\newcommand{\sX}{\mathcal{X}}
\newcommand{\sB}{\mathcal{B}}
\newcommand{\sY}{\mathcal{Y}}
\newcommand{\sW}{\mathcal{W}}
\newcommand{\isom}{\cong}
\newcommand{\bQ}{\mathbb{Q}}
\newcommand{\bR}{\mathbb{R}}
\newcommand{\sO}{\mathcal{O}}
\newcommand{\sC}{\mathcal{C}}
\newcommand{\sD}{\mathcal{D}}
\newcommand{\sL}{\mathcal{L}}
\def\mbA{\mathbb{A}}
\newcommand{\mbR}{\mathbb{R}}
\newcommand{\mbZ}{\mathbb{Z}}
\newcommand{\mbQ}{\mathbb{Q}}
\def\mbF{\mathbb{F}}
\def\mbE{\mathbb{E}}
\def\mbN{\mathbb{N}}
\def\mbP{\mathbb{P}}
\newcommand{\<}{\leq}
\def\>{\geq}
\def\ve{\varepsilon}
\def\vphi{\varphi}
\def\subset{\subseteq}
\newcommand{\dis}{\displaystyle}
\newcommand{\lrd}{\lfloor}
\newcommand{\rrd}{\rfloor}
\newcommand{\num}{\equiv}
\newcommand{\bir}{\dashrightarrow}
\def\mcO{\mathcal{O}}
\def\mcC{\mathcal{C}}
\def\mcR{\mathcal{R}}
\def\mcS{\mathcal{S}}
\def\msL{\mathscr{L}}
\def\mfm{\mathfrak{m}}
\def\iff{\Leftrightarrow}
\numberwithin{equation}{section}
\newcommand{\sG}{\mathcal{G}}
\newcommand{\sZ}{\mathcal{Z}}
\def\deg{\operatorname{deg}}
\def\im{\operatorname{Im}}
\def\Supp{\operatorname{Supp}}
\def\dim{\operatorname{dim}}
\def\codim{\operatorname{codim}}
\def\chr{\operatorname{char}}
\def\Ex{\operatorname{Ex}}
\def\lct{\operatorname{lct}}
\def\Spec{\operatorname{Spec}}
\def\Proj{\operatorname{Proj}}
\def\Bl{\operatorname{Bl}}
\def\Pic{\operatorname{Pic}}
\def\NE{\operatorname{NE}}
\def\min{\operatorname{min}}
\def\max{\operatorname{max}}
\def\mult{\operatorname{mult}}
\def\red{\operatorname{red}}
\def\Sing{\operatorname{Sing}}
\def\Diff{\operatorname{Diff}}
\def\Aut{\operatorname{Aut}}
\def\Bs{\operatorname{Bs}}
\author{Omprokash Das}
\address{School of Mathematics\\
	Tata Institute of Fundamental Research\\
	Homi Bhabha Road, Navy Nagar\\
	Colaba, Mumbai 400005}
\email{omdas@math.tifr.res.in}
\email{omprokash@gmail.com}
\author{Joe Waldron}
\address{Department of Mathematics\\
	Michigan State University\\
	619 Red Cedar Road\\
	East Lansing\\
	Michigan 48824\\
	USA}
\email{waldro51@msu.edu}
\date{}
\begin{document}

	\title[$3$-Fold LMMP over imperfect fields of char $p>5$]{On the log minimal model program for $3$-folds over Imperfect Fields of Characteristic $p>5$}
	\maketitle

\begin{abstract}
	We prove that many of the results of the LMMP hold for $3$-folds over  fields of characteristic $p>5$ which are not necessarily perfect.  In particular, the existence of flips, the cone theorem, the contraction theorem for birational extremal rays, and the existence of log minimal models.  As well as pertaining to the geometry of fibrations of relative dimension $3$ over algebraically closed fields, they have applications to tight closure in dimension $4$.
	\end{abstract}
	
	\tableofcontents

\section{Introduction}

 It has recently been shown that most of the log minimal model program  (LMMP) holds for threefolds over an algebraically closed field of characteristic $p>5$, \cite{Kee99,HX15,CTX15,Bir16,BW17} and even if the base field is perfect \cite{GNT15}.  On the other hand, it could be hoped that the LMMP will continue to hold for arbitrary excellent schemes of dimension $3$, as is the case for surfaces \cite{Tan18}.   In this article we prove that many parts of the LMMP hold for threefolds over imperfect fields of characteristic $p>5$.  That is, the schemes we work with are integral, separated and of finite type over a field, but may fail to be geometrically integral or geometrically normal.  
 
 These schemes arise even if one is concerned purely with geometry over algebraically closed fields, because in positive characteristic there are morphisms over algebraically closed fields whose general fibers are excessively singular, for example non-normal or non-reduced.  As a result, many assumptions appearing in the LMMP (e.g. klt) are not preserved by restricting to a general fiber (even after passing to a normalization \cite{PW17}), and so we must restrict to the generic fiber instead (e.g. \cite{BCZ18, Das18,DW18}).  This is a variety over an imperfect field.  This situation is an unavoidable part of an inductive approach to the higher dimensional LMMP in positive characteristic, so we need to be able to apply LMMP results to varieties over imperfect fields.
 
 	Later we will discuss some applications to varieties over algebraically closed fields.  In particular we are able to give the first results about the LMMP in positive characteristic applicable to varieties of dimension greater than $3$.  Furthermore, our results have surprising applications to pure commutative algebra, by work of Aberbach and Polstra \cite{aberbach_polstra}.

 	Our first result is the cone theorem.  Note that the bound on the length of extremal rays takes a somewhat different form when the base field is not algebraically closed.  This new statement is optimal even if the ground field is perfect.

\begin{theorem}\label{thm:cone}
		Let $(X,B)$ be a projective $\bQ$-factorial, dlt pair of dimension at most $3$, over a $F$-finite field $k$ of characteristic $p> 5$. Then there is a countable set of curves $\{C_i\}_{i\in I}$ such that:
		\begin{enumerate}
			\item \[\overline{NE}(X)=\overline{NE}(X)_{K_X+B\geq 0}+\sum_{i\in I}\mathbb{R}_{\geq 0}[C_i]\]
			\item For any ample $\bQ$-divisor $A$, there is a finite subset $I_A\subset I$ such that
			\[\overline{NE}(X)=\overline{NE}(X)_{K_X+B+A\geq 0}+\sum_{i\in I_A}\mathbb{R}_{\geq 0}[C_i]\]
			\item The rays $[C_i]$ do not accumulate in $\overline{NE}(X)_{K_X+B<0}$.
			\item  For each $C_i$, there is a unique positive integer $d_{C_i}$
			depending only on $X, C_i$ and the ground field $k$ satisfying the following properties: 
			\begin{enumerate}
\item			\[ 0<-(K_X+B)\cdot_k C_i\leq  6d_{C_i},\mbox{ and}\]
\item  For any Cartier divisor $L$ on $X$, the integer $L\cdot_k C_i$ is divisible by $d_{C_i}$.
\end{enumerate}
				For a precise description of $d_{C_i}$ see \autoref{sec:fields}.
		\end{enumerate}
	\end{theorem}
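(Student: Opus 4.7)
The plan is to reduce to the cone and length theorem for threefolds over an algebraically closed field of characteristic $p>5$ (established in \cite{BW17} and predecessors) via the base change $\pi: \bar X \to X$ to $\bar k$, and then to track the degree discrepancy between intersection numbers computed over $k$ and over $\bar k$. The integer $d_{C_i}$ will emerge naturally as the multiplicity/degree factor coming from this base change.

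For (4b), I would simply define $d_C$, for any proper integral curve $C \subset X$, to be the unique positive generator of the image of the homomorphism
\[
\Pic(X)\longrightarrow\mbZ,\qquad L\longmapsto L\cdot_k C.
\]
Divisibility then holds tautologically. Using the projection formula for the finite flat map $\pi$, one identifies $d_C$ with the multiplicity-degree factor of the $1$-cycle $\pi^{*}C$ on $\bar X$, expressible in terms of the inseparability degree of the base change and the residue-field extension of $C/k$; this should match the description announced in \autoref{sec:fields}.

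For (4a), fix an extremal ray $R\subset\overline{NE}(X)_{K_X+B<0}$. Passing to the geometric base change and then to the normalization $\nu:\bar X^\nu\to\bar X$, the dlt-modification machinery developed earlier in the paper produces a $\mbQ$-boundary $\bar B^\nu$ on $\bar X^\nu$ for which $(\bar X^\nu,\bar B^\nu)$ is dlt and
\[
K_{\bar X^\nu}+\bar B^\nu \;\leq\; (\pi\circ\nu)^{*}(K_X+B),
\]
the difference being an effective divisor supported in the conductor and inseparable loci. The ray $R$ lifts to a face of $\overline{NE}(\bar X^\nu)$ negative for $K_{\bar X^\nu}+\bar B^\nu$, and the length bound of \cite{BW17} over $\bar k$ furnishes a rational curve $\bar C$ in an extremal subray of this face with
\[
0<-(K_{\bar X^\nu}+\bar B^\nu)\cdot_{\bar k}\bar C \;\leq\; 2\dim X = 6.
\]
Setting $C:=\pi(\nu(\bar C))$ and applying the projection formula together with the identification of $d_C$ above converts this into
\[
0<-(K_X+B)\cdot_k C \;\leq\; 6\, d_C,
\]
provided the correction divisor has nonpositive intersection with $\bar C$ (which holds because $\bar C$ spans an extremal subray and the correction is supported on the exceptional/inseparable locus of $\pi\circ\nu$). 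Once (4) is available, parts (1)--(3) follow from the standard Koll\'ar--Mori formalism, since length-bounded rational curves generate the relevant portion of $\overline{NE}(X)_{K_X+B<0}$ and no accumulation is possible there.

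The main obstacle will be the construction and control of $(\bar X^\nu,\bar B^\nu)$ used in step (4a): the combined operations of base change to $\bar k$ and normalization introduce conductor and inseparability contributions that must be arranged so that (i) the pair remains dlt, (ii) the displayed inequality holds with an effective correction, and (iii) the correction does not meet $\bar C$ in a way that spoils the length bound. This is where the $F$-finite and $p>5$ hypotheses are essential, through the adjunction/different formalism for non-normal schemes over imperfect fields developed in the earlier sections.
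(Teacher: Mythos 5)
Your strategy differs fundamentally from the paper's, and the step you flag as the ``main obstacle'' is in fact a genuine gap that the paper is designed specifically to avoid.

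The central problem is in step (4a). After writing $(\pi\circ\nu)^*(K_X+B)=K_{\bar X^\nu}+\Delta$ with $\Delta\geq 0$ coming from Tanaka's pullback formula, and lowering coefficients to some dlt $\bar B^\nu\leq\Delta$ with correction $E=\Delta-\bar B^\nu\geq 0$, the inequality you need to transfer the bound is
\[
-(\pi\circ\nu)^*(K_X+B)\cdot\bar C\;=\;-(K_{\bar X^\nu}+\bar B^\nu)\cdot\bar C-E\cdot\bar C\;\leq\;-(K_{\bar X^\nu}+\bar B^\nu)\cdot\bar C,
\]
which requires $E\cdot\bar C\geq 0$, not $\leq 0$ as you wrote. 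Your sign is reversed, and your justification for it (``$\bar C$ spans an extremal subray and the correction is supported on the exceptional/inseparable locus'') actually argues for $E\cdot\bar C<0$, which would break the bound rather than help. Moreover, if $\bar C\subset\Supp(E)$ you also cannot conclude from $\bar B^\nu\leq\Delta$ that $K_{\bar X^\nu}+\bar B^\nu$ is even negative on $\bar C$, so the hypothesis of the length theorem may fail on the lifted face. Finally, there is no argument that simply truncating coefficients produces a dlt pair on $\bar X^\nu$ itself; in general one must pass to a dlt model, which severs the link between $\bar C$ and $C$.

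The paper sidesteps all of this. It proves a rationality-of-nef-threshold lemma (in \autoref{sec:cone2}) and only then applies \cite[Theorem 3.15]{KM98}, which takes a rationality statement as input rather than a bare length bound; your last paragraph conflates these. The lemma's proof splits cases. When $L=K_X+B+\lambda H$ is big, the analysis is carried out entirely on $X$ using the contraction and flip existence results from \autoref{sec:flips}; there is no base change, so the conductor/inseparability correction never appears. When $L$ is not big, the paper works on the normalization $Y$ of the geometric base change (over an uncountable algebraically closed extension, so as to have a nef reduction map). There it either runs a mobile-family argument from \cite{BW17} which, as the paper notes, does not require the boundary to be a boundary, or applies bend-and-break at a point chosen away from $\Supp(\Delta)$. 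In both subcases the curve $\bar C$ is produced so as to avoid $\Supp(\Delta)$, which forces $\Delta\cdot\bar C\geq 0$ and makes the sign issue disappear; the bound is obtained directly for $-(K_Y+\Delta)\cdot\bar C$ and pushed down by \autoref{lem:inequality}. Your uniform ``base change, make dlt, apply BW17'' plan is exactly the route the paper found it could not take.

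One smaller point: defining $d_C$ as the generator of the image of $\Pic(X)\to\mbZ$ makes (4b) tautological, but it is not a priori the same integer as the one in \autoref{prop:degree} (built from $[k_C:k]$, the inseparable degree and $[L_D:k_{in}]$); the paper's $d_C$ divides yours, and it is the paper's field-theoretic $d_C$ that \autoref{lem:constant} and \autoref{lem:inequality} use to compare $k$- and $\overline{k}$-intersection numbers, which is why the explicit formula in \autoref{sec:fields} is needed rather than the gcd.
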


\begin{remark}
	In the surface case, the restriction on the characteristic and the ground field is not required, see \autoref{thm:surface_cone}.  We also have a weaker statement for threefolds without these assumptions, see \autoref{thm:cone_theorem}.
	\end{remark}

This new form of the cone theorem turns out to be sufficient for most of the standard applications, and allows us to adapt many of the arguments of \cite{HX15} and \cite{Bir16} to obtain a lot of important results of the LMMP over arbitrary fields:

\begin{theorem}\label{thm:flip}
	Let $(X,B)$ be a quasi-projective $\mbQ$-factorial dlt threefold pair over an arbitrary field of characteristic $p>5$. Let $f:X\to Z$ be a $(K_X+B)$-flipping projective contraction.  Then the flip of $f$ exists.
\end{theorem}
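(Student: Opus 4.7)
The plan is to follow the Hacon--Xu strategy for threefold flips in characteristic $p>5$ \cite{HX15,Bir16}, adapting each step to the imperfect-field setting. First, I would reduce to the case of a pl-flipping contraction: by taking a $\mbQ$-factorial dlt modification of $(X,B)$ and running a carefully chosen MMP on the exceptional divisors (using the cone and contraction theorem \autoref{thm:cone}, after descending to an $F$-finite subfield over which everything is defined if this is not automatic), one arrives at a plt pair $(X',S+B')$ with $S=\lfloor S+B'\rfloor$ irreducible and both $-(K_{X'}+S+B')$ and $-S$ ample over $Z$. The existence of the original flip then follows from the existence of the pl-flip of $(X',S+B')$.

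For such a pl-flipping contraction $f\colon X\to Z$, the existence of the flip is equivalent to finite generation of the $\mathcal{O}_Z$-algebra $\bigoplus_{m\geq 0} f_*\mathcal{O}_X(m(K_X+S+B))$. By Shokurov's reduction, this follows from finite generation of the corresponding restricted algebra on $S$. Adjunction -- in the form suitable for imperfect ground fields, see for example \cite{Das18} -- expresses $(K_X+S+B)|_{S^\nu}$ as $K_{S^\nu}+B_{S^\nu}$, where $(S^\nu,B_{S^\nu})$ is a klt pair on a normal surface over an imperfect field. Finite generation on such a surface is within reach via Tanaka's surface MMP \cite{Tan18} applied relatively over the image of $S$ in $Z$.

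The main obstacle, exactly as in the perfect case, is to lift finite generation from $S$ back to $X$: one needs an extension theorem guaranteeing that sections of pluri-log-canonical sheaves on $S^\nu$ extend to $X$. The Hacon--Xu approach exploits global $F$-regularity of the minimal model of $(S^\nu,B_{S^\nu})$ together with Keel-type relative vanishing to produce such extensions via Frobenius. Over an imperfect field the $F$-singularity arguments become delicate because Frobenius interacts non-trivially with the non-perfect part of $k$ and because $S$ itself may fail to be geometrically normal; moreover the adjunction formula can acquire extra Weil-divisor contributions from the non-$F$-pure geometry. I expect the hardest part of the proof to be establishing an appropriate version of $F$-regularity (or a direct extension theorem) for plt pairs over imperfect fields strong enough to drive the lifting step; this is the point at which the argument is most sensitive to imperfectness of the ground field, and where the threshold $p>5$ enters essentially through the surface theory of $S^\nu$.
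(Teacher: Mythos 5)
Your plan matches the paper's: reduce to pl-flips via the Hacon--Xu/Birkar machinery, restrict to the surface $S$, establish finite generation there, and lift back using global $F$-regularity, with a descent to $F$-finite subfields handling the arbitrary-field case. Two clarifications worth making. First, the paper performs the descent to an $F$-finite subfield $\ell\subset k$ at the \emph{outermost} layer of the proof of \autoref{thm:flip}, not inside the pl-flip reduction: since the flip is equivalent to finite generation of $\bigoplus_m f_*\mathcal{O}_X(\lfloor m(K_X+B)\rfloor)$, one can spread $X$, $B$, $Z$, $f$ out over a finitely generated (hence $F$-finite) subfield $\ell$, check via \autoref{lem:model}, \autoref{lem:klt-descends}, and \autoref{lem:Q-factorial-descends} that the flipping-contraction hypotheses descend, invoke the $F$-finite case (\autoref{thm:dlt-flips-for-arbitrary-coefficients}), and conclude by flat base change. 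This avoids having to track whether intermediate MMP steps are defined over the subfield. Second, on the step you flag as hardest: the paper's \autoref{thm:globally-f-regular} (Section 5) supplies the needed global $F$-regularity over $F$-finite $k$ by first proving a boundedness-of-complements statement over imperfect fields (\autoref{lem:numerical_complement}, \autoref{table:comp}), which requires keeping track of residue field degrees $[k(P):k]$ of the points in the boundary, and then checking global $F$-regularity of the finitely many resulting $\mathbb{P}^1$-pairs by base changing to a finite \emph{separable} (hence harmless, by \autoref{lem:GFR_base_change}) Galois closure to reduce to the rational-point computations of Hacon--Xu and Sato--Takagi. The worry you raise about $S$ failing to be geometrically normal is defused: what the argument needs is that $S$ is normal, which the paper deduces from strong $F$-regularity of the klt adjunction pair $(S^\nu, B_{S^\nu})$ when the coefficients are standard and $p>5$.
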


A key part of the proof of the corresponding result over algebraically closed fields \cite{HX15} (which allowed the recent progress on the threefold LMMP to take place) is the use of properties of $F$-singularities to replace vanishing theorems.  These properties make sense only over fields which are $F$-finite, but fortunately we are able to reduce to this from the general case.

Many of the results of \cite{Bir16} depend only on the LMMP, so given the existence of flips, we are able to run the same arguments with some adaptations pertaining to applications of the new cone theorem, to obtain the following results:

\begin{theorem}\label{thm:bpf}
	Let $(X,B)$ be a projective klt threefold pair over an arbitrary field $k$ of characteristic $p>5$ and $B$ is a $\mbQ$-divisor. Let $f:X\to Z$ be a projective contraction, and $D$ a $\mbQ$-Cartier divisor such that both $D$ and $D-(K_X+B)$ are $f$-big and $f$-nef.  Then $D$ is $f$-semi-ample.
	\end{theorem}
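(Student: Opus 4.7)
The plan is to adapt Birkar's proof of the base-point-free theorem in \cite{Bir16} to the imperfect-field setting, leveraging the cone theorem (\autoref{thm:cone}) and the existence of flips (\autoref{thm:flip}) already established.

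The first step is to reduce to the case where the ground field $k$ is $F$-finite. The tuple $(X, B, f\colon X \to Z, D)$ descends to a finitely generated subfield $k_0 \subseteq k$ by standard spreading-out, and any finitely generated field over $\mathbb{F}_p$ is automatically $F$-finite. All hypotheses of the theorem (projectivity, klt, $\mathbb{Q}$-Cartier, $f$-big, $f$-nef) and the conclusion of $f$-semi-ampleness behave well under the faithfully flat base change $\Spec k \to \Spec k_0$, so one may assume $k$ itself is $F$-finite.

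Once $k$ is $F$-finite, the next move is a Kodaira-style perturbation. Since $D - (K_X + B)$ is $f$-nef and $f$-big, the relative Kodaira lemma gives $D - (K_X + B) \sim_{\mathbb{Q}, f} H_0 + E$ with $H_0$ an $f$-ample $\mathbb{Q}$-divisor and $E$ effective. For $0 < \epsilon \ll 1$ the pair $(X, B + \epsilon E)$ remains klt, and rearranging produces $D \sim_{\mathbb{Q}, f} K_X + B' + H$ with $B' = B + \epsilon E$ klt and $H$ still $f$-ample. The problem is thus reduced to the relative base-point-free theorem for klt threefold pairs with $H$ relatively ample. For this I would run a $(K_X+B')$-MMP over $Z$ with scaling of $H$: extremal rays and their birational contractions are supplied by \autoref{thm:cone} together with the contraction theorem referred to in the excerpt, and flips are supplied by \autoref{thm:flip}. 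Termination proceeds as in \cite{Bir16} by special termination, together with the fact that the scaling coefficient cannot cross $0$ since $D \equiv_f K_X + B' + H$ is $f$-nef. On the final model either $D$ is $f$-ample, in which case one is done, or there is a $(K_X + B')$-trivial Mori fibre contraction $g \colon X' \to Y/Z$ onto which $D$ descends, and one finishes by Keel's semi-ampleness criterion combined with the $F$-singularity and $F$-regularity arguments of \cite{HX15, Bir16} applied to the big-and-nef pushforward on $Y$.

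The main obstacle is the very last stage: verifying that the $F$-singularity inputs of \cite{HX15, Bir16}, especially the non-vanishing results built on the trace $F_* \omega_X \to \omega_X$ and on strong $F$-regularity of klt pairs in characteristic $p > 5$, genuinely survive the passage from algebraically closed to merely $F$-finite base fields; this is what forces Step~1 and must be done carefully because imperfectness can destroy usual geometric integrality of general members of linear systems. A secondary delicate point is to confirm that the MMP with scaling remains compatible with the new length bound of \autoref{thm:cone} involving the integers $d_{C_i}$, so that the extremal contractions invoked during the MMP are of the expected Mori/flipping/divisorial types and the standard termination arguments apply without modification.
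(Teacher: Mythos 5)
Your proposal follows essentially the same route as the paper: you first reduce to an $F$-finite ground field by descending $(X,B,f,D)$ to a finitely generated subfield (the paper's \autoref{lem:model} and \autoref{lem:klt-descends}), noting that semi-ampleness is insensitive to the subsequent base change, and then invoke the $F$-finite case, which the paper handles by simply citing that the proof of \cite[Theorem~1.4]{Bir16} goes through once the cone theorem, contractions and flips are available. The paper does not spell out the $F$-finite step at all, so your Kodaira-perturbation/MMP sketch is additional detail in the spirit of Birkar's argument rather than a divergence; one small imprecision is your remark that the scaling coefficient ``cannot cross $0$'' — the relevant point is rather that nefness of $D\sim_{\mathbb{Q},f}K_X+B'+H$ keeps the initial nef threshold $\leq 1$, and termination is what needs Birkar's actual argument — but since the paper itself only cites \cite{Bir16} for this, the approaches coincide.
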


\begin{theorem}\label{thm:divisorial-contraction}
	Let $(X,B)$ be quasi-projective  dlt threefold pair over an $F$-finite field $k$ of characteristic $p>5$ with a $(K_X+B)$-negative extremal ray $R$.  Suppose there is a big and nef $\mbQ$-Cartier divisor $D$ such that $D\cdot R=0$.  Then $R$ can be contracted by a projective morphism.
\end{theorem}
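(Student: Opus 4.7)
The plan is to reduce to the klt case and then apply the base point free theorem (Theorem \ref{thm:bpf}) to a divisor constructed from $D$ together with a scaling argument, producing a projective morphism that contracts $R$.

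First, since Theorem \ref{thm:bpf} is stated for klt pairs, I perturb $(X, B)$ to a klt pair $(X, B')$ keeping $R$ a $(K_X+B')$-negative extremal ray. Using that $D$ is big, Kodaira's lemma gives $D \sim_{\mathbb{Q}} A_0 + E_0$ with $A_0$ ample and $E_0$ effective; combined with a small reduction of $\lfloor B \rfloor$, this yields $B'$ with $(X, B')$ klt and $(K_X + B') \cdot R < 0$.

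Next, I construct an auxiliary $\mathbb{Q}$-Cartier nef divisor $L$ with $L \cdot R = 0$ by a scaling argument. Take an ample $\mathbb{Q}$-Cartier divisor $A$ of the form $D + \delta H$ for an ample $H$ and small $\delta > 0$, and set
\[
t_0 := \inf\{\, t \geq 0 : K_X + B' + tA \text{ is nef}\,\}.
\]
By the cone theorem (Theorem \ref{thm:cone}) applied to $(X, B')$, $t_0$ is positive and is achieved, and since $K_X+B'$ and $A$ are $\mathbb{Q}$-Cartier, $t_0$ is rational. Choosing $\delta$ and the auxiliary ample $H$ so that $R$ is a $(K_X+B')$-negative extremal ray realizing the threshold, one obtains $L := K_X + B' + t_0 A$ as a $\mathbb{Q}$-Cartier nef divisor with $L \cdot R = 0$.

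Now set $M := D + L$. Then $M$ is nef (sum of nef divisors) and big (sum of a big and a nef divisor is big), while $M - (K_X + B') = D + t_0 A$ is the sum of a nef divisor and an ample divisor, hence ample by the Nakai--Moishezon criterion, and in particular big and nef. By Theorem \ref{thm:bpf} applied to $(X, B')$ and $M$, the divisor $M$ is semi-ample. The induced morphism $\phi_M \colon X \to Y$ is projective, and since $M \cdot R = D \cdot R + L \cdot R = 0$, it contracts all curves whose class lies in $R$, giving the desired contraction.

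The main obstacle is the scaling construction of $L$: ensuring that $R$ is an extremal ray realizing the threshold $t_0$ requires a careful choice of $A$ and relies on the non-accumulation of $(K_X+B')$-negative extremal rays (Theorem \ref{thm:cone}(3)) together with the quantitative bounds in part (4). The dlt-to-klt reduction also needs care to preserve the $(K_X+B)$-negativity of $R$, and one may need to pass to a projective compactification since Theorem \ref{thm:bpf} is stated for projective pairs.
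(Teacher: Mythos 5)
The overall strategy you use — perturb to a klt pair, produce a nef $\mathbb{Q}$-Cartier divisor supporting $R$, and apply the base point free theorem (\autoref{thm:bpf}) — is the same as what the paper relies on (the paper simply cites the proof of \cite[Theorem~1.5]{Bir16}). The bigness/nefness bookkeeping for $M$ and $M-(K_X+B')$ is correct, and the dlt-to-klt reduction via Kodaira's lemma is standard.

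However, there is a genuine gap that you do not close, and which is not quite the one you flag. To conclude that ``$R$ can be contracted by a projective morphism,'' the semi-ample divisor $M$ must satisfy $M^{\perp}\cap\overline{NE}(X)=R$ \emph{exactly}; you only establish $M\cdot R=0$, i.e.\ $R\subseteq M^\perp$. Your acknowledged obstacle (``ensure $R$ realizes the nef threshold $t_0$'') does not resolve this. Even if $R$ attains the threshold, the face $L^\perp\cap\overline{NE}(X)$ supported by $L=K_X+B'+t_0A$ may be a higher-dimensional rational polyhedral cone containing several extremal rays, and $M^\perp=L^\perp\cap D^\perp$ may still contain rays other than $R$: any $(K_X+B')$-negative ray $R'$ with $D\cdot R'=0$ lying in $L^\perp$ survives. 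Because $A=D+\delta H$ satisfies $A\cdot R'=\delta H\cdot R'$ (small simultaneously with $A\cdot R$), shrinking $\delta$ scales the thresholds of $R$ and of all such $R'$ by the same factor, so it does not separate them; nor does genericity of $H$ alone guarantee that $R$ has the \emph{largest} threshold among rays annihilated by $D$. The missing step is the construction of a nef $\mathbb{Q}$-divisor whose zero locus in $\overline{NE}(X)$ is precisely $R$: one must use that $L^\perp\cap D^\perp$ is rational polyhedral (from the cone theorem) with $R$ as a rational extremal ray, produce a rational supporting functional for $R$ in this face, and add it (suitably scaled against $D$ and $A$) to $M$ while preserving nefness and the hypotheses of the base point free theorem. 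That is precisely the content of the Kollár–Mori-style ``existence of supporting functions'' argument, and without it the morphism $\phi_M$ may contract a larger face than $R$.

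A secondary point: you rightly note that \autoref{thm:bpf} is stated for projective $X$, so some reduction from the quasi-projective case (e.g.\ working relative to an affine base or compactifying and checking that $R$ survives) is needed; this should be spelled out rather than left as a possible need.
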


\begin{theorem}\label{thm:log-minimal-model}
	Let $(X,B)$ be a quasi-projective klt threefold pair over an $F$-finite field $k$ of characteristic $p>5$, with projective contraction $f:X\to Z$.  If $K_X+B$ is pseudo-effective over $Z$, then $(X,B)$ has a log minimal model over $Z$.
\end{theorem}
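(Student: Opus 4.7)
The strategy is to run a $(K_X+B)$-MMP with scaling of an ample divisor over $Z$, adapting the approach of \cite{Bir16} to the imperfect-field setting using the tools established earlier in the paper. First, choose a sufficiently general ample $\bQ$-Cartier divisor $H$ on $X$ such that $(X,B+H)$ remains klt and $K_X+B+H$ is nef over $Z$; this is produced by a Bertini-type argument applied to a suitable multiple of an ample class. Set
\[
\lambda_1 := \inf\{\, t \geq 0 \ :\ K_X+B+tH \text{ is nef over } Z \,\}.
\]
If $\lambda_1=0$ we are done. Otherwise \autoref{thm:cone} produces a $(K_X+B)$-negative extremal ray $R$ over $Z$ with $(K_X+B+\lambda_1 H)\cdot R=0$. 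Since $K_X+B$ is pseudo-effective over $Z$ and $\lambda_1 H$ is ample, the nef divisor $D:=K_X+B+\lambda_1 H$ is also big over $Z$, so \autoref{thm:divisorial-contraction} yields a projective contraction $\pi:X\to Y$ associated with $R$. Pseudo-effectiveness of $K_X+B$ rules out the fibering case; in the divisorial case we replace $X$ by $Y$, and in the flipping case we replace $X$ by the flip $X^+$ provided by \autoref{thm:flip}. Standard arguments preserve $\bQ$-factoriality and dlt-ness, the scaling value cannot increase, and we iterate.

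It remains to establish termination of this MMP with scaling. Following Birkar, one reduces first to the case where $K_X+B$ is big over $Z$: pseudo-effectiveness allows one to pick a boundary $B'\geq B$ with $(X,B')$ klt and $K_X+B'$ big over $Z$, and to transport termination between the two pairs by running a common MMP with scaling. Once $K_X+B$ is big over $Z$, termination is deduced from two inputs: (a) special termination on the dlt strata, which for threefolds reduces to running an LMMP on normal surfaces over possibly imperfect residue fields, available through \cite{Tan18}; and (b) termination of the residual sequence of flips with scaling, for which Birkar's argument relies on monotonicity of the scaling thresholds together with the length bounds on extremal rays coming from the cone theorem and an ACC-type input on log canonical thresholds.

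The hard part will be carrying Birkar's termination argument across to the imperfect-field setting. The length of an extremal ray in \autoref{thm:cone} is only bounded by $6d_{C_i}$, where $d_{C_i}$ depends on the residue field of the image of $C_i$ and need not be uniformly controlled along an MMP; one must therefore recheck that the ACC-type and combinatorial estimates in Birkar's termination argument still produce a contradiction after finitely many flips. In addition, one has to verify that the auxiliary ingredients used in Birkar's argument — dlt modifications, adjunction to dlt centers, boundedness of certain relative thresholds — continue to behave correctly for threefolds over $F$-finite imperfect fields of characteristic $p>5$, possibly after base changing to the perfection as in the proof of \autoref{thm:flip}. Assuming these verifications go through, the MMP terminates with a log minimal model of $(X,B)$ over $Z$.
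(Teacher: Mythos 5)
Your proposal takes a genuinely different route from the paper, and the difference is not cosmetic. The paper proves this theorem by pointing to \autoref{pro:log_minimal_models_f_finite}, whose proof is a direct adaptation of Birkar's argument for \cite[Theorem 1.2]{Bir16}. That argument is organized around weak Zariski decompositions: \autoref{pro:weak-zariski-to-mmp} (= \cite[Proposition 8.3]{Bir16}) shows that a weak Zariski decomposition of $K_X+B$ over $Z$ already yields a log minimal model, by an induction on the number of components of the negative part of the decomposition; one then produces the weak Zariski decomposition from pseudo-effectivity. The whole point of this framework is that it manufactures a log minimal model \emph{without} ever invoking termination of the $(K_X+B)$-MMP with scaling as such.

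Your proposal instead runs the $(K_X+B)$-MMP with scaling and then tries to prove termination of that MMP. This is a genuine gap. Termination of the LMMP with scaling for pseudo-effective klt threefold pairs is \emph{not} known in characteristic $p$, even over algebraically closed fields; the paper says so explicitly in the remark following \autoref{thm:log-minimal-model}, where it observes that termination of the LMMP with scaling remains open because of the missing boundedness input from \cite{BW17}. Appealing to ``monotonicity of the scaling thresholds together with the length bounds \ldots and an ACC-type input'' does not reproduce Birkar's argument; those are some of the inputs, but what actually closes the loop in \cite{Bir16} is the inductive structure of the weak Zariski decomposition argument, not a direct termination statement. Your ``reduction to the big case by picking $B'\geq B$ and transporting termination'' is similarly underspecified: a $(K_X+B')$-log minimal model for $B'>B$ is not a $(K_X+B)$-log minimal model, so one cannot simply pass between the two without exactly the kind of WZD/polytope bookkeeping you are trying to avoid.

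One secondary point: your worry about the constants $d_{C_i}$ from \autoref{thm:cone} being uncontrolled along an MMP is legitimate, but the paper addresses it. As explained at the end of \autoref{sec:keel}, the arguments of \cite[Sec 3]{Bir16} go through after systematically replacing each intersection number $D\cdot\Gamma$ by the normalized quantity $D\cdot\tfrac{\Gamma}{d_\Gamma}$, which restores a uniform bound independent of the residue field. So that difficulty is surmountable; the real issue with your proposal is the reliance on termination of the MMP with scaling, which is not available here. You should instead route the argument through the weak Zariski decomposition statement \autoref{pro:weak-zariski-to-mmp}.
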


\begin{remark}
	The proof of the existence of a Mori fiber space whenever $K_X+B$ is not pseudo-effective in \cite{BW17} relies on the boundedness of $\epsilon$-lc del Pezzo surfaces over algebraically closed fields.  As we have no analogue of this yet for geometrically non-reduced surfaces, the existence of Mori fiber spaces and termination of the  LMMP with scaling remain open.
	\end{remark}

\begin{remark}
	The restriction $p>5$ arises from a generalization of the fact that klt surface singularities are strongly $F$-regular in these characteristics, which is used in the construction of flips in \cite{HX15}, and which fails in low characteristics.    Recently, Hacon and Witaszek have extended parts of the LMMP for threefolds over perfect fields to low characteristics by giving new arguments for the existence of flips.  It is therefore likely that all of our results can be extended to characteristic $5$ using  the construction of flips in \cite{hacon_witaszek_5}, and partially to characteristic $2$ and $3$ using that in \cite{hacon_witaszek_2_3}.  However, we have not attempted to verify this.  
\end{remark}

\begin{remark}
Some of our results are stated for $F$-finite fields.  This covers all realistic geometric applications, for example if $k$ is the function field of a variety over a perfect field, and so $X$ is a generic fiber.  This assumption is necessary due to limitations of the known forms of resolution of singularities.  If projective log resolutions exist over arbitrary fields then this assumption can be removed in all the main results.  While $F$-finite is also needed for some $F$-singularity arguments in the existence of flips, the general case of that result can be deduced from the $F$-finite case, leaving resolution of singularities as the only obstacle.
\end{remark}

We now give some sample applications to higher dimensional varieties over algebraically closed fields.

\begin{corollary}\label{cor:glmm}
	Let $(\sX,\sB)$ be an $n$-dimensional dlt pair over an algebraically closed field of characteristic $p>5$, with a projective contraction $f:\sX\to Z$ to an $(n-3)$-fold $Z$ such that $K_{\sX}+\sB$ is big over $Z$.  Then there is a non-empty open subset $U\subset Z$ such that if $(\sX_U,\sB_U)$ is the induced pair on $f^{-1}(U)$, then $(\sX_U,\sB_U)$ has a good log minimal model over $U$.
	\end{corollary}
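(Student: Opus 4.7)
The plan is to apply the paper's $3$-fold LMMP results to the generic fiber of $f\colon \sX\to Z$, which is a $3$-fold over the $F$-finite field $K(Z)$, and then spread out the resulting good log minimal model over a suitable open subset of $Z$. Let $\eta=\Spec K(Z)$ denote the generic point. Since $Z$ is of finite type over an algebraically closed field $k$ of characteristic $p>5$, the function field $K(Z)$ is finitely generated over $k$, hence $F$-finite of characteristic $p>5$. The generic fiber $(\sX_\eta,\sB_\eta)$ is then a quasi-projective dlt pair of dimension $3$ over $K(Z)$ (after passing to a small $\mbQ$-factorial dlt modification if necessary), and $K_{\sX_\eta}+\sB_\eta$ is big because bigness over $Z$ restricts to bigness on the generic fiber.

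Next, I would construct a good log minimal model of $(\sX_\eta,\sB_\eta)$ over $\Spec K(Z)$. Since \autoref{thm:log-minimal-model} is stated for klt pairs, I first pass to a klt perturbation: pick $\epsilon>0$ small enough that $\sB'_\eta := \sB_\eta - \epsilon \lfloor\sB_\eta\rfloor$ makes $(\sX_\eta,\sB'_\eta)$ klt while $K_{\sX_\eta}+\sB'_\eta$ remains big (bigness being an open condition). Applying \autoref{thm:log-minimal-model} produces a log minimal model $\phi_\eta\colon\sX_\eta\dashrightarrow X'_\eta$ for this klt pair. Writing $B^*=\phi_{\eta*}\sB'_\eta$, the divisor $K_{X'_\eta}+B^*$ is nef and big on $X'_\eta$, so I invoke \autoref{thm:bpf} with $D=2(K_{X'_\eta}+B^*)$: both $D$ and $D-(K_{X'_\eta}+B^*)$ are positive rational multiples of a big nef divisor, so $D$, and hence $K_{X'_\eta}+B^*$, is semi-ample. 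To upgrade this to a good model for the original dlt pair, I push $\sB_\eta$ forward to $X'_\eta$ and run additional LMMP steps using \autoref{thm:flip} and \autoref{thm:divisorial-contraction}; since $K_\sX+\sB$ is big, termination follows by a standard MMP-with-scaling argument in the big setting, and a further application of \autoref{thm:bpf} on the final model gives semi-ampleness, yielding a good log minimal model $\psi_\eta\colon \sX_\eta\dashrightarrow X^m_\eta$ for the dlt pair.

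Finally, I would spread out. The map $\psi_\eta$ is a finite composition of divisorial contractions and flips over $\Spec K(Z)$, each defined by finitely generated data; by standard spreading-out arguments (EGA~IV) this sequence extends to a relative factorization over some non-empty open $U\subset Z$, and after further shrinking $U$ the relevant properties (dltness, $\mbQ$-factoriality, relative nefness, bigness, and semi-ampleness of the final $K+B$) propagate from $\eta$ to all of $U$. This produces the required good log minimal model of $(\sX_U,\sB_U)$ over $U$. The hard parts will be the dlt-to-klt reduction—in particular ensuring termination of the remaining dlt MMP steps on $X'_\eta$ and showing that the resulting dlt model is genuinely a log minimal model of $(\sX_\eta,\sB_\eta)$—and verifying that relative semi-ampleness of $K+B$ on the generic fiber truly propagates to relative semi-ampleness over some $U$, which is the familiar technical difficulty encountered when spreading out outputs of the LMMP.
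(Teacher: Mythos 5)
Your proposal follows the same overall strategy as the paper — restrict to the generic fiber, which is a $3$-fold over the $F$-finite function field $K(Z)$, produce a good log minimal model there, and spread out to an open $U\subset Z$ — so it is essentially correct. The main place where you diverge is the spreading-out mechanism. You propose to spread out the entire finite sequence of flips and divisorial contractions and then verify that each step remains an MMP step over $U$. The paper instead spreads out only the \emph{endpoint} $Y$ together with a log resolution $W\to X$ that dominates both $X$ and $Y$, and then verifies the defining properties of a log minimal model directly: discrepancies are checked on $\sW\to\sX$ (which, after shrinking $U$, is a log resolution whose exceptional divisors are all horizontal, so the computation reduces to the one already done on $W\to X$), and relative semi-ampleness of $K_{\sY}+\sB_{\sY}$ is obtained by spreading out the morphism induced by the semi-ample $K_Y+B_Y$. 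This is cleaner: you avoid having to prove that intermediate steps remain flips or divisorial contractions after base change, and you avoid the $\bQ$-factoriality and dltness subtleties at each intermediate stage. Regarding the dlt-versus-klt concern you raise, note that for big $K_X+B$ the paper can invoke \autoref{pro:weak-zariski-to-mmp}, which works for lc (hence dlt) pairs since a big divisor trivially admits a weak Zariski decomposition, so the detour through klt perturbation and a second round of MMP steps that you propose is unnecessary; the klt perturbation is still useful for applying \autoref{thm:bpf} to get semi-ampleness, which the paper also does (tersely). Your flagged worry about propagating semi-ampleness from the generic fiber to an open $U$ is legitimate but standard, and the paper handles it by simply asserting the spread-out of the contraction morphism; your discrepancy-via-MMP-steps route would require more bookkeeping to reach the same conclusion.
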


\begin{corollary}\label{cor:flip}
	Let $(X,B)$ be an $n$-dimensional dlt pair over an algebraically closed field of characteristic $p>5$, with a projective contraction $f:X\to Z$ to an $(n-3)$-fold $Z$, which is not an isomorphism over the generic point of $Z$.  Suppose that $g:X\to Y$ is a flipping contraction over $Z$ and $h:Y\to Z$ is the induced morphism.  Then there exists a non-empty open subset $U\subset Z$ and a flip $(g|_{X_U})^+:(X_U)^+\to Y_U$ for $g|_{X_U}:X_U\to Y_U$, where $X_U=(h\circ g)^{-1}U$ and $Y_U=h^{-1}U$.
	\end{corollary}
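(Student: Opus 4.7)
The plan is to produce the flip on the generic fiber over $Z$ by applying \autoref{thm:flip}, and then to spread the resulting construction out to an open neighborhood of the generic point of $Z$.

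First let $\eta=\Spec K(Z)$ be the generic point of $Z$; since $Z$ is a variety over an algebraically closed field, the field $K(Z)$ is $F$-finite of characteristic $p>5$. Because $\dim X=n$, $\dim Z=n-3$, and $g$ is non-trivial over $\eta$, the generic fibers $X_\eta$ and $Y_\eta$ are quasi-projective $3$-folds over $K(Z)$. The restricted pair $(X_\eta,B_\eta)$ is dlt, $X_\eta$ is $\bQ$-factorial (as inherited from $X$, whose $\bQ$-factoriality is implicit in the notion of a flipping contraction), and $g_\eta:X_\eta\to Y_\eta$ is a $(K_{X_\eta}+B_\eta)$-flipping projective contraction. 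By \autoref{thm:flip}, the flip $g_\eta^+:X_\eta^+\to Y_\eta$ exists; equivalently, the relative log canonical algebra
\[\mathcal{R}_\eta:=\bigoplus_{m\geq 0}(g_\eta)_*\mathcal{O}_{X_\eta}(\lfloor m(K_{X_\eta}+B_\eta)\rfloor)\]
is a finitely generated $\mathcal{O}_{Y_\eta}$-algebra and $X_\eta^+=\Proj_{Y_\eta}\mathcal{R}_\eta$.

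Next I spread out. Let $\mathcal{R}:=\bigoplus_{m\geq 0}g_*\mathcal{O}_X(\lfloor m(K_X+B)\rfloor)$ on $Y$; its restriction to $Y_\eta$ is $\mathcal{R}_\eta$. Standard limit and spreading-out arguments (EGA IV) show that finite generation on $\eta$ propagates: there exists a non-empty open $U_0\subset Z$ with $\mathcal{R}|_{Y_{U_0}}$ finitely generated over $\mathcal{O}_{Y_{U_0}}$. Set $(X_{U_0})^+:=\Proj_{Y_{U_0}}\mathcal{R}|_{Y_{U_0}}$, which is projective over $Y_{U_0}$ with generic fiber $X_\eta^+\to Y_\eta$. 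To confirm that $(X_U)^+$ is the flip on some further open $U\subset U_0$, I must verify: (i) $(g|_{X_U})^+$ is small and birational; (ii) $(X_U)^+$ is $\bQ$-factorial; and (iii) $K_{(X_U)^+}+B_U^+$ is ample over $Y_U$, where $B_U^+$ is the birational transform of $B_U$. Conditions (i) and (iii) are open in the base and hold at $\eta$ by the flip property of $g_\eta^+$, so they hold on a suitable shrinking of $U_0$.

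I expect the main obstacle to be verifying (ii), since $\bQ$-factoriality is not a standard open condition. The approach is to observe that, after possibly shrinking $U_0$, the divisor class group $\Cl((X_{U_0})^+)$ is generated by finitely many Weil divisor classes: namely the closures over $U_0$ of prime divisors on $X_\eta^+$, together with vertical components coming from $Z$. Each of these is $\bQ$-Cartier on $\eta$ by the $\bQ$-factoriality of $X_\eta^+$, and the finitely many relations witnessing $\bQ$-Cartierness spread to an open $U\subset U_0$, giving (ii). All remaining details reduce to routine spreading-out.
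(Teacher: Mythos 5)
Your overall strategy coincides with the paper's: construct the flip on the generic fiber $X_\eta\to Y_\eta$ by \autoref{thm:flip}, then spread out to an open neighbourhood of the generic point of $Z$. The paper packages the spreading-out step as an invocation of \autoref{cor:glmm} taken relative to $Y$ (the flip being the log canonical model), whereas you work directly with the relative log canonical algebra $\mathcal{R}$; these are two descriptions of the same construction, and the verification of smallness and relative ampleness of $K^++B^+$ by openness in the base is exactly what the paper's argument amounts to.

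The gap is in your treatment of $\bQ$-factoriality. The claim that, after shrinking $U_0$, the group $\Cl((X_{U_0})^+)$ is generated by finitely many Weil divisor classes is not correct: $\Cl(X_\eta^+)$ need not be finitely generated, and vertical prime divisors of $(X_{U_0})^+$ lie over codimension-one subsets of $U_0$, of which there are infinitely many, so no shrinking of $U_0$ reduces to finitely many. Even the weaker fact you actually need, namely that \emph{every} Weil divisor becomes $\bQ$-Cartier over a single open $U$, does not follow: $\bQ$-Cartierness of a fixed divisor $D$ is an open condition on $(X_{U_0})^+$, but the open set depends on $D$, and there is no uniform $U$ this way. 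The correct route (when one actually wants $\bQ$-factoriality) is the argument of \cite[Prop.~3.37]{KM98}, which deduces $\bQ$-factoriality of $X^+$ from $\bQ$-factoriality of $X$ together with $\rho(X/Y)=1$ and the base point free/rigidity step; but that requires controlling the relative Picard rank of $X_U\to Y_U$, which is itself not a simple openness statement. In fact, for the statement as written the worry is unnecessary: the natural reading of ``flip'' here is the relative log canonical model $\Proj_{Y_U}\mathcal{R}|_{Y_U}$, whose generic fiber is the genuine $3$-fold flip; the paper's proof (and its proof of \autoref{cor:glmm}) makes no attempt to prove $\bQ$-factoriality of the total space either. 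So you should either drop point~(ii) altogether and adopt the log-canonical-model interpretation of the flip, or replace the class group argument by the Koll\'ar--Mori one together with an honest discussion of the relative Picard rank.
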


It is possible to formulate analogous versions of many of our other results in higher dimensions, as well as other standard formal consequences of the LMMP, but we leave this to the interested reader.   For example, there are versions of the base point free theorem, existence of crepant dlt models and terminalizations etc.

The next corollary is an example of a formal consequence of the LMMP.    

\begin{corollary}\label{cor:finite_generation}
Let $(X,B)$ be a klt pair which is either a quasi-projective variety of dimension three defined over an arbitrary field $k$ of characteristic $p>5$, or is the localization at a codimension three point of a quasiprojective variety over such a field.  Let $D$ be a 
Weil divisor on $X$.
  Then $\oplus_{m\>0} \sO_X(mD )$ is a finitely generated sheaf of $\sO_X$-algebras.
\end{corollary}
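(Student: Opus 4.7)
The plan is to deduce this corollary from the existence of log minimal models (\autoref{thm:log-minimal-model}) and the base point free theorem (\autoref{thm:bpf}), using the standard correspondence between finite generation of a divisorial algebra and semi-ampleness on a small $\mathbb{Q}$-factorial modification.

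First I would reduce the question. Finite generation of a quasi-coherent sheaf of $\mathcal{O}_X$-algebras is a Zariski-local property on $X$, so we may assume $X$ is quasi-projective; the codimension-three localization case reduces to this by spreading the data $(X, B, D)$ out to a quasi-projective variety and then localizing the resulting finitely generated graded algebra. When $k$ is not $F$-finite I would further pass to an $F$-finite subfield of definition for $(X, B, D)$ and invoke flat base change to descend finite generation from the $F$-finite case.

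Next I would replace $X$ by a small $\mathbb{Q}$-factorial modification $\pi : X' \to X$ with $X'$ $\mathbb{Q}$-factorial klt. Such a $\mathbb{Q}$-factorialization is produced in the usual way from the MMP machinery of the paper: start with a projective log resolution of $(X, B + D_{\mathrm{red}})$, equip it with an auxiliary boundary keeping the pair klt over $X$, and run the appropriate relative MMP via \autoref{thm:log-minimal-model}; the negativity lemma, applied to the klt pair $(X,B)$, then forces $\pi$ to be small and crepant. The strict transform $D' := \pi_*^{-1} D$ is $\mathbb{Q}$-Cartier on $X'$, and smallness of $\pi$ gives $\pi_*\mathcal{O}_{X'}(mD') = \mathcal{O}_X(mD)$ for every $m \geq 0$, so it suffices to prove that $D'$ is $\pi$-semi-ample.

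To establish $\pi$-semi-ampleness, I would decompose $D' = D'_+ - D'_-$ into effective parts, choose $\epsilon > 0$ small enough that $(X', B' + \epsilon(D'_+ + D'_-))$ remains klt, and run a $(K_{X'} + B' + \epsilon D'_+)$-MMP over $X$ with scaling of a $\pi$-ample divisor; this terminates by \autoref{thm:log-minimal-model}, the pair being pseudo-effective over $X$ since $D'_+$ is effective. Because $K_{X'} + B'$ is $\pi$-numerically trivial throughout, the output is a model on which $D'_+$ itself is $\pi$-nef; adding a small $\pi$-ample perturbation to achieve bigness and applying \autoref{thm:bpf} upgrades $\pi$-nefness to $\pi$-semi-ampleness of $D'_+$, and the symmetric argument treats $D'_-$. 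The main obstacle will be carefully justifying termination of the two relative MMPs and correctly arranging the $F$-finite reduction, since \autoref{thm:log-minimal-model} and \autoref{thm:bpf} are stated (respectively) over $F$-finite or arbitrary fields, and ensuring that the $\mathbb{Q}$-factorialization step and the semi-ampleness output genuinely descend to give finite generation of $\bigoplus \mathcal{O}_X(mD)$ rather than only of its reflexive pullback.
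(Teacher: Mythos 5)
Your proposal contains two genuine gaps, one in each of the two cases of the statement.

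\textbf{The codimension-three localization case.} Your plan to ``spread out $(X,B,D)$ to a quasi-projective variety and then localize the resulting finitely generated graded algebra'' is circular in a crucial way: spreading out just reproduces the ambient quasi-projective variety $\sX$, and $\sX$ can have arbitrarily large dimension. The hypothesis is only that the point being localized has codimension~$3$, not that $\sX$ is a threefold, so the paper's threefold LMMP gives you no control over $\bigoplus_m \sO_{\sX}(m\sD)$. The paper's proof uses a genuinely different reduction that you are missing: choose a pencil of hyperplanes on $\sX$ whose restriction to $\sZ=\overline{\{\eta\}}$ is still a pencil (so $\sZ\dashrightarrow\mathbb{P}^1$ is dominant), pass to the normalization $\sG$ of the graph closure (which is an isomorphism near $\eta$), and replace $\sX$ by the generic fiber of $\sG\to\mathbb{P}^1$. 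Each such step drops $\dim\sX$ and $\dim\sZ$ by one, preserves the codimension-three localized pair, and replaces the ground field by a larger (typically imperfect) function field. After finitely many iterations one lands on a quasi-projective threefold over an imperfect field and concludes via the quasi-projective case and \autoref{lem:localisation}. This dimension-dropping construction --- which is exactly why the paper is forced to develop the LMMP over imperfect fields --- is the essential idea, and it is not reachable by spreading out alone.

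\textbf{The quasi-projective case.} Your decomposition $D' = D'_+ - D'_-$ followed by ``run a $(K_{X'}+B'+\epsilon D'_+)$-MMP over $X$ \dots\ and the symmetric argument treats $D'_-$'' does not establish what you claim. The two MMPs produce two different output models, and on neither is the divisor you actually need, namely $D'$ itself, shown to be $\pi$-nef; even if one could arrange $D'_+$ and $D'_-$ to be simultaneously $\pi$-semi-ample on a single model, the difference $D'=D'_+-D'_-$ would still in general fail to be $\pi$-nef (take $D'_+=0$ and $D'_-$ any $\pi$-ample effective divisor). What must be made $\pi$-nef is $D'$ itself, which requires contracting $D'$-negative (not $D'_+$-negative) extremal rays; because $K_{X'}+B'\equiv_\pi 0$ such a ``$D'$-MMP'' can be realized as a $(K+\text{boundary})$-MMP with scaling after a more careful choice of auxiliary boundary and scaling divisor than the one you wrote, and this is what the exercises in Koll\'ar that the paper cites (Ex.~92--109) carry out. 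Finally, the paper's $F$-finite reduction is done not through flat base change on the section ring but via Koll\'ar's equivalence (Ex.~90) between finite generation of $\bigoplus_m\sO_X(mD)$ and the existence of a small projective birational $g\colon X^+\to X$ with $g_*^{-1}D$ $\bQ$-Cartier and $g$-ample, and then descending by noting that smallness, properness, $\bQ$-Cartierness and $g$-ampleness are all stable under base field extension; this sidesteps the worry you raise at the end about reflexive pullbacks.
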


We include this, because it has surprising applications to the  commutative algebra of tight closure, via the following theorem of Aberbach and Polstra  \cite{aberbach_polstra}:

\begin{theorem}\cite[Theorem A]{aberbach_polstra}\label{cor:aberbach_polstra}
	Let $R$ be a $4$-dimensional finitely generated algebra over a field of prime
	characteristic $p > 5$ which has infinite transcendence degree over $\mathbb{F}_p$. If $R$ is weakly
	$F$-regular, then $R$ is strongly $F$-regular.
	\end{theorem}

 As the LMMP for threefolds in positive characteristic has roots in the theory of $F$-singularities, we find it interesting that it now finds applications in the opposite direction, to problems in pure commutative algebra.

\subsection{Organization of the paper}

Due to the way the positive characteristic LMMP works, our results are proved in a somewhat unintuitive order.  \autoref{sec:preliminaries} collects information and techniques we will need, and points out the various difficulties we will encounter when working with arbitrary fields.  In \autoref{sec:fields} we prove some algebraic results necessary to understand the new cone theorem. In \autoref{sec:keel} we prove weak versions of the cone and base point free theorems similar to those of \cite{Kee99}.  In \autoref{sec:complements} and \autoref{sec:flips} we prove the existence of flips, log minimal models and the base point free theorem for big divisors. More specifically, the proofs of \autoref{thm:flip}, \ref{thm:bpf}, \ref{thm:divisorial-contraction} and \ref{thm:log-minimal-model} appears at the end of \autoref{sec:flips}. In \autoref{sec:cone2} we prove the full version of the cone theorem for threefolds. Finally, in \autoref{sec:corollary} we prove the \autoref{cor:glmm}, \ref{cor:flip}, \ref{cor:finite_generation} and \ref{cor:aberbach_polstra}.

\subsection{Acknowledgments}
The authors would like to thank  Christopher Hacon, Takumi Murayama, Zsolt Patakfalvi, Marta Pieropan, Thomas Polstra and Burt Totaro for helpful conversations, and Nivedita Bhaskar and Lena Ji for carefully reading early versions of the paper.

This material is partly based upon work supported by the National Science Foundation under Grant No. 1440140, while the second author was in residence at the Mathematical Sciences Research Institute in Berkeley, California, during the Spring of 2019.  Waldron is supported by a grant from the Simons Foundation (850684).

\section{Preliminaries}\label{sec:preliminaries}

\subsection{Varieties}

\begin{definition}
A variety $X$ over a field $k$ is an integral scheme which is separated and of finite type over $k$.
 If $X$ has dimension $1$ (resp. $2$ or $3$) we call it a \emph{curve} (resp. \emph{surface} or \emph{threefold}).  
 \end{definition}

\begin{remark}
	 We adopt the definition of variety as in \cite[Definition 020D]{STP}, in which $X$ \emph{need not be geometrically integral over $k$}.  
	 There is some disagreement among the experts over whether varieties should be  geometrically integral over the base field or not, and unfortunately both conventions are in use, sometimes without comment. 
	 
	 We would like to advocate for adoption of the above definition for several reasons:
	 \begin{enumerate}
	 	\item Our definition of ``variety over a field $k$'' is stable under passing to the generic fiber of a fibration, and to integral subschemes.  (Of course this is at the cost of not being stable under field extensions.)
	 	\item When integral schemes which are separated and of finite type over a non-algebraically closed field arise in positive characteristic geometry, it is usually precisely because some hypotheses are not preserved under field extension.
	 	\item \cite{STP} is becoming the go-to reference for checking this type of definition, so using its convention will be less confusing to future readers, especially where EGA provides no alternative. \cite[Chapter II.4]{Har77} defines ``variety'' only under the assumption that the ground field is algebraically closed.
	 	\item If we were to replace ``threefold'' with ``integral scheme of dimension three separated and of finite type over a field'' everywhere it appears, this article would probably be at least a page longer, and unreadable.
	 	\end{enumerate}
	\end{remark}

\subsubsection*{Bertini's theorems}
Let $X$ be a variety over an \emph{infinite} field $k$. Let $D$ be a Cartier divisor on $X$ and $V=H^0(X, \mcO_X(D))$. Let $W$ be a $k$-subspace of $V$. Let $\Gamma=\mbP(W)\subseteq |D|=\mbP(V)$ be a linear system on $X$, $\Bs(\Gamma)$ be the base locus of $\Gamma$ and $\Phi_\Gamma:X\bir\mbP^n_k$ be the rational map defined by $\Gamma$, where $n=\dim_k V$. 
\begin{theorem}[First Bertini theorem]\label{thm:bertini-i}\cite[Theorem 3.4.10]{FCG}
With the notations above, if 
$\dim\Phi_\Gamma(X)>1$ and $\codim_X\Bs(\Gamma)\>2$, then a general member of $\Gamma$ is \emph{irreducible}. In particular, if $X\subseteq\mbP^n_k$ is a quasi-projective (irreducible) variety with $\dim X\>2$, then for a general hyperplane $H\subseteq\mbP^n_k$, the intersection $X\cap H$ is \emph{irreducible}.\\ 	
\end{theorem}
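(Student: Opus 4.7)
My plan is to form the incidence variety and translate the irreducibility problem into a field-theoretic statement. Set $n=\dim\Gamma$, fix a basis $s_0,\ldots,s_n$ of $W$, and let
\[
Z \;:=\; \overline{\{(x,[s])\in(X\setminus\Bs(\Gamma))\times\Gamma : s(x)=0\}} \;\subseteq\; X\times\Gamma,
\]
with projections $p\colon Z\to X$ and $q\colon Z\to\Gamma$. Over $X\setminus\Bs(\Gamma)$ the map $p$ is a projective bundle of relative dimension $n-1$, so $Z$ is irreducible of dimension $\dim X+n-1$, and the hypothesis $\codim_X\Bs(\Gamma)\geq 2$ ensures that the locus in $Z$ lying over $\Bs(\Gamma)$ is strictly lower dimensional, hence contributes no new component. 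The fiber of $q$ over a general closed point $[s]\in\Gamma$ is precisely the support of the divisor $D_s$, so a general member of $\Gamma$ is irreducible iff a general fiber of $q$ is irreducible.

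I would next appeal to the standard criterion (valid over any infinite $k$) that a general fiber of a dominant morphism of integral varieties is irreducible iff the function field of the target is separably algebraically closed in the function field of the source. Hence it suffices to show that $k(\Gamma)$ is separably algebraically closed in $k(Z)$. For the computation, since $Z\to X$ is generically a projective bundle of rank $n-1$, $k(Z)=k(X)(\tau_1,\ldots,\tau_{n-1})$ is purely transcendental over $k(X)$. Working on the affine charts $\{t_0\neq 0\}\subseteq\Gamma$ and $\{s_n\neq 0\}\subseteq X$, the defining equation $\sum_{i=0}^n t_is_i=0$ of $Z$ yields the embedding
\[
k(\Gamma)=k(t_1/t_0,\ldots,t_n/t_0) \;\hookrightarrow\; k(X)(\tau_1,\ldots,\tau_{n-1})=k(Z),
\]
sending $t_i/t_0\mapsto\tau_i$ for $i\leq n-1$ and $t_n/t_0\mapsto -(v_0+v_1\tau_1+\cdots+v_{n-1}\tau_{n-1})$, where $v_i:=s_i/s_n\in k(X)$. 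Crucially, $k(v_0,\ldots,v_{n-1})=k(\Phi_\Gamma(X))$, so the dimension hypothesis translates into $\operatorname{trdeg}_k k(v_0,\ldots,v_{n-1})\geq 2$.

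The last and hardest step is to show that any $\xi\in k(Z)$ algebraic over $k(\Gamma)$ already lies in $k(\Gamma)$. The strategy is to take a hypothetical minimal polynomial of $\xi$, expand the algebraic relation as an identity of rational functions in the independent transcendentals $\tau_1,\ldots,\tau_{n-1}$ with coefficients rational in the $v_i$, and then specialize the $\tau_i$ to generic values in $k$ (permissible since $k$ is infinite). Each such specialization produces polynomial constraints among the $v_i$; the transcendence bound $\geq 2$ supplies enough independence among the $v_i$ to force these constraints to be trivial, giving $\xi\in k(\Gamma)$. This is the essential content of the classical Bertini--Zariski irreducibility theorem, and it is here that the hypothesis $\dim\Phi_\Gamma(X)\geq 2$ enters decisively.

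The ``in particular'' assertion is immediate by taking $\Gamma$ to be the complete linear system of hyperplane sections of $X\subseteq\mathbb{P}^n_k$: then $\Phi_\Gamma$ is the inclusion, so $\Phi_\Gamma(X)=X$ has dimension $\geq 2$, and $\Bs(\Gamma)=\emptyset$, so the general statement applies.
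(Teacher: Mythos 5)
The paper does not prove this theorem; it is cited directly from \cite[Theorem 3.4.10]{FCG}, so there is no in-paper argument to compare against. Judged on its own terms, your incidence-variety reduction is set up correctly: $Z$ is irreducible of dimension $\dim X + n - 1$ because it is a $\mbP^{n-1}$-bundle over $X\setminus\Bs(\Gamma)$ while the locus over $\Bs(\Gamma)$ is strictly smaller by the codimension hypothesis; the reformulation via the generic fiber of $q$, namely that a general $k$-member is irreducible once $k(\Gamma)$ is separably algebraically closed in $k(Z)$ (geometric irreducibility of the generic fiber plus constructibility as in EGA IV 9.7.7, plus density of $k$-points in an open of projective space since $k$ is infinite), is the standard route; and the explicit computation $k(Z)=k(X)(\tau_1,\dots,\tau_{n-1})$ with $k(\Gamma)=k(\tau_1,\dots,\tau_{n-1},\,v_0+\tau_1v_1+\cdots+\tau_{n-1}v_{n-1})$, together with the translation of $\dim\Phi_\Gamma(X)>1$ into $\operatorname{trdeg}_k k(v_0,\dots,v_{n-1})\geq 2$, is correct.

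The gap is exactly where you flag it. The final paragraph does not prove that $k(\Gamma)$ is separably algebraically closed in $k(Z)$: ``specialize the $\tau_i$ to generic values in $k$ and read off polynomial constraints among the $v_i$'' is a heuristic, not an argument. A putative separable algebraic relation for $\xi$ over $k(\Gamma)$ has coefficients that mix $\tau_1,\dots,\tau_{n-1}$ and $w=v_0+\sum\tau_iv_i$ inextricably, and nothing is said about why specializing the $\tau_i$ is legitimate inside the extension $k(Z)/k(\Gamma)$ or why the resulting constraints force $\xi\in k(\Gamma)$. You then close by attributing the step to ``the classical Bertini--Zariski irreducibility theorem,'' but that is precisely the statement being proved, so as written the argument is circular. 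To complete the proof you would need to state and prove the field-theoretic lemma itself (the form used by Zariski, Matsusaka, or Jouanolou): if $\operatorname{trdeg}_k k(v_0,\dots,v_{n-1})\geq 2$ and $\tau_1,\dots,\tau_{n-1}$ are algebraically independent over $k(X)$, then $k\bigl(\tau_1,\dots,\tau_{n-1},\,v_0+\textstyle\sum\tau_iv_i\bigr)$ is separably algebraically closed in $k(X)(\tau_1,\dots,\tau_{n-1})$. That lemma is where the transcendence-degree hypothesis actually does work, and it is not a one-liner.
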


\begin{theorem}[Second Bertini Theorem]\label{thm:bertini-ii}\cite[Corollary 3.4.14]{FCG}
	Let $X \subseteq \mathbb{P}^n_k$ be a quasi-projective scheme of finite type over an infinite field $k$. If $X$ is regular (resp. normal, resp. reduced, resp. satisfies $R_\ell$ for some $\ell\>0$), then for a general hyperplane $H\subseteq \mathbb{P}^n_k$, the intersection $X\cap H$ is also regular (resp. normal, resp. reduced, resp. satisfies $R_\ell$ for some $\ell\>0$).\\
\end{theorem}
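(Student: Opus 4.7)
My plan is to reduce the theorem to a Bertini-type assertion on an incidence variety, where the smoothness of a natural projection makes the various Serre conditions accessible. Consider the incidence correspondence
\[
\mcI = \{(x, H) \in X \times (\mbP^n_k)^* : x \in H\},
\]
equipped with the two projections $\pi_1 \colon \mcI \to X$ and $\pi_2 \colon \mcI \to (\mbP^n_k)^*$, whose fiber over $H$ is precisely $X \cap H$. The projection $\pi_1$ is a Zariski-locally trivial $\mbP^{n-1}$-bundle, since its fiber over $x$ is the linear system of hyperplanes passing through $x$, a codimension one linear subspace of $(\mbP^n_k)^* \cong \mbP^n_k$. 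In particular $\pi_1$ is smooth of relative dimension $n-1$, and smooth morphisms ascend the Serre conditions $R_\ell$ and $S_m$; hence if $X$ satisfies the given property $P \in \{R_\ell, S_m, \textrm{normal}, \textrm{reduced}\}$, then so does the total space $\mcI$.

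Next I would show that $\pi_2$ is flat of relative dimension $\dim X - 1$, because $\mcI$ is cut out of the flat family $X \times (\mbP^n_k)^* \to (\mbP^n_k)^*$ by a single bihomogeneous incidence equation which restricts to a nonzero linear form on each fiber of $\pi_2$. Writing $K = k(\eta)$ for the function field of the dual projective space and taking $p \in \mcI$ in the generic fiber $\mcI_\eta$, one has $\mcO_{\mcI_\eta, p} = \mcO_{\mcI, p}$ because $\mcO_{(\mbP^n_k)^*, \eta} = K$ is already a field, so the fiber is obtained merely by localizing $\mcI$ at the points over $\eta$. Consequently the codimension of $p$ in $\mcI_\eta$ equals its codimension in $\mcI$, and the generic fiber $\mcI_\eta$ inherits the property $P$ from $\mcI$ pointwise.

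To pass from the generic fiber back to an actual $k$-rational hyperplane, I would use the standard fact that the locus in the base of a finite-type flat family on which fibers satisfy $P$ is constructible; combined with the inclusion of the generic point $\eta$ in this locus, this yields a non-empty open subset $U \subseteq (\mbP^n_k)^*$ of hyperplanes $H$ for which $X \cap H$ satisfies $P$. Since $k$ is infinite, the $k$-rational points of $(\mbP^n_k)^*$ are Zariski dense, so $U(k) \ne \emptyset$ and a general $k$-rational hyperplane $H \in U(k)$ delivers the required section.

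The main obstacle is the last step: ensuring that the property $P$ descends from the generic fiber $\mcI_\eta$ (a $K$-scheme) to a genuine $k$-rational general fiber of $\pi_2$, without inadvertently passing through a geometric fiber. Over an imperfect base field $k$ the distinction between $R_\ell$ and being geometrically $R_\ell$ can be strict, and so one must use an openness/constructibility statement tailored to the non-geometric property rather than the more familiar one for geometric fibers. This is the most delicate point, and is why a careful theory of Bertini theorems for arbitrary ground fields, as in \cite{FCG}, is needed.
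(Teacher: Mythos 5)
The paper gives no argument of its own: its ``proof'' consists of citing \cite[Corollary 3.4.14]{FCG} (stated for $X$ projective) and remarking that the argument there is local in nature and therefore extends to the quasi-projective case. Your proposal, by contrast, is a genuine proof attempt via the classical incidence-correspondence route: use the $\mbP^{n-1}$-bundle $\pi_1 \colon \mcI \to X$ to transfer the property $P$ up to $\mcI$, identify the generic fiber of $\pi_2$ pointwise with local rings of $\mcI$, and then spread out. That is a different and self-contained argument, and it is basically sound, but two points need attention.

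First, the flatness of $\pi_2$ as you state it is not quite correct for arbitrary $X$: the slicing criterion requires the incidence form to be a nonzerodivisor on each fiber $X \times \{H\}$, which can fail when some irreducible component of $X$ is contained in $H$. This only causes trouble over a proper closed subset of the dual space, so you need only flatness over a neighborhood of the generic point, and that does hold; but the claim as written is too strong. Second, the gap you flag at the end is real and is where the work lies. You cannot invoke the familiar openness of geometrically regular / geometrically normal fibers (e.g.\ EGA~IV, 12.1.6), since over an imperfect $k$ these are strictly stronger than $R_\ell$ and $S_m$. The fix is to use EGA~IV, 9.9.2, which gives constructibility \emph{on the total space} $\mcI$ of the locus where the fiber local ring $\mcO_{\mcI_{\pi_2(x)},x}$ is regular (resp.\ satisfies $R_\ell$, resp.\ $S_m$), and then apply Chevalley's theorem (which does not require properness, only finite type, so the quasi-projective case is covered) to see that the set of $H$ with some fiber point outside this good locus is constructible. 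Its complement is then a constructible set containing the generic point, hence contains a dense open. Your proposal stops short of supplying this, deferring instead to \cite{FCG}; if you cite 9.9.2 plus Chevalley explicitly, the argument closes.

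Neither approach is wrong, and since the paper itself leans entirely on the reference, your proof is arguably more informative. What the paper's route buys is brevity and an appeal to an authoritative source that already handles the imperfect-field subtleties; what your route buys is a transparent account of where exactly those subtleties enter.
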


\begin{proof}
	This is stated in \cite[Corollary 3.4.14]{FCG} for $X$ projective. However, their proof is based on local computations,
	 which hold equally well for quasi-projective schemes.
	\end{proof}

Because of these results, we will usually assume that $k$ is infinite.  We lose no generality from this assumption as all of our main results were proved for perfect (and hence for finite) fields in \cite{GNT15, HNT17nov}.

\subsection{$F$-finite fields}

There is a particular class of imperfect fields which is more convenient to work with: those which are $F$-finite.  This says roughly that $k$ is similar to a field which arises from geometry.  For example, a perfect field is $F$-finite and any function field of a variety over an $F$-finite field is $F$-finite.
The reasons we need to work with this class of field are explained below.

\begin{definition}
	A field $k$ is $F$-finite if the Frobenius $F:\Spec(k)\to\Spec(k)$ is finite, or equivalently $[k:k^p]<\infty$, where $k^p=\{a^p: a\in k\}$.
	\end{definition}

\begin{lemma}\label{lem:F-finite-vs-differentially-finite}
	Let $k$ be a field of characteristic $p>0$.  Then the following are equivalent:
	\begin{enumerate}
		\item $k$ is $F$-finite.
		\item  $k$ is differentially finite over the perfect subfield $k_0$, i.e., $\dim_k\Omega_{k/k_0}<\infty$, where $k_0:=\cap_{n\>1} k^{p^n}\subset k$ and $k^{p^n}:=\{a^{p^n}: a\in k\}$.
	\end{enumerate}
\end{lemma}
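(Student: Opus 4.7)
The plan is to identify $\Omega_{k/k_0}$ with $\Omega_{k/k^p}$ and then read off its $k$-dimension from $[k:k^p]$ via the standard theory of $p$-bases.

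First I would observe that $k_0 = \bigcap_{n \geq 1} k^{p^n} \subseteq k^p$ by definition, so the natural map $\Omega_{k/k_0} \twoheadrightarrow \Omega_{k/k^p}$ is surjective. Its kernel is the image of $\Omega_{k^p/k_0} \otimes_{k^p} k$ coming from the transitivity sequence for $k_0 \subseteq k^p \subseteq k$. However, every $y \in k^p$ has the form $y = x^p$ with $x \in k$, and then $dy = p x^{p-1} dx = 0$ already in $\Omega_{k/k_0}$ because $\chr k = p$. Hence the transitivity map vanishes, the surjection is an isomorphism, and $\dim_k \Omega_{k/k_0} = \dim_k \Omega_{k/k^p}$.

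Next I would invoke the classical theory of $p$-bases. By Zorn's lemma, $k$ admits a $p$-basis $\{x_i\}_{i \in I}$ over $k^p$, meaning that the monomials $\prod_i x_i^{e_i}$ with $0 \leq e_i < p$ (only finitely many $e_i$ nonzero) form a $k^p$-vector space basis of $k$. The cardinality $|I|$ satisfies $p^{|I|} = [k:k^p]$, so $|I|$ is finite if and only if $k$ is $F$-finite. A standard fact is that $\{dx_i\}_{i \in I}$ is then a $k$-basis of $\Omega_{k/k^p}$: spanning is immediate from $k = k^p[x_i]$ together with the Leibniz rule, while linear independence follows by pairing against the formal partial derivations $D_j : k \to k$ defined $k^p$-linearly on the monomial basis by $D_j\bigl(\prod_i x_i^{e_i}\bigr) = e_j x_j^{e_j - 1}\prod_{i \neq j} x_i^{e_i}$, which detect each $dx_i$ via $D_j(x_i) = \delta_{ij}$.

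Combining these two steps, $\dim_k \Omega_{k/k_0} = |I|$, which is finite if and only if $[k:k^p]$ is finite, proving the equivalence. The main subtlety lies in verifying that the $D_j$ are genuine $k^p$-derivations and not merely $k^p$-linear maps, but this is a routine check on the $k^p$-monomial basis and is the classical content of the theory of $p$-bases, which I would cite rather than reprove.
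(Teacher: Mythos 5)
Your proof is correct and takes essentially the same route as the paper: both reduce to the theory of $p$-bases via the observation that $k_0\subseteq k^p$ makes $\Omega_{k/k_0}$ behave as $\Omega_{k/k^p}$. The paper simply cites the Stacks project (\cite[07P1, 07P2]{STP}) for the two steps that you unpack explicitly through the transitivity sequence and the partial derivations $D_j$.
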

\begin{proof}
	  Clearly $k_0$ is perfect, and in fact it is the largest perfect subfield of $k$. Note that the compositum of the fields $k_0$ and $k^p$ is $k^p$, since $k_0\subset k^p$. Thus from \cite[Definition 07P1]{STP} it follows that $k$ is $F$-finite if and only if $k$ has a finite $p$-basis over $k_0$. But then this is equivalent to  $\dim_k\Omega_{k/k_0}<\infty$ by \cite[Lemma 07P2]{STP}.
\end{proof}

\subsubsection{Resolution of singularities}
Resolution of singularities was proved in \cite{Cut09}, \cite{CP08} and \cite{CP09} for threefolds over a field $K$ which is differentially finite over some perfect subfield $k\subset K$, which we see from  \autoref{lem:F-finite-vs-differentially-finite} is the same as assuming that $K$ is $F$-finite.

\begin{theorem}\cite{CP09}\cite{CJS09}
	Let $X$ be a reduced quasi-projective scheme of dimension at most $3$ over an $F$-finite field, and $B$ a reduced Weil divisor. Then there exists a projective birational morphism $\pi:Y\to X$ such that 
	\begin{enumerate}
		\item $Y$ is regular and it is obtained via successive blow up of regular centers,
		\item $\pi$ is an isomorphism over the snc locus of $(X,B)$, and
		\item $\pi^{-1}(\Sing(X)\cup \Supp(B))$ is a divisor with snc support.
	\end{enumerate}
\end{theorem}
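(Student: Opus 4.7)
The plan is to assemble this statement from the two cited papers. The main theorem of Cossart--Piltant \cite{CP09} gives resolution of singularities for a reduced quasi-projective scheme of dimension at most $3$ over a field that is differentially finite over some perfect subfield, which by \autoref{lem:F-finite-vs-differentially-finite} is exactly the $F$-finite assumption. Their construction produces a projective birational morphism $\pi_0:Y_0\to X$ as a finite composition of blow-ups along regular centers contained in $\Sing(X)$. This immediately gives property (1) for $\pi_0$, and forces $\pi_0$ to be an isomorphism over the regular locus of $X$, hence in particular over the snc locus of $(X,B)$.

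Next I would promote this to the embedded situation by blowing up regular centers inside the non-snc locus of the pair $(Y_0,\ \pi_{0*}^{-1}B+\Ex(\pi_0))$ until the support of this divisor becomes snc on the already regular ambient scheme. The existence of such a further sequence of blow-ups on a regular three-fold over an $F$-finite field is provided by the embedded/functorial resolution of Cossart--Jannsen--Saito \cite{CJS09} applied to the pair (and can also be extracted from \cite{CP09} by treating $B$ as part of the input). Because each additional center is chosen inside the current non-snc locus, these blow-ups are isomorphisms over the open set where $(Y_0,\ \pi_{0*}^{-1}B+\Ex(\pi_0))$ is already snc. Composing with $\pi_0$, the total morphism $\pi:Y\to X$ is therefore an isomorphism over the snc locus of $(X,B)$, giving (2), while (3) holds by construction.

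The only delicate point is extracting from the cited works the \emph{economy} that every blown-up center lies in the current non-snc locus, so that nothing over the snc locus of $(X,B)$ is ever touched. This is a built-in feature of the canonical/functorial resolution algorithms in \cite{CP09,CJS09}, rather than a separate theorem, but it requires a careful reading of those references in order to state it in the precise form asked for here. No further ingredient beyond what is in the literature is needed.
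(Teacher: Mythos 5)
The paper offers no proof of this theorem: it is cited directly from the literature via \cite{CP09} and \cite{CJS09}, and the text immediately continues with ``In this case $\pi:Y\to X$ is called a \emph{log resolution} of the pair $(X,B)$.'' So there is no argument of the paper's own to compare against. Your outline is a reasonable reconstruction of how one would assemble the stated log-resolution form from the cited works, and you are right to flag the ``economy'' of blown-up centers (that each center lies inside the current non-snc locus) as the only delicate point, since that is precisely what makes (2) hold. The two-stage structure you describe---first desingularize $X$ by \cite{CP09}, then achieve snc for the strict transform of $B$ plus the exceptional divisors by further embedded blow-ups on the now-regular ambient scheme---is the standard route, and it is compatible with (2) because the snc locus of $(X,B)$ sits inside the regular locus of $X$, so the first stage is automatically an isomorphism there and the exceptional locus of $\pi_0$ avoids it. The one thing I would press on is that your appeal to ``built-in functoriality'' of \cite{CP09,CJS09} is doing real work: you need to check that the specific algorithms there do produce blow-up centers supported in the non-snc (resp.\ singular) locus, rather than merely somewhere over it, and that the embedded stage is available in the precise generality you invoke (projective morphism, $F$-finite field, dimension $\le 3$). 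Since the paper itself does not spell this out, your level of detail is consistent with what the paper intends the reader to take on faith from the references.
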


In this case $\pi:Y\to X$ is  called a \emph{log resolution} of the pair $(X,B)$.

\begin{remark}\label{rmk:on-projective-resolution}
	Resolution of singularities was extended to arbitrary quasi-excellent Noetherian schemes of dimension at most $3$ (in particular threefolds over arbitrary fields) in  \cite{CP19} and \cite{CJS09}.  However, the latter produces a resolution which may not be a projective morphism.  This is not enough to run certain LMMP arguments such as the Shokurov pl-flip reduction, therefore we must work with $F$-finite fields.  If resolution by a projective morphism were known over an arbitrary field, all of our results would immediately generalize to that setting.
	\end{remark}

\subsubsection{$F$-singularities}

The following linear system has played a key role in the recent progress in birational geometry in positive characteristic, in particular in \cite{HX15}.

\begin{definition}\cite[Definition 4.1]{Sch14}
	Let $(X,\Delta\>0)$ be a log pair with $\bQ$-boundary over an $F$-finite field $k$, and $M$ a line bundle on $X$.  Then for any integer $e>0$,
	\[S^0\left(X,\sigma\left(X,\Delta\right)\otimes M\right):=\bigcap_{n>0} \im\left(H^0\left(X,F^{ne}_*\left(\sigma\left(X,\Delta\right)\otimes \mathcal{L}_{ne,\Delta}\otimes M^{p^{ne}}\right)\right)\to H^0\left(X,\sigma\left(X,\Delta\right)\otimes M\right)\right)\]
	Here $\sigma(X,\Delta)$ is the non-$F$-pure ideal of $(X,\Delta)$ (see \cite[Definition 2.7]{Sch14}), and $\sL_{ne,\Delta}=\sO_X((1-p^{ne})(K_X+\Delta))$.
\end{definition}

Most of the results concerning this linear system and its properties already hold in the generality of a variety over an $F$-finite field, with this assumption being necessary to apply Grothendieck duality for a finite morphism to the Frobenius.  These results can all be found in Section 2 of \cite{HX15}, and the proofs of these results referred to by \cite{HX15} all hold over $F$-finite fields.

\subsection{Keel's semi-ampleness criterion}

We use the following semi-ampleness criterion due to Keel, which holds for arbitrary schemes which are projective over a field of positive characteristic.

\begin{definition}\label{def:nef-exceptional-locus}
	Let $X$ be a scheme proper over a field $k$ and $L$ a nef line bundle on $X$. A subvariety $Z\subset X$ is called \emph{$L$-exceptional} if $L|_Z$ is not big, i.e., $(c_1(L))^{\dim Z}\cdot Z=0$. The \emph{exceptional locus} of $L$, denoted by $\mathbb{E}(L)$ is defined as the closure of the union of all $L$-exceptional subvarieties, with reduced induced structure. 
\end{definition}

\begin{definition}\label{def:EWM}
	Let $X$ be a proper scheme defined over an arbitrary field $k$ and $L$ a nef line bundle. Then $L$ is called \emph{Endowed With a Morphism} or a \emph{EWM} if there exists a proper morphism $f:X\to Y$ to a proper algebraic space $Y$ such that $f$ contracts exactly the $L$-exceptional subvarieties of $X$.
\end{definition}

\begin{theorem}\cite[Theorem 1.9]{Kee99}
	Let $X$ be a scheme projective   over  field of positive characteristic and $L$ be a nef line bundle on $X$.  Then $L$ is semi-ample (resp. EWM) if and only if $L|_{\mathbb{E}(L)}$ is semi-ample (resp. EWM).
\end{theorem}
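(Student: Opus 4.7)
The ``only if'' direction is immediate: if $L$ is semi-ample (resp.\ EWM) via a morphism $f:X\to Y$, then the restriction $f|_{\mathbb{E}(L)}$ gives a semi-ample (resp.\ EWM) structure on $L|_{\mathbb{E}(L)}$, since the contracted subvarieties of $L$ that meet $\mathbb{E}(L)$ lie inside $\mathbb{E}(L)$ by definition. For the reverse direction, my plan is to argue by induction on $\dim\mathbb{E}(L)$. The base case $\mathbb{E}(L)=\emptyset$ says that $L^{\dim Z}\cdot Z>0$ for every integral closed subscheme $Z\subseteq X$; combined with nefness, the Nakai--Moishezon criterion for projective schemes gives that $L$ is ample, hence both semi-ample and EWM.

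For the inductive step, I would extract the following \emph{lifting lemma}: if $Z\subseteq X$ is a closed subscheme containing $\mathbb{E}(L)$ on which $L|_Z$ is semi-ample, then for every $m$ divisible enough and every $s\in H^0(Z,L^m|_Z)$, there exists $e\ge 0$ and $\tilde{s}\in H^0(X,L^{mp^e})$ with $\tilde{s}|_Z = s^{p^e}$. The obstruction to such lifting lives in $H^1(X,\mcI_Z\otimes L^{mp^e})$ coming from
\[ 0 \to \mcI_Z\otimes L^{mp^e} \to L^{mp^e} \to L^{mp^e}|_Z \to 0.\]
The crucial input is that on $X\setminus Z$ every subvariety sees $L$ as \emph{big} (this is the defining property of being outside $\mathbb{E}(L)$), so a Fujita-type vanishing combined with the $p$-linear Frobenius morphism $F^e:L^m\to L^{mp^e}$ can be used to kill this $H^1$ for $e$ sufficiently large --- this is the point where characteristic $p>0$ is essential. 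Given the lemma, one takes a finite set of generators of $L^m|_Z$, lifts each to $X$, normalizes to a common Frobenius power, and combines with the abundant sections available on $X\setminus Z$ (which is a quasi-projective scheme on which $L$ is big) to produce a globally base-point-free linear system in $|L^{mp^E}|$, giving semi-ampleness.

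For the EWM case, the argument is parallel but the ``globalization'' step uses Artin's algebraization of formal functions: one contracts the formal neighbourhood of $\mathbb{E}(L)$ using the morphism given by $L|_{\mathbb{E}(L)}$, glues this to the morphism defined away from $\mathbb{E}(L)$ by the bigness of $L$ there, and verifies the formal data descends to a proper algebraic space. The main obstacle throughout is the lifting lemma and the Frobenius-mediated vanishing it depends on; once that is secured, the remainder of the proof is a formal dimension-induction combined with standard gluing arguments.
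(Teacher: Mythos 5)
This statement is quoted verbatim from \cite[Theorem 1.9]{Kee99} and is used as a black box in the present paper; there is no proof of it here, so the comparison must be made against Keel's original argument.

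Your outline contains the two genuine ingredients --- Nakai--Moishezon for the base case $\mathbb{E}(L)=\emptyset$ and a Frobenius-mediated lifting of sections from $\mathbb{E}(L)$ --- and in that broad sense it is in the spirit of Keel's proof. But the heart of the argument is asserted rather than proved, and the justifications offered would not go through. Three concrete issues.

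First, the proposed vanishing of $H^1(X,\mcI_Z\otimes L^{mp^e})$ via ``a Fujita-type vanishing combined with the $p$-linear Frobenius'' does not work as stated. Fujita vanishing requires an ample twist, and $L$ is merely nef; bigness of $L$ on subvarieties off $Z$ does not transfer to a global cohomological vanishing on $X$ by any direct mechanism. What Keel actually establishes is much more delicate: the obstruction class $\delta(s^{p^e})$ factors through the Frobenius bracket power $\mcI_Z^{[p^e]}\subseteq\mcI_Z^{p^e}$, and one has to show that the natural composite $H^1(X,\mcI_Z^{[p^e]}\otimes L^{mp^e})\to H^1(X,\mcI_Z\otimes L^{mp^e})$ kills this class for $e\gg 0$. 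Proving that is the technical core of the entire theorem, and it is not a routine vanishing argument.

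Second, ``combine with the abundant sections available on $X\setminus Z$'' is not a valid step: sections of $L^n$ over the open set $X\setminus Z$ have no reason to extend across $Z$, and bigness of $L$ on a quasi-projective open set produces no base-point-free system on its own. What you actually need here is that the stable base locus of $L$ is contained in $\mathbb{E}(L)$, which is itself a nontrivial input and not a definitional fact.

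Third, the induction on $\dim\mathbb{E}(L)$ does no work in your argument as written: if the lifting lemma held, it would give semi-ampleness directly without induction, and no step of your argument actually reduces $\dim\mathbb{E}(L)$. In Keel's treatment the dimension reduction is achieved by a genuine gluing/contraction construction along $\mathbb{E}(L)$, and that is exactly where the algebraic-space machinery you defer to the EWM case becomes indispensable even for the semi-ample conclusion.

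In short, the skeleton is the right one, but all the load-bearing steps are gaps. The argument as proposed would not establish the theorem.
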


Keel also proved versions of the cone and contraction theorems, however his proofs of these hold only over algebraically closed fields.  We shall adapt and recover these results over imperfect fields in \autoref{sec:keel}.

\subsection{Log minimal model program}

We will follow \cite{Kol13} for definitions of pairs and singularity classes, except that our boundaries will have $\bR$-coefficients unless stated otherwise.

We refer to \cite{Bir16} for definitions such as \emph{log birational model}, \emph{log minimal model} etc.

\subsubsection{LMMP for Surfaces}

Almost all results on the LMMP for surfaces hold over imperfect fields by \cite{Tan18}.   A notable exception is part of the cone theorem, which we deal with later. 

\subsubsection{Adjunction and DCC coefficients} A key assumption in the proof of existence of flips in \cite{HX15} was that the pair has standard coefficients.  We need to use the same condition.

\begin{definition}\label{def:standard-set}
	Let $\mathcal{S}:=\{1-\frac{1}{n}:n\in\mathbb{N}\}\cup\{1\}$.  We say that a pair $(X, \Delta)$ whose boundary $\Delta$ has coefficients in $\mathcal{S}$ has \emph{standard coefficients}.
	\end{definition}

\begin{proposition}\cite[Def. 4.2]{Kol13}\cite[Pro. 4.2]{Bir16}\cite[Pro. 2.8]{Das18}
	Let $(X,B)$ be a log canonical pair with boundary coefficients in a DCC set $\mathcal{J}$,  and $S$ a component of $\lfloor B\rfloor$.  Let $B_{S^n}$ be the different defined by the adjunction $K_{S^n}+B_{S^n}=(K_X+B)|_{S^n}$, where $S^n\to S$ is the normalization. Then the coefficients of $B_{S^n}$ are contained in a fixed DCC set $D(\mathcal{J})$. If $\mathcal{J}=\mathcal{S}$, the set of standard coefficients, then $D(\mathcal{J})=\mathcal{S}$.
\end{proposition}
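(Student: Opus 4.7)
The plan is to reduce the statement to a local computation at each prime divisor $D \subset S^n$, then to invoke a purely combinatorial lemma. Let $P$ denote the generic point of such a $D$, and let $Q$ be its image in $S$, a codimension-$2$ point of $X$. After shrinking $X$ around $Q$ I may assume that the only components of $B$ meeting $S$ along $D$ are $S$ itself (of coefficient $1$) and finitely many horizontal components $B_1, \ldots, B_r$ with coefficients $b_i \in \mathcal{J}$. Since $(X,B)$ is lc, $K_X + S$ has some positive local index $m$ at $Q$, and by computing on a log resolution I would obtain the classical formula
\[
\mult_D B_{S^n} \;=\; 1 - \frac{1}{m} + \frac{1}{m}\sum_{i=1}^{r} n_i b_i,
\]
where each $n_i \in \mathbb{Z}_{>0}$ records the intersection multiplicity of $B_i$ with $S^n$ at $P$ together with a correction for any inseparability in the residue extension $\kappa(P)/\kappa(Q)$. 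Since this identity lives at the DVR $\mathcal{O}_{S^n,P}$, the imperfect base field enters only through the $n_i$.

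Given the formula, the DCC conclusion follows formally. The lc inequality $\mult_D B_{S^n} \le 1$ gives $\sum n_i b_i \le 1$, which combined with a strictly positive lower bound on $\mathcal{J}$ (available since $\mathcal{J} \subset [0,1]$ is DCC) bounds both $r$ and each $n_i$ uniformly; the same inequality then bounds $m$. Consequently $\mult_D B_{S^n}$ lies in a set obtained from $\mathcal{J}$ by finitely many affine-linear operations with bounded positive integer denominators, which is again DCC. I would take this set to be the definition of $D(\mathcal{J})$.

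For the standard case $\mathcal{J} = \mathcal{S}$, I would apply Shokurov's combinatorial lemma. Writing $b_i = 1 - 1/k_i$ with $k_i \in \mathbb{Z}_{>0} \cup \{\infty\}$, the formula becomes
\[
1 - \frac{1}{m}\Bigl(1 - \sum_i n_i + \sum_i \frac{n_i}{k_i}\Bigr),
\]
and a direct case analysis using $\sum n_i(1 - 1/k_i) \le 1$ shows that this quantity equals either $1$ or $1 - 1/N$ for some $N \in \mathbb{Z}_{>0}$. The argument is entirely integer-theoretic, independent of the ground field, and transports verbatim from the perfect setting of \cite[Pro.~4.2]{Bir16}.

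The main obstacle I anticipate is verifying the explicit adjunction formula in the imperfect setting, since the normalization $S^n \to S$ can introduce inseparable residue extensions and the classical derivation in \cite{Kol13} implicitly assumes the base is algebraically closed. The resolution is that only the integers $n_i$ need to be reinterpreted, for instance by absorbing the inseparable degree appearing in the conductor; this leaves both the DCC argument and the standard-coefficients combinatorics untouched. This point is already worked out in \cite[Pro.~2.8]{Das18}, which I would cite once the explicit formula is in place.
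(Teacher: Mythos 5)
The paper offers no proof of this proposition; it simply cites \cite{Kol13}, \cite{Bir16} and \cite{Das18}, so there is no internal argument to compare against. Your overall strategy --- localize at a prime divisor of $S^n$, write the coefficient via the Shokurov adjunction formula, and finish with combinatorics --- is the standard route, and both the formula $\mult_D B_{S^n}=1-\frac{1}{m}+\frac{1}{m}\sum n_i b_i$ and the case analysis for $\mathcal{J}=\mathcal{S}$ are sound. Deferring the field-theoretic subtleties in the $n_i$ to \cite{Das18} is also the right move, and in line with the paper's own citation.

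There is, however, a genuine gap in the DCC step: the claim that the lc inequality \emph{bounds $m$} is false. The condition $\mult_D B_{S^n}\le 1$ is equivalent to $\sum n_i b_i\le 1$ and imposes no constraint whatsoever on the index $m$ of $K_X+S$ at $Q$; already for $B=S$ with no other components, one gets $1-\frac1m$ for every $m\in\mathbb{Z}_{>0}$. Consequently your assertion that the coefficients lie in a set obtained from $\mathcal{J}$ by ``finitely many affine-linear operations with bounded positive integer denominators'' does not hold, and the DCC conclusion does not follow from it as stated. The result is nonetheless true, but it needs the sharper observation that $\bigl\{1-\tfrac{1-a}{m}:m\in\mathbb{Z}_{>0},\ a\in A\bigr\}$ is DCC whenever $A\subset[0,1]$ is DCC: if $1-\tfrac{1-a_j}{m_j}$ were strictly decreasing then $\tfrac{1-a_j}{m_j}$ would be strictly increasing and bounded below by a positive constant; if $m_j$ stayed bounded one could pass to a constant subsequence and contradict the DCC of $A$, while if $m_j\to\infty$ then $\tfrac{1-a_j}{m_j}\le\tfrac1{m_j}\to 0$, again a contradiction. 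Replacing the ``bounded denominators'' sentence with this argument closes the gap; the rest of your write-up stands.
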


\subsubsection{Negativity Lemma}

\begin{lemma}[Negativity Lemma]\label{lem:negativity-lemma}
	Let $f:X\to Y$ be a proper birational morphism between two normal varieties defined over some arbitrary field $k$. Let $B$ be a $\mbQ$-Cartier divisor on $X$ such that $-B$ is $f$-nef. Then $B$ is effective if and only if $f_*B$ is effective.	
\end{lemma}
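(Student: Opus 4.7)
The $(\Rightarrow)$ direction is immediate from the definition of pushforward. For the converse, decompose $B = B_+ - B_-$ as a difference of effective $\mathbb{Q}$-divisors sharing no prime components; the goal is to prove $B_- = 0$. Every prime component of $B_-$ is $f$-exceptional: since $f$ is birational and $Y$ is normal, pushforward induces a bijection between non-exceptional prime divisors on $X$ and prime divisors on $Y$, and the non-exceptional part of $B_+$ pushes forward to prime divisors disjoint from those coming from $B_-$; hence a non-exceptional component of $B_-$ would appear in $f_*B$ with a negative coefficient that cannot be cancelled, contradicting $f_*B \geq 0$.

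The statement is local on $Y$, so I may assume $Y$ is affine, hence quasi-projective. The lemma is classical over perfect fields (and thus over finite fields), so I may also assume $k$ is infinite and use the Bertini theorems \autoref{thm:bertini-i} and \autoref{thm:bertini-ii}. Assuming for contradiction $B_- \neq 0$, pick a prime component $E$ of $B_-$ with image $Z = f(E)$ of codimension $c \geq 2$ in $Y$. A two-stage slicing argument --- first cut $Y$ by $\dim Y - c$ general hyperplane sections so that $Z$ becomes a nonempty zero-dimensional set, then cut by $c - 2$ further hyperplane sections through a chosen point $y \in Z$ --- produces normal surfaces $S \subset Y$ and $T = f^{-1}(S) \subset X$ with $f|_T : T \to S$ birational, and with $T$ meeting $E$ in a curve. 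Then $-B|_T$ is $(f|_T)$-nef, $(f|_T)_*(B|_T) = (f_*B)|_S \geq 0$ by the projection formula, and $(B|_T)_- \neq 0$, reducing the problem to the case of normal surfaces.

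For the surface case, write $B|_T = P - N$ with $P, N \geq 0$ sharing no components. By the first paragraph $N$ is $(f|_T)$-exceptional, so the intersection matrix of its components on each connected exceptional fibre is negative definite by Mumford's theorem, which holds for birational morphisms between normal surfaces over any field (see also \cite{Tan18}). Using $(f|_T)$-nefness,
\[0 \leq -B|_T \cdot N = -P \cdot N + N \cdot N \leq N \cdot N \leq 0,\]
where the middle inequality uses $P\cdot N \geq 0$ (as $P,N$ share no components) and the last uses negative definiteness; hence $N\cdot N = 0$ and $N = 0$, the desired contradiction. The main subtlety in this plan is the slicing step: when $c > 2$ the fibre $f^{-1}(y)$ has dimension $c - 1$, so naively cutting by hyperplanes all passing through $y$ produces slices of the wrong dimension, and one must also verify normality of both the slice $S$ and its preimage $T$. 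The two-stage construction --- separating $Z$ into isolated points using general hyperplanes \emph{not} required to pass through $y$ before cutting down further --- together with careful application of \autoref{thm:bertini-ii}, is what makes the surface reduction go through cleanly over an arbitrary infinite field.
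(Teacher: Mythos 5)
Your high-level plan matches what the paper follows by invoking \cite[Lemma 3.39]{KM98}: decompose $B=B_+-B_-$, show $B_-$ is $f$-exceptional, slice to normal surfaces via Bertini, and conclude from negative definiteness of the exceptional intersection matrix (the paper cites \cite[Theorem 10.1]{Kol13} for this over excellent bases). The exceptionality step in your first paragraph and the final surface computation $0\le -B|_T\cdot N \le N^2\le 0$ are both fine. But there is a genuine gap at exactly the point you flag as the ``main subtlety,'' and the two-stage device you describe does not fix it.

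In stage $2$ you cut $Y_1$ (of dimension $c$ after stage $1$) by $c-2$ hyperplanes through $y$, and set $T=f^{-1}(S)$. Each such hyperplane $H$ has local equation at $y$ lying in $\mathfrak m_y$, which therefore pulls back to a non-unit along every prime divisor of $X_1$ dominating $y$; consequently $f^{-1}(H)\supset f^{-1}(y)\supset E_1$, which has dimension $c-1$. So $T$ still contains the fibre $f^{-1}(y)$ of dimension $c-1$: for $c\ge 4$ this gives $\dim T>2$, so $T$ is not a surface, and already for $c=3$ the scheme $f^{-1}(S)$ is a reducible (and generally non-reduced) divisor in $X_1$ rather than a normal surface. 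A concrete failure: for $f\colon X\to Y=\mathbb A^4$ the blowup at the origin (so $E\cong\mathbb P^3$, $c=4$), $f^{-1}(S)$ contains all of $E$ for every plane $S$ through the origin. Moving the stage-$2$ hyperplanes \emph{away} from $y$ does not help either: then $S$ misses $y$, the strict-transform surface $T$ is disjoint from $f^{-1}(y)\supset E_1$, and you have lost all contact with $B_-$.

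The fix --- and what \cite[Lemma 3.39 via Lemma 3.40]{KM98} actually does at this stage --- is to slice the \emph{source} rather than the target: once stage $1$ has arranged $\dim f(\Supp B_-)=0$ (choose $E$ with $\dim f(E)$ maximal among components of $B_-$ so that \emph{all} of $f(\Supp B_-)$ becomes finite), cut $X_1$ by $c-2$ general members of a very ample linear system \emph{on $X_1$}, not pullbacks from $Y_1$. By \autoref{thm:bertini-i} and \autoref{thm:bertini-ii} the resulting $T\subset X_1$ is a normal surface, and $T\cap E_1$ is a nonempty curve by a dimension count. The map $f|_T\colon T\to f(T)$ is proper and birational onto its image and factors through the normalization $f(T)^\nu$ because $T$ is normal, and $(B|_T)_-=(B_-)|_T$ maps into a finite set, hence is exceptional for $T\to f(T)^\nu$ directly --- you then do not need the identity $(f|_T)_*(B|_T)=(f_*B)|_S$, which was left unjustified and would anyway require $f_*B$ to be $\mathbb Q$-Cartier near $S$. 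With this correction your surface argument goes through verbatim.
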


\begin{proof}
	If $k$ is finite, pass to $\overline{k}$, noting that effectivity and nefness are preserved under field extension.  The same proof as in \cite[Lemma 3.39]{KM98} works using the Hodge index theorem for excellent surfaces \cite[Theorem 10.1]{Kol13}, relying on the Bertini theorems in \autoref{thm:bertini-i} and \autoref{thm:bertini-ii} to cut down the dimension.
\end{proof}

\section{Field extensions}\label{sec:fields}

Let $X$ be a projective variety over a field $k$ of characteristic $p>0$.  Fix an algebraic closure  $\overline{k}$ of $k$ throughout. As a scheme, $X$ comes with a natural choice of ground field, which is $k_X:=H^0(X,\sO_X)$. This is the unique largest field contained in $\sO_X$, i.e., it is the largest field over which $X$ is a $k_X$-scheme.  Furthermore, $k_X$ is the normalization of $k$ in the function field $K(X)$, in particular, $k_X/k$ is a finite extension, and to define $k_X$ this way does not require $X$ to be projective. 

In some situations it is not convenient to consider a variety to have ground field $k_X$.  For instance, if $X$ is a variety with $k_X=H^0(X,\sO_X)$, and $Z$ a reduced subscheme with $k_Z=H^0(Z,\sO_Z)$, then it is possible for $k_Z$ to be a non-trivial  finite extension of $k_X$, but we want to consider both to have the same ground field.

We would like to point out that geometric properties such as geometrically reduced, smooth etc. depend heavily on the choice of the ground field. It is also important to keep track of the choice of ground field in intersection theory, because intersection numbers depend on the choice as follows:

\begin{definition}\cite[7.3.1]{Liu02},\cite{Ful98}
	Let $C$ be a curve over a field $k$, and $D$ be a Cartier divisor on $C$.  Then 	
	if we express $D$ as $D=\sum_{x\in C} v_x(D)x$, we define $$\deg_kD=\sum_{x\in C} v_x(D)[k(x):k].$$

	Now let $X$ be a normal variety over a field $k$, containing a curve $C$ and Cartier divisor $D$.  Then there is an naturally defined intersection $0$-cycle $D\cdot C=D|_C$, given as a subscheme of $X$.
	The \emph{intersection number over $k$} is $$D\cdot_kC =\mathrm{length}_k(\sO_{D\cdot C})=\deg_k D|_C.$$ 
\end{definition}

Note that these numbers depend  on the choice of ground field, while the cycles do not.
This means one must be careful of the effect of the ground field, for example, on Riemann-Roch for regular curves which takes the following form:
\[\chi_k(X, \mcO_X(D))=\deg_kD-\dim_kH^1(C, \sO_C)+\dim_kH^0(C,\sO_C).\]

The notion of ground field should not be confused with \emph{field of definition} described below.

\begin{definition}\cite[4.8.11]{EGAIV2}\label{def:field_definition}
	Let $X$ be a variety over a field $k$.  Let $Z\subset X_{K}$ be a closed subscheme (possibly non-reduced or non-irreducible), where $K$ is an algebraic extension of $k$ and $X_{K}=X\otimes_kK$.  
	
	For a field $k\subset F\subset K$, we say that $Z$  is \emph{defined over $F$} if there is a subscheme $Z_F$ of $X_F=X\otimes_k F$ such that $Z=Z_F\otimes_F K$.  
	If this holds we say that $F$ is a \emph{field of definition} of $Z$ with respect to $X$.
	
	Furthermore, there is a \emph{unique} subfield $L\subset K$ called the \emph{minimal field of definition} of $Z$ in $X_{K}$, which is contained in every field of definition of $Z$ with respect to $X$. We note that $L/k$ is a finite extension by \cite[4.8.13]{EGAIV2}.
\end{definition}

\begin{remark}
	\begin{itemize}
		\item Whether or not $Z$ is defined over $F$ is a property of the embedding $i:Z\hookrightarrow X\otimes_kK$ rather than the set $Z$ alone.  We suppress some of this if $X$, $k$ and $K$ are clear from the context.
		\item 
	Let $Z\subseteq X_{\overline{k}}$ be a closed subscheme with $F$ a field of definition. If $Z$ is reduced (resp. irreducible, resp. integral), then it follows from the definition above that  $Z_F$ is geometrically reduced (resp. geometrically irreducible, resp. geometrically integral) over $F$.
	\end{itemize}
\end{remark}

\begin{lemma}\label{lem:splitting_degree}
	Let $X$ be a variety over $k$. Fix an algebraic closure $\overline{k}$ of $k$. Let $C\subseteq X$ be an irreducible closed subvariety and $\{C_i\}_{i\in I}$ be the irreducible components of $C\otimes_k \overline{k}$ with reduced induced structure.  For each $i$, let $L_i\subseteq \overline{k}$ be the minimal field of definition of $C_i$ in $X\otimes_k\overline{k}$. 
	
Then the group $\Aut(\overline{k}/k)$ acts transitively on the fields $\{L_i\}_{i\in I}$, which are pairwise isomorphic under this action.  Furthermore, if we let $C^{L_i}$ denote the component of $(C\otimes_kL_i)_{\mathrm{red}}$ which satisfies $C^{L_i}\otimes_{L_i}\overline{k}=C_i$, then for each $i,j$ there is a commutative diagram with isomorphic rows:
	\begin{center}
			\begin{tikzcd}
			C^{L_i}\ar{r}\ar{d} & C^{L_j}\ar{d}\\
			\Spec(L_i) \ar{r}& \Spec(L_j)
		\end{tikzcd}
	\end{center}
\end{lemma}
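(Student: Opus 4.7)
The proof naturally splits into three stages: transitivity of the Galois action on the components, compatibility of the minimal fields of definition with this action, and construction of the commutative diagram by base change along the induced isomorphism of fields.

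\textbf{Transitivity.} The group $\Aut(\overline{k}/k)$ acts on $X\otimes_k\overline{k}$ through the second factor, and this action permutes the irreducible components of $(C\otimes_k\overline{k})_{\mathrm{red}}$. Let $\pi\colon X\otimes_k\overline{k}\to X$ be the projection. Since $C$ is irreducible and each $C_i$ is a component of $C\otimes_k\overline{k}$, each $C_i$ dominates $C$. Given any $\Aut(\overline{k}/k)$-orbit $\mathcal{O}$ of components, the union $Z_\mathcal{O}=\bigcup_{C_i\in\mathcal{O}} C_i$ is a Galois-stable closed subset, and hence (by descent of Galois-stable closed subsets along the faithfully flat morphism $\pi$, using that its ideal sheaf is Galois-invariant) equals $\pi^{-1}(W)$ for some closed $W\subseteq C$. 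Because $Z_\mathcal{O}$ surjects onto $C$, we must have $W=C$, so $Z_\mathcal{O}=C\otimes_k\overline{k}$ and the action is transitive.

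\textbf{Matching the $L_i$.} Choose any $\sigma\in\Aut(\overline{k}/k)$ carrying $C_i$ to $C_j$. The automorphism $\sigma$ of $X\otimes_k\overline{k}$ sends subschemes defined over $F\subseteq\overline{k}$ to subschemes defined over $\sigma(F)$, so the descended model of $C_i$ over $L_i$ produces a descended model of $C_j$ over $\sigma(L_i)$. Minimality of $L_j$ gives $L_j\subseteq\sigma(L_i)$, and the symmetric argument applied to $\sigma^{-1}$ gives the reverse inclusion. Thus $\sigma(L_i)=L_j$, and in particular the $L_i$'s are pairwise isomorphic subfields of $\overline{k}$ permuted transitively by $\Aut(\overline{k}/k)$.

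\textbf{Constructing $C^{L_i}$ and the diagram.} By the definition of minimal field of definition there is a unique subscheme $Z_i\subseteq X\otimes_k L_i$ with $Z_i\otimes_{L_i}\overline{k}=C_i$ (endowed with its reduced induced structure); since $C_i\subseteq(C\otimes_k\overline{k})_{\mathrm{red}}$ and base change commutes with taking reduced structure up to possibly non-reducedness issues which we kill by passing to the reduction, we may regard $Z_i\subseteq(C\otimes_k L_i)_{\mathrm{red}}$. To see that $Z_i$ is in fact a component (and not a proper subscheme of one), note that the components of $(C\otimes_k L_i)_{\mathrm{red}}$ correspond to $\Aut(\overline{k}/L_i)$-orbits of components of $(C\otimes_k\overline{k})_{\mathrm{red}}$ by the same descent argument as in the first step; by the minimality of $L_i$, the $\Aut(\overline{k}/L_i)$-orbit of $C_i$ is the singleton $\{C_i\}$, so $Z_i$ is a single component, which we declare to be $C^{L_i}$. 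Finally, given $\sigma$ as above, the field isomorphism $\sigma|_{L_i}\colon L_i\xrightarrow{\sim}L_j$ induces a $k$-scheme isomorphism $X\otimes_k L_i\xrightarrow{\sim} X\otimes_k L_j$ fitting into the required square over $\Spec(L_i)\to\Spec(L_j)$; checking after further base change along $L_j\hookrightarrow\overline{k}$ shows it carries $C^{L_i}$ to $C^{L_j}$, since both become $C_i$ and $C_j$ respectively and $\sigma$ was chosen to identify them.

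\textbf{Main obstacle.} The most delicate point is verifying that $Z_i$ is literally a component of $(C\otimes_k L_i)_{\mathrm{red}}$, rather than merely a closed subscheme supported on a union of components or a proper subset of one. This requires combining the descent identification of components of $(C\otimes_k L_i)_{\mathrm{red}}$ with $\Aut(\overline{k}/L_i)$-orbits of components of $(C\otimes_k\overline{k})_{\mathrm{red}}$ together with the minimality of $L_i$; the transitivity statement itself reduces cleanly to descent of closed subsets, and the final commutativity of the diagram is then formal.
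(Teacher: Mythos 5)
Your proof is correct and follows essentially the same strategy as the paper's: transitivity of the Galois/automorphism action on the irreducible components of $C\otimes_k\overline k$, plus minimality of the fields of definition, give the matching of the $L_i$ and the required isomorphism of models. The one place where the paper is more careful and you are slightly terse is in the transitivity step: when $k$ is imperfect, $\Aut(\overline k/k)$-invariance of a closed subset of $X\otimes_k\overline k$ only gives descent to the perfect closure $k^{1/p^\infty}$ as a \emph{scheme}, so one must also observe that the purely inseparable extension induces a universal homeomorphism and hence a bijection on components; the paper handles this by working explicitly with the separable closure $F$, citing \cite[Tag 04KY]{STP} there, and then passing to $\overline k$. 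Your direct argument that $\sigma(L_i)=L_j$ via minimality applied to $\sigma$ and $\sigma^{-1}$ is cleaner than the paper's degree-counting, and your verification that $C^{L_i}$ really is a single component (via the singleton $\Aut(\overline k/L_i)$-orbit of $C_i$) makes explicit a point the paper asserts without proof.
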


\begin{proof}
	Let $F$ be the separable closure of $k$ in $\overline{k}$. Then every element $\phi$ of $\mathrm{Gal}(F/k)$ acts on $X\otimes_k F$ by $\mathrm{id}\otimes\phi$.  It therefore induces an action on $C\otimes_k F$ and its irreducible components. By \cite[Lemma 04KY]{STP}, this action on the irreducible components of $C\otimes_k F$ is transitive, as $C$ is irreducible.  Since $\overline{k}/F$ is purely inseparable, the irreducible components of $C\otimes_k F$ are in bijective correspondence with those of $C\otimes_k\overline{k}$.
	
	Now by the uniqueness of $p^{\mathrm{th}}$ root in characteristic $p>0$, any automorphism $\phi:F\to F$ extends uniquely to an automorphism $\phi_{\overline{k}}:\overline{k}\to\overline{k}$ as follows
	\[\phi_{\overline{k}}(a):=\left(\phi\left(a^{p^e}\right)\right)^{\frac{1}{p^e}},\mbox{ where } a^{p^e}\in F\mbox{ for some } e\gg0, \quad\mbox{for all } a\in\overline{k}.\]
	The uniqueness of $p^{\mathrm{th}}$ root ensures that this does not depend on the choice of $e$.  Furthermore, one can verify that this is a ring homomorphism using the identity $(a+b)^p=a^p+b^p$.

	Fix two irreducible components $C_j$ and $C_i$ of $(C\otimes_k\overline{k})_{\mathrm{red}}$, and let $L_j$ be the minimal field of definition of $C_j$ in $X\otimes_k\overline{k}$.
	\begin{claim}\label{clm:relating-C_i-and-C_j}
		There is a field $L_j'\subseteq\overline{k}$ which is a field of definition of $C_i$ such that $[L_j':k]=[L_j:k]$.
	\end{claim}

	\begin{proof}[Proof of \autoref{clm:relating-C_i-and-C_j}]
		
		Since the action of $\mathrm{Gal}(F/k)$ permutes the irreducible components of $C\otimes_k F$ and the irreducible components of $C\otimes_k F$ and $C\otimes_k\overline{k}$ are in bijective correspondence, there exists a $\phi\in\mathrm{Gal}(F/k)$ such that its extension $\phi_{\overline{k}}\in\mathrm{Aut}(\overline{k}/k)$ satisfies $\phi_{\overline{k}}(C_j)=C_i$. 	Let $L_j'=\phi_{\overline{k}}(L_j)$.  We consider the following commutative diagram:
		\begin{center}
			\begin{tikzcd}
				C_j\subset C\otimes_k\overline{k}\ar{rr}{f\;=\;\mathrm{id}\otimes\phi_{\overline{k}}}\ar{d}& & C_i\subset C\otimes_k\overline{k}\ar{d}\\
				C^{L_j}\subset C\otimes_k L_j \ar{rr}{g\;=\;\mathrm{id}\otimes\phi_{\overline{k}}|_{L_j}}\ar{dr}& & C^{L_j'}\subset C\otimes_k{L_j'}\ar{dl}\\
				& C  &
			\end{tikzcd}
		\end{center}
		Since $L_j$ is the minimal field of definition of $C_j$, there is an irreducible component (with reduced structure) $C^{L_j}$  of $C\otimes_k L_j$ such that $C^{L_j}\otimes_{L_j}\overline{k}=C_j$. Observe that, $f$ is an isomorphism and $f^{-1}(C_i)=C_j$, since $\phi(C_j)=C_i$. Since $g$ is also an isomorphism, from the commutativity of the above diagram it follows that there is an irreducible component  $C^{L_j'}$ of $C\otimes_kL_j'$ which is geometrically integral and such that $C^{L_j'}\otimes_{L_j'} \overline{k}=C_i$. Thus $L_j'$ is a field of definition of $C_i$, and we also have $[L_j':k]=[L_j:k]$, since $L_j$ and $L_j'$ are isomorphic  via $\phi_{\overline{k}}$.\\
	\end{proof}	  
	
	We now return to the main proof. Since $L_i$ is the minimal field of definition of $C_i$, we have  $L_i\subseteq L_j'$ and so we see that $[L_j:k]\geq [L_i:k]$.  But in fact, we may run the argument with $i$ and $j$ swapped, yielding $[L_j:k]=[L_i:k]$. From the uniqueness of minimal field of definition it also follows that $L_i\cong L_j'$, and thus the morphism $\phi_{\overline{k}}|_{L_j}$ yields an isomorphism $L_j\cong L_i$. But then $C^{L_j'}$ which appeared in the proof of the claim was in fact isomorphic to $C^{L_i}$, and so we have obtained the isomorphism in the statement.\\
\end{proof}

\begin{lemma}\label{lem:integer}
	In the notation of \autoref{lem:splitting_degree}, let $L$ be the compositum of $\{L_i\}_{i\in I}$ in $\overline{k}$.  Let $E$ be an irreducible component of $(C\otimes_k L)_{\mathrm{red}}$.  Then the quantity \[d_C:=\frac{[L:k]}{[K(E):K(C)]}\] is an integer.
\end{lemma}

\begin{proof}
	Note that $L/k$ is a normal field extension, since the image of any injection $L\to\overline{k}$ is the compositum of the images of the $L_i$. But the images of the $L_i$ are these same fields permuted, since $\{L_i\}$ form a complete set of Galois conjugates by \autoref{lem:splitting_degree} and \cite[V.3.8]{hungerford_algebra}.

	 Let $H^0(C,\sO_C)=k_C$.  We first claim that we may assume that $k=k_C$. 
	It follows from the construction that $E$ is geometrically integral over  $L$ since $L$ is a field of definition for every component of $C\otimes_k\overline{k}$, and consequently, $H^0(E, \mcO_E)=L$ by \cite[Lemma 0BUG]{STP}.
	The composition $E\to C\to k$ factors through the Stein factorization $C\to k_C$, and so $k_C\subset H^0(E,\sO_E)=L$, and we obtain the following diagram:
	
	\[
	\xymatrix{
		E\ar[r]&C \ar@{-->}[dr]^\psi\ar@/^2pc/[drr]^{\cong}\ar@/_2pc/[ddr]& &\\
		&	&C\otimes_k k_C\subset X\otimes_k k_C \ar[r]^-\phi\ar[d]& C\subset X\ar[d]\\
		&	& \Spec k_C \ar[r]& \Spec k
	}
	\]
	
	As the top diagonal arrow is an isomorphism, the image of $C$ under the induced map is isomorphic to $C$.  While $C\otimes_{k} k_C$ potentially has several (non-reduced) components, the map $E\to\Spec(k_C)$ which is fixed from the start picks out a single component of interest to us, so we are free to ignore the rest.   Since $(X\otimes_k k_C)\otimes_{k_C}\overline{k}\isom X\otimes_k \overline{k}$, it follows that the minimal fields of definition  are the same whether it is calculated relative to $C\subset X$ or  $C'\subset C\otimes_kk_C\subset X\otimes_k k_C$, {where $C'\isom C$ is that particular component described before}. Note that $C\to X\otimes_k k_C$ is a closed embedding by \cite[Lemma 03BB]{STP}, since $\phi\circ \psi$ is an isomorphism.  Observe that $[K(E):K(C)]$ is unaffected under this consideration, so if we obtain integrality of  \[\frac{[L:k_C]}{[K(E):K(C)]}\] then the same for $k$ by multiplying  by $[k_C:k]$.  
	
	We next show that we may assume that $L/k_C$ is separable.   By \cite[Corollary 3.6.5]{roman_field_theory}, since $L/k_C$ is normal, there exists a purely inseparable extension  $k_{in}/k_C$, such that $L/k_{in}$ is separable.
	 Furthermore, if $k_{sep}$ is the separable closure of $k_C$ in $L$, we have $L=k_{in}\cdot k_{sep}$.  Now let $C_{in}=(C\otimes_{k_C}k_{in})_{red}$, where $C_{in}$ is irreducible since $k_{in}/k_C$ is purely inseparable.  Since $C_{in}$ is irreducible, the irreducible components of $C_{in}\otimes_{k_{in}}L$ biject with the irreducible components of $C\otimes_{k_C} L$. Also, $K(C)\otimes_k k_{in}$ is a local ring of dimension $0$ with residue field $K(C_{in})$.  By the Cohen structure theorem, there is a coefficient ring, i.e. a subring $F$ of $K(C)\otimes_k k_{in}$ which maps isomorphically to the residue field $K(C_{in})$.
	 
	 Therefore we have \[[k_{in}:k_C]=\dim_{K(C)}(K(C)\otimes_k k_{in})=[K(C_{in}):K(C)]\dim_{F}(K(C)\otimes_k k_{in})\]
	 
	 On the other hand the tower law gives \[\frac{[L:k_C]}{[K(E):K(C)]}=\frac{[L:k_{in}][k_{in}:k_C]}{[K(E):K(C_{in})][K(C_{in}):K(C)]}\]
	 Therefore we are done if we show integrality of 
	 \[\frac{[L:k_{in}]}{[K(E):K(C_{in})]}.\]
	 
	$L/k_{in}$ is separable by construction of $k_{in}$, and we claim that $K(E)/K(C_{in})$ is also separable.  One way to see this 
	is to note that $K(E)/K(C_{in})$ is separable if and only if {$K(E)\otimes_{K(C_{in})} K(E)$ is reduced}, and this is a quotient of the Artinian ring \[(L\otimes_{k_{in}}K(C_{in}))\otimes_{K(C_{in})}(L\otimes_{k_{in}}K(C_{in}))\cong L\otimes_{k_{in}}L\otimes_{k_{in}}K(C_{in})\]
	This in turn is reduced since $K(C_{in})$ is reduced and $L/k_{in}$ is  separable.
	Hence both extensions are separable.  We also have $H^0(C_{in},\sO_{C_{in}})=k_{in}$ by \cite[Lemma 5.5]{ji_waldron}. 
	 
	 Let $C_1,...,C_k$ be the distinct components of $C_{in}\otimes_{k_{in}}L$.  By flat base change we have 
	 \[[L:k_{in}]=\dim_{K(C_{in})}(K({C_{in}})\otimes_{k_{in}}L)\]  But since $L/{k_{in}}$ is separable, $K(C_{in})\otimes_{k_{in}}L$ is a reduced Artinian ring, and hence is a direct sum of fields. But this is the fiber of $C_{in}\times_{\Spec(k_{in})}\Spec(L)$ over the generic point of $C_{in}$, and hence the fields in question are the function fields of the $C_{i}$.
	 
	  Hence we have
	\[[L:k_{in}]=\sum_{i=1}^k[K(C_i):K(C_{in})].\]
\autoref{lem:splitting_degree} says that the action of $\Aut(\overline{k}/k_{in})$ on the $C_i$ is transitive.  This action is preserved under the surjection $\Aut(\overline{k}/k_{in})\twoheadrightarrow\Aut(L/k_{in})$, and so $\Aut(L/k_{in})$ acts transitively on the $C_i$,  and hence also on their function fields.  Therefore each of the $K(C_i)$ are isomorphic to one another via this action, and $[K(C_i):K(C_{in})]$ are equal since $K(C_{in})$ is fixed by the action of $\Aut(L/k_{in})$.
  Hence the summands are all equal, and equal to $[K(E):K(C_{in})]$ since $E$ is one of the $C_i$.
	This completes the proof.
	 
	\end{proof}

\section{Cone theorem I and special LMMPS}\label{sec:keel}

	It was shown in \cite[Example 7.3]{Tan15f} that the usual bound on the length of extremal rays fails on a variety defined over a non-algebraically closed ground field. In this section we introduce a correction term into the bound which suffices for many applications.

	\begin{lemma}\label{lem:constant}
	Let $X$ be a proper variety over a  field $k$ and denote $\phi:X\otimes_k\overline{k}\to X$.  Let $C$ be a curve on $X$ and  $C^{\overline{k}}$ be an integral curve in $X\otimes_k{\overline{k}}$ whose image is $C$.    If $d_C$ is the positive integer which appears in \autoref{lem:integer},  then for any $\bR$-Cartier divisor $D$ we have:
	\[D \cdot_k C=d_C (\phi^*D\cdot_{\overline{k}} C^{\overline{k}}).\]
	In particular, if $D$ is a Cartier divisor on $X$, then $D\cdot_k C$ is divisible by $d_C$. 
		\end{lemma}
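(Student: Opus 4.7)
The strategy is to factor the restriction $\pi := \phi|_{C^{\overline{k}}} : C^{\overline{k}} \to C$ through the tower of field extensions $k \subseteq k_C \subseteq k_{in} \subseteq L_D \subseteq L \subseteq \overline{k}$ supplied by \autoref{prop:fields}, and to track how the intersection number transforms at each stage. By $\bR$-linearity it suffices to treat the case that $D$ is a Cartier divisor whose support contains neither $C$ nor $C^{\overline{k}}$, so that all intersection numbers are honest lengths of zero-dimensional schemes. The decomposition $d_C = [k_C:k]\cdot m \cdot [L_D:k_{in}]$ provided by \autoref{prop:degree}, where $m := \dim_{K(C_{in})}(K(C)\otimes_{k_C} k_{in})$, dictates a four-step plan, with one of these factors appearing at each stage (and the last step producing no factor).

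The first step uses the Stein factorization $C \to \Spec(k_C)\to \Spec(k)$ to view $C$ as a $k_C$-variety; by transitivity of length along the finite extension $k_C/k$ we have $D\cdot_k C = [k_C:k]\cdot(D\cdot_{k_C} C)$. In the second step, because $k_{in}/k_C$ is purely inseparable, $C\otimes_{k_C}k_{in}$ is irreducible and its reduction $C_{in}$ appears in it with generic multiplicity $m$. Flat base change preserves lengths of zero-cycles when the degree is computed over the respective ground field, while intersection against a cycle of multiplicity $m$ introduces a factor of $m$, giving
\[D\cdot_{k_C} C = m\cdot(\phi_1^*D \cdot_{k_{in}} C_{in}),\]
where $\phi_1:X_{k_{in}}\to X$.

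For the third step, base change $C_{in}$ from $k_{in}$ to $L$. Since $L/k_{in}$ is finite Galois and $C_{in}$ is geometrically reduced over $k_{in}$, as extracted from the proof of \autoref{prop:degree} the scheme $C_{in}\otimes_{k_{in}} L$ decomposes into $[L_D:k_{in}]$ reduced components, one of which is $D_L := C^{L_D}\otimes_{L_D} L$, while the remaining components are the $\Gal(L/k_{in})$-conjugates of $D_L$ identified via \autoref{lem:splitting_degree}. Because $\phi_2^*D$ (for $\phi_2:X_L\to X$) is Galois-invariant, each component contributes the same intersection number, so by flat base change
\[\phi_1^*D \cdot_{k_{in}} C_{in} = [L_D:k_{in}]\cdot(\phi_2^*D \cdot_L D_L).\]
Finally, since $C^{L_D}$ is geometrically integral over $L_D$ by definition of the minimal field of definition, lengths of zero-cycles on it are preserved under base change to both $L$ and $\overline{k}$, giving $\phi_2^*D\cdot_L D_L = \phi^*D\cdot_{\overline{k}} C^{\overline{k}}$. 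Multiplying the factors from the four steps produces the desired equality, and the divisibility claim follows immediately since $\phi^*L\cdot_{\overline{k}} C^{\overline{k}}$ is an integer whenever $L$ is Cartier.

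The most delicate point is Step~3, where one must correctly count the Galois orbit of $D_L$ inside $C_{in}\otimes_{k_{in}} L$ and verify the symmetry of intersection numbers; both follow from the orbit-stabilizer argument carried out in the proof of \autoref{prop:degree} (the stabilizer of $D_L$ is $\Gal(L/L_D)$) combined with the fact that $\phi_2^*D$ is pulled back from $X$ and is therefore $\Gal(L/k_{in})$-invariant. The remaining steps are essentially formal consequences of flat base change for $0$-cycles together with the generic-multiplicity analysis of purely inseparable extensions.
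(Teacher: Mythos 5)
Your proposal is correct, but it takes a genuinely different route from the paper's proof. The paper works only with the minimal field of definition $\ell = L_D$: it applies the projection formula between $X$ and $X_\ell$ to absorb the factor $[K(C^\ell):K(C)]$, changes ground field from $k$ to $\ell$ to pick up $[\ell:k]$, and then invokes geometric integrality of $C^\ell$ over $\ell$ (via \cite[Pro. 3.7(a), Ch. 7]{Liu02}) to pass directly to $\overline{k}$; the proof then closes by recognizing the quotient $[\ell:k]/[K(C^\ell):K(C)]$ as $d_C$ from the first equality of \autoref{prop:degree}. You instead march through the whole tower $k\subseteq k_C\subseteq k_{in}\subseteq L_D\subseteq L$, peeling off one multiplicative factor per stage and appealing to the factored expression $d_C = [L_D:k_{in}]\cdot m\cdot [k_C:k]$ from the second equality of \autoref{prop:degree}. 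The two routes cost about the same, but yours effectively re-derives the content of \autoref{prop:degree} (which reconciles the two expressions for $d_C$) as part of the lemma, while the paper simply cites whichever expression it needs. Your Step~3 orbit-counting is sound and matches the orbit-stabilizer computation inside the proof of \autoref{prop:degree}. One small point worth tightening: in Step~2 the phrase ``intersection against a cycle of multiplicity $m$ introduces a factor of $m$'' is correct but leans on the identification of the scheme-theoretic intersection number with the cycle-theoretic one for non-reduced $1$-dimensional subschemes; a cleaner derivation of the same factor is the projection formula applied to $\phi_1:X_{k_{in}}\to X$, which gives $[K(C_{in}):K(C)]\,D\cdot_{k_C}C = [k_{in}:k_C]\,(\phi_1^*D\cdot_{k_{in}}C_{in})$ without invoking generic multiplicities at all. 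Also note the accidental notational collision in your final sentence: $L$ is used both for the Galois field in the tower and for a Cartier divisor.
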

	\begin{proof}
		As the constant $d_C$ does not depend on $D$, it is enough to prove the equality for $D$ a Cartier divisor.		
		Let $L$ be the compositum of the minimal fields of definitions of the components of $(C\otimes_k\overline{k})_{\mathrm{red}}$, and $C^{L}$ be the 
		 irreducible component (with reduced structure) of $C\otimes_kL\subseteq X\otimes_kL$ such that $C^{L}\otimes_L\overline{k}= C^{\overline{k}}$. Let $\theta:X_{\overline{k}}\to X_L:=X\otimes_k L$ and $\phi:X_L\to X$ be the projection morphisms. Note that $\phi_*C^L$ is a cycle of dimension $1$ in $X$ supported on $C$; in fact, $\phi_*C^L=[K(C^L):K(C)]C$, where $K(C^L)$ and $K(C)$ are the function fields of $C^L$ and $C$, respectively.

		Now we have
			\begin{align*}
			D\cdot_k C &=\frac{1}{[K(C^L):K(C)]} D\cdot_k \phi_*C^L \\
			&= \frac{1}{[K(C^L):K(C)]} \phi^*D\cdot_k C^L \\
			&=
			\frac{[L:k]}{[K(C^L):K(C)]}\phi^*D\cdot_L C^L\\
			&=	\frac{[L:k]}{[K(C^L):K(C)]}\deg_L((\phi^*D)|_{C^L})\\
			&=	\frac{[L:k]}{[K(C^L):K(C)]}\deg_{\overline{k}}(((\phi\circ\theta)^*D)|_{C^{\overline{k}}})\quad\mbox{by \cite[Pro. 3.7(a), Ch. 7]{Liu02}} \\
			&=	\frac{[L:k]}{[K(C^L):K(C)]}((\phi\circ\theta)^*D)\cdot_{\overline{k}}C^{\overline{k}}\\ 
			&=d_C(\pi^*D\cdot_{\overline{k}}C^{\overline{k}})\quad 
			\end{align*}
		
		\end{proof}
	
\begin{lemma}\label{lem:inequality}
	Let $X$ be a proper variety over a  field $k$, $C$ a curve on $X$ and $Y$ is the normalization of $X\otimes_k \overline{k}$, with  $\pi:Y\to X$ the induced morphism.
	
	Let $C^Y$ be a curve on $Y$ such that $\pi(C^Y)=C$.  If $d_C$ is the number which appears in \autoref{lem:integer},  then for any $\bR$-Cartier divisor $D$ with $0<D\cdot_k C$ we have:
	\[0<D\cdot_k C\leq d_C (\pi^*D\cdot_{\overline{k}}C^{Y})\]
\end{lemma}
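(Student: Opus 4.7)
The plan is to factor the given morphism as $\pi = \phi \circ \nu$, where $\nu : Y \to (X\otimes_k\overline{k})_{\mathrm{red}}$ is the normalization and $\phi : X\otimes_k \overline{k} \to X$ is the projection, and then reduce to the situation of \autoref{lem:constant}. First I would set $C^{\overline{k}} := \nu(C^Y)$ equipped with its reduced induced structure. Since $\pi(C^Y) = C$ is one-dimensional and $\phi$ is finite on each irreducible component of $X\otimes_k\overline{k}$, the map $\nu$ cannot contract $C^Y$, so $C^{\overline{k}}$ is an integral curve in $X\otimes_k\overline{k}$ with $\phi(C^{\overline{k}}) = C$. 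This places us in the hypotheses of \autoref{lem:constant}, applied to $C^{\overline{k}}$.

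Next I would invoke the projection formula. The induced morphism $\nu|_{C^Y} : C^Y \to C^{\overline{k}}$ is finite (as the restriction of the finite morphism $\nu$ to a closed subscheme of $\nu^{-1}(C^{\overline{k}})$) and dominant, so setting $m := [K(C^Y) : K(C^{\overline{k}})] \geq 1$, we have $\nu_* C^Y = m \cdot C^{\overline{k}}$ as $1$-cycles. The projection formula then gives
$$\pi^* D \cdot_{\overline{k}} C^Y \;=\; \nu^*\phi^*D \cdot_{\overline{k}} C^Y \;=\; \phi^*D \cdot_{\overline{k}} \nu_* C^Y \;=\; m \bigl(\phi^*D \cdot_{\overline{k}} C^{\overline{k}}\bigr).$$

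Finally I would combine this with \autoref{lem:constant} applied to $C^{\overline{k}}$, which yields $D \cdot_k C = d_C (\phi^*D \cdot_{\overline{k}} C^{\overline{k}})$. Multiplying the first identity by $d_C$ produces
$$d_C \bigl(\pi^*D \cdot_{\overline{k}} C^Y\bigr) \;=\; m \bigl(D \cdot_k C\bigr) \;\geq\; D \cdot_k C,$$
where the last step uses the hypothesis $D \cdot_k C > 0$ (which, via \autoref{lem:constant}, forces $\phi^*D \cdot_{\overline{k}} C^{\overline{k}} > 0$) together with $m \geq 1$. The positivity of $D \cdot_k C$ then propagates through, giving the desired chain of inequalities. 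There is no serious obstacle here: the whole point is that passing from $C^Y$ to its image $C^{\overline{k}}$ scales the intersection number by the positive integer $m$, after which \autoref{lem:constant} immediately supplies the comparison with $D \cdot_k C$.
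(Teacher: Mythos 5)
Your proposal is correct and matches the paper's proof essentially line for line: factor $\pi$ through the projection $\phi:X\otimes_k\overline{k}\to X$, push forward $C^Y$ to $C^{\overline{k}}$ via the projection formula to get the factor $m=[K(C^Y):K(C^{\overline{k}})]\geq 1$, and then invoke \autoref{lem:constant}. The only small wrinkle is your parenthetical claim that $\phi$ is finite on irreducible components, which is not quite right when $\overline{k}/k$ is infinite (it is integral but not of finite type); but this is harmless since the fact you actually need, that $C^{\overline{k}}$ is one-dimensional, already follows from $\pi(C^Y)=C$ being a curve as you also observe.
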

\begin{proof}
	Let $\phi$ be the natural map $X\otimes_k\overline{k}\to X$ and $C^{\overline{k}}$ be an integral curve in $X\otimes_k\overline{k}$ such that $\psi(C^Y)=C^{\overline{k}}$ and $\phi(C^{\overline{k}})=C$, where $\psi:Y\to X\otimes_k\overline{k}$ is the induced morphism.\\
	Then $\psi_*(C^Y)=[K(C^Y):K(C^{\overline{k}})]C^{\overline{k}}$. Now from \autoref{lem:constant} we have
	\[
	D\cdot_k C  =d_C(\phi^*D\cdot_{\overline{k}} C^{\overline{k}}).
\]
	By the projection formula we have \[\pi^*D\cdot_{\overline{k}}C^{Y}=\phi^*D\cdot_{\overline{k}}\psi_*(C^Y) = [K(C^Y):K(C^{\overline{k}})](\phi^*D\cdot_{\overline{k}}C^{\overline{k}}).\]
	
	Thus \[D\cdot_k C=\frac{d_C}{[K(C^Y):K(C^{\overline{k}})]} (\pi^*D\cdot_{\overline{k}}C^{Y})\leq d_C (\pi^*D\cdot_{\overline{k}}C^{Y}).\]
	\end{proof}

\subsection{Cone theorem for surfaces}

The aim of this subsection is to prove the following:

\begin{theorem}\label{thm:surface_cone}
	Let $X$ be a normal projective surface over an arbitrary field $k$, and let $B$ be an effective $\bR$-divisor such that $K_X+B$ is $\bR$-Cartier. Then there is a countable collection of curves $\{C_i\}_{i\in I}$ such that:
	\begin{enumerate}
		\item \[\overline{NE}(X)=\overline{NE}(X)_{K_X+B\geq 0}+\sum_{i\in I}\mathbb{R}_{\geq 0}[C_i]. \]
		\item For any ample $\mbR$-divisor $A$, there is a finite subset $I_A\subset I$ such that
		\[\overline{NE}(X)=\overline{NE}(X)_{K_X+B+A\geq 0}+\sum_{i\in I_A}\mathbb{R}_{\geq 0}[C_i]. \]
		\item The rays $\{\mbR\cdot[C_i]\}$ do not accumulate in $\overline{NE}(X)_{(K_X+B)<0}$.
		\item  For each $C_i$, either:
		\begin{enumerate}
			\item $C_i$ is contained in $\Supp(B)$, or
			\item There is a unique positive integer $d_{C_i}$ depending only on $X, C$ and the ground field $k$  such that 
			\[ 0<-(K_X+B)\cdot_k C_i\leq  4d_{C_i}\]
		and if $L$ is any Cartier divisor on $X$, then $L\cdot_k C_i$ is divisible by $d_{C_i}$.
		\end{enumerate}
	\end{enumerate}
	Furthermore, if $(X,B)$ is a log canonical pair then every curve can be taken from the case (b).
\end{theorem}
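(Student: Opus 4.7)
\medskip

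\noindent\textbf{Proof plan.} The plan is to obtain the cone structure in parts (1)--(3) from the existing minimal model program for excellent surfaces \cite{Tan18}, and then to establish the refined length bound in part (4) by passing to the algebraic closure and invoking the translation lemmas of \autoref{sec:fields}.

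First I would reduce to the case $k$ infinite, since the perfect (and hence finite) case is already handled by the results in \cite{Tan18}. Items (1)--(3) then follow from Tanaka's cone theorem for normal projective excellent surfaces together with the standard countability and accumulation arguments, so the only remaining task is to prove the length bound in (4). Let $\pi : Y \to X$ denote the composition of the normalization of $(X\otimes_k \overline{k})_{\mathrm{red}}$ with the natural morphism to $X$, so that $Y$ is a normal projective surface over $\overline{k}$ and $\pi$ is finite surjective. The key construction is to produce an effective $\bR$-divisor $B_Y$ on $Y$ such that
\[ K_Y + B_Y \sim_{\bR} \pi^*(K_X+B) + E \]
for some effective $\bR$-divisor $E$, by decomposing the difference $K_Y - \pi^* K_X$ into a ramification part (from the separable part of $\pi$) and a conductor-type part (from the purely inseparable part), and distributing them between $B_Y$ and $E$ so that $(Y, B_Y)$ inherits log canonicity from $(X, B)$ whenever the latter is log canonical.

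Given a curve $C_i$ generating a $(K_X + B)$-negative extremal ray of $\overline{NE}(X)$ with $C_i \not\subset \Supp(B)$, I would then choose an irreducible curve $C^Y_j$ on $Y$ mapping onto $C_i$ and lying in a $(K_Y + B_Y)$-negative extremal ray of $\overline{NE}(Y)$; such a curve exists because $\pi$ is finite surjective, so $\pi_* : \overline{NE}(Y) \to \overline{NE}(X)$ is surjective and the preimage of $\bR_{\geq 0}[C_i]$ contains a $(K_Y + B_Y)$-negative extremal ray. The classical cone theorem for normal projective surfaces over an algebraically closed field then yields
\[ 0 < -(K_Y+B_Y)\cdot_{\overline{k}} C^Y_j \leq 2\dim Y = 4,\]
and applying \autoref{lem:inequality} to $D = -(K_X+B)$ gives
\[ 0 < -(K_X+B)\cdot_k C_i \leq d_{C_i}\bigl(-\pi^*(K_X+B)\cdot_{\overline{k}} C^Y_j\bigr) \leq d_{C_i}\bigl(-(K_Y+B_Y)\cdot_{\overline{k}} C^Y_j\bigr) \leq 4 d_{C_i},\]
provided $C^Y_j \not\subset \Supp(E)$, which can be arranged by choosing the representative curve appropriately. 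The divisibility assertion in (4)(b) is precisely \autoref{lem:constant}.

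The main obstacle is the construction of $B_Y$: because $\pi$ factors through a generically étale cover and a purely inseparable base change, the ramification contribution (positive) and the conductor contribution (arising from the non-$S_2$ geometry of $X\otimes_k\overline{k}$) must be carefully balanced so that $B_Y$ is effective with coefficients in $[0,1]$ whenever $(X,B)$ is log canonical, in order to preserve the hypotheses of the length bound over $\overline{k}$. A secondary subtlety is the final statement in the log canonical case: one must show that any extremal ray generated by a component of $\Supp(B)$ can be represented by a curve outside $\Supp(B)$, which I would handle via adjunction on components of $\lfloor B\rfloor$ combined with the existence of moving curves in the same numerical class on normal surfaces.
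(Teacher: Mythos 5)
Your proposal's central construction is not achievable, and the paper's proof works very differently precisely in order to avoid it. You ask for an effective $\bR$-divisor $B_Y$ with coefficients in $[0,1]$ and an effective $E$ such that $K_Y+B_Y\sim_{\bR}\pi^*(K_X+B)+E$, where $Y$ is the normalization of $(X\otimes_k\overline{k})_{\mathrm{red}}$. But by \cite[Theorem~4.2]{Tan15f} (used in the paper's Step~3) one has $\pi^*K_X\sim K_Y+\Delta$ with $\Delta\geq 0$, so your identity forces $B_Y=\Delta+\pi^*B+E\geq\Delta$; since $\Delta$ is a conductor-type divisor whose coefficients can be arbitrarily large (this failure of singularity classes to descend through inseparable base change is exactly what \cite{PW17} and \cite{Tan15f} emphasize), there is no way to keep $B_Y$ a boundary. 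Even if the construction were granted, your inequality chain has a sign error: from $K_Y+B_Y=\pi^*(K_X+B)+E$ and $E\cdot C^Y\geq 0$ one gets $-\pi^*(K_X+B)\cdot C^Y\geq -(K_Y+B_Y)\cdot C^Y$, which is the wrong direction for the claimed middle step $-\pi^*(K_X+B)\cdot C^Y_j \leq -(K_Y+B_Y)\cdot C^Y_j$.

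The paper's route (\autoref{prop:curve_hunt}) never tries to produce a log pair over $\overline{k}$. It first reduces to $X$ regular via a dlt model plus minimal resolution, then splits according to the type of contraction. In the fibration case it passes to $(Y,\Delta)$ with $\pi^*K_X=K_Y+\Delta$ and, without requiring $(Y,\Delta)$ to be lc, either computes directly on a general fiber (an lci conic, so $K_Y\cdot C_Y=-2$ by \autoref{lem:imperfect-curve}) or applies bend-and-break \cite[II.5.8]{kollar_rational} to $-(K_Y+\Delta)$ big and nef to find a curve of anticanonical degree at most $4$; both arguments use only effectivity of $\Delta$, not any coefficient bound. In the birational case the paper stays entirely over $k$: an iterated sequence of dlt modifications controlled by ACC for log canonical thresholds reduces to a normal curve $C$ on which adjunction and \autoref{lem:imperfect-curve} give $-(K_X+C)\cdot_k C < 2[k_C:k]\leq 2d_C$. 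Your proposal gives no substitute for either of these mechanisms. Finally, your plan for the ``furthermore'' clause (moving curves out of $\Supp(B)$ by adjunction on $\lfloor B\rfloor$) is also not what is needed: the paper's dichotomy in \autoref{prop:curve_hunt} is between curves in $\Supp(B_{>1})$ and curves satisfying the bound, so in the lc case $B_{>1}=0$ and case~(a) is simply vacuous; no separate moving argument is required.
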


Our starting point is \cite[Theorem 7.5]{Tan15f}, which  proves (2), that is to say that for any given ample $\mbR$-divisor $A$, there are finitely many curves $C_i$ such that 
\[\overline{NE}(X)=\overline{NE}(X)_{K_X+B+A\geq 0}+\sum_{i\in I_A}^n\mathbb{R}_{\geq 0}[C_i].\]

\begin{lemma}\label{lem:imperfect-curve} \cite[Lemma 3.2]{BCZ18}
	Let $C$ be a projective curve over an arbitrary field $k$, with local complete intersection singularities. Assume that $\deg_k K_C<0$ and $\ell=H^0(C, \mcO_C)$. Then
	\begin{enumerate}
		\item \label{itm:pic_trivial} $\Pic^0(C):=\{\msL\in\Pic(C)\; |\; \deg_k\msL=0\}=\{\mcO_C\}$,
		\item $C$ is a conic over $\ell$ embedded into $\mbP^2_\ell$, and $\deg_\ell K_C=-2$,
		\item if $\chr(\ell)>2$ and $C$ is normal, then $C_{\bar{\ell}}=C\otimes_\ell\bar{\ell}\cong\mbP^1_{\bar{\ell}}$,
	\end{enumerate}	
\end{lemma}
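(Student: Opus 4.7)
The plan is to exploit Serre duality on the Gorenstein curve $C$. Since $\ell=H^0(C,\mcO_C)$ is a field and $C$ is projective over $k$, it is geometrically connected over $\ell$, and the identity $\deg_k D=[\ell:k]\deg_\ell D$ (coming from $[k(x):k]=[\ell(x):\ell][\ell:k]$ for closed points $x$) gives $\deg_\ell K_C<0$. Because $C$ is l.c.i., it is Gorenstein and Cohen--Macaulay, so Serre duality yields $\chi(\omega_C)=-\chi(\mcO_C)$. Combining this with Riemann--Roch applied to $\omega_C$ produces the standard identity $\deg_\ell\omega_C=2p_a(C)-2$, where $p_a(C):=h^1(\mcO_C)$. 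Since $\deg_\ell\omega_C<0$, this forces $p_a(C)=0$ and $\deg_\ell K_C=-2$, which already gives the degree assertion in (2).

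For part (1), let $\msL\in\Pic(C)$ satisfy $\deg_k\msL=0$, equivalently $\deg_\ell\msL=0$. Riemann--Roch gives $\chi(\msL)=1$, and Serre duality identifies $h^1(\msL)=h^0(\omega_C\otimes\msL^{-1})$. The line bundle $\omega_C\otimes\msL^{-1}$ has negative degree, and on an integral projective curve a line bundle of negative degree has no nonzero sections (any such section would define an effective Cartier divisor of nonnegative degree). Hence $h^0(\msL)=1$; a nonzero section $s$ is an effective Cartier divisor of degree zero, which must be trivial, so $s$ is nowhere vanishing and induces $\mcO_C\cong\msL$.

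For the embedding in (2), consider $\omega_C^{-1}$: by Riemann--Roch and the vanishing above, $h^0(\omega_C^{-1})=\deg_\ell\omega_C^{-1}+1=3$. I would show $\omega_C^{-1}$ is very ample by a case analysis in the residue field degree. Since $C$ is l.c.i. of dimension one, every closed point $P$ is Cartier, and $\omega_C^{-1}(-P)$ is a line bundle of degree $2-[\ell(P):\ell]$. Riemann--Roch (together with (1) in the degree-zero case) gives $h^0(\omega_C^{-1}(-P))<h^0(\omega_C^{-1})$ in every case, so $\omega_C^{-1}$ is base-point free. A parallel case analysis for $h^0(\omega_C^{-1}(-P-Q))$ shows that the induced morphism $\phi\colon C\to\mbP^2_\ell$ separates points and tangent vectors, hence is a closed embedding. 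The image cannot be a line (otherwise only two of the three sections of $\omega_C^{-1}$ would be linearly independent), so its degree equals $\deg_\ell\omega_C^{-1}=2$, and $C$ is embedded as a conic in $\mbP^2_\ell$.

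For (3), assume $\chr(\ell)>2$ and $C$ is normal. By (2), $C=V(q)$ for a nonzero quadratic form $q$ on $\mbP^2_\ell$. Since $\chr(\ell)\neq 2$, $q$ diagonalizes as $a_1X^2+a_2Y^2+a_3Z^2$ after a linear change of coordinates. If some $a_i$ vanishes, the conic is either a union of two lines (non-integral), a double line (non-reduced), or the irreducible but singular curve $V(a_1X^2+a_2Y^2)$ with a non-regular point at $[0{:}0{:}1]$; in every case normality fails. Thus all $a_i\neq 0$, and $C$ is a smooth conic over $\ell$. Since smoothness is preserved under base change, $C_{\overline{\ell}}$ is a smooth projective curve of genus $0$ over $\overline{\ell}$, and hence $C_{\overline{\ell}}\cong\mbP^1_{\overline{\ell}}$. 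The most delicate step is verifying very-ampleness of $\omega_C^{-1}$ in (2), because $C$ is only assumed l.c.i. and may have Gorenstein singularities; the separation of points and tangent vectors must be carried out via Riemann--Roch and the Cartier divisor formalism (available since every closed point on a one-dimensional l.c.i. scheme is Cartier) in place of the usual differential arguments available for smooth curves.
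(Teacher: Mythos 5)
The paper gives no proof of this lemma; it is quoted directly from \cite[Lemma 3.2]{BCZ18}. So your proposal should be judged on its own merits as a self-contained argument. Your parts (1) and (3), and the degree computation in (2) via Serre duality and Riemann--Roch on the Gorenstein curve $C$, are correct and in the spirit of the cited source.

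However, there is a genuine error in the very-ampleness step of (2). You assert that \emph{every closed point on a one-dimensional l.c.i.\ scheme is Cartier}, and you use this to treat $\omega_C^{-1}(-P)$, $\omega_C^{-1}(-P-Q)$, etc.\ as line bundles so that Riemann--Roch applies. This is false: a closed point $P$ is a Cartier divisor precisely when $\mathfrak{m}_{C,P}$ is principal, i.e.\ when $\mcO_{C,P}$ is regular. Being l.c.i.\ only gives that $\mcO_{C,P}$ is a complete intersection (hence Gorenstein), not that it is regular. A concrete example already within the scope of the lemma is $C=V(x^2+y^2)\subset\mbP^2_{\bR}$: this is an integral l.c.i.\ curve with $p_a(C)=0$ and $\deg_{\bR}K_C<0$, but the singular point $[0\colon 0\colon 1]$ has $\mathfrak{m}_P=(x,y)$, which is not locally principal. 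So $I_P$ is not invertible, and the argument as written does not apply there. Since you are explicitly trying to handle the case ``$C$ is only assumed l.c.i.\ and may have Gorenstein singularities,'' this is exactly the case your argument needs to cover and currently does not.

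The gap is fixable. Rather than pretending $I_P$ and $I_Z$ are line bundles, work with them as rank-one torsion-free sheaves on the integral curve $C$ and use Riemann--Roch in the form $\chi(\mcF)=\deg\mcF+\chi(\mcO_C)$ valid for such sheaves, together with Serre duality $h^1(\mcF)=\dim_\ell\operatorname{Hom}(\mcF,\omega_C)$ (which holds on the Cohen--Macaulay curve $C$). For the length-$2$ criterion for very ampleness: for any length-$2$ closed subscheme $Z\subset C$ over $\ell$, the sheaf $I_Z\otimes\omega_C^{-1}$ has $\chi=1$, and $h^1(I_Z\otimes\omega_C^{-1})=\dim\operatorname{Hom}(I_Z,\omega_C^{\otimes 2})$; a nonzero map $I_Z\to\omega_C^{\otimes 2}$ between rank-one torsion-free sheaves on the integral $C$ would be injective with cokernel of degree $\deg\omega_C^{\otimes 2}-\deg I_Z=-4-(-2)=-2<0$, impossible, so $h^1=0$ and $h^0(I_Z\otimes\omega_C^{-1})=1$, giving the surjectivity $H^0(\omega_C^{-1})\twoheadrightarrow H^0(\omega_C^{-1}|_Z)$. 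Base-point-freeness is handled the same way with $Z$ replaced by a single closed point (after bounding its residue degree, which requires a short extra argument comparing $\delta$-invariants with $p_a(C)=0$). This replaces the Cartier-divisor formalism you invoked, which simply is not available at the singular points.
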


\begin{proposition}\label{prop:curve_hunt}
	Let $X$ be a normal projective surface defined over an arbitrary field $k$, and let $B$ be an effective $\bR$-divisor such that $K_X+B$ is $\bR$-Cartier. Suppose that $L$ is a nef Cartier divisor such that $L^{\perp}$ cuts out a $(K_X+B)$-negative extremal ray $R$.

	Then $R$ contains a curve $C$ such that either 
	\begin{enumerate}
		\item $C$ is contained in $\Supp(B_{>1})$, or
		\item $(X, B)$ is log canonical at the generic point of $C$, and there is an integer $d_{C}$  such that $L\cdot_k C$ is divisible by $d_{C}$ for every Cartier divisor $L$, and
		\[0<-(K_X+B)\cdot_k C\leq  4d_{C}.\]
	\end{enumerate}

\end{proposition}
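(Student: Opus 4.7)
The plan is to reduce to the case of an algebraically closed ground field by passing to the normalization $Y$ of $X_{\overline{k}}$, apply the classical length bound for $(K_X+B)$-negative extremal rays on normal projective surface pairs over an algebraically closed field, and descend the result via the intersection number formulas of \autoref{lem:constant} and \autoref{lem:inequality}.

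Let $\pi\colon Y\to X$ denote the composition of the normalization $Y\to X_{\overline{k}}$ with the projection $X_{\overline{k}}\to X$. Then $\pi^*L$ is a nef Cartier divisor on $Y$. By part (2) of \autoref{thm:surface_cone}, namely \cite[Theorem 7.5]{Tan15f}, the ray $R$ contains at least one curve $C_0$. If $C_0^Y$ is any irreducible component of $\pi^{-1}(C_0)$ mapping onto $C_0$, the projection formula gives $\pi^*L\cdot_{\overline{k}}C_0^Y=0$, so the face $(\pi^*L)^\perp\cap\overline{NE}(Y)$ is non-trivial. Applying \autoref{lem:inequality} to the divisor $-(K_X+B)$ and the curve $C_0$ shows $\pi^*(K_X+B)\cdot_{\overline{k}}C_0^Y<0$, hence this face contains a $\pi^*(K_X+B)$-negative extremal ray $R^Y$ of $\overline{NE}(Y)$.

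Over the algebraically closed field $\overline{k}$, the classical cone theorem and length bound for lc pairs on normal projective surfaces produce a curve $C^Y$ spanning $R^Y$ with
\[0<-\pi^*(K_X+B)\cdot_{\overline{k}}C^Y\leq 4,\]
provided the pair is log canonical at the generic point of $C^Y$. Setting $C:=\pi(C^Y)$, the projection formula forces $[C]\in R$. If $C\subseteq\Supp(B_{>1})$ we are immediately in case (a). Otherwise, the coefficients of $B$ along divisors through $C$ are all at most $1$, and writing $\pi^*(K_X+B)=K_Y+B_Y$ by the usual adjunction/normalization bookkeeping, $(Y,B_Y)$ is log canonical at the generic point of $C^Y$. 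The main technical obstacle is precisely this descent step: verifying that inseparable base change followed by normalization preserves log canonicity at the chosen generic point. The dichotomy between (a) and (b) in the statement is exactly what allows us to exclude the pathological behavior in which components of high multiplicity in $B$ are swept into the inseparable locus.

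Once the bound above is secured, applying \autoref{lem:inequality} to $D=-(K_X+B)$ and the curve $C$ yields
\[0<-(K_X+B)\cdot_k C\leq 4d_C,\]
while \autoref{lem:constant} shows that $M\cdot_k C$ is divisible by $d_C$ for every Cartier divisor $M$ on $X$. This places us in case (b) and completes the proof.
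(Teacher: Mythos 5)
Your proposal identifies the right obstacle but does not overcome it, and unfortunately the obstacle is fatal to the approach as stated. After pulling back to the normalization $Y$ of $X_{\overline{k}}$, the divisor $B_Y$ in $K_Y+B_Y=\pi^*(K_X+B)$ is \emph{not} controlled: by \cite[Theorem 4.2]{Tan15f} one only gets $K_Y+\Delta=\pi^*K_X$ with $\Delta\geq 0$ effective, and there is no reason for the coefficients of $B_Y=\Delta+\pi^*B$ to be at most $1$, even when $(X,B)$ is log canonical. This failure of log canonicity under inseparable base change is precisely the phenomenon the paper is designed to circumvent (it is flagged in the introduction as the reason one cannot restrict to a general fiber). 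So the ``classical length bound for lc pairs over $\overline{k}$'' simply does not apply to $(Y,B_Y)$, and the dichotomy in the statement between cases (a) and (b) does not rescue this: even when $C\not\subseteq\Supp(B_{>1})$, the extra divisor $\Delta$ coming from normalizing $X_{\overline{k}}$ can have arbitrarily large coefficients.

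The paper's proof takes a structurally different route that avoids needing log canonicity over $\overline{k}$. It first reduces to $X$ regular and $(X,B)$ dlt (Steps 1--2), then produces the contraction $f:X\to Z$ and splits into three cases. When $\dim Z=1$ it observes that $B\cdot R\geq 0$ so $B$ can be discarded; after base change to $\overline{k}$ and normalization the general fibers are integral conics and one computes $K_Y\cdot C_Y=-2$ directly via \autoref{lem:imperfect-curve}, with no log canonicity hypothesis on the pair $(Y,\Delta)$ needed. When $\dim Z=0$ it likewise discards $B$ and finds the curve by Kollár's bend-and-break \cite[II.5.8]{kollar_rational}, again using only that $-(K_Y+\Delta)$ is nef and big, not that $(Y,\Delta)$ is lc. When $f$ is birational it stays on $X$, iteratively replaces the coefficient of $C$ by the log canonical threshold, and uses the ACC for lct together with adjunction on the normal dlt center $C$; the bound $2d_C$ comes from $\deg K_C=-2$ on the regular curve $C$, again with no base change to $\overline{k}$. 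If you want to salvage your approach, you would need to prove a statement controlling the coefficients of $\Delta$ along $C^Y$ in the relevant cases, but that is essentially equivalent to redoing the paper's case analysis.
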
	
\begin{proof}
	\emph{Step 1}: We claim that we may assume $X$ is regular.\\
	Let $\pi:Y\to X$ be a $\bQ$-factorial dlt model of $(X,B)$, that is a model such that $(Y,(B_Y)_{\leq1})$ is dlt and the exceptional locus of $\pi$ lies over the non-klt locus of $(X,B)$.  Furthermore, assume that $Y$ is regular via replacing it with the minimal resolution, and set $K_Y+B_Y=\pi^*(K_X+B)$.  The construction ensures that $B_Y\geq 0$ and $(Y,(B_Y)_{\leq 1})$ is dlt.  Then $(\pi^*L)^\perp$ cuts out some face of $\overline{NE}(Y)$ which contains a $(K_Y+B_Y)$-negative class of $\overline{NE}(Y)$ by the projection formula.  This class is also $(K_Y+B_Y+A)$-negative for some sufficiently small ample $\mathbb{R}$-divisor $A$. Therefore by \cite[Theorem 7.5]{Tan15f}, $(\pi^*L)^{\perp}$ contains some $(K_Y+B_Y+A)$-negative extremal ray $R_Y$ which is generated by an integral curve $C_Y$. In particular, $C_Y$ is not contracted by $\pi$ because by the ampleness of $A$ we have $(K_Y+B_Y)\cdot C_Y<0$, and thus $C_Y$ is birational to its image, say $C=\pi_*C_Y$.  If $L$ is an $\bR$-Cartier divisor on $X$, by the projection formula we have 
	\[\pi^*L\cdot_k C_Y=L\cdot_k \pi_*C_Y= L\cdot_k C\]
	Consequently, we obtain $(2)$ if we can prove the existence of the constant $d$ and the bound for the curve $C_Y$ and $K_Y+B_Y=\pi^*(K_X+B)$.
	  Furthermore, if $C_Y$ is contained in $\Supp(B_{Y>1})$ and is not contracted over $X$, then  $C$ is contained in $\Supp(B_{>1})$.  

	Therefore if we can find $C_Y$ in $R_Y$  satisfying (1) or (2) for $(K_Y+B_Y)$, then we are done.
	
	\medskip

	\emph{Step 2}: Reduction to the log canonical case.
	
	By removing the components of $B_Y=\sum b_i B_i$ with $B_i\cdot R_Y\geq0$, we may assume that all irreducible components $B_i$ satisfy $B_i\cdot R_Y<0$.  If there is such a component $B_i$, then there can be at most one, and its support is equal to $C$, and if the coefficient of $B_i$ were greater than $1$, we are in Case (1) and are done.  So from now on we assume that all coefficients of $B_Y$ are at most $1$, in which case $(Y, B_Y)$ is log canonical (in fact dlt). 
	
	\medskip
	
	We have reduced the proof of \autoref{prop:curve_hunt} to the case where $(X,B)$ is dlt and $X$ is regular, by replacing  $X$ by $Y$.
	By \cite[Theorem 4.4]{Tan18}, there is a projective contraction $f:X\to Z$ contracting the ray $R$.  
	
	\medskip

	\emph{Step 3}:  The case of $\dim(Z)<\dim(X)$.\\
	In this case either $X$ has Picard number $\rho(X)=1$ or $X$ is a fibration over a curve. In either case we have $B_i\cdot R\>0$ for each $i$ because the curves in $R$ are movable, and so $K_X\cdot C<0$; in particular, in this case we may assume that $B=0$. We will consider two cases depending on the dimension of $Z$.\\ 
	
	\noindent
	\emph{Case I:} Assume that $\dim(Z)=1$.   Let $Y$ be the normalization of $(X\otimes_k\overline{k})_{\red}$ and $W$ the normalization of $Z\otimes_k \overline{k}$. 
	\begin{equation}
	\xymatrixrowsep{3pc}\xymatrixcolsep{3pc}\xymatrix{
		Y\ar[r]\ar[d]_g\ar@/^2pc/[rr]^-{\phi} & X\otimes_k\overline{k}\ar[r]\ar[d] & X\ar[d]^f\\
		W\ar[r] & Z\otimes_k\overline{k}\ar[r] & Z
	}
	\end{equation}
	Then by \cite[Theorem 4.2]{Tan15f} there is an effective Weil divisor $\Delta$ on $Y$ such that $\phi^*K_X=K_Y+\Delta$. Thus $-(K_Y+\Delta)$ is ample over $W$.  Replacing $W$ by the Stein factorization of $g:Y\to W$, we may assume that $g_*\mcO_Y=\mcO_W$. Now since $Y$ is a disjoint union of irreducible components and $g$ has connected fibers, it follows that $W$ is also a disjoint union of irreducible components. Furthermore, from the connectedness of $g$ it also follows that for an irreducible component $W'\subseteq W$, $Y'=g^{-1}W'$ is an irreducible component of $Y$. In particular, replacing $Y$ by $Y'$ and $W$ by $W'$, we may assume that $Y$ is an irreducible normal surface, $W$ an irreducible normal curve over the field $\overline{k}$, $g_*\mcO_Y=\mcO_W$ and $-(K_Y+\Delta)$ is $g$-ample. Then from \cite[Corollary 7.3]{Bad01} it follows that the general fibers of $g$ are integral curves.  Let $C_Y$ be such a general fiber. Then $$K_Y\cdot_{\overline{k}}C_Y\leq (K_Y+\Delta)\cdot_{\overline{k}}C_Y<0.$$ But also  $$\deg_{\overline{k}}K_{C_Y}=(K_Y+C_Y)\cdot_{\overline{k}}C_Y=K_Y\cdot_{\overline{k}}C_Y<0.$$ Since $C_Y$ is a local complete intersection curve (being a general fiber), by \autoref{lem:imperfect-curve} $C_Y$ is a conic in $\mbP^2_{\overline{k}}$ and $\deg_{\overline{k}}K_{C_Y}=-2$, i.e.,
	\[
	K_Y\cdot_{\overline{k}} C_Y=-2.
	\]
	
	Then by \autoref{lem:inequality}, there is a curve $C$ supported on a fiber  of $f:X\to Z$ 
	such that
	\[
	-(K_X\cdot_k C)\leq -d_C(K_Y+\Delta)\cdot_{\overline{k}}C_Y\leq 2d_C,
	\]
	where $d_C$ is the constant which appears in \autoref{lem:integer}.\\

	\noindent
	\emph{Case II:}
	Assume that $Z=\Spec(k)$. In this case  $X$ is a regular del Pezzo surface with $\rho(X)=1$ and $H^0(X, \mcO_X)=k$.   Let $Y$ be the normalization of $(X\otimes_k\overline{k})_{\red}$ and $\pi:Y\to X$ the induced morphism. Then by \cite[Theorem 4.2]{Tan15f} there exists an effective Weil divisor $\Delta$ on $Y$ such that $K_Y+\Delta\sim \phi^*K_X$.   Since $\rho(X)=1$, any curve on $X$ lies in the required extremal ray, so by \autoref{lem:inequality} it is sufficient to find any curve on $Y$ such that $-4\leq (K_Y+\Delta)\cdot_{\overline{k}} C_Y<0$.
	
	By cutting down by general hyperplanes we can find a curve $D$ on $Y$ such that $Y$ is smooth near $D$, $D$ is not contained in $\Supp(\Delta)$ and $-(K_Y+\Delta)\cdot_{\overline{k}} D>0$, as $-(K_Y+\Delta)$ is nef and big.  Fix a point $x\in D$ such that $x\notin\Supp(\Delta)$.  Then we can apply \cite[Theorem II.5.8]{kollar_rational} to obtain a rational curve $C_Y$ on $Y$ which passes through $x$ and which satisfies
	$$0<-(K_Y+\Delta)\cdot_{\overline{k}} C_Y\leq 4\frac{-(K_Y+\Delta)\cdot_{\overline{k}} C_Y}{-K_Y\cdot_{\overline{k}} C_Y}\leq 4.$$

	\bigskip
	
	\emph{Step 4:} Finding the curve $C$ in the case where $f$ is birational.
	
	As $X$ is a surface, $C$ is the unique curve contracted by $f$ and it satisfies $C^2<0$.  Let $\lambda_0$ be such that  $B=B'+\lambda_0C$ with $C\notin \Supp(B')$.   We wish to reduce to the case of  $\lambda_0=1$.  First note that we may remove components of $B$ which are not $C$, so assume $B'=0$.
	
	Let $\lambda_1=\lct(X; C)$, so that $(X, \lambda_1 C)$ is lc but not klt.  As $X$ is regular, we have that $\lambda_1>0$, and we also have $\lambda_1\geq\lambda_0$.  Let $\pi:X_1\to X$ be  a dlt model of $(X, \lambda_1 C)$; note that such a model exists via standard application of MMP as in \cite{Tan18}.  Then $(X_1,B_{X_1}+\lambda_1C_{X_1})$ is dlt where $C_{X_1}$ is the strict transform of $C$ and $K_{X_1}+B_{X_1}+\lambda_1C_{X_1}=\pi^*(K_X+\lambda_1C)$. Then we have 
		\begin{align*}
		-(K_{X_1}+B_{X_1}+\lambda_1 C_{X_1})\cdot_k C_{X_1} &=-(K_X+\lambda_1C)\cdot_k\pi_*C_{X_1}\\
		& = -(K_X+\lambda_1C)\cdot_k C\\
		& \geq -(K_X+\lambda_0C)\cdot_k C\\
		& \geq -(K_X+B)\cdot_k C>0
		\end{align*}
	
  Therefore we will be done if we can show that the required constant and bound holds for \[(K_{X_1}+B_{X_1}+\lambda_1C_{X_1})\cdot C_{X_1}\geq -4d_{C_{X_1}}.\]  
	
	Furthermore, we have that $C_{X_1}^2<0$ via the projection formula:
	$$0>C^2=C\cdot\pi_*C_{X_1}=\pi^*C\cdot C_{X_1}=(C_{X_1}+E)\cdot C_{X_1}\geq C_{X_1}^2$$
	for some effective $\pi_1$-exceptional divisor $E$.
	
	As before, we can remove the components of $B_{X_1}$ which are not $C$, and let $\lambda_2=\lct(X_1; C_{X_1})$.  Also, notice that $\lambda_2\geq \lambda_1$ and equality holds if and only if $\lambda_1=1$. In particular, repeating the above argument we create an increasing sequence  of $\{\lambda_i\}$, i.e., $\lambda_{i+1}\>\lambda_i$ for all $i\>1$ where equality holds if and only $\lambda_i=1$.    Thus by the ACC for log canonical thresholds as in \cite[Theorem 6.1]{Das18}, the sequence $\{\lambda_i\}$ must stabilize after finitely many stages, i.e., there exists an $n\in\mbN$ such that $\lambda_i=1$ for all $i\>n$. Then replacing $(X, B'+\lambda_0C)$ by $(X_n, \lambda_nC_{X_n})$ we may assume that $(X, C)$ is log canonical. Then again replacing $(X, C)$ by a dlt model and removing other components we may assume that $(X, C)$ is  dlt. Consequently, $C$ is normal, since it is a dlt center (see \cite[Lemma 3.4]{BCZ18}).  Then if $d_C$ is the constant from \autoref{lem:integer}, we have:
	
		\begin{align*}
		0<-(K_X+C)\cdot_k C=\deg_k-(K_C+B'_C) 
		&<\deg_k -K_C\\
		&=[k_C:k]\deg_{k_C}-K_C\\
		&=2[k_C:k]\quad ({\rm{by\ \autoref{lem:imperfect-curve}}})\\
		&\leq 2 d_C 
		\end{align*}

where $[k_C:k]|d_C$ by the first reduction in the proof of \autoref{lem:integer}.

\end{proof}

\begin{lemma}\label{lem:surface_rationality_theorem}
	Let $(X,B\>0)$ be a log canonical pair of dimension $2$, where $B$ is a $\bQ$-divisor. Assume that $K_X+B$ is not nef. Then there is a natural number $n$ depending only on $(X,B)$ such that if $H$ is an ample Cartier divisor, and  \[\lambda=\inf\{t\>0:K_X+B+tH\mathrm{\ is\ nef}\}\]  then $\lambda=\frac{n}{m}$ for some natural number $m$. Moreover, there is a curve $C$ generating an extremal ray $R=\mbR_{\>0}\cdot[C]$ of $\overline{\NE}(X)$ such that there is an integer $d_{C}$  such that $L\cdot_k C$ is divisible by $d_{C}$ for every Cartier divisor $L$, and
	\[
	(K_X+B+\lambda H)\cdot C=0\quad\mbox{and}\quad -(K_X+B)\cdot C\<4d_C.
	\]
\end{lemma}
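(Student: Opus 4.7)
The plan is to apply the surface cone theorem \autoref{thm:surface_cone} directly to the log canonical pair $(X,B)$. Since $(X,B)$ is log canonical, every extremal curve $C_i$ produced by \autoref{thm:surface_cone} falls into case (b), so each carries an integer $d_{C_i}$ dividing $L\cdot_k C_i$ for every Cartier $L$ and satisfying $0 < -(K_X+B)\cdot_k C_i \leq 4 d_{C_i}$. The hypothesis that $K_X+B$ is not nef forces $\lambda > 0$, and since the nef cone is closed, $K_X+B+\lambda H$ is nef.

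First I would set $\lambda_i := -(K_X+B)\cdot_k C_i / (H\cdot_k C_i)$ for each $i$ and show $\lambda = \sup_i \lambda_i$. The inequality $\sup_i \lambda_i \leq \lambda$ is immediate from the nefness of $K_X+B+\lambda H$; conversely, if $t < \lambda$ then $K_X+B+tH$ is not nef, so by part (1) of \autoref{thm:surface_cone} it must be negative on some $C_i$, giving $\lambda_i > t$. Next I would verify that the supremum is attained. Otherwise there would exist an infinite sequence of distinct rays $\mbR_{\geq 0}[C_{i_n}]$ with $\lambda_{i_n} \to \lambda$. Normalizing by $H\cdot_k C_{i_n} > 0$, the classes $[C_{i_n}]/(H\cdot_k C_{i_n})$ sit in a bounded region of the affine hyperplane $\{H=1\} \subset N_1(X)_\mbR$, so a subsequence converges to some $y \in \overline{NE}(X)$ with $(K_X+B)\cdot y = -\lambda < 0$. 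The ray $\mbR_{\geq 0}\cdot y$ is then an accumulation point of $(K_X+B)$-negative rays, contradicting part (3) of \autoref{thm:surface_cone}. Hence $\lambda = \lambda_{i_0}$ for some $i_0$; setting $C := C_{i_0}$ yields $(K_X+B+\lambda H)\cdot C = 0$ together with $-(K_X+B)\cdot C \leq 4 d_C$.

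For the rationality claim, let $N$ be a positive integer such that $N(K_X+B)$ is Cartier; note that $N$ depends only on $(X,B)$. Then $-N(K_X+B)\cdot_k C$ is a positive integer divisible by $d_C$, so can be written as $d_C \cdot a$ with $1 \leq a \leq 4N$ by the bound above. Similarly $H \cdot_k C = d_C \cdot m'$ for some positive integer $m'$, whence
\[ \lambda = \frac{-(K_X+B)\cdot_k C}{H\cdot_k C} = \frac{a}{N m'}. \]
Taking $n$ to be any fixed positive integer divisible by every element of $\{1,\dots,4N\}$, for instance $n := (4N)!$, one obtains $\lambda = n/m$ with $m := N m' (n/a) \in \mbN$, and $n$ depends only on $(X,B)$.

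The main obstacle I anticipate is the attainment of the supremum; this is where the precise form of \autoref{thm:surface_cone}, and in particular the non-accumulation statement (3), is essential. Once that step is in hand, identification of $C$, the bound on $-(K_X+B)\cdot C$, and the rationality of $\lambda$ are all formal consequences of the cone theorem.
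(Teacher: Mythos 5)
Your argument is circular: at the point where \autoref{lem:surface_rationality_theorem} is proved, parts (1), (3), and the concluding ``Furthermore'' clause of \autoref{thm:surface_cone} that you invoke are \emph{not yet available}. The paper establishes exactly those parts afterwards by feeding the present lemma into \cite[Theorem 3.15]{KM98}. The only pieces of the cone picture that exist before this lemma are part (2), which is \cite[Theorem 7.5]{Tan15f}, and the bound $0<-(K_X+B)\cdot_k C\leq 4d_C$ together with the divisibility $d_C \mid L\cdot_k C$ for a curve in a $(K_X+B)$-negative extremal ray, which come from \autoref{prop:curve_hunt} rather than from \autoref{thm:surface_cone}. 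So as written, you are assuming the theorem this lemma is supposed to supply the rationality input for.

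Your underlying compactness idea for attaining $\lambda$ is salvageable once the citations are repaired: the use of part (1) can be replaced by applying \cite[Theorem 7.5]{Tan15f} to $(X,B)$ with ample perturbation $A=tH$ for each $t<\lambda$, producing a $(K_X+B+tH)$-negative extremal ray with a curve from \autoref{prop:curve_hunt}; and the non-accumulation you appeal to in (3) can itself be deduced from \cite[Theorem 7.5]{Tan15f} (if normalized extremal rays $R_n$ converged to a class $y$ with $(K_X+B)\cdot y<0$, then for a sufficiently small ample $A$ infinitely many distinct extremal rays would be $(K_X+B+A)$-negative, contradicting finiteness). The paper avoids the limit-point argument entirely: it builds an increasing sequence $\lambda_i\to\lambda$ with curves $C_i$ supplied by \cite[Theorem 7.5]{Tan15f} and \autoref{prop:curve_hunt}, and then observes that the integers $-I_{K_X+B}(K_X+B)\cdot C_i/d_{C_i}$ and $H\cdot C_i/d_{C_i}$ are bounded, so only finitely many values $\lambda_i$ can occur and the sequence must stabilize. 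This yields both attainment of $\lambda$ and the uniform numerator $n$ in one step, without invoking compactness of $\overline{NE}(X)\cap\{H=1\}$; your final computation showing $\lambda=n/m$ with $n$ depending only on the Cartier index of $K_X+B$ is correct and matches the paper's conclusion.
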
	

\begin{proof}
	Note that since $(X, B)$ is log canonical, by \autoref{prop:curve_hunt} it is enough to prove that there is a $(K_X+B)$-negative extremal ray $R$ such that $(K_X+B+\lambda H)\cdot R=0$; indeed in that case there is an integral curve $C$ such that $R=\mbR_{\>0}\cdot[C]$ and $(K_X+B+\lambda H)\cdot C=0$. Solving this equation shows that $\lambda$ is a rational number. We also need to show that the numerator of $\lambda$ can be chosen uniformly. 
	
	To this end, first we claim that there exists an increasing sequence of real numbers $\lambda_i$ converging to $\lambda$ and $K_X+B$-negative extremal rays $R_i$ such that $(K_X+B+\lambda_i H)\cdot R_i=0$. Let $t_i>0$ be an increasing sequence of non-negative real numbers such that $t_i<\lambda$ for all $i$ and $\lim t_i=\lambda$. Then $K_X+B+t_iH$ is not nef for any $i$, and thus by \cite[Theorem 7.5]{Tan15f} there is a $(K_X+B+t_iH)$-negative extremal ray $R_i=\mbR_{\>0}\cdot[C_i]$ generated by a curve $C_i$ satisfying $0<-(K_X+B+t_iH)\cdot C_i\<4d_{C_i}$ (this follows from \autoref{prop:curve_hunt}). Choose $\ve_i>0$ such that  $(K_X+B+(t_i+\ve_i)H)\cdot C_i=0$ for all $i$.

	Now recall that $(K_X+B+\lambda H)$ is nef, in particular, $(K_X+B+\lambda H)\cdot C_i\>0$ for all $i$. This implies that $t_i\<t_i+\ve_i\<\lambda$ for all $i$, otherwise if $t_i<\lambda<t_i+\ve_i$ for some $i$, then from the definition of $\ve_i$ it follows that $(K_X+B+\lambda H)\cdot C_i<0$, a contradiction. Thus by setting $\lambda_i:=t_i+\ve_i$ we see that $(\lambda_i)$ is an increasing sequence of positive real numbers converging to $\lambda$.

	Next we claim that the sequence $(\lambda_i)$ stabilizes to its limit $\lambda$ after finitely many steps. Clearly if the claim holds then we have $(K_X+B+\lambda H)\cdot C_i=(K_X+B+\lambda_i H)\cdot C_i=0$ for all $i\gg0$, and this completes the proof. So to the contrary assume that $(\lambda_i)$ is strictly increasing.	
	As a result, for each ray $R_i$ we obtain a curve $C_i$ and constant $d_{C_i}$ as in \autoref{thm:surface_cone}.  In particular we have \[-4\leq \frac{(K_X+B)\cdot C_i}{d_{C_i}}\leq 0.\]  The property of $d_C$ ensures that the numbers \[\frac{I_{K_X+B}(K_X+B)\cdot C_i}{d_{C_i}}\] are integers, where $I_{K_X+B}$ is the Cartier index of $K_X+B$, so by the bound, there are only finitely many possibilities for these numbers independently of $R_i$.  Similarly, if we truncate the sequence so that $\lambda/2<\lambda_i$ for each $i$, there are only finitely many possibilities for \[\frac{(K_X+B+\frac{\lambda}{2}H)\cdot C_i}{d_{C_i}}\]
	This means finitely many possibilities for \[\frac{H\cdot C_i}{d_{C_i}}\] and so finitely many possible values of $t_i$, which is a contradiction.
	
	We have shown that the sequence $t_i$ did in fact stabilize at $\lambda$, and so the numerator of $\lambda$ divides the integer \[-\frac{I_{K_X+B}(K_X+B)\cdot C_i}{d_{C_i}}\] which in turn divides $4I_{K_X+B}!$.
\end{proof}

\begin{proof}[Proof of \autoref{thm:surface_cone}]
	Exactly as in \cite[Proof of 1.1]{BW17}, using \cite[Theorem 3.15]{KM98}.
\end{proof}

\subsection{Cone theorem for $3$-folds I}

\begin{proposition}\label{prop:3-fold_find_curve}
	Let $X$ be a normal $\mathbb{Q}$-factorial $3$-fold over a field of characteristic $p>0$, and $B$ be an  $\bR$-divisor with coefficients in $[0,1]$ such that $0\leq M\equiv K_X+B$ is $\bR$-Cartier. Let $R$ be a $(K_X+B)$-negative extremal ray. Then $R$ contains a curve $C$, which is either contained in \[\Sing(X)\cup\Sing(\Supp(M+B))\] or  there is an integer $d_{C}$  such that $L\cdot_k C$ is divisible by $d_{C}$ for every Cartier divisor $L$ and 
	\[0<-(K_X+B)\cdot C\leq 4 d_{C}.\]
\end{proposition}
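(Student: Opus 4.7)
The plan is to reduce to the surface cone theorem (\autoref{thm:surface_cone}) by restricting to a suitable prime component of $\Supp(M)$. If every integral curve whose class lies in $R$ is contained in $\Sing(X)\cup\Sing(\Supp(M+B))$, the proposition is immediate, so I would start by picking an integral curve $C_0$ with $[C_0]\in R$ whose generic point lies in the regular locus of $X$ and in the snc locus of $\Supp(M+B)$. Since $M\equiv K_X+B$ gives $M\cdot_k C_0<0$, the curve $C_0$ is contained in a unique prime component $S$ of $\Supp(M+B)$, and $S$ is regular along $C_0$.

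Writing $b_S\in [0,1]$ for the coefficient of $S$ in $B$, I would set $B^{\natural}:=B+(1-b_S)S$, which keeps all coefficients in $[0,1]$ while bumping the coefficient of $S$ up to $1$; note that $K_X+B^{\natural}$ is $\bR$-Cartier by $\bQ$-factoriality. Because $S$ is the unique component of $\Supp M$ through the generic point of $C_0$, we have $S\cdot_k C_0<0$, so
\[
-(K_X+B)\cdot_k C_0\;\leq\;-(K_X+B^{\natural})\cdot_k C_0.
\]
Letting $\nu:S^n\to S\hookrightarrow X$ be the normalization, adjunction along the coefficient-one component $S$ of $B^{\natural}$ produces an effective $\bR$-divisor $B_{S^n}$ on the normal projective surface $S^n$ with $K_{S^n}+B_{S^n}=\nu^{*}(K_X+B^{\natural})$.

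Next I would apply \autoref{thm:surface_cone} to $(S^n,B_{S^n})$ and to a birational lift $\widetilde{C}_0\subseteq S^n$ of $C_0$ (which exists since $S$ is normal along $C_0$). This yields a decomposition
\[
[\widetilde{C}_0]\;=\;\eta+\sum_i a_i[\widetilde{C}_i]\in\overline{\NE}(S^n),
\]
with $(K_{S^n}+B_{S^n})\cdot\eta\geq 0$ and each $\widetilde{C}_i$ generating a $(K_{S^n}+B_{S^n})$-negative extremal ray with $-(K_{S^n}+B_{S^n})\cdot_k\widetilde{C}_i\leq 4 d_{\widetilde{C}_i}$ together with the divisibility property for Cartier divisors on $S^n$. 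Pushing forward by $\nu_{*}$ and using the projection formula $(K_X+B^{\natural})\cdot_k\nu_{*}\widetilde{C}_i=(K_{S^n}+B_{S^n})\cdot_k\widetilde{C}_i$ together with the extremality of $R$ and $(K_X+B^{\natural})\cdot R<0$, every effective summand must lie in $R\cup\{0\}$ and $\nu_{*}\eta=0$; in particular some $\widetilde{C}_i$ is not contracted, so $C:=\nu(\widetilde{C}_i)$ is an integral curve on $X$ with $[C]\in R$. If such a $\widetilde{C}_i$ happens to lie in the non-snc locus of $(S^n,B_{S^n})$, its image lies inside $\Sing(X)\cup\Sing(\Supp(M+B))$, matching case (a); otherwise case (b) of \autoref{thm:surface_cone} applies on $S^n$.

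The main obstacle I foresee is to identify the surface constant $d_{\widetilde{C}_i}$ with the threefold constant $d_C$. For this I would compare minimal fields of definition of the irreducible components of $\widetilde{C}_i\otimes_k\overline{k}\hookrightarrow S^n\otimes_k\overline{k}$ and of $C\otimes_k\overline{k}\hookrightarrow X\otimes_k\overline{k}$ through \autoref{prop:degree}, using that $\widetilde{C}_i\to C$ is birational, to see that the two constants coincide. The divisibility assertion then transfers via $L\cdot_k C=L|_{S^n}\cdot_k\widetilde{C}_i$ for any Cartier $L$ on $X$, and the chain
\[
-(K_X+B)\cdot_k C\;\leq\;-(K_X+B^{\natural})\cdot_k C\;=\;-(K_{S^n}+B_{S^n})\cdot_k\widetilde{C}_i\;\leq\;4 d_C
\]
delivers the desired bound.
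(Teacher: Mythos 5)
Your proof is correct and follows essentially the same route as the paper: restrict to the component $S$ of $\Supp(M)$ with $S\cdot R<0$, bump the coefficient of $S$ to one, apply adjunction to the normalization $S^n$, invoke \autoref{thm:surface_cone}, and push forward. The only cosmetic difference is that you raise the coefficient by adding $(1-b_S)S$ whereas the paper adds $aM$ (both are harmless since $S\cdot R<0$, hence also $M\cdot R<0$, keeps the relevant intersection numbers more negative), and you spell out the pushforward/extremality argument that the paper compresses into the single claim that a $(K_{\tilde S}+B_{\tilde S})$-negative extremal ray on $\tilde S$ maps onto $R$; your extra care about identifying the surface constant $d_{\widetilde C_i}$ with the threefold constant $d_C$ via \autoref{prop:degree} (using that $\widetilde C_i\to C$ is birational once you exclude curves in the non-normal locus) is exactly the right way to close the one gap the paper leaves implicit.
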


\begin{proof}
	By assumption we have $K_X+B\num M\geq 0$.  Therefore any extremal ray  with $(K_X+B)\cdot R<0$ is contained in the image of $\overline{NE}(S)\to \overline{NE}(X)$ for some component $S$ of $\Supp(M)$ with $S\cdot R<0$.    
	
	Let $a\>0$ be a real number such that $S$ has coefficient $1$ in $K_X+B+aM$.  Then by adjunction there is $B_{\tilde{S}}$ such that $K_{\tilde{S}}+B_{\tilde{S}}=(K_X+B+aM)|_{\tilde{S}}$, where $\tilde{S}$ is the normalization of $S$.  There is a $(K_{\tilde{S}}+B_{\tilde{S}})$-negative extremal ray $R_{\tilde{S}}$ whose image in $\overline{\NE}(X)$ is $R$.  The support of $B_{\tilde{S}}$ is contained in the pre-image of $\Sing(X)\cup\Sing(\Supp(M+B))$.    
	Then $R_{\tilde{S}}$ contains the required curve $C$ by \autoref{thm:surface_cone}, and \[(K_{\tilde{S}}+B_{\tilde{S}})\cdot_k C=(K_X+B+aM)\cdot_k C\leq (K_X+B)\cdot_k C<0.\]
\end{proof}

\begin{lemma}\label{lem:rationality_theorem}
	Let $(X,B)$ be a projective $\bQ$-factorial $3$-fold pair such that  $B$ is a $\bQ$-divisor with coefficients in $[0, 1]$,  $\kappa(K_X+B)\geq0$ and $K_X+B$ is not nef.  Then there is a natural number $n$ such that for any ample Cartier divisor $H$, if  \[\lambda=\inf\{t:K_X+B+tH\mathrm{\ is\ nef}\}\]  then $\lambda=\frac{n}{m}$ for some natural number $m$.
\end{lemma}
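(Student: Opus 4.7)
The plan is to mimic the surface case \autoref{lem:surface_rationality_theorem}, substituting \autoref{prop:3-fold_find_curve} for \autoref{prop:curve_hunt}. The hypothesis $\kappa(K_X+B)\geq 0$ supplies an effective $\bQ$-divisor $M\sim_{\bQ}K_X+B$, which gives exactly the numerical equivalence $K_X+B\equiv M\geq 0$ that \autoref{prop:3-fold_find_curve} requires, and moreover $K_X+B+sH\equiv M+sH\geq 0$ for every $s\in[0,\lambda]$.

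First I would choose a strictly increasing sequence $t_i\nearrow\lambda$ with $t_i<\lambda$. Since $K_X+B+t_iH$ is not nef, I invoke whatever preliminary cone theory is already available in \autoref{sec:keel}, or alternatively reduce to the surface cone theorem \autoref{thm:surface_cone} applied to the normalization of a component $S\subseteq\Supp(M+B)$ on which $K_X+B+t_iH$ has a negative intersection (as in the proof of \autoref{prop:3-fold_find_curve}), to produce a $(K_X+B+t_iH)$-negative extremal ray $R_i$. Setting $\lambda_i=\sup\{s:(K_X+B+sH)\cdot R_i\leq 0\}$ yields $\lambda_i\in[t_i,\lambda]$ with $(K_X+B+\lambda_iH)\cdot R_i=0$, hence $\lambda_i\nearrow\lambda$. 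By \autoref{prop:3-fold_find_curve}, each $R_i$ contains a curve $C_i$ such that either $C_i\subseteq\Sing(X)\cup\Sing(\Supp(M+B))$, or $0<-(K_X+B)\cdot_k C_i\leq 4d_{C_i}$ with $L\cdot_k C_i$ divisible by $d_{C_i}$ for every Cartier divisor $L$. The singular locus has dimension at most one and so contains only finitely many curves, contributing only finitely many values of $\lambda_i$; after truncation I may assume every $C_i$ satisfies the second bound.

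The key step is to show that $\lambda_i=\lambda$ for all $i\gg 0$. Let $I$ be the Cartier index of $K_X+B$, and set $a_i=-I(K_X+B)\cdot_k C_i/d_{C_i}$ and $b_i=H\cdot_k C_i/d_{C_i}$, which are positive integers (using $H$ Cartier and the divisibility part of \autoref{prop:3-fold_find_curve}), with $a_i\leq 4I$ by the length bound. Then $I\lambda_i=a_i/b_i$. Because the $a_i$ take only finitely many values, I can pass to a subsequence on which $a_i=a$ is constant; then $b_i=a/(I\lambda_i)$ is a sequence of integers strictly exceeding $a/(I\lambda)$ and converging to it. If $a/(I\lambda)\notin\mathbb{Z}$ then $b_i\geq\lceil a/(I\lambda)\rceil>a/(I\lambda)$, contradicting $b_i\to a/(I\lambda)$. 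So $a/(I\lambda)\in\mathbb{Z}_{>0}$ and $b_i=a/(I\lambda)$ for $i$ large, equivalently $\lambda_i=\lambda$ for $i$ large. Writing $\lambda=a/(Ib)$ with $a\in\{1,\ldots,4I\}$ and $b\in\mathbb{Z}_{>0}$, the numerator $n$ can be chosen to divide $(4I)!$, which depends only on $(X,B)$ and not on $H$.

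The main obstacle is producing the extremal rays $R_i$ at this stage of the paper, before the full threefold cone theorem of \autoref{sec:cone2} is available. I expect the resolution to be the surface reduction sketched above: any curve on which $K_X+B+t_iH$ is negative must lie in $\Supp(M+t_iH)$, and adjunction combined with \autoref{thm:surface_cone} on the normalization of an appropriate component produces an extremal ray whose image on $X$ gives the required $R_i$. This is exactly the mechanism underlying \autoref{prop:3-fold_find_curve}, so it should adapt verbatim once the correct component is chosen for each $t_i$.
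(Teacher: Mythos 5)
Your proposal is structurally the paper's argument (mimic the surface rationality lemma, replace \autoref{prop:curve_hunt} with \autoref{prop:3-fold_find_curve}, handle the finitely many curves in $\Sing(X)\cup\Sing(\Supp(M+B))$ separately, then run the arithmetic on $a_i/d_{C_i}$ and $b_i/d_{C_i}$). The arithmetic step you present is correct and slightly cleaner than the paper's wording. However, the step you yourself flag as the ``main obstacle'' is a genuine gap, and your proposed resolution does not close it.

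The input to \autoref{prop:3-fold_find_curve} is a $(K_X+B)$-negative \emph{extremal ray of $\overline{\NE}(X)$}. You must therefore produce, for each $t_i<\lambda$, an extremal ray $R_i$ of the threefold $X$ on which $K_X+B+t_iH$ is negative. Your option (a), invoking ``whatever preliminary cone theory is available in \autoref{sec:keel},'' is circular: \autoref{thm:cone_theorem} comes after this lemma in the paper and is proved \emph{from} it. Your option (b), running \autoref{thm:surface_cone} on a component $S\subset\Supp(M)$, does supply a $(K_{\tilde S}+B_{\tilde S}+t_iH|_{\tilde S})$-negative extremal ray of $\overline{\NE}(\tilde S)$, but the image of an extremal ray of $\overline{\NE}(\tilde S)$ in $\overline{\NE}(X)$ has no reason to be extremal in $X$; indeed in \autoref{prop:3-fold_find_curve} this direction is run the other way, starting from an extremal ray of $X$ already given and finding a compatible ray on $\tilde S$. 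The paper fills this exact gap by citing \cite[Theorem 7.6]{Tan15f}, a cone-theorem-with-ample-perturbation statement for threefolds, in parallel with the use of \cite[Theorem 7.5]{Tan15f} in the surface case; without that (or an equivalent source of extremal rays on $X$ itself), the proof does not get started.

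A secondary imprecision: you say that after truncation you may assume every $C_i$ avoids the singular locus. If $\lambda$ itself is computed on a singular-locus curve, you cannot truncate past it; but then, since there are only finitely many such curves, the sequence $\lambda_i$ immediately stabilizes at $\lambda$ and the numerator of $\lambda$ divides a fixed integer determined by those finitely many curves. The correct conclusion, as the paper notes, is that these curves ``may result in a larger value of $n$,'' not that they can be discarded.
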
	
\begin{proof}
	It follows exactly as in \autoref{lem:surface_rationality_theorem}, using \autoref{prop:3-fold_find_curve} and \cite[Theorem 7.6]{Tan15f}, except that finitely many of the extremal rays involved may contain curves from \[\Sing(X)\cup\Sing(\Supp(M+B))\] which do not satisfy the usual length bound.  These do not cause issues as there are only finitely many such curves, but may result in a larger value of $n$.
\end{proof}
\begin{theorem}\label{thm:cone_theorem}
	Let $X$ be a normal $\mathbb{Q}$-factorial $3$-fold over an arbitrary field of characteristic $p>0$. Let $B$ be an $\bR$-divisor with coefficients in $[0,1]$ such that $K_X+B$ is $\mbR$-Cartier. If $\kappa(K_X+B)\geq 0$, then there is a countable collection of curves $\{C_i\}_{i\in I}$ such that:
	\begin{enumerate}
		\item \[\overline{NE}(X)=\overline{NE}(X)_{K_X+B\geq 0}+\sum_i\mbR_{\>0}\cdot[C_i].\]
		\item The rays $[C_i]$ do not accumulate in the half space $(K_X+B)_{<0}$.
		\item  All but finitely many $C_i$ satisfy
		\[0<-(K_X+B)\cdot C_i\leq 4 d_{C_i}.\]
	\end{enumerate}
\end{theorem}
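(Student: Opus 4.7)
The plan is to adapt the classical cone theorem argument (cf.\ \cite[Theorem~3.15]{KM98} and our \autoref{thm:surface_cone}), using \autoref{lem:rationality_theorem} and \autoref{prop:3-fold_find_curve} as replacements for the usual rationality theorem and length-of-extremal-rays results. Since $\kappa(K_X+B)\geq 0$, I first fix an effective $\bR$-Cartier divisor $M\equiv K_X+B$ so that \autoref{prop:3-fold_find_curve} applies. Every $(K_X+B)$-negative extremal ray then contains a curve $C$ that either lies in the closed subset $\Sigma := \Sing(X)\cup\Sing(\Supp(M+B))$, which has dimension at most $1$ in $X$ and hence only finitely many irreducible curve components, or satisfies the length bound $0<-(K_X+B)\cdot_k C\leq 4d_C$ together with the divisibility of $L\cdot_k C$ by $d_C$ from \autoref{lem:constant}. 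The curves lying in $\Sigma$ contribute only finitely many ``exceptional'' extremal rays, which will account for the finite exceptional set in assertion (3).

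The main technical step is to show that for each ample Cartier divisor $H_0$ and each $\epsilon>0$, only finitely many non-exceptional extremal rays lie in the half-space $\{(K_X+B+\epsilon H_0)<0\}$. I would first assume $B$ is a $\bQ$-divisor and treat the $\bR$-case afterwards by writing $B$ as a convex combination of finitely many $\bQ$-divisors $B^{(j)}$ with support contained in $\Supp(B)$, and taking the union of the collections produced for each $(X,B^{(j)})$, since any $(K_X+B)$-negative extremal ray is $(K_X+B^{(j)})$-negative for some $j$. In the $\bQ$-case, if $C$ generates a non-exceptional ray in the half-space then the length bound forces $\epsilon(H_0\cdot_k C)<4d_C$; since by \autoref{lem:constant} the positive integer $H_0\cdot_k C$ is divisible by $d_C$, the ratio $(H_0\cdot_k C)/d_C$ takes only finitely many values. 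I would then argue on the compact slice $\overline{NE}(X)\cap\{\alpha:H_0\cdot_k\alpha=1\}$: any infinite sequence of such extremal rays would accumulate in the compact region cut out by $\{(K_X+B+\epsilon H_0)\leq 0\}$, and applying \autoref{lem:rationality_theorem} to $H_0$ and to small integral perturbations of $H_0$ within the ample cone, together with the finite list of possible values of $(H_0\cdot_k C_i)/d_{C_i}$, one can run the KM-style polyhedral argument to force two approaching rays to coincide, a contradiction.

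Once finiteness in each half-space is established, the countable collection $\{C_i\}$ is obtained by taking $\epsilon=1/n$ and forming the union over $n\in\mathbb{N}$, and assertions (2) and (3) follow immediately. For (1), I would use the standard separating-hyperplane argument: if the closed cone spanned by $\overline{NE}(X)_{K_X+B\geq 0}$ together with the collected rays were a proper subcone of $\overline{NE}(X)$, any supporting hyperplane would give a nef $\bR$-Cartier class negative on some $(K_X+B)$-negative extremal ray not in the list; applying \autoref{prop:3-fold_find_curve} to that ray would produce a generator contradicting the construction.

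The main obstacle is the finiteness step in the $\bQ$-case, where one must accommodate the factor $d_C$ from the length bound and the absence of a uniform integral denominator on intersection numbers over an imperfect field. This subtlety is precisely what forces the weaker ``all but finitely many'' formulation of (3), compared with the classical cone theorem over algebraically closed fields.
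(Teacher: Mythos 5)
Your proposal is essentially the same route the paper takes, just with the abstract machinery unrolled rather than cited.  The paper's actual proof is two sentences: it invokes \cite[Theorem 3.15]{KM98} as a black box and observes that the required hypothesis — that the nef threshold $\lambda$ is rational with a denominator of uniformly bounded numerator $n$ — is exactly \autoref{lem:rationality_theorem}.  You have correctly identified the same underlying inputs (\autoref{prop:3-fold_find_curve} for the length bound, \autoref{lem:rationality_theorem} for the threshold) and the same overall architecture (local finiteness in each half-space $\{(K_X+B+\epsilon H_0)<0\}$, then a separating-hyperplane argument), but the crucial step in your sketch — forcing two approaching non-exceptional rays to coincide — is deferred to ``run the KM-style polyhedral argument,'' which is precisely the content of \cite[Theorem 3.15]{KM98}; rederiving it buys nothing over the direct citation.

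Two smaller points.  First, the ``all but finitely many'' exception in (3) is \emph{not} caused by the absence of a uniform integral denominator over an imperfect field — that phenomenon is precisely what the correction factor $d_{C_i}$ absorbs.  It comes from the finitely many rays generated by curves inside $\Sing(X)\cup\Sing(\Supp(M+B))$, which \autoref{prop:3-fold_find_curve} excludes from the length bound; this is also the source of the ``may result in a larger value of $n$'' caveat inside the proof of \autoref{lem:rationality_theorem}.  Second, your $\bR$-to-$\bQ$ reduction via a convex decomposition $B=\sum\lambda_j B^{(j)}$ needs a little more care: one must arrange that each $K_X+B^{(j)}$ remains numerically equivalent to an effective $\bR$-divisor (so that \autoref{prop:3-fold_find_curve} and \autoref{lem:rationality_theorem} still apply), which is not automatic from an arbitrary decomposition of $B$.
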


\begin{proof}
	By \cite[Theorem 3.15]{KM98} it is enough to show that there is some integer $a(K_X+B)$ such that if $N$ is a nef Cartier divisor and if \[r:= \max\{t\in \bR: H+t(K_X+B) \mathrm{\ is\ nef}\},\] then $r$ is a rational number of the form $n/a(K_X+B)$ for some integer $n$.  This follows immediately from \autoref{lem:rationality_theorem}.
\end{proof}

The idea behind this bound on the length of extremal rays is that in many applications, one can replace a curve $C$ by the $1$-cycle $\frac{1}{d_{C}}C$ and run the exact same proofs as in the algebraically closed case.  We give the following as an example, which is a standard application of the length of extremal rays.

\begin{proposition}\cite[3.8]{BW17}\label{lem:length_nef_perturb}
	Let $X$ be a $\bQ$-factorial projective klt surface over $k$.  Let $V$ be a finite dimensional rational affine space of Weil divisors, and let \[\sL=\{0\<\Delta\in V:(X,\Delta) \mathrm{\ is\ lc}\}\]
	Fix $B\in \sL$.  Then there are real numbers $\alpha,\delta>0$ depending on $(X,B)$ and $V$ such that
	\begin{enumerate}
		\item if $\Gamma$ is any extremal curve and $(K_X+B)\cdot\frac{\Gamma}{d_{\Gamma}}>0$ then $(K_X+B)\cdot\frac{\Gamma}{d_{\Gamma}}>\alpha$.
		\item  If $\Delta\in \sL$, $||\Delta-B||<\delta$ and $(K_X+\Delta)\cdot R\leq 0$ for an extremal ray $R$ then \[(K_X+B)\cdot R\leq 0.\]
	\end{enumerate}
\end{proposition}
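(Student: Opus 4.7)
The plan is to adapt the proof of \cite[Proposition 3.8]{BW17} by normalizing intersection numbers with the multiplicity $d_\Gamma$ of \autoref{prop:degree}, so that the length bound from \autoref{thm:surface_cone} takes a uniform form across fields of definition. Fix an ample Cartier divisor $H$ on $X$; by \autoref{lem:constant}, $H\cdot_k\Gamma$ is a positive integer multiple of $d_\Gamma$, so in particular $H\cdot_k\Gamma\geq d_\Gamma$ for every curve $\Gamma$. Using $\mathbb{Q}$-factoriality of $X$, choose a basis $D_1,\ldots,D_r$ of the rational vector space $V-V$ consisting of $\mathbb{Q}$-Cartier Weil divisors, and fix a common positive integer $N$ such that $NB_0$ (for a rational reference point $B_0\in V$) and each $ND_j$ are Cartier. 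The cone theorem \autoref{thm:surface_cone} applied to the klt pair $(X,B)$ produces a countable collection $\{C_i\}$ of extremal curves satisfying $-(K_X+B)\cdot_k C_i/d_{C_i}\in(0,4]$; these are automatically $(K_X+B)$-negative.

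For (1), let $\Gamma$ be an extremal curve with $(K_X+B)\cdot\Gamma/d_\Gamma>0$. Then the ray $[\Gamma]$ lies in $\overline{NE}(X)_{K_X+B\geq 0}$. Apply \autoref{thm:surface_cone}(2) with ample perturbation $\varepsilon H$ for small rational $\varepsilon>0$: either $\Gamma$ is among the finite set $\{C_i\}_{i\in I_{\varepsilon H}}$ of extremal curves making $(K_X+B+\varepsilon H)$ negative, in which case the positive values of $(K_X+B)\cdot\Gamma/d_\Gamma$ are finite in number and thus bounded below by some $\alpha_1>0$; or $\Gamma$ lies in the $(K_X+B+\varepsilon H)$-nef region, in which case the quantization $N(K_X+B_0)\cdot_k\Gamma,\ ND_j\cdot_k\Gamma\in d_\Gamma\mathbb{Z}$ combined with the non-accumulation statement \autoref{thm:surface_cone}(3) confines $(K_X+B)\cdot\Gamma/d_\Gamma$ to a discrete set whose positive part is bounded below by some $\alpha_2>0$. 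Set $\alpha=\min(\alpha_1,\alpha_2)$.

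For (2), argue contrapositively. Write $\Delta-B=\sum_j a_j D_j$ with $|a_j|<\delta$. For any extremal curve $\Gamma$ in a ray $R$ with $(K_X+B)\cdot R>0$, part (1) gives $(K_X+B)\cdot\Gamma/d_\Gamma\geq\alpha$. A uniform upper bound $|D_j\cdot\Gamma/d_\Gamma|\leq M$ on the relevant extremal curves (obtained by the same case split as in the proof of (1), using the finiteness of extremal rays in any $(K_X+B+\varepsilon H)$-negative region) yields
\[
(K_X+\Delta)\cdot\Gamma/d_\Gamma=(K_X+B)\cdot\Gamma/d_\Gamma+\sum_j a_j\,D_j\cdot\Gamma/d_\Gamma\geq\alpha-rM\delta,
\]
so choosing $\delta<\alpha/(rM)$ gives $(K_X+\Delta)\cdot\Gamma/d_\Gamma>0$, contradicting $(K_X+\Delta)\cdot R\leq 0$.

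The main obstacle is establishing the lower bound $\alpha_2$ in (1) and the companion uniform upper bound $M$ in (2), both for extremal curves lying in the $(K_X+B+\varepsilon H)$-nef region where infinitely many rays may accumulate on the wall $\{K_X+B=0\}$. Both reductions rely on combining the discreteness of intersection values modulo the Cartier indices of $NB_0$ and $ND_j$ with \autoref{thm:surface_cone}(3), mirroring the analogous step of \cite{BW17}.
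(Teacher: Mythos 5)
Your case split in part (1) is flawed in a way that collapses the whole argument.  You suppose $(K_X+B)\cdot\Gamma/d_\Gamma>0$ and then ask whether $\Gamma$ is among the finitely many curves $C_i$ appearing in \autoref{thm:surface_cone}(2) for the perturbation $\varepsilon H$.  But those $C_i$ satisfy $(K_X+B+\varepsilon H)\cdot C_i<0$, hence certainly $(K_X+B)\cdot C_i<0$; so under your hypothesis $\Gamma$ is \emph{never} one of them.  The first branch of your dichotomy is vacuous, and everything rests on the second branch --- the one you yourself flag as ``the main obstacle'' and leave unproved.  The quantization observation there is insufficient on its own: when $B$ is irrational, the quantities $(K_X+B)\cdot\Gamma/d_\Gamma$ are irrational combinations of integers and are \emph{not} confined to a discrete set; nothing in your argument rules out a sequence of extremal curves with these values tending to $0^+$.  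Likewise, the non-accumulation clause of \autoref{thm:surface_cone}(3) concerns rays in $\overline{NE}(X)_{K_X+B<0}$ and says nothing about rays on the positive side of the wall.

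The idea you are missing is the paper's rational decomposition: write $B=\sum a_iB_i$ as a convex combination of $\bQ$-divisors $B_i\in\sL$ (so each $(X,B_i)$ is lc).  For any extremal curve $\Gamma$, the length bound of \autoref{thm:surface_cone}(4) gives $(K_X+B_i)\cdot\frac{\Gamma}{d_\Gamma}\geq -4$ (trivially true if the intersection is nonnegative, and by part (4)(b) if negative).  If in addition $(K_X+B)\cdot\frac{\Gamma}{d_\Gamma}=\sum a_i(K_X+B_i)\cdot\frac{\Gamma}{d_\Gamma}<1$, then each term is also bounded above; since each $(K_X+B_i)\cdot\frac{\Gamma}{d_\Gamma}$ is a rational number with denominator bounded independently of $\Gamma$ (the Cartier index of $B_i$, combined with $L\cdot_k\Gamma\in d_\Gamma\mathbb{Z}$ for Cartier $L$), each term takes only finitely many values, and so does the sum.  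This yields $\alpha$.  Part (2) then follows by the same decomposition applied to $\Delta$ and $B$ together, not by the $\varepsilon H$-perturbation route you sketched.
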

\begin{proof}
	The proof of the proposition follows from that of \cite[3.8]{BW17} word for word after substituting $\frac{1}{d_{\Gamma}}\Gamma$ for $\Gamma$.  We include only that of the first part.
	\begin{enumerate}
		\item If $B$ is a $\bQ$-divisor then this is obvious.  Otherwise let $B=\sum a_iB_i$ with $a_i\geq 0$, $\bQ$ divisors $B_i>0$ and  $\sum a_i=1$.  
		We have \[(K_X+B)\cdot\frac{\Gamma}{{d_\Gamma}}=\sum a_i(K_X+B_i)\cdot\frac{\Gamma}{d_\Gamma}\]
		Assuming $(K_X+B)\cdot \frac{\Gamma}{{d_\Gamma}}<1$ then  there are only finitely many possibilities for the intersection numbers $(K_X+B_i)\cdot \frac{\Gamma}{d_{\Gamma}}$ because $(K_X+B_i)\cdot\frac{\Gamma}{d_\Gamma}\geq -4$.  This in turn implies that there are only finitely many possibilities for $(K_X+B)\cdot\frac{\Gamma}{d_{\Gamma}}$.  So the existence of $\alpha$ is clear.
		\item See \cite[Proposition 3.8]{BW17} and proceed as above.
	\end{enumerate}
\end{proof}

\begin{remark}
	The same statement for $3$-folds over $F$-finite fields follows from the improved \autoref{thm:cone} which we prove later.
	\end{remark}

\subsection{Base point free theorem I}

The following theorem was claimed over arbitrary fields in \cite[Theorem 0.5]{Kee99}.  However, the proof applied a base change of the ground field to reduce to the case of an algebraically closed field, and the hypotheses of the theorem are not stable under this base change (see  \cite[Theorem 1.1]{Tan15f}, \cite[Theorem 1.1]{PW17} and \cite[Theorem 1.1]{ji_waldron}).  In this section we prove this theorem over arbitrary fields by following Keel's original arguments along with the canonical bundle formula for purely inseparable field extensions developed in the above mentioned works.

\begin{theorem}[Keel's Base-Point Free Theorem]\label{thm:keel's-bpf-thm}
	Let $X$ be a normal projective $3$-fold over an arbitrary field $k$ of positive characteristic.  Let $L$ be a nef and big Cartier divisor on $X$.  If $L-(K_X+\Delta)$ is nef and big for some boundary divisor $\Delta$ with coefficients in $[0,1)$, then $L$ is EWM.  
\end{theorem}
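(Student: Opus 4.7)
The plan is to adapt Keel's original argument from \cite{Kee99} by reducing to the exceptional locus $\mathbb{E}(L)$ and treating this locus with the tools available over imperfect fields. The key ingredients are Keel's EWM criterion (which holds over arbitrary fields of positive characteristic), the canonical bundle formula for purely inseparable base change developed in \cite{Tan15f, PW17, ji_waldron}, and the surface machinery built earlier.

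First I would invoke \cite[Theorem 1.9]{Kee99} to reduce the problem: $L$ is EWM on $X$ if and only if $L|_{\mathbb{E}(L)}$ is EWM. Since $L$ is big and nef, $\dim \mathbb{E}(L) \le 2$, and one can pass to the reduced induced structure on each irreducible component. On $1$-dimensional components, any nef divisor is EWM (degree zero components are numerically trivial and thus EWM; positive-degree components are ample, hence EWM). The substantive step is therefore to show EWM on each $2$-dimensional component.

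Next, for a $2$-dimensional component $S \subseteq \mathbb{E}(L)$, let $\nu:\widetilde S \to S$ be its normalization. It suffices to show $\nu^*(L|_S)$ is EWM on $\widetilde S$ (EWM descends under finite surjective morphisms). Restricting the hypothesis $L - (K_X+\Delta)$ nef and big to $S$ and applying adjunction on $S$, together with the canonical bundle formula for the potentially inseparable map $\widetilde S \to S \hookrightarrow X$, we obtain an expression $(K_X+\Delta)|_{\widetilde S} = K_{\widetilde S} + \Delta_{\widetilde S} + E$, where $E\geq 0$ arises from the inseparability/conductor contribution and $\Delta_{\widetilde S}$ is an effective $\bR$-divisor. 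Absorbing $E$ into the boundary (and using that $L|_{\widetilde S}$ is nef) gives us a surface pair on which $L|_{\widetilde S} - (K_{\widetilde S} + \Delta_{\widetilde S}')$ is nef and big, with $\Delta_{\widetilde S}'$ effective. We then apply a BPF/EWM statement for surfaces over arbitrary fields (following the surface LMMP of \cite{Tan18} together with \autoref{thm:surface_cone}) to conclude that $L|_{\widetilde S}$ is EWM.

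The main obstacle will be controlling the coefficients of $\Delta_{\widetilde S}'$: the canonical bundle formula for purely inseparable base change introduces an effective divisor whose coefficients can be as large as $p-1$ times integer multiples of certain residual coefficients, which ruins the ``coefficients in $[0,1)$'' hypothesis for a naive invocation of a klt BPF theorem. The workaround is either (a) to perturb by a small multiple of the big Cartier class $L|_{\widetilde S}-(K_{\widetilde S}+\Delta_{\widetilde S}')$ to absorb bad coefficients while preserving nef-and-bigness, or (b) to stratify the non-klt locus on $\widetilde S$ and apply Keel's criterion once more on that lower-dimensional stratum, allowing an inductive finish. Either route reduces the statement to settings where only the surface EWM results and the inductive application of Keel's criterion are required, yielding the theorem.
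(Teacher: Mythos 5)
Your high-level outline starts on the same track as the paper (reduce via Keel's criterion to the exceptional surface components, normalize, use the inseparable-base-change canonical bundle formula), but the crucial step is resolved very differently, and the resolution you sketch has a real gap.

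You correctly identify the obstacle: after normalizing an exceptional surface $S$ and applying the canonical bundle formula for the possibly inseparable map $\widetilde S \to S$, the effective correction divisor can have coefficients on the order of $p-1$, so one cannot simply invoke a klt surface base-point-free theorem over the imperfect field. But neither of your proposed workarounds is what the paper does, and neither is clearly sound. Workaround (a), perturbing by a small multiple of the big class $L|_{\widetilde S}-(K_{\widetilde S}+\Delta_{\widetilde S}')$, does not reduce the boundary coefficients below $1$; the conductor/inseparability contribution is a fixed effective divisor whose coefficients are unaffected by adding a perturbation elsewhere, and subtracting it would destroy effectivity. Workaround (b), stratifying the non-klt locus and re-applying Keel, is not obviously a finite or well-founded induction, and the paper offers no surface EWM theorem over imperfect fields to bootstrap from.

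The paper's actual route avoids proving any surface statement over the imperfect field at all. Following Keel's decomposition $L=A+N_0+N_1+N_2$ it reduces to $T=\Supp(N_1)$, then for each component $T_i$ passes further to $S_i$, the normalization of $(\widetilde T_i\otimes_k\overline k)_{\mathrm{red}}$, which lives over the \emph{algebraically closed} field $\overline k$. There Keel's original Riemann--Roch counting argument applies verbatim, with the inseparability divisor $\mathfrak{F}_i$ from \cite[Theorem 1.1(b)]{ji_waldron} simply added into the divisor $D_i$ that plays the role of Keel's $Q_i$. The key geometric control is not a bound on coefficients but the observation that the general fibers of $S_i\to Z_{S_i}$ are conics, so $D_i$ has at most one horizontal component of relative degree one; this lets one invoke \cite[Corollary 2.15]{Kee99} to conclude EWM on $(T_i\otimes_k\overline k)_{\mathrm{red}}$, and then Keel's descent lemmas \cite[Lemma 1.5, Lemma 2.2]{Kee99} bring the conclusion back down to $T_i$ over $k$. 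So the large-coefficient problem you flag is indeed the crux, but it is sidestepped by base-changing to $\overline k$ and reusing Keel's fiber-geometry argument there, not by a coefficient-taming perturbation or stratification over the imperfect field.
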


\begin{remark}
	Note that Keel's theorem \cite[Theorem 0.5]{Kee99} had a `$\mbQ$-factoriality' assumption on $X$, however, it was never used in his proof. So we remove this extra condition on $X$ here. 
\end{remark}

\begin{proof}
	First replacing $k$ by a finite extension we may assume that $H^0(X, \mcO_X)=k$. Let $k^s$ be the separable closure of $k$, and $X_{k^s}=X\otimes_k k^s$ the base change. Since $X$ is normal and $k^s/k$ is separable, $X_{k^s}$ is normal. Note that $X_{k^s}$ descends to a finite sub-extension $k'/k\subset k^s/k$, in particular, $\psi:X_{k'}\to X$ is a finite \'etale morphism and thus $K_{X_{k'}}=\psi^*K_X$. Consequently, if $\phi:X_{k^s}\to X$ is the projection, then $K_{X_{k^s}}=\vphi^*K_X$. Thus replacing $k$ by $k^s$ and using \cite[Lemma 2.2]{Kee99} from now on we may assume that $k$ is separably closed and $H^0(X, \mcO_X)=k$.
	In the following steps we will closely follow the proof and notations of \cite[Theorem 0.5]{Kee99}.\\ 
	
	\emph{Step 0:}
	Following the notation of the proof of \cite[Theorem 0.5]{Kee99}, write $L=A+N_0+N_1+N_2$, where $A$ is ample, $N_i\geq 0$ and the restriction of $L$ to each component of $N_i$ has numerical dimension $i$.  By the same argument as in \cite[Theorem 0.5]{Kee99}, it is enough to show that $L|_T$ is EWM, where $T=\Supp(N_1)$.  Let $T_i$ be the irreducible components of $N_1$, and $\pi_i:\tilde{T_i}\to T_i$ be the corresponding normalizations\\   
	
	\emph{Step 1:}
	Fix $i$, and in the next several steps we will show that $L|_{T_i}$ is EWM. Consider the commutative diagram
	\[\xymatrix{
		\sC\ar[r]\ar[d] & \tilde{T_i}\ar[d] \\
		\sD\ar[r] & T_i
	}\]
	defined by the conductor schemes of $\pi:\tilde{T_i}\to T_i$.

	The morphism $\tilde{T}_i\to T_i$ is an isomorphism outside $\Supp(\sC)$.  Let $S_i$ be the normalization of $(\tilde{T}_i\otimes_k\overline{k})_{\mathrm{red}}$. Then we have the following digram:
	
		\[	\xymatrix{
		S_i\ar[r] & \tilde{T_i}\otimes_k\overline{k}\ar[d]\ar[r]& \tilde{T}_i\ar[d] \\
		& T_i\otimes_k\overline{k}\ar[r]& T_i
	}
	\]
	
Note some important facts about this diagram:
\begin{enumerate}
	\item It follows from \cite[Theorem 1.1(b)]{ji_waldron} that there are effective Weil divisors $\frak F_i$ and $\frak M_i$  on $S_i$ such that the $\Supp (\frak{F}_i)$ is equal to the locus where $S_i\to (\tilde{T_i}\otimes_k \overline{k})_{\red}$ fails to be an isomorphism, and satisfies
	\begin{equation}\label{eqn:inseparable-cbf}
	K_{S_i}+\frak F_i+\frak M_i\sim \sigma^*K_{\tilde{T}_i}
	\end{equation}
	Though \cite[Theorem 1.1]{ji_waldron} assumes that the ground field of $\tilde{T}_i$ is a function field, we may find such subfields of $\overline{k}$ over which  $S_i$ and $T_i$ are defined, via the process outlined in \cite[Subsection 2.1]{ji_waldron}.  After obtaining the divisor $\frak F_i$ on those subfields, we can tensor back up to obtain it on $S_i$ and $T_i$.
	\item $\tilde{T_i}\otimes_k\overline{k}\to T_i\otimes_k\overline{k}$ is an isomorphism outside of $\Supp(	\sC\otimes_k\overline{k})$, and hence so is $(\tilde{T_i}\otimes_k\overline{k})_{\red}\to (T_i\otimes_k\overline{k})_{\red}$
	\item $S_i$ is the normalization of $(T_i\otimes_k \overline{k})_{\red}$  by \cite[Prop. 1.22, Ch. 4]{Liu02}, since $S_i$ is normal and $S_i\to (T_i\otimes_k \overline{k})_{\red}$ is a finite birational morphism.
	\item $S_i\to (T_i\otimes_k\overline{k})_{\red}$ is an isomorphism outside of $\Supp(\sC)\cup \Supp(\frak F_i)$.\\
	\end{enumerate}

Now similar to the \cite[Eqn (5.0.3)]{Kee99} we have an effective Weil divisor $D_i$, an effective $\bQ$-divisor $R_i$ and an ample $\bQ$-divisor $A_i$ on $S_i$ such that
\begin{equation}\label{eqn:base-change-adjunction}
(1+\lambda_i)L|_{S_i}=K_{\tilde{T}_i}+D_i+R_i+A_i,	
\end{equation} 
where $\Supp(D_i)$ contains the support of $\frak F_i$ and the the pullback of the support of $\mcC$. As $S_i$ is defined over an algebraically closed field, using the Riemann-Roch argument from  Keel's proof on  the divisor $D_i$ in place of his $Q_i$ we see that $L|_{S_i}$ is EWM.  

		By \autoref{eqn:base-change-adjunction}, the general fibers of the associated map $h_i:S_i\to Z_{S_i}$ are conics, and so $D_i$ has at most one horizontal component, which is of degree one to $Z_{S_i}$ and has coefficient one.  So in particular, $\frak F_i+C_i$ intersects the general fibers of $h_i$ in at most $1$ point, and thus $L|_{({T}_i\otimes_k\overline{k})_{\red}}$ is EWM by \cite[Corollary 2.15]{Kee99}. Then from \cite[Lemma 1.5]{Kee99} it follows that $L|_{{T}_i\otimes_k \overline{k}}$ is EWM, and hence from \cite[Lemma 2.2]{Kee99} that $L|_{T_i}$ is EWM.

	\emph{Step 3:}  Let $g_i:T_i\to Z_i$ be the associated morphism for each $i$. We need to glue these $g_i$'s to show that $L|_T$ is EWM, where $T=\cup_i T_i$. The rest of Keel's argument works essentially unchanged.  By induction on $i$, it is enough to show that we can glue $g_W$ to $f_i$ where $W'=\cup_{i=1}^{n-1}T_i$ where we assume that $L|_{W'}$ is EWM.    But the divisors $T_i$ in which $T_i$ meets $W$ are all contained in the support of $D_i$, and we have already seen that the map $g_i|_{Q_i}$ has geometrically connected fibers at all but finitely many points.  We conclude by \cite[Corollary 2.12]{Kee99}.
	
\end{proof}

\subsection{Contraction of extremal rays}
\begin{theorem}[Contraction Theorem]\label{thm:contraction_theorem}
	Let $(X, \Delta)$ be a normal $\mbQ$-factorial $3$-fold dlt pair, projective over an arbitrary field of characteristic $p>0$, such that $K_X+\Delta$ is pseudo-effective. Let $R$ be a $(K_X+\Delta)$-negative extremal ray. Then the corresponding contraction $f:X\to Z$ exists in the category of algebraic spaces. Moreover, if $(X, \Delta=S+B)$ is plt, $S$ is normal and $S\cdot R<0$, then $f:X\to Z$ is a birational morphism to a projective $\mbQ$-factorial $3$-fold with $\rho(X/Z)=1$.
\end{theorem}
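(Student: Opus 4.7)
The plan is to find a nef Cartier divisor $L$ supporting $R$ whose difference from $K_X+\Delta$ is ample, apply Keel's base point free theorem to produce the contraction to a proper algebraic space, and in the plt case leverage adjunction to $S$ to improve this to a projective morphism and a $\mbQ$-factorial target.

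For the first (general) assertion, I will construct $L$ via standard manipulations of the cone and rationality theorems (Theorem \ref{thm:cone_theorem} and Lemma \ref{lem:rationality_theorem}). Let $H$ be an ample Cartier divisor on $X$. Since $K_X+\Delta$ is pseudo-effective and $A$ is ample, a small perturbation $K_X+\Delta+\varepsilon A$ is big, so $\kappa\geq 0$ and Lemma \ref{lem:rationality_theorem} applies; the nef threshold $\lambda_0:=\inf\{t:K_X+\Delta+tH\text{ nef}\}$ is therefore rational. Set $L_0:=K_X+\Delta+\lambda_0 H$, a nef $\bQ$-Cartier divisor with $L_0\cdot R=0$ that strictly supports $R$ (by the discreteness part of the cone theorem, after possibly adjusting $H$). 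Taking $m$ sufficiently divisible, $L:=mL_0$ is Cartier; moreover
\[L-(K_X+\Delta)=(m-1)L_0+m\lambda_0 H\]
is nef plus ample, hence ample, and $L$ itself is big since it is the sum of a pseudo-effective and an ample class. Keel's base point free theorem (Theorem \ref{thm:keel's-bpf-thm}) then produces $f:X\to Z$, EWM with respect to $L$; by construction the contracted subvarieties are exactly those whose classes lie in $R$, giving the desired contraction in the category of algebraic spaces.

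For the plt case with $\Delta=S+B$, $S$ normal and $S\cdot R<0$, every curve whose class is in $R$ must be contained in $S$, so the $L$-exceptional locus $\mathbb{E}(L)$ is contained in $S$. Since $\dim\mathbb{E}(L)\leq 2$, the morphism $f$ is birational. To upgrade from EWM to a projective morphism, by Keel's semi-ampleness criterion \cite[Theorem 1.9]{Kee99} it suffices to show that $L|_S$ is semi-ample. By plt adjunction there is an effective divisor $B_S$ on $S$ with $(K_X+\Delta)|_S=K_S+B_S$ and $(S,B_S)$ klt, so
\[L|_S-(K_S+B_S)=(L-(K_X+\Delta))|_S\]
is ample. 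As $L|_S$ is also nef and big, the surface base point free theorem over arbitrary fields \cite{Tan18} gives that $L|_S$ is semi-ample, whence $L$ is semi-ample on $X$ and $Z$ is projective.

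Finally, $\rho(X/Z)=1$ is immediate from the fact that $L$ supports only the ray $R$. For $\mbQ$-factoriality of $Z$, I will use the standard divisorial-contraction argument: given a Weil divisor $D$ on $Z$, its strict transform $D'$ on $X$ is $\mbQ$-Cartier by $\mbQ$-factoriality of $X$, and because $\rho(X/Z)=1$ with $S$ the unique $f$-exceptional prime divisor, $D'+cS$ becomes numerically trivial over $Z$ for a suitable rational $c$, hence descends to a $\mbQ$-Cartier divisor on $Z$. The main obstacle in the argument is the semi-ampleness of $L|_S$ in the plt step: $S$ is only a normal surface over a potentially imperfect residue field, so the semi-ampleness must be run through Tanaka's surface LMMP together with plt adjunction in positive characteristic, rather than a characteristic-zero Kawamata–Viehweg/BPF input. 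A lesser subtlety is the reconciliation of the hypothesis ``pseudo-effective'' against the hypothesis ``$\kappa\geq 0$'' required by Lemma \ref{lem:rationality_theorem}, handled by the small ample perturbation above.
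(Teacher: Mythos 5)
Your overall route matches the paper's — the paper simply cites \cite[Theorem 5.3]{HX15} together with the cone theorem, the rationality lemma, Keel's base point free theorem, and Tanaka's surface semi-ampleness for the plt step, which is exactly the scaffold you rebuild. However there are two substantive gaps and one imprecision.

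First, you apply Keel's base point free theorem (\autoref{thm:keel's-bpf-thm}) with the given dlt boundary $\Delta$, but that theorem as stated requires the boundary to have coefficients in $[0,1)$. A dlt pair is allowed to have components with coefficient exactly $1$ (indeed $S=\lfloor\Delta\rfloor$ in your plt case has coefficient $1$), so you cannot apply the theorem directly. You need to pass to a perturbed boundary $\Delta_\varepsilon:=\Delta-\varepsilon\lfloor\Delta\rfloor$ with $\varepsilon>0$ small enough that $R$ is still $(K_X+\Delta_\varepsilon)$-negative and the nef supporting divisor $L$ still satisfies the ampleness of $L-(K_X+\Delta_\varepsilon)$; this is what HX15 do. Your proposal omits this step.

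Second, your argument for $\bQ$-factoriality of $Z$ treats $S$ as ``the unique $f$-exceptional prime divisor'' and descends $D'+cS$. But in the plt case the contraction $f$ may be small (this is precisely the pl-flipping situation where $\mathbb{E}(L)\subsetneq S$ has dimension one), in which case $S$ is not $f$-exceptional and your argument does not apply. In fact if $f$ is a nontrivial small contraction with $X$ $\bQ$-factorial and $\rho(X/Z)=1$ then $Z$ is \emph{never} $\bQ$-factorial: given any flipping curve $C$ and a $\bQ$-Cartier $D$ on $X$ with $D\cdot C\neq 0$, $\bQ$-factoriality of $Z$ would force $D=f^*f_*D$ (since their difference is $f$-exceptional of codimension $\geq 2$) and hence $D\cdot C=0$, a contradiction. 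The paper's own \autoref{lem:algebraic-space-to-varieties} accordingly only concludes that $V$ is projective, not $\bQ$-factorial, and your written argument only applies to the divisorial case.

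Finally, your fix for the mismatch between ``pseudo-effective'' and ``$\kappa\geq 0$'' is too loose: $K_X+\Delta+\varepsilon A$ with $A$ merely an ample $\bQ$-divisor is not of the form $K_X+B$ with $B$ a boundary in $[0,1]$, which is a hypothesis of \autoref{lem:rationality_theorem}. One must instead choose $A$ effective, general (using Bertini over an infinite field), add it to the boundary while keeping the pair dlt, and check that $R$ is still negative for the perturbed pair; the rationality and cone theorems are then applied to that new pair, not to $K_X+\Delta+\varepsilon A$ as a bare $\bR$-divisor.

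Minor: your displayed formula for $L-(K_X+\Delta)$ has a slip; if $L=mL_0$ with $L_0=K_X+\Delta+\lambda_0H$ then $L-(K_X+\Delta)=(m-1)L_0+\lambda_0 H$, not $(m-1)L_0+m\lambda_0H$. The conclusion (nef plus ample is ample, provided $\lambda_0>0$) is unaffected.
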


\begin{proof}
	Follows similarly as in the proof of \cite[Theorem 5.3]{HX15} using Cone \autoref{thm:cone_theorem}, \autoref{lem:rationality_theorem} and Keel's base-point free \autoref{thm:keel's-bpf-thm}. For the plt case we use \cite[Theorem 1.1]{Tan15} instead of \cite[Lemma 2.3.5]{KK}.
\end{proof}

\subsection{LMMP with scaling and weak Zariski decompositions}

We can use the modified cone theorem to find the required extremal rays for these LMMPs as defined in \cite{Bir16}, and run those LMMPs assuming the existence of the required contractions and flips.  The proofs of \cite[Sec 3]{Bir16} go through by changing all expressions of the form $D\cdot \Gamma$ for some divisor $D$ into $D\cdot\frac{\Gamma}{d_{\Gamma}}$ and using \autoref{thm:surface_cone} in place of \cite{Tan12j}.

\section{Complements on surfaces over $F$-finite fields}\label{sec:complements}

In this section we will prove the following theorem.
\begin{theorem}\label{thm:globally-f-regular}
	Let $k$ be an $F$-finite field of characteristic $p>5$. Let $(S, B)$ be a $2$-dimensional pair over $k$ with a proper birational morphism $f:S\to T$ to a normal surface germ $(T, 0)$ such that 
	\begin{enumerate}
		\item $(S, B)$ is klt,
		\item $-(K_S+B)$ is $f$-nef, and
		\item the coefficients of $B$ are in the standard set $\left\{1-\frac{1}{n}:n\in\mbN\right\}$.
	\end{enumerate}
	Then $(S, B)$ is globally $F$-regular over $T$.\\ 
\end{theorem}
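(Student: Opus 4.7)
The plan is to adapt the proof of the corresponding statement over algebraically closed fields (a variant of \cite[Proposition 2.11]{HX15}) to the $F$-finite setting, via the theory of complements for surfaces.

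First I would reduce to the case where $k$ is infinite by passing to a purely transcendental extension $k(t)/k$ if needed; this preserves the $F$-finite assumption and every hypothesis in the theorem. By taking a $\bQ$-factorial dlt modification of $(S,B)$ (which exists by \cite{Tan18}) and noting that global $F$-regularity behaves well under such modifications, I may further assume that $S$ is $\bQ$-factorial. Applying the negativity lemma (\autoref{lem:negativity-lemma}) to $-(K_S+B)$, which is $f$-nef with $f$ birational over a germ, gives enough control on $f_*(K_S+B)$ and its pullback to allow one to work locally on $T$.

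The heart of the argument is to construct a $\bQ$-\emph{complement}: an effective divisor $B^+\geq B$ with coefficients again in the standard set $\left\{1-\tfrac{1}{n}\right\}$ such that $K_S+B^+\sim_{\bQ,f}0$ and $(S,B^+)$ is log canonical. To produce it, I would run the relevant MMP over $T$ using \autoref{thm:surface_cone} and \cite{Tan18}, combined with adjunction along the reduced part of $B$ and the observation that $1$-dimensional complements exist on curves over arbitrary fields. The ACC for log canonical thresholds \cite[Theorem 6.1]{Das18} ensures the standard-coefficient condition is preserved, and the negativity lemma guarantees that the extension $B^+-B$ is $f$-exceptional in the appropriate sense. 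This step is where the argument differs most from the algebraically closed case, since boundary curves need not be geometrically normal and one must do adjunction via the normalization with the correct different, using the canonical bundle formula for purely inseparable extensions from \cite{ji_waldron}.

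With $B^+$ in hand, I would form the index-one cyclic cover $\pi\colon \widetilde{S}\to S$ associated to the $\bQ$-Cartier divisor $K_S+B^+$. Its degree divides a power of the common denominator of the standard coefficients and is prime to $p$ because $p>5$, so $\pi$ is tame. On $\widetilde{S}$ we get $K_{\widetilde{S}}\sim_{\bQ,\widetilde{f}}0$ with $\widetilde{S}$ canonical, and canonical surface singularities in characteristic $p>5$ over an $F$-finite field are strongly $F$-regular; this is the $F$-finite generalization of Hara's theorem, which holds in the generality needed because the $F$-singularity machinery assembled in \cite[\S2]{HX15} functions over $F$-finite fields. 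A tame cyclic cover transfers strong $F$-regularity back down as global $F$-regularity of $(S,B^+)$ over $T$ in the standard way (via compatible splittings along the ramification), and since $B^+\geq B$, the same splittings witness global $F$-regularity of $(S,B)$ over $T$. The main obstacle is the construction of the complement $B^+$ with standard coefficients: it is here that one must combine the imperfect-field surface LMMP with adjunction along possibly geometrically non-normal boundary curves, while preserving both the tameness and the standard-coefficient conditions needed to transfer $F$-regularity through the index-one cover.
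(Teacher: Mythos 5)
Your proposal diverges from the paper's proof at the crucial step, and that step contains a genuine gap.

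The paper follows \cite[Theorem 3.1]{HX15} essentially verbatim: after constructing an $N$-complement $B^c\geq B$ (with $N\in\{1,2,3,4,6\}$, Theorem~\ref{thm:boundedness-of-complements}) and passing to a dlt modification, it lifts Frobenius splittings from the boundary curve $C=\lfloor B^c_{\tilde S}\rfloor$ via $F$-adjunction; the work lies in verifying that HX15's Lemmas 3.5 and 3.8 and Proposition 3.10 persist over $F$-finite fields, which requires the case analysis of Table~\ref{table:comp} and the global $F$-regularity of the specific pairs $(\mathbb{P}^1,D_i)$. Your proposal instead forms the index-one cyclic cover of $K_S+B^+$ and aims to invoke Hara/Sato--Takagi for canonical surface singularities.

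The gap: by construction (Theorem~\ref{thm:boundedness-of-complements}) the complement $(S,B^+)$ is log canonical \emph{but not klt} --- this is unavoidable, since the entire point of the complement in HX15 is to manufacture a non-klt place, namely the boundary curve from which one performs $F$-adjunction. Consequently the index-one cover $\widetilde{S}$ of $K_S+B^+$ cannot have canonical singularities; it is log canonical with a non-klt locus, and log canonical surface singularities (cusps, simple elliptic points) are certainly not strongly $F$-regular, so the reduction to ``canonical surface singularities are strongly $F$-regular'' fails. Relatedly, your final step would produce global $F$-regularity of $(S,B^+)$ over $T$, which is impossible: globally $F$-regular pairs are klt, and $(S,B^+)$ is not. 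What the complement actually buys you is not a divisor to cyclic-cover away but a curve $C$ to restrict to; the inductive structure (restrict, verify $(C,\Diff_C)$ is globally $F$-regular via Lemmas~\ref{lem:global-F-regularity} and~\ref{lem:GFR_base_change}, then lift the Frobenius splitting) is the heart of the argument and is entirely absent from your sketch. The parts of your proposal that do align with the paper --- the need for a standard-coefficient complement over an imperfect field, the care required with adjunction along geometrically non-normal curves, the tameness considerations when $p>5$ --- are correctly identified, but they feed into a different machine than the one you have built.
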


Note that this is a generalization of \cite[Theorem 3.1]{HX15} to a larger class of ground fields.  This is in turn a generalization of the  fact due to Hara \cite[Cor 4.9]{Har98} that over an algebraically closed field $k$ of $\chr p>5$, a $2$-dimensional pair $(S, 0)$ over $k$ is strongly $F$-regular if and only if $(S, 0)$ is klt.  That result was proved in our situation by  Sato and Takagi \cite[Theorem 1.2]{ST18}.

First we need some results on complements. For definition and basic properties of complements see \cite{Pro01}.  First we need an analogue of the classification of curve complements as in \cite{Pro01}.

\begin{lemma}\label{lem:complement-on-curve}
	Let $C$ be a regular curve over a field $k$ such that $\deg_k K_C<0$. Let $(C, \Delta)$ be a klt pair such that $-(K_C+\Delta)$ is nef and the coefficients of $\Delta$ are contained in the standard set $\mcS$ (see \autoref{def:standard-set}). Then $(C,\Delta)$ is either $1$-, $2$-, $3$-, $4$- or $6$-complementary.
	\end{lemma}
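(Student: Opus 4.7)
My plan is the following. First, by \autoref{lem:imperfect-curve}, $C$ is a conic embedded in $\mbP^2_\ell$ where $\ell = H^0(C, \mcO_C)$, with $\deg_\ell K_C = -2$ and $\Pic^0(C) = 0$. Replacing $k$ by $\ell$ (which rescales each $\deg_k$ by $[\ell:k]$ but does not affect the structure of $\Delta$), we may assume $k = \ell$, so $C$ is geometrically integral. Since $\Pic(C) \hookrightarrow \mbZ$ via $\deg_k$, the condition $n(K_C+\Delta^+) \sim 0$ is equivalent to the single requirement that $n\Delta^+$ is an integral Weil divisor with $\deg_k \Delta^+ = 2$. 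Thus it suffices to construct an effective $\bQ$-divisor $\Delta^+ \geq \Delta$ (in the complement sense) such that $(C, \Delta^+)$ is lc, $\deg_k \Delta^+ = 2$, and $n\Delta^+$ is integral for some $n \in \{1,2,3,4,6\}$.

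Next, I would enumerate the admissible $(C, \Delta)$. Write $\Delta = \sum_i (1 - 1/m_i) P_i$ with integers $m_i \geq 2$, and set $d_i = [k(P_i):k]$. The nefness constraint $\sum(1 - 1/m_i) d_i \leq 2$ combined with the lower bound $1 - 1/m_i \geq 1/2$ forces $\sum d_i \leq 4$, yielding only finitely many configurations of pairs $(d_i, m_i)$. In particular, when three $k$-rational points appear, the tuple $(m_1,m_2,m_3)$ falls into the familiar list $(2,2,m), (2,3,3), (2,3,4), (2,3,5), (2,3,6), (3,3,3)$, paralleling the classical Prokhorov classification over $\mbP^1$; the other partitions of $\sum d_i \in \{0,1,2,3,4\}$ produce similarly short finite lists once the $m_i$ are restricted by the inequality.

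For each admissible configuration I would then exhibit $\Delta^+$ by hand. If $\deg_k \Delta = 2$, I take $\Delta^+ = \Delta$; the required $n$ is then $\mathrm{lcm}(m_i)$, and a case check confirms that this always lies in $\{1,2,3,4,6\}$. If $\deg_k \Delta < 2$, I top $\Delta$ up, either by raising the coefficient at one of the existing $P_i$ of degree at most $2$ to the value $1$, or by adding fresh low-degree points. When $C$ admits a $k$-rational point (so $C \cong \mbP^1_k$), $k$-points are plentiful and the required auxiliary divisor is easy to write down. When $C$ has no $k$-rational point (a non-split conic), every closed point has even degree, which further restricts the admissible $\Delta$ and again lets us write the complement explicitly. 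In each case the resulting $n$ lies in $\{1,2,3,4,6\}$.

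The principal subtlety is handling closed points whose residue field $k(P_i)$ is inseparable over $k$, since base-changing to $\bar k$ would multiply coefficients by the inseparable degree and break the klt condition, obstructing a naive reduction to the classical $\mbP^1_{\bar k}$ complement theorem. The point of the strict bound $\sum d_i \leq 4$ is precisely that it allows all the case work to be carried out directly on $C$ over $k$, bypassing any such base change while still producing explicit $n$-complements with $n \in \{1,2,3,4,6\}$.
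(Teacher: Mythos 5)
Your approach is essentially the same as the paper's: extend the base field to $\ell=H^0(C,\mcO_C)$, use the triviality of $\Pic^0(C)$ from \autoref{lem:imperfect-curve} to translate $N$-complementary into the two conditions $\deg_k\Delta^+=2$ and $N\Delta^+$ integral, observe that the standard-coefficient lower bound $1-1/m_i\geq 1/2$ together with $\deg_k\Delta\leq 2$ forces $\sum_i [k(P_i):k]\leq 4$, and then do a finite case analysis. The paper packages the case analysis into the purely numerical \autoref{lem:numerical_complement} with its accompanying \autoref{table:comp}, whereas you stay on the curve. The one place you are genuinely more careful is the top-up when $\deg_k\Delta<2$: as stated, \autoref{lem:numerical_complement} only allows increasing the coefficients $a_i$ at the \emph{given} points, which cannot reach degree $2$ when $\sum_i n_i<2$ (e.g. $\Delta=0$, or $\Delta=(1-\frac1m)P$ at a rational point $P$, corresponding to the last row of the table); you explicitly note that fresh closed points must be adjoined and justify their existence by distinguishing the split and non-split conic cases (in the latter every closed point has even degree). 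This correctly fills a small gap in the paper's enumeration. You do leave the actual case-by-case exhibition of $\Delta^+$ as an outline, but the bound $\sum d_i\leq 4$ makes this finite, and spot-checking the configurations (e.g. the $(2,2,m)$, $(2,3,3)$, $(2,3,4)$, $(2,3,5)$, $(2,3,6)$, $(3,3,3)$ list when all $d_i=1$, plus the higher-degree-point variants) confirms the denominators always land in $\{1,2,3,4,6\}$.
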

\begin{proof}
	Note that the conclusion is unaffected by the choice of the base field, so extending the base field $k$ if necessary we may assume that $k=H^0(C,\sO_C)$.  As $C$ is a regular curve, a pair $(C,\Delta^+)$ on $C$ is log canonical if and only if the coefficients of $\Delta^+$ are at most one.  Furthermore, if $\Delta^+$ is a $\bQ$-boundary, then $K_C+\Delta^+\sim_{\bQ}0$ if and only if $\deg_k \Delta^+=2$ by \autoref{lem:imperfect-curve}\autoref{itm:pic_trivial}.  It also follows from \autoref{lem:imperfect-curve}\autoref{itm:pic_trivial} that if $m$ is a common denominator of the coefficients of $\Delta^+$, then $m(K_C+\Delta^+)\sim 0$.  So the result follows so long as we can find a divisor $\Delta^+\geq \Delta$ such that the coefficients of $\Delta^+$ have denominator $1$, $2$, $3$, $4$ or $6$ and $\deg_k\Delta^+=2$.  Note that $\deg_k \Delta=\sum_{P\in\Supp \Delta} [k(P):k]a_P$.  So we are reduced to showing that if we have a set of positive integers $n_i=[k(P_i):k]$ and standard coefficients $a_i$ such that $\sum_i n_ia_i\leq 2$ then we can find rational numbers $a_i^+\geq a_i$ with denominators as described above satisfying $\sum_i n_i a_i^+=2$.
	This is proved in \autoref{lem:numerical_complement}.
	\end{proof}
Now we define some notations for the solutions of $\sum_i n_ia_i\<2$, where $n_i\in\mbZ^+$ and $x_i\in\mcS$.  We will denote such a solution by
$(a_1, a_2, a_3, a_4)_{(n_1, n_2, n_3, n_4)}$. If $n_i=1$ for all $i$, then we will simply denote the solution by $(a_1, a_2, a_3, a_4)$. Note that since we are concerned about finding $\Delta$ with $\deg_k\Delta\<2$, we will make distinction among solutions of the form $(a, a, a, a)$, $(a, a)_{(2, 2)}$ and $(a, a)_{(1, 3)}$, since they correspond to different divisors on $C$. Explicitly, a solution of the form $(a_1, a_2, a_3, a_4)_{(n_1, n_2, n_3, n_4)}$ corresponds to a divisor $\sum_{i=1}^4 a_iP_i$ on $C$ such that $[k(P_i):k]=n_i$ for $i=1, 2, 3, 4$.

\begin{lemma}\label{lem:numerical_complement}
	Let $a_i\in\mcS-\{0, 1\}$, and $n_i$ be positive integers such that $\sum_i n_ia_i\leq 2$, where $\mcS$ is the standard set.  Then there are rational numbers $a_i^+\in\mbQ^{+}$ satisfying: $a_i\<a_i^+\<1$ for all $i$ and $\sum_i n_i a_i^+=2$. Moreover, the denominators of the $a_i^+$'s can be simultaneously cleared by multiplying by some $N\in \{1,2,3,4,6\}$.
	\end{lemma}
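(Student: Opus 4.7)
The plan is a case analysis on $S := \sum_i n_i$. Since $a_i \in \mathcal{S}\setminus\{0,1\}$ forces $a_i \geq \tfrac{1}{2}$, the hypothesis $\sum_i n_i a_i \leq 2$ gives $S \leq 4$. Furthermore, any $a_j$ strictly greater than $5/6$ must satisfy $a_j^+ = 1$ in any admissible solution, as $1$ is the only element of $\{1/2,2/3,3/4,5/6,1\}$ that is $\geq a_j$; once such $a_j$ are handled, the remaining coefficients form a smaller subproblem of the same shape. So we may assume $a_i \in \{1/2, 2/3, 3/4, 4/5, 5/6\}$ for all $i$, a finite set of values.

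The boundary cases are immediate. When $S = 4$, the bound forces equality throughout, giving $a_i = 1/2$ for every $i$, and we take $a_i^+ = 1/2$ with $N = 2$. When $S = 2$ we simply take $a_i^+ = 1$ for every $i$, giving $N = 1$. The heart of the argument is $S = 3$. Here the admissible target tuples $(a_i^+)$ of degree $2$ with denominators in $\{1,2,3,4,6\}$ correspond (after distributing over the $n_i$-weights) to the three classical log Calabi--Yau triples on $\mathbb{P}^1$: namely $(\tfrac{2}{3},\tfrac{2}{3},\tfrac{2}{3})$ with $N=3$, $(\tfrac{1}{2},\tfrac{3}{4},\tfrac{3}{4})$ with $N=4$, and $(\tfrac{1}{2},\tfrac{2}{3},\tfrac{5}{6})$ with $N=6$. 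Sorting $a_1 \leq a_2 \leq a_3$, I would pick the target whose largest coefficient is the smallest entry of $\{2/3, 3/4, 5/6\}$ that is $\geq a_3$, and then verify by a short check --- using the hypothesis $\sum_i n_i a_i \leq 2$ to rule out overshooting tuples --- that the remaining coefficients of the chosen target dominate $a_1$ and $a_2$.

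The main obstacle is just the discipline of this finite case check for $S = 3$: each profile $(n_1, n_2, n_3)$ with $n_1+n_2+n_3 = 3$ requires a short but distinct enumeration, and one must track how the rounding reduction ``replace any $a_j > 5/6$ by $a_j^+ = 1$'' interacts with target selection in each subcase. The remaining degenerate case $S = 1$ arises only from a single-point input with $a_1 < 1$; here the literal statement cannot hold, but in the application to \autoref{lem:complement-on-curve} one is free to enlarge the support of the proposed complement $\Delta^+$ beyond that of $\Delta$ by adjoining auxiliary points, bringing the effective problem into the range $S \geq 2$ before the numerical lemma is invoked.
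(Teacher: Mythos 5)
Your overall strategy (a finite case check organized by $S := \sum_i n_i$, with a preliminary reduction for large $a_j$) is in the same spirit as the paper's proof, which is simply an exhaustive enumeration recorded in \autoref{table:comp}.  However, there are two gaps that prevent the argument as written from being complete.

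First, the reduction step is misstated.  When you set $a_j^+ = 1$ for all $a_j > 5/6$, the target sum for the remaining coefficients becomes $T = 2 - \sum_{j\text{ removed}} n_j$, which is $0$ or $1$, not $2$.  So the remaining coefficients do \emph{not} ``form a smaller subproblem of the same shape,'' and the case analysis that follows — which is visibly calibrated for target $2$ (e.g.\ ``when $S=2$ we simply take $a_i^+ = 1$'') — cannot be invoked on the reduced problem without modification.  This is fixable (one shows $T\in\{0,1,2\}$ and that $T \leq 1$ forces all remaining $a_i = 1/2$, using the discreteness of $\mathcal{S}$), but as stated there is a hole.

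Second, and more substantively, your list of three admissible targets for $S=3$ — namely $(\tfrac23,\tfrac23,\tfrac23)$, $(\tfrac12,\tfrac34,\tfrac34)$, $(\tfrac12,\tfrac23,\tfrac56)$ — is incomplete: it misses $(\tfrac12,\tfrac12,1)$ with $N=2$, and this target is unavoidable even when no $a_j$ exceeds $5/6$.  Take $a_1 = 1/2$ with $n_1 = 2$ and $a_2 = 3/4$ with $n_2 = 1$, so $\sum n_i a_i = 7/4 \leq 2$ and no reduction is triggered.  We need $2a_1^+ + a_2^+ = 2$ with $a_1^+ \geq 1/2$ and $a_2^+ \geq 3/4$, which forces $a_1^+ \leq 5/8$; the only permitted value in $[1/2,5/8]$ with denominator in $\{1,2,3,4,6\}$ is $1/2$, which then forces $a_2^+ = 1$, i.e.\ the complement is $(1/2,1)_{(2,1)}$ with $N=2$, exactly the row $(1/2,\,1-1/m)_{(2,1)}\to(1/2,1)_{(2,1)}$ of \autoref{table:comp}.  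Your selection rule instead picks $(\tfrac12,\tfrac34,\tfrac34)$ (the smallest of $\{2/3,3/4,5/6\}$ at least $3/4$), which cannot be distributed over the weight profile $(2,1)$: the two ``copies'' of the weight-$2$ index would have to receive the distinct values $1/2$ and $3/4$.  So the ``short check'' you defer to actually fails in this case, and a fourth target involving a coefficient equal to $1$ must be added to your list and incorporated into the selection rule.
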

\begin{proof}
	Since $a_i\geq 1/2$, it follows that $\sum n_i\leq 4$. Also, since the $a_i$ satisfy the DCC, there are only finitely many solutions $a_i\in\mcS$ satisfying $\sum n_ia_i\<2$.  In \autoref{table:comp}, we enumerate these possibilities, first ordered by decreasing values of $\sum_i n_i$, and then by lexicographic order on the $a_i$.  Note that the first column represent the divisors $\Delta$ on $C$ satisfying the hypothesis of \autoref{lem:complement-on-curve}, the second column gives the corresponding complement $\Delta^+$ , and the third column gives the value of $N$ for which it is an $N$-complement.

\begin{table}[h!]
	
	\begin{tabular}{|c|c|c|c|  }
		\hline	
	 $(a_i)_{n_i}=\Delta$ &  $(a_i^+)_{(n_i)}=\Delta^+$ & $N$\\
	\hline
	\hline
	$(1/2,1/2,1/2,1/2)$ & $(1/2,1/2,1/2,1/2)$ & $2$\\
	\hline
	$(1/2,1/2,1/2)_{(1,1,2)}$ & $(1/2,1/2,1/2)_{(1,1,2)}$ & $2$\\
	\hline
	$(1/2, 1/2)_{2,2}$ & $(1/2, 1/2)_{2,2}$ & $2$ \\
	\hline
	$(1/2)_{(4)}$ & $(1/2)_{(4)}$  & $2$\\
	\hline
	\hline
	$(1/2,1/2,1-1/m)$  & $(1/2,1/2,1)$ & $2$\\
	\hline
	$(1/2, 1-1/m)_{(2,1)}$ & $(1/2,1)_{(2,1)}$ & $2$\\
	\hline
	$(1/2)_{(3)}$ & $(2/3)_{(3)}$ & $3$\\
	\hline
	\hline
	$(1/2,2/3,2/3)$& $(2/3, 2/3, 2/3)$& $3$\\
	\hline
	 $(1/2, 2/3)_{(1,2)}$ & $(2/3, 2/3)_{(1,2)}$& $3$\\
	\hline
	\hline
	$(1/2, 2/3, 3/4)$ & $(1/2,3/4,3/4)$ & $4$\\
	\hline
	$(1/2, 2/3, 4/5)$ & $(1/2, 2/3, 5/6)$ & $6$\\
	\hline
	$(1/2, 2/3, 5/6)$ & $(1/2, 2/3, 5/6)$ & $6$\\
	\hline
	\hline
	$(1/2,3/4,3/4)$ & $(1/2, 3/4, 3/4)$&$4$\\
	 \hline
	$(1/2, 3/4)_{(1,2)}$ & $(1/2,3/4)_{(1,2)}$ & $4$ \\
	\hline
	\hline
	$(2/3, 2/3, 2/3)$ & 	$(2/3, 2/3, 2/3)$& $3$\\
	 \hline
	$(2/3, 2/3)_{(1,2)}$ & $(2/3, 2/3)_{(1,2)}$ & $3$\\
	 \hline
	$(2/3)_{(3)}$ & $(2/3)_{(3)}$ & $3$ \\
	\hline
	\hline
	$(1-1/m_1,1-1/m_2)$ & $(1,1)$ & $1$ \\
	\hline
	$(1-1/m)_{(2)}$ &$(1)_{(2)}$ & $1$ \\
	\hline
	\hline
	$(1-1/m)$ & $(1)$ &$1$ \\
	\hline	
	\end{tabular}
	\caption{Complement}
	\label{table:comp}
\end{table}

	\end{proof}

As in \cite{HX15} we need the following theorem over $F$-finite fields in order to prove \autoref{thm:globally-f-regular}.
\begin{theorem}[{{c.f. \cite[Theorem 3.2]{HX15}}}]\label{thm:boundedness-of-complements}
	Let $k$ be a field of characteristic $p>0$. With the same notations and hypothesis as in \autoref{thm:globally-f-regular}, there exists a divisor $B^c\>B$ and an integer $N\in\mcR N_2=\{1, 2, 3, 4, 6\}$, such that $N(K_S+B^c)\sim_T 0$ and $(S, B^c)$ is log canonical but not klt. Let $\nu:\tilde{S}\to S$ be a dlt modification, $K_{\tilde{S}}+B^c_{\tilde{S}}=\nu^*(K_S+B^c)$. Then
	\begin{enumerate}
		\item If $(\tilde{S}, B^c_{\tilde{S}})$ is plt  and $C=\lrd B^c_{\tilde{S}}\rrd$ is irreducible, then we may assume that $(C, \Diff_C(B^c_{\tilde{S}}-C))$ belongs to one of the cases which appear in \autoref{table:comp}.
		\item If $(\tilde{S}, B^c_{\tilde{S}})$ is not plt and $C$ is an exceptional$/T$ curve contained in the $\Supp\lrd B^c_{\tilde{S}}\rrd$, then $N\in\{1, 2\}$ and $(C, \Diff_C(B^c_{\tilde{S}}-C))$ is of the type $(1/2, 1/2, 1)_{(1,1,1)}, (1, 1)_{(1,1)}, (1/2,1)_{(2, 1)}$ or $(1)_{(2)}$. \\
	\end{enumerate}
\end{theorem}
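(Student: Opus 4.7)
The plan is to adapt \cite[Theorem 3.2]{HX15} to the $F$-finite setting, combining the curve-level complement classification of \autoref{lem:complement-on-curve} and \autoref{lem:numerical_complement} with the two-dimensional LMMP of \cite{Tan18} and the $F$-adjunction machinery of \cite{Sch14} as extended to $F$-finite ground fields. Since the statement is local on the germ $(T,0)$, I am free to shrink $T$.

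\textbf{Step 1 (Producing an lc, non-klt perturbation).} I first replace $B$ by a boundary $B^c_0\geq B$ such that $(S,B^c_0)$ is lc but not klt while $-(K_S+B^c_0)$ remains $f$-nef. Choose an effective $f$-exceptional $\bQ$-divisor $E$ on $S$ with $-E$ being $f$-ample; such an $E$ exists because the intersection form on the exceptional curves of the birational morphism $f$ to the normal surface germ is negative definite. Set
\[
\tau \; := \; \lct(S,B;E) \; = \; \sup\{\, t\geq 0 : (S,B+tE)\text{ is lc}\,\}.
\]
Then $\tau$ is strictly positive (because $(S,B)$ is klt) and finite (because sufficiently large $t$ forces coefficients to exceed $1$), and $B^c_0 := B+\tau E$ satisfies: $(S,B^c_0)$ is lc but not klt, and $-(K_S+B^c_0) = -(K_S+B) + \tau(-E)$ is $f$-nef.

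\textbf{Step 2 (Dlt modification and adjunction).} Take a $\bQ$-factorial dlt modification $\nu:\tilde S\to S$ of $(S,B^c_0)$, with $K_{\tilde S}+B^c_{0,\tilde S} = \nu^*(K_S+B^c_0)$, using \cite{Tan18}. Let $C\subseteq \lfloor B^c_{0,\tilde S}\rfloor$ be a connected component. We land in one of the two situations (1) or (2) of the theorem: either $(\tilde S,B^c_{0,\tilde S})$ is plt with $C$ a single regular curve, or it is strictly lc with $C$ an $f\circ\nu$-exceptional curve in the floor. Adjunction gives
\[
K_C+\Diff_C(B^c_{0,\tilde S}-C) \; = \; (K_{\tilde S}+B^c_{0,\tilde S})\big|_C,
\]
and by the adjunction/DCC proposition recalled earlier, $\Diff_C(B^c_{0,\tilde S}-C)$ has coefficients in the standard set $\mcS$, while $-(K_C+\Diff_C(B^c_{0,\tilde S}-C))$ is nef over $T$.

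\textbf{Step 3 (Curve-level complement).} I apply \autoref{lem:complement-on-curve} together with the enumeration of \autoref{table:comp} to the curve pair $(C,\Diff_C(B^c_{0,\tilde S}-C))$. This produces $N\in\{1,2,3,4,6\}$ and an effective divisor $\Diff^+\geq \Diff_C(B^c_{0,\tilde S}-C)$ with $N(K_C+\Diff^+)\sim 0$, whose numerical type is one of those in \autoref{table:comp}; this yields the structural conclusion (1). In the non-plt case, $C$ contracts to $0\in T$, and the connectedness of the non-klt locus together with the dual-graph configuration of $\lfloor B^c_{0,\tilde S}\rfloor$ reduces the possible types to the four listed in (2) with $N\in\{1,2\}$.

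\textbf{Step 4 (Lifting to $\tilde S$ and descent to $S$).} The remaining task is to lift the $N$-complement from $C$ to an $N$-complement on $\tilde S$; pushing down by $\nu$ then yields $B^c$ on $S$. In characteristic zero this is Shokurov's inversion of adjunction via Kawamata--Viehweg vanishing; in positive characteristic I use the $S^0$-surjectivity
\[
S^0\bigl(\tilde S,\sigma(\tilde S, B^c_{0,\tilde S}-C)\otimes \mcO_{\tilde S}(\mcM)\bigr) \twoheadrightarrow S^0\bigl(C,\sigma(C,\Diff_C(B^c_{0,\tilde S}-C))\otimes \mcO_C(\mcM|_C)\bigr),
\]
where $\mcM = -N(K_{\tilde S}+B^c_{0,\tilde S}-C)$. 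This surjectivity is the form of $F$-adjunction from \cite[Section 2]{HX15}, which holds in the $F$-finite generality via Grothendieck duality along the Frobenius, and requires that klt surface singularities in characteristic $p>5$ be strongly $F$-regular, as provided by Hara \cite[Corollary 4.9]{Har98} and extended to $F$-finite ground fields in \cite[Theorem 1.2]{ST18}.

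\textbf{Main difficulty.} The hardest part is Step 4: checking that both the $S^0$-surjectivity of Hacon--Xu and the Hara-type strong $F$-regularity of klt surface singularities remain valid over an arbitrary $F$-finite field, rather than only over perfect or algebraically closed base fields. Once these two inputs are in place, the rest of the argument is a routine adaptation of Hacon--Xu's construction using the MMP tools and curve-complement classification already established in the paper.
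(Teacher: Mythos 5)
Your Steps 1 through 3 match the structure of the paper's argument: construct a non-klt perturbation, pass to a dlt modification, and classify the curve complement on the non-klt center by replacing Prokhorov's enumeration with \autoref{lem:numerical_complement}. However, Step 4 is where you diverge from the paper, and it is the step you yourself flag as the hardest — but you have chosen a considerably heavier tool than is needed, and one that would not in fact prove the theorem in its stated generality.

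The statement of \autoref{thm:boundedness-of-complements} assumes only characteristic $p>0$; it drops both the $F$-finiteness assumption and the $p>5$ bound appearing in \autoref{thm:globally-f-regular}. Your proposed lifting via the $S^0$-surjectivity and $F$-adjunction machinery of \cite[Section 2]{HX15} would reinstate precisely those hypotheses (you need $F$-finiteness for Grothendieck duality along Frobenius, and $p>5$ for the strong $F$-regularity of klt surface germs from \cite{Har98} and \cite{ST18}). In other words, your Step 4 proves a strictly weaker statement. What the paper actually does is follow Hacon--Xu's use of Prokhorov's \cite[Pro.~4.4.1]{Pro01} to lift the $N$-complement from the non-klt center to $\tilde S$, and Prokhorov's proof only needs the relative Kawamata--Viehweg vanishing theorem for a birational morphism of surfaces. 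This vanishing holds for arbitrary excellent surfaces by \cite[Theorem~10.4]{Kol13}, with no $F$-finiteness or characteristic restriction. The $F$-singularity inputs you are worrying about in Step 4 are indeed needed in the paper, but they belong to the proof of \autoref{thm:globally-f-regular}, not of this boundedness-of-complements theorem; you have imported machinery from the wrong part of the argument. Replacing Step 4 with the relative KV vanishing argument would both fix the mismatch in hypotheses and make the lifting step essentially trivial, eliminating the ``main difficulty'' you identify.
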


\begin{proof}[Proof of \autoref{thm:boundedness-of-complements}]
	It follows from the same arguments as in the proof of \cite[Theorem 3.2]{HX15} using \autoref{lem:numerical_complement} in place of 
	\cite[4.1.11, 4.1.12]{Pro01} to find curve complements. We also note that \cite[Pro. 4.4.1]{Pro01} is used in the proof of \cite[Theorem 3.2]{HX15} to lift complements on a birational model, fortunately the proof of \cite[Pro. 4.4.1]{Pro01} works for surfaces over imperfect fields since the relative Kawamata-Viehweg vanishing theorem is known for any birational morphism between excellent surfaces, see \cite[Theorem 10.4]{Kol13}.\\ 
	\end{proof}

\begin{proof}[Proof of \autoref{thm:globally-f-regular}]
	Using the same notations as in the proof of \cite[Theorem 3.1]{HX15} we show below that the proof of Hacon and Xu works over $F$-finite fields of characteristic $p>5$. The proof of \cite[Theorem 3.1]{HX15} involves several lemmas and propositions: Lemma 3.4, 3.5, 3.6, 3.7, 3.8, and Proposition: 3.9 and 3.10. In what follows we will show that all of these results hold over $F$-finite fields in $\chr p>5$.\\

	\begin{itemize}
		\item The proof of Lemma 3.4, 3.6, 3.7 and Proposition 3.9 works over $F$-finite fields without any change.
		\item For Lemma 3.5, 3.8 and Proposition 3.10 we either give a short proof explaining how to make the arguments of Hacon and Xu work over $F$-finite fields or reduce our problem to that of an algebraically closed base field.
	\end{itemize}

\end{proof}

\begin{lemma}\cite[Lemma 3.5]{HX15}\label{lem:hx-lemma-3.5}
With notations and hypothesis as in \cite[Lemma 3.5]{HX15}, it holds that $-(K_{\tilde{S}}+B^*_{\tilde{S}})$ is nef over $T$.	
\end{lemma}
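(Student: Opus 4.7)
The approach is to adapt the argument of \cite[Lemma 3.5]{HX15} line for line to the $F$-finite setting, verifying that every ingredient used there has an analogue over an arbitrary field for surfaces. The key point is that the nefness of $-(K_{\tilde{S}}+B^*_{\tilde{S}})$ over $T$ should follow from combining two facts: the pullback of $K_S+B^c$ to $\tilde{S}$ is numerically trivial over $T$, and the pullback of $-(K_S+B)$ is $T$-nef.

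First I would set up the two defining identities on $\tilde{S}$. By construction $K_{\tilde{S}}+B^c_{\tilde{S}}=\nu^*(K_S+B^c)$ is $\bQ$-linearly trivial over $T$, since $N(K_S+B^c)\sim_T 0$ by the construction of the complement provided by \autoref{thm:boundedness-of-complements}. On the other hand, writing $K_{\tilde{S}}+B_{\tilde{S}}=\nu^*(K_S+B)+F_B$ for an effective $\nu$-exceptional discrepancy divisor $F_B$, the divisor $-(K_{\tilde{S}}+B_{\tilde{S}})$ is $T$-nef modulo this exceptional correction.

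Next, unwinding the construction of $B^*_{\tilde{S}}$ as in \cite[Lemma 3.5]{HX15}, which keeps the reduced part of $B^c_{\tilde{S}}$ but interpolates the fractional part with $B_{\tilde{S}}$, one obtains an identity of the form
\[
K_{\tilde{S}}+B^*_{\tilde{S}} = \alpha(K_{\tilde{S}}+B_{\tilde{S}}) + (1-\alpha)(K_{\tilde{S}}+B^c_{\tilde{S}}) - G
\]
for a suitable $\alpha\in[0,1]$ and an effective $\nu$-exceptional correction term $G$. Substituting the two identities above, the $T$-nefness of $-(K_{\tilde{S}}+B^*_{\tilde{S}})$ reduces to checking that $G\cdot_k \Gamma \leq 0$ for every $\nu$-exceptional curve $\Gamma$. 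This in turn follows from the negativity lemma \autoref{lem:negativity-lemma} applied on the normal surface $\tilde{S}$, combined with the observation that $G$ is supported on the $\nu$-exceptional locus and is determined by the discrepancies of $(S,B)$ versus $(S,B^c)$, which are both controlled over $T$.

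The main obstacle in transferring this proof is purely notational: in \cite{HX15} the intersection-theoretic arguments are phrased over an algebraically closed field. In our setting one replaces each intersection number $D\cdot \Gamma$ with $D\cdot_k \Gamma$, and notes that by \autoref{lem:constant} signs of intersection numbers with $\nu$-exceptional curves are preserved under the correction factor $d_\Gamma$ from \autoref{sec:fields}, so they cause no issue for nefness statements. Both the negativity lemma \autoref{lem:negativity-lemma} and the Hodge-index type input it relies on hold for normal projective surfaces over an arbitrary field, via the excellent-surface version of Hodge index used in its proof. Therefore the entire computation of \cite[Lemma 3.5]{HX15} runs unchanged, and no additional characteristic- or perfectness-type hypothesis is needed for this particular step.
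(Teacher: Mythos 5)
Your proposal diverges substantially from the paper's proof and, more importantly, misses the genuine subtlety in adapting \cite[Lemma 3.5]{HX15} to imperfect fields. The paper does not use the negativity lemma here, nor does it express $B^*_{\tilde{S}}$ as a convex combination of $B_{\tilde{S}}$ and $B^c_{\tilde{S}}$ minus an exceptional correction. In fact, the construction in HX15 replaces coefficients $\tfrac{p_j}{q}$ by $\tfrac{p_j-1}{q-1}$ along the exceptional chain $\Gamma^0_1$, which is not a linear interpolation with a single parameter $\alpha$, so the decomposition you posit is not available without justification. The negativity lemma tells you that a divisor whose pushforward is effective and whose negative is relatively nef must itself be effective; it is not the right tool to conclude nefness of $-(K_{\tilde{S}}+B^*_{\tilde{S}})$, and simply saying $G\cdot\Gamma\leq 0$ for exceptional $\Gamma$ is precisely what has to be checked, not a consequence of the lemma.

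The real issue over an imperfect field---which the paper's proof addresses head-on and your proposal dismisses as ``purely notational''---is that two adjacent exceptional curves $E_{j-1}$, $D$ in the chain may meet at a point whose residue field is a proper extension of $\ell=H^0(D,\sO_D)$. Thus $\deg_\ell E_{j-1}|_D = n_{j-1}$ may be strictly greater than $1$, unlike over an algebraically closed field. The paper's argument is an explicit computation: for a curve $D$ in the chain, it writes out $\deg_\ell(K_{\tilde{S}}+B^c_{\tilde{S}})|_D=0$ using $p_a(D/\ell)=0$ (from $H^1(D,\sO_D)=0$, via the excellent-surface vanishing \cite[Theorem 10.4]{Kol13}) so that $\deg_\ell K_D=-2$, and then manipulates the resulting equation with the unknowns $n_{j-1},n_{j+1}\geq 1$ to deduce the inequality $\deg_\ell(K_{\tilde{S}}+B^*_{\tilde{S}})|_D\leq 0$. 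This verification that the inequality survives for arbitrary $n_{j\pm1}\geq 1$ is the content, and it cannot be obtained by citing \autoref{lem:constant} or the $d_\Gamma$-machinery (which are irrelevant here) or by saying the signs of intersection numbers are unchanged---the magnitudes matter. Your proof proposal, as written, has a genuine gap and should be replaced by the direct numerical argument.
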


\begin{proof}
	Since $B^*_{\tilde{S}}\<B^c_{\tilde{S}}$, we can write $-(K_{\tilde{S}}+B^*_{\tilde{S}})=-(K_{\tilde{S}}+B^c_{\tilde{S}})+E$, where $E$ is an effective $\mbQ$-divisor such that $E\wedge C=\emptyset$. Therefore we have $(K_{\tilde{S}}+B^*_{\tilde{S}})\cdot C\<0$. If $D$ is an exceptional curve not in $\Gamma^0_1$, then by the a similar argument it follows that $(K_{\tilde{S}}+B^*_{\tilde{S}})\cdot D\<0$.
	
	Now let $D=E_i$ be an exceptional curve contained in $\Gamma^0_1$. 
	 Let $B^c_{\tilde{S}}=\frac{p_{j-1}}{q} E_{j-1}+\frac{p_j}{q} D+\frac{p_{j+1}}{q} E_{j+1}+F$, where $E_{j-1}$ and $E_{j+1}$ are the  curves adjacent to $D$ in $\Gamma^0_1$, and $F$ supports the other components of $B^c_{\tilde{S}}$. Since $K_{\tilde{S}}+B^c_{\tilde{S}}\sim_{\mbQ, T} 0$, $(K_{\tilde{S}}+B^c_{\tilde{S}})\cdot D=0$. Let $\ell=H^0(D, \mcO_D)$, $\deg_{\ell} E_{j-1}|_D=n_{j-1}, \deg_\ell E_{j+1}|_D=n_{j+1}$ and $\deg_\ell F|_D=\frac{r}{q}$, where $n_{j-1}, n_{j+1}\in\mbZ_{\>1}$ and $r\in\mbZ_{\>0}$. We also note that the arithmetic genus $p_a(D/\ell)=0$, since $H^1(D, \mcO_D)=0$ by \cite[Lemma 10.8]{Kol13}. Thus $\deg_\ell (K_{\tilde{S}}+D)|_D=\deg_\ell K_D=2p_a(D/\ell)-2=-2$ (see \cite[Cor. 3.31, Chap. 7]{Liu02}).
	 Then from $\deg_\ell (K_{\tilde{S}}+B^c_{\tilde{S}})|_D=0$ we have
	 
	\begin{align}
		n_{j-1}\left(\frac{p_{j-1}}{q}\right)+n_{j+1}\left(\frac{p_{j+1}}{q}\right)+\frac{p_j}{q}D^2+\frac{r}{q}-2-D^2 &=0\label{eqn:first}\\
		\mbox{i.e.,}\ n_{j-1}p_{j-1}+n_{j+1}p_{j+1}+r-2q+(p_j-q)D^2 &=0.\label{eqn:second}
	\end{align}  
	
Now we have
\begin{equation}\label{eqn:negativity-of-intersection}
	\begin{split}
	0 &=n_{j-1}p_{j-1}+n_{j+1}p_{j+1}+r-2q+(p_j-q)D^2\\
	& \qquad =n_{j-1}(p_{j-1}-1)+n_{j+1}(p_{j+1}-1)+r-2\left(q-\frac{n_{j-1}+n_{j+1}}{2}\right)+((p_j-1)-(q-1))D^2\\	
	& \qquad \> n_{j-1}(p_{j-1}-1)+n_{j+1}(p_{j+1}-1)+r-2(q-1)+((p_j-1)-(q-1))D^2,
	\end{split}
\end{equation}
and thus 	
\begin{equation}\label{eqn:anti-nef}
	\begin{split}
	& n_{j-1}\left(\frac{p_{j-1}-1}{q-1}\right)+n_{j+1}\left(\frac{p_{j+1}-1}{q-1}\right)+\frac{p_j-1}{q-1}D^2+\frac{r}{q}-2-D^2\\ 											
	& \qquad \<n_{j-1}\left(\frac{p_{j-1}-1}{q-1}\right)+n_{j+1}\left(\frac{p_{j+1}-1}{q-1}\right)+\frac{p_j-1}{q-1}D^2+\frac{r}{q-1}-2-D^2\<0.
	\end{split}
\end{equation}
Observe that $B^*_{\tilde{S}}=\frac{p_{j-1}-1}{q-1} E_{j-1}+\frac{p_j-1}{q-1} D+\frac{p_{j+1}-1}{q-1} E_{j+1}+F$. Thus from \eqref{eqn:anti-nef} it follows that $(K_{\tilde{S}}+B^*_{\tilde{S}})\cdot D\<0$.	
	
\end{proof}

\begin{lemma}\cite[Proposition 3.10]{HX15}\label{lem:global-F-regularity}
	Let $k$ be a $F$-finite field of characteristic $p>5$. Let $P_1, P_2, P_3\in\mbP^1_k$ be three distinct $k$-rational points and $D_1=\frac{2}{5}P_1+\frac{2}{3}P_2+\frac{5}{6}P_3$, $D_2=\frac{1}{3}P_1+\frac{3}{4}P_2+\frac{3}{4}P_3$, $D_3=\frac{1}{2}P_1+\frac{3}{5}P_2+\frac{5}{6}P_3$ and $D_4=\sum_{i=1}^r\frac{d_i-1}{d_i}P_i$, where $r\<2$ or $r=3$ and 
	 \[
	 	(d_1, d_2, d_3)\in\{(2, 2, d), (2, 3, 3), (2, 3, 4), (2, 3, 5) \},
	 \] 
	 then $(\mbP^1_k, D_i)$ is globally $F$-regular for all $i$.\\
\end{lemma}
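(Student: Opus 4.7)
The plan is to reduce the statement to the case of an algebraically closed base field, where it is precisely \cite[Proposition 3.10]{HX15}. The essential feature of the setup is that each boundary $D_i$ is supported on three $k$-rational points, which allows for descent to the prime subfield $\mbF_p \subset k$.

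First I would normalize coordinates. Using the $3$-transitivity of $\mathrm{PGL}_2(k)$ on triples of distinct $k$-rational points, we may assume $P_1=[0:1]$, $P_2=[1:1]$, $P_3=[1:0]$. Under this normalization the pair $(\mbP^1_k, D_i)$ is the base change of the corresponding pair $(\mbP^1_{\mbF_p}, D_i)$ along the faithfully flat morphism $\Spec(k)\to\Spec(\mbF_p)$. A further base change to $\overline{\mbF}_p$ produces a pair which is globally $F$-regular by \cite[Proposition 3.10]{HX15}.

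The main body of the proof then transfers global $F$-regularity along the tower $\overline{\mbF}_p\supset\mbF_p\to k$. For the descent $\overline{\mbF}_p\to\mbF_p$, any $\overline{\mbF}_p$-splitting of a map of the form $\mcO_{\mbP^1}\to F^e_*\mcO_{\mbP^1}(\lceil (p^e-1)D_i\rceil + E)$ is automatically defined over some finite subextension $\mbF_{p^n}\subset\overline{\mbF}_p$, and its normalized trace along the Galois group $\Gal(\mbF_{p^n}/\mbF_p)$ yields a splitting over $\mbF_p$. For the ascent $\mbF_p\to k$, since $\mbF_p$ is perfect the morphism $\Spec(k)\to\Spec(\mbF_p)$ is geometrically regular between $F$-finite schemes, and global $F$-regularity of projective log pairs is preserved under such base change; concretely, an $\mbF_p$-splitting pulls back to a $k$-splitting of the base-changed map.

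The main obstacle is the descent step: one must verify that the Galois-averaged splitting is nontrivial, which requires choosing the original $\overline{\mbF}_p$-splitting in a sufficiently generic way so that its trace does not collapse to zero. This can be bypassed by a more hands-on route that avoids the algebraic closure entirely. Namely, working in the affine coordinate $x$ on $\mbP^1_k\setminus\{P_3\}$, the required splittings can be written as explicit polynomials with coefficients in $\mbF_p$: for the $D_4$ cases this is transparent because those pairs are toric in our chosen coordinates, and for $D_1,D_2,D_3$ it reduces to a short verification that for suitable $e$ the space $H^0(\mbP^1_k, F^e_*\mcO(\lceil (p^e-1)D_i\rceil))$ contains an element satisfying the splitting criterion of \cite[\S 4]{Sch14}. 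Once the splittings are known over $\mbF_p$ (or over some $\mbF_{p^n}$), base change to the $F$-finite field $k$ is immediate and completes the proof.
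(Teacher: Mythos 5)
Your proposal takes a genuinely different route from the paper's. The paper simply observes that the computation of [HX15, Proposition 3.10] verifying the splitting criterion works verbatim over any $F$-finite field once the points are $k$-rational (and cites [ST18] for the $D_4$ case); no base change is needed. You instead try to reduce to the algebraically closed case by descending $\overline{\mbF}_p\to\mbF_p$ and then ascending $\mbF_p\to k$. Conceptually this is defensible, and your "bypass" at the end (verify the splitting directly over $\mbF_p$, then ascend) is essentially the same as the paper's argument; but your main route has two issues worth flagging.

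First, the ascent. Your "concrete" justification --- that an $\mbF_p$-splitting of $\mcO_{\mbP^1}\to F^e_*\mcO_{\mbP^1}(\cdot)$ pulls back to a $k$-splitting of the base-changed map --- is not quite right: base-changing the absolute Frobenius on $\mbP^1_{\mbF_p}$ along $\mbF_p\to k$ produces the \emph{relative} Frobenius $F_{\mbP^1_k/k}$, not the absolute Frobenius on $\mbP^1_k$; these agree only when $k$ is perfect, which is precisely the case you are not in. Global $F$-regularity in this paper (via the section-ring translation of \autoref{lem:GFR_base_change}) is an absolute-Frobenius notion, so pulling back a splitting does not immediately give what is needed. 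The ascent is nevertheless true, but the argument is an Aberbach-type theorem: for a flat map of $F$-finite Noetherian rings with regular (closed) fibers, strong $F$-regularity ascends, which applies here because $\mbF_p$ is perfect and so $A\to A\otimes_{\mbF_p}k$ has regular fibers; this is a substantive result, not a formality, and should be cited. Second, the descent step does not need the Galois-trace construction at all, and as you yourself note that construction is suspect (the trace of a splitting can vanish). Once the splitting is realized over some $\mbF_{p^n}$, the inclusion $\mbF_p\hookrightarrow\mbF_{p^n}$ is pure, hence $A\to A\otimes_{\mbF_p}\mbF_{p^n}$ is split, and strong $F$-regularity descends along split (indeed along pure) maps --- no averaging required. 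With those repairs the strategy works, but it is considerably longer than the paper's direct observation that the HX15 computation never used algebraic closedness once the marked points are $k$-rational.
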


\begin{proof}
	For $i=1, 2$ or $3$, the global $F$-regularity of $(\mbP^1_k, D_i)$ follows by the same argument as in \cite[Proposition 3.10]{HX15}, and the global $F$-regularity of $(\mbP^1_k, D_4)$ follows from \cite[Proposition 5.3]{ST18}.
\end{proof}

\begin{lemma}\label{lem:GFR_base_change}
		Let $(X,\Delta)$ be a pair over an $F$-finite field $k$ such that $K_X+\Delta$ is $\bQ$-Cartier. Let $(X_{\ell},\Delta_\ell)$ be the base-change by some finite separable field extension $\ell/k$.  Then $(X,\Delta)$ is globally $F$ regular if and only if $(X_\ell, \Delta_\ell)$ is globally $F$-regular.
	\end{lemma}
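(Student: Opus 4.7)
The plan is to use that, since $\ell/k$ is finite and separable, the projection $\pi\colon X_\ell\to X$ is finite étale. The argument will rest on four standard consequences: (i) $\pi$ is flat, so $\pi^*$ preserves split injections of $\mcO_X$-modules; (ii) both $\pi^*$ and $\pi_*$ commute with $F^e_*$ (the former because étale morphisms are stable under absolute Frobenius base change, the latter because $\pi$ is affine); (iii) unramifiedness of $\pi$ gives $\pi^*\lceil(p^e-1)\Delta\rceil=\lceil(p^e-1)\Delta_\ell\rceil$ and $\pi^*\pi_*D'\geq D'$ for any effective Weil divisor $D'$ on $X_\ell$; (iv) the natural inclusion $\iota\colon\mcO_X\hookrightarrow\pi_*\mcO_{X_\ell}$ admits an $\mcO_X$-linear splitting $\tau$, constructed by fixing any $k$-linear retraction $s\colon\ell\to k$ of $k\hookrightarrow\ell$ (available since $\ell$ is a finite-dimensional $k$-vector space) and globalizing via $\mathrm{id}_{\mcO_X}\otimes s$, using the identification $(\pi_*\mcO_{X_\ell})|_{\Spec A}=A\otimes_k\ell$ on affine opens.

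For the forward direction, assume $(X,\Delta)$ is globally $F$-regular. Given an effective Weil divisor $D'$ on $X_\ell$, set $D:=\pi_*D'$, so that $\pi^*D\geq D'$ by (iii). Global $F$-regularity of $(X,\Delta)$ yields some $e>0$ for which $\mcO_X\to F^e_*\mcO_X(\lceil(p^e-1)\Delta\rceil+D)$ splits. Applying $\pi^*$ and invoking (i)--(iii) produces a split map $\mcO_{X_\ell}\to F^e_*\mcO_{X_\ell}(\lceil(p^e-1)\Delta_\ell\rceil+\pi^*D)$. Since $\pi^*D\geq D'$, this map factors through the subsheaf inclusion $F^e_*\mcO_{X_\ell}(\lceil(p^e-1)\Delta_\ell\rceil+D')\hookrightarrow F^e_*\mcO_{X_\ell}(\lceil(p^e-1)\Delta_\ell\rceil+\pi^*D)$, and composing the given splitting with that inclusion yields the required splitting for $D'$.

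For the reverse direction, assume $(X_\ell,\Delta_\ell)$ is globally $F$-regular. Given an effective Weil divisor $D$ on $X$, apply the hypothesis to $\pi^*D$ to obtain $e>0$ and a splitting $\sigma$ of $\psi\colon\mcO_{X_\ell}\to F^e_*\mcO_{X_\ell}(\lceil(p^e-1)\Delta_\ell\rceil+\pi^*D)$. Pushing $\psi$ forward by the affine morphism $\pi$ and applying the projection formula together with (ii) rewrites the target as $F^e_*(\mcL\otimes\pi_*\mcO_{X_\ell})$ with $\mcL:=\mcO_X(\lceil(p^e-1)\Delta\rceil+D)$, producing a split map $b\colon\pi_*\mcO_{X_\ell}\to F^e_*(\mcL\otimes\pi_*\mcO_{X_\ell})$ with splitting $\pi_*\sigma$. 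This $b$ fits into a commutative square whose top arrow is the natural map $a\colon\mcO_X\to F^e_*\mcL$ and whose vertical arrows are $\iota$ and $\iota':=F^e_*(\mathrm{id}_\mcL\otimes\iota)$. Fact (iv) splits $\iota$ via $\tau$ and hence also splits $\iota'$ via $F^e_*(\mathrm{id}_\mcL\otimes\tau)$. A direct computation then shows that $\tau\circ\pi_*\sigma\circ\iota'\circ a=\tau\circ\pi_*\sigma\circ b\circ\iota=\tau\circ\iota=\mathrm{id}$, so $\tau\circ\pi_*\sigma\circ\iota'$ is an $\mcO_X$-linear splitting of $a$, as required.

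The main obstacle is the bookkeeping in the reverse direction: verifying that the retraction $s$ in (iv) genuinely assembles into a sheaf map of $\mcO_X$-modules (which it does because $s$ is $k$-linear and the affine-local construction is functorial in $A$), and tracing through the compatibility of projection formula with $F^e_*$ to obtain the commutative square above. The forward direction and all Frobenius/étale compatibilities are essentially formal.
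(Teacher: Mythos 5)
Your argument is correct, but it takes a genuinely different route from the paper. The paper reduces to the affine case: letting $\msL$ be ample on $X$ with section ring $S=\oplus_m H^0(X,\msL^m)$, it invokes \cite[Proposition 5.3]{SS10} to translate global $F$-regularity of $(X,\Delta)$ into strong $F$-regularity of the cone $(\Spec S,\Delta^S)$. Since the section ring of $\msL_\ell$ is $T=S\otimes_k\ell$, the induced map $\Spec T\to\Spec S$ is finite \'{e}tale, and the equivalence of strong $F$-regularity under finite \'{e}tale maps is then a citation to \cite[Corollary 6.31]{Schwede_Tucker}. Your proof instead works directly with the sheaf-theoretic splitting definition of global $F$-regularity on $X$ and $X_\ell$, using flat and \'{e}tale base change of Frobenius together with an explicit $\mcO_X$-linear retraction $\tau$ of $\mcO_X\hookrightarrow\pi_*\mcO_{X_\ell}$ built from a $k$-linear retraction $\ell\to k$, and checks the resulting commutative squares by hand. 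This buys you a self-contained argument that avoids the cone machinery, at the cost of more bookkeeping (commutation of $\pi_*$ with $F^e_*$, projection formula for rank-one reflexive sheaves along the finite flat $\pi$, compatibility of the rounding $\lceil(p^e-1)\Delta\rceil$ with \'{e}tale pullback). One small caveat worth flagging: the projection formula $\pi_*\pi^*\mcL\cong\mcL\otimes\pi_*\mcO_{X_\ell}$ is stated for line bundles, whereas here $\mcL=\mcO_X(\lceil(p^e-1)\Delta\rceil+D)$ is only reflexive of rank one; one should either note it holds (up to reflexive hull, which is harmless here) because $\pi$ is finite flat and $X$ is normal, or work on the regular locus whose complement has codimension $\geq 2$. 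Both approaches are valid; the paper's is shorter because it outsources the hard part to the literature.
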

	\begin{proof}
		Let $\msL$ be an ample line bundle on $X$, and $S=\oplus_{m\>0}H^0(X, \msL^m)$ be the section ring of $\msL$ over $k$. Then $\Spec S$ is an affine cone over $X$. Let $\Delta^S$ be the unique divisor on $\Spec S$ corresponding to $\Delta$ (see \cite[Subsection 5.2]{SS10}). Then by \cite[Proposition 5.3]{SS10}, $(X, \Delta)$ is globally $F$-regular if and only if $(\Spec S, \Delta^S)$ is strongly $F$-regular.\\
		Let $\msL_\ell$ be the pullback of $\msL$ on $X_\ell$. Then from the commutativity of the flat base-change it follows that the section ring of $\msL_\ell$ on $X_\ell$ is given by $T=\oplus_{m\>0}H^0(X, \msL^m)\otimes_k\ell$. Therefore $\Spec T=\Spec (S\otimes_k\ell)$, and hence the projection $\Spec T\to \Spec S$ is a smooth morphism of relative dimension $0$, i.e., a finite \'etale morphism, since $\ell/k$ is a finite separable extension. Let $\Delta^T_\ell$ be the unique divisor on $\Spec T$ corresponding to $\Delta_\ell$. Then by \cite[Corollary 6.31]{Schwede_Tucker}, $(\Spec T, \Delta^T_\ell)$ is strongly $F$-regular if and only if $(\Spec S, \Delta^S)$ is strongly $F$-regular.\\
	\end{proof}

\begin{remark}\label{rmk:global-F-regularity-on-P1}
	For later use, we note that if $P_1, P_2, P_3$ and $Q_1, Q_2, Q_3$ are six distinct $k$-rational points on $\mbP^1_k$ and $a_i\>0$, then $(\mbP^1_k, a_1P_1+a_2P_2+a_3P_3)$ is globally $F$-regular if and only if $(\mbP^1_k, a_1Q_1+a_2Q_2+a_3Q_3)$ is globally $F$-regular. This simply follows from the fact that these two pairs are isomorphic under a linear change of variable which takes the points $P_1, P_2, P_3$ to $Q_1, Q_2, Q_3$, respectively. 
\end{remark}

	\begin{lemma}\cite[Lemma 3.8]{HX15}
		With notations and hypothesis as in \cite[Lemma 3.8]{HX15}, the pair $(C/k, \Diff_C(B^*_{\tilde{S}}))$ is globally $F$-regular in $\chr p>5$.
	\end{lemma}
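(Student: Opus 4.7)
The plan is to adapt the proof of \cite[Lemma 3.8]{HX15} to the $F$-finite setting by using \autoref{lem:GFR_base_change} to reduce to the geometric situation where $C \cong \mbP^1_k$ and the support of $\Diff_C(B^*_{\tilde{S}})$ consists of $k$-rational points, at which point \autoref{lem:global-F-regularity} applies directly.

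First I would classify the pair $(C, \Diff_C(B^c_{\tilde{S}} - C))$ via \autoref{thm:boundedness-of-complements}, obtaining one of the rows of \autoref{table:comp}. A direct computation using the formula for $B^*_{\tilde{S}}$ from \autoref{lem:hx-lemma-3.5} (which replaces each coefficient $p_i/q$ of $B^c_{\tilde{S}}$ by $(p_i - 1)/(q - 1)$) then shows that $\Diff_C(B^*_{\tilde{S}})$ has strictly smaller coefficients than $\Diff_C(B^c_{\tilde{S}} - C)$ at each supporting point. A case-by-case inspection of the rows of \autoref{table:comp} would then exhibit $\Diff_C(B^*_{\tilde{S}})$ as componentwise dominated by one of the distinguished divisors $D_1, D_2, D_3$ or $D_4$ from \autoref{lem:global-F-regularity}. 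Since global $F$-regularity is monotone in the boundary (if $(X, \Delta')$ is globally $F$-regular and $\Delta \leq \Delta'$, then so is $(X, \Delta)$), it suffices to prove the conclusion for the dominating pair.

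Second I would reduce to the geometric situation of \autoref{lem:global-F-regularity}. Since $p > 5$, every residue degree appearing in \autoref{table:comp} is at most $4$ and hence strictly less than $p$, so all of the involved residue field extensions over $\ell := H^0(C, \mcO_C)$ are separable. In characteristic $p > 2$ the conic $C$ over $\ell$ further admits a separable splitting field by \autoref{lem:imperfect-curve}. Enlarging $k$ along a suitable finite separable extension, we may therefore assume that $C \cong \mbP^1_k$ and that all of the closed points in the support of $\Diff_C(B^*_{\tilde{S}})$ are $k$-rational. At this point \autoref{lem:global-F-regularity} combined with \autoref{rmk:global-F-regularity-on-P1} gives the required global $F$-regularity of the base-changed pair, and \autoref{lem:GFR_base_change} descends it to the original pair over $k$.

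The main obstacle is the case-by-case verification that each row of \autoref{table:comp} produces, under the substitution $(p_i, q) \mapsto (p_i - 1, q - 1)$, a boundary dominated by one of the $D_i$ from \autoref{lem:global-F-regularity}. This is essentially bookkeeping, but several entries of the table are close to the edge of the allowed region and require careful inspection. The relevant inequalities should follow from the numerical constraint \autoref{eqn:negativity-of-intersection} in the proof of \autoref{lem:hx-lemma-3.5}, which confines $\Diff_C(B^*_{\tilde{S}})$ to a regime where the $D_i$ exhaust the cases permitted by the complements classification.
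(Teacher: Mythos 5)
Your proposal is essentially correct and follows the same approach as the paper: reduce to $\mathbb{P}^1_k$ with $k$-rational support by a finite separable base change (using $p>5$ so all degree-$\leq 4$ residue field extensions in \autoref{table:comp} are separable), descend global $F$-regularity via \autoref{lem:GFR_base_change}, and match the resulting boundary divisors against $D_1,\dots,D_4$ of \autoref{lem:global-F-regularity} using monotonicity. The paper organizes the argument slightly differently (handling the all-rational case first and then the base change in a second case), but the ingredients and logic are identical.
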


\begin{proof}
Note that being globally $F$-regular is independent of the ground field,  so replacing $k$ by a finite extension without changing $X$, we may assume that $k=H^0(C,\sO_C)$.
	We know that $(C, \Diff_C(B^c_{\tilde{S}}))$ is a klt pair which is $N$-complementary with $N\in\{1, 2, 3, 4, 6\}$ and the coefficients of $\Diff_C(B^c_{\tilde{S}})$ are in the standard set. By \autoref{lem:imperfect-curve} we know that $\deg_kK_C=-2$.  From the construction of $B^*_{\tilde{S}}$ it also follows that $\deg_k(K_C+\Diff_C(B^*_{\tilde{S}}))<\deg_k(K_C+\Diff_C(B^c_{\tilde{S}}))$; in particular, $\deg_k(K_C+\Diff_C(B^*_{\tilde{S}}))<0$, and thus $\deg_k(\Diff_C(B^*_{\tilde{S}}))<2$.
 
The possibilities for $\Diff_C(B^c_{\tilde{S}})$ are listed in the second column of \autoref{table:comp}.  We now verify that for each of these cases the corresponding $\Diff_C(B^*_{\tilde{S}})$ are globally $F$-regular.
\begin{itemize}
	\item\textbf{Case I:} If all the points in the support of $\Diff_C(B^c_{\tilde{S}})$ are $k$-rational points, then it is clear that the same proof as in \cite[Lemma 3.8]{HX15} using \autoref{lem:global-F-regularity} shows that $(C, \Diff_C(B^*_{\tilde{S}}))$ is globally $F$-regular.
	\item \textbf{Case II:} In the following we will deal with the cases where at least one point of $\Supp(\Diff_C(B^c_{\tilde{S}}))$ is not a $k$-rational point (see the second column of \autoref{table:comp}).
\end{itemize}
Now recall that $N$ is contained in the set $\{1, 2, 3, 4, 6\}$. From the second column of \autoref{table:comp} we see that for $N=6$, the support of $\Diff_C(B^c_{\tilde{S}})$ consists of $k$-rational points only. So it belongs to the Case I.

For $N\in\{1, 2, 3, 4\}$, we see from the second column of \autoref{table:comp} that $\Supp(\Diff_C(B^c_{\tilde{S}}))$ contains a point $P\in C$ such that $[k(P):k]=2, 3$ or $4$. Since $\chr p>5$, $k(P)/k$ is a separable extension. Let $\ell$ be the Galois closure of $k(P)$ over $k$. Then $C_\ell\cong \mbP^1_\ell$ and the pre-image of $P$ under $C_\ell\to C$ are $[k(P):k]$-number of $\ell$-rational points on $C_\ell$. Thus we are done either by Case I or \autoref{lem:GFR_base_change}; we explain this argument below with an explicit computation for $N=4$.  The other cases are similar.\\

If $N=4$, the possibilities are: 
 $(C, \Diff_C(B^c_{\tilde{S}}))=(\mbP^1_k, \frac{1}{2}P_1+\frac{3}{4}P_2)$ with $P_1$ a $k$-rational point and $[k(P_2):k]=2$. In this case $(C, \Diff_C(B^*_{\tilde{S}}))=(\mbP^1_k, aP_1+bP_2)$, where
 \[
 	(a, b)\in\{(1/3, 3/4); (1/2, 2/3) \}.
 \]
Since $\chr p>5$ and $[k(P_2):k]=2$, $k(P_2)/k$ is a separable extension. Let $\ell$ be the Galois closure of $k(P_2)$ over $k$. Then after a base changing to the field $\ell$ we get $(C_{\ell}, (\Diff_C(B^*_{\tilde{S}}))_{\ell})=(\mbP^1_{\ell},aP_1+b P_{2,1}+b P_{2,2})$ with \[(a,b)\in\{ (1/3 ,3/4) , (1/2,2/3)\},\]
where $P_{2, 1}$ and $P_{2, 2}$ are two $\ell$-rational points which are pre-images of $P_2\in C$ under the projection $C_\ell\to C$.
Then the global $F$-regularity of $(\mbP^1_\ell, \frac{1}{3}P_1+\frac{3}{4}P_{2, 1}+\frac{3}{4}P_{2, 2})$ follows directly from Case I, i.e., \autoref{lem:global-F-regularity}. For $(\mbP^1_\ell, \frac{1}{2}P_1+\frac{2}{3}P_{2, 1}+\frac{2}{3}P_{2 ,2})$ we compare it with $(\mbP^1_\ell, \frac{1}{2}P_1+\frac{2}{3}P_{2}+\frac{3}{4}P_{3})$. Since the latter is globally $F$-regular by \autoref{lem:global-F-regularity}, so is the former by \autoref{rmk:global-F-regularity-on-P1}, this follows from the fact that if $(X, \Delta)$ is globally $F$-regular and $0\<\Delta'\<\Delta$, then $(X, \Delta')$ is also globally $F$-regular. 
Finally we are done by \autoref{lem:GFR_base_change}.
\end{proof}

\section{Existence of flips and log minimal models}\label{sec:flips}
One of the main results of this section is the existence of pl-flips over $F$-finite fields. First we recall some definitions.

\begin{definition}\label{def:flip}
	A morphism $f:(X, \Delta)\to Z$ is called a $(K_X+\Delta)$-flipping contraction if the following conditions are satisfied: 
	\begin{enumerate}
		\item $X$ is a $\mbQ$-factorial normal $3$-fold,
		\item $f:X\to Z$ is a small projective birational contraction with $\rho(X/Z)=1$,
		\item $(X, S+B)$ has dlt singularities, and
		\item $-(K_X+\Delta)$ is ample over $Z$.
	\end{enumerate}
	
	A morphism $f^+:(X^+, \Delta^+)\to Z$ is called a flip (if it exists) of the flipping contraction $f:(X, \Delta)\to Z$ if the following conditions are satisfied:
	\begin{enumerate}
		\item $X^+$ is a $\mbQ$-factorial normal $3$-fold,
		\item the induced birational map $\phi:X\bir X^+$ is an isomorphism in codimension $1$,
		\item $f^+:X^+\to Z$ is a small projective birational contraction with $\rho(X^+/Z)=1$,
		\item $(X^+, \Delta^+)$ has dlt singularities, where $\Delta^+=\phi_*\Delta$, and
		\item $K_X+\Delta$ is ample over $Z$.
	\end{enumerate} 
\end{definition}

\begin{definition}\label{def:pl-flip}
	A morphism $f:(X, S+B)\to Z$ is called a \emph{pl-flipping} contraction if the following conditions are satisfied:
	\begin{enumerate}
		\item $f:X\to Z$ is a small projective birational contraction with $\rho(X/Z)=1$,
		\item $(X, S+B)$ has plt singularities with $\lrd S+B\rrd=S$ irreducible, and
		\item $-S$ and $-(K_X+S+B)$ are ample over $Z$.
	\end{enumerate} 
	A flip (if it exists) of a pl-flipping contraction is called a \emph{pl-flip}.
\end{definition}

\begin{lemma}
	Let $(X, S+B\>0)$ be a plt $3$-fold pair over an $F$-finite field of characteristic $p>5$. Assume that the coefficients of $B$ are in the standard set $\mcS=\{1-\frac{1}{n}:n\in\mbN\}\cup\{1\}$ and $\lrd S+B\rrd=S$ is irreducible. Let $S^n\to S$ be the normalization, and $K_{S^n}+B_{S^n}=(K_X+S+B)|_{S^n}$ is by adjunction.  Then $S$ is normal.
	\end{lemma}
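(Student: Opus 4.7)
The plan is to reduce the statement to an application of Theorem \ref{thm:globally-f-regular} combined with a standard Frobenius splitting argument, following the strategy of \cite[Proposition 3.15]{HX15}. Normality is a local property, so we may pass to the germ of a fixed closed point of $S$. By the adjunction formula for plt pairs (\cite[\S 4]{Kol13}), $(S^n, B_{S^n})$ is klt, and by the DCC adjunction proposition in Section \ref{sec:preliminaries} the coefficients of $B_{S^n}$ remain in the standard set $\mathcal{S}$, since this set is preserved under $D$.

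The key step is to apply Theorem \ref{thm:globally-f-regular} to the pair $(S^n, B_{S^n})$, taking as the morphism $f$ the identity $S^n \to S^n$ (viewed locally as a proper birational morphism to a normal surface germ, which is trivially the case at the normal surface $S^n$). The $f$-nefness condition on $-(K_{S^n}+B_{S^n})$ is automatic for $f = \mathrm{id}$, and the remaining hypotheses have been verified above. We therefore conclude that $(S^n, B_{S^n})$ is strongly $F$-regular at every point.

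We then deduce normality of $S$ from the strong $F$-regularity of $(S^n, B_{S^n})$. The argument runs as in \cite[Proposition 3.15]{HX15}: starting from the short exact sequence
\[ 0 \to \mathcal{O}_X(-S) \to \mathcal{O}_X \to \mathcal{O}_S \to 0\]
and the inclusion $\mathcal{O}_S \hookrightarrow \pi_*\mathcal{O}_{S^n}$, one pushes forward by iterated Frobenius and invokes Grothendieck duality for the finite Frobenius morphism. The plt hypothesis on $(X, S+B)$ together with the strong $F$-regularity of $(S^n, B_{S^n})$ produces, for sufficiently large $e$, splittings of the relevant Frobenius maps which lift from $S^n$ to $X$. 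These splittings force $\mathcal{O}_S = \pi_*\mathcal{O}_{S^n}$, i.e., $S = S^n$, so $S$ is normal.

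The main technical subtlety, and the reason we insist on an $F$-finite ground field, is that the entire $F$-singularity machinery used in the second step (Grothendieck duality for Frobenius, the $S^0$-construction, the $F$-adjunction results of \cite[\S 2]{HX15}) must be used in its $F$-finite form. As noted in Section \ref{sec:preliminaries}, all of these results are available verbatim over $F$-finite fields, so once Theorem \ref{thm:globally-f-regular} supplies the input, the argument of \cite[Proposition 3.15]{HX15} carries over essentially word-for-word without any new ideas beyond those already developed.
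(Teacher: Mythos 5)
Your proof is correct and follows essentially the same route as the paper: adjunction gives $(S^n, B_{S^n})$ klt with standard coefficients, strong $F$-regularity then follows, and the Hacon–Xu Frobenius-splitting argument upgrades this to normality of $S$. Two small remarks. First, where you invoke \autoref{thm:globally-f-regular} with $f=\identity$ to obtain strong $F$-regularity, the paper instead cites Sato and Takagi \cite[Theorem 1.2]{ST18} directly; these are equivalent here (applying \autoref{thm:globally-f-regular} with $f=\identity$ just recovers the Sato--Takagi statement), but the direct citation is the cleaner path and avoids any appearance of circularity, since \autoref{thm:globally-f-regular} is itself proved using that result. Second, the reference \cite[Proposition 3.15]{HX15} should be \cite[Proposition 4.1]{HX15} (the paper also offers \cite[Cor.\ 5.4]{Das15} as an alternative); the argument you sketch is the right one, just attached to the wrong proposition number.
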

\begin{proof}
	By adjunction $(S^n,\Delta_{S^n})$ has klt singularities.  Now by \cite[Theorem 1.2]{ST18}, $(S^n, B_{S^n})$ is strongly $F$-regular. Then by the same proof as in \cite[Pro. 4.1]{HX15} or \cite[Cor. 5.4]{Das15} it follows that $S$ is normal.
	\end{proof}

The main result of this section is the following theorem.
\begin{theorem}\label{thm:pl-flip}\cite[Theorem 4.12]{HX15}
	Fix an $F$-finite field $k$ of characteristic $p>5$. Let $f:(X, S+B)\to Z$ be a pl-flipping contraction of projective $3$-folds defined over $k$ and the coefficients of $B$ belong to the standard set $\mcS=\{1-\frac{1}{n}:n\in\mbN\}\cup\{1\}$. Then the flip of $f$ exists. 
\end{theorem}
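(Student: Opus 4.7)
The plan is to follow the strategy of Hacon--Xu \cite{HX15} for the existence of pl-flips, verifying at each step that the arguments extend from algebraically closed fields to $F$-finite fields of characteristic $p > 5$, using the tools developed earlier in the paper. Since a pl-flip exists if and only if the restricted log canonical algebra
\[
\mathcal{R}(X/Z, K_X + S + B) = \bigoplus_{m \geq 0} f_* \mathcal{O}_X(\lfloor m(K_X + S + B) \rfloor)
\]
is a finitely generated $\mathcal{O}_Z$-algebra, the goal is to establish this finite generation. By the usual Shokurov-type reduction (replacing $(X, S+B)$ by a suitable dlt/terminal model obtained by running a $(K_X+S+B)$-MMP over $Z$ against an auxiliary divisor), we reduce to studying the restricted algebra to $S$, which is normal by the lemma preceding the theorem. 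The reductions use only the existence of projective log resolutions (available for $F$-finite fields by \cite{CP09}) and MMP arguments that work via the cone and contraction theorems (\autoref{thm:cone_theorem}, \autoref{thm:contraction_theorem}) and Keel's base-point free theorem (\autoref{thm:keel's-bpf-thm}) already established in the paper.

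The main step is to show that the restricted algebra on $S$ is finitely generated. By adjunction, $(S, B_S)$ with $K_S + B_S = (K_X + S + B)|_S$ is a klt pair with standard coefficients (by the standard adjunction proposition cited in the preliminaries), and $-(K_S + B_S)$ is ample over $f(S) \subseteq Z$. Thus \autoref{thm:globally-f-regular} applies and tells us that $(S, B_S)$ is globally $F$-regular over $T := f(S)$. This global $F$-regularity is exactly the hypothesis needed to invoke the Hacon--Xu lifting machinery: using the linear system $S^0(S, \sigma(S, B_S) \otimes M)$ defined in Section 2, one shows that sections of appropriate divisors lift from $S$ to $X$, and this lifting is compatible with multiplication so that $\mathcal{R}|_S$ is isomorphic to an algebra of sections on $S$ that one can control. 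The $F$-singularity arguments of \cite[Section 2]{HX15} referenced in the preliminaries all hold over $F$-finite fields (this is where the $F$-finite hypothesis is essential, as Grothendieck duality for the Frobenius requires it).

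Once the restricted algebra on $S$ is shown to be finitely generated, the classical argument of Shokurov--Hacon--McKernan shows that the full algebra $\mathcal{R}(X/Z, K_X + S + B)$ is finitely generated, which produces the flip: one takes $X^+ := \operatorname{Proj}_Z \mathcal{R}$ and verifies that $f^+ : X^+ \to Z$ satisfies the axioms of \autoref{def:flip}. Verification that $X^+$ is $\mathbb{Q}$-factorial dlt and has the correct properties uses the fact that the Picard rank is one relative to $Z$, together with the negativity lemma (\autoref{lem:negativity-lemma}), which was already verified to hold over arbitrary fields.

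The hard part is really the input \autoref{thm:globally-f-regular}, which was proven in Section 5 precisely so this step would go through; once that is in hand, the lifting and finite generation arguments follow the Hacon--Xu template essentially verbatim. A minor subtlety is that in the $F$-finite setting the residue fields at closed points of $S$ need not be algebraically closed, so intersection numbers appearing in the complement classification (used in the proof of global $F$-regularity) must be computed carefully with the corrections $d_C$ from \autoref{sec:fields}; but this has already been incorporated into \autoref{thm:boundedness-of-complements} and the lemmas following it, so no further care is needed here.
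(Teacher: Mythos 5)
Your proposal correctly identifies the overall strategy: follow the Hacon--Xu template for pl-flips, with \autoref{thm:globally-f-regular} serving as the key new input that makes the restricted algebra on $S$ tractable, and with the $F$-finite hypothesis ensuring the $F$-singularity machinery (Grothendieck duality for Frobenius, $S^0$ sections) applies. This matches the paper's approach. However, you overstate things when you say the lifting and finite generation arguments follow Hacon--Xu ``essentially verbatim'' once global $F$-regularity is in hand. There is a genuine technical obstruction you do not flag: the proof of \cite[Lemma 4.2]{HX15} relies on Musta\c{t}\u{a}'s comparison between the Frobenius--Seshadri constant and the ordinary Seshadri constant (\cite[Proposition 2.12]{mustata_frobenius_2012}), and that result, along with the basic theory of Seshadri constants (multiplicities at regular points, separation of jets, the blow-up characterization), was only available in the literature over algebraically closed fields. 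The paper must independently redevelop this theory over arbitrary fields, which is the entire content of the appendix (\autoref{sec:seshadri}); a proof that silently assumes these facts transfer is incomplete.

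A smaller omission: several steps in the Hacon--Xu construction use Bertini-type arguments which require a general member of a linear system to exist, and hence an infinite base field. The paper handles this by observing that the finite-field case is already covered by \cite{GNT15}, so one can safely assume $k$ is infinite. Your write-up does not address this reduction. Finally, your remark about the $d_C$ corrections in the complement classification is a bit off-target; those corrections enter the cone and rationality theorems, whereas the degree bookkeeping in \autoref{lem:numerical_complement} and \autoref{lem:complement-on-curve} is about residue field degrees $[k(P):k]$ rather than the splitting constants $d_C$ of \autoref{sec:fields}.
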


\begin{proof}
The arguments of \cite[Thm 4.12]{HX15} go with minor changes over $F$-finite (infinite) fields $k$ of char $p>5$ by using \autoref{thm:globally-f-regular} .  The changes necessary are as follows:

\begin{enumerate}
	\item Whenever using Bertini's theorem we must assume that the base field $k$ is infinite in order for general $k$-hyperplanes to exist.  This is harmless because the result is already known over finite fields by \cite{GNT15}.
	\item The proof \cite[Lemma 4.2]{HX15} uses the inequality between the usual Seshadri constant and the $F$-Seshadri constant, which was proved over algebraically closed field in \cite[Proposition 2.12]{mustata_frobenius_2012}.  As usual with $F$-singularity arguments, these arguments go through for an $F$-finite base field.  However, we could not find suitable references for standard properties of the usual Seshadri constant over arbitrary fields, so we include proofs of these in \autoref{sec:seshadri}.
	\item The proofs of Lemma 4.4, 4.5, 4.6, 4.7, 4.8, 4.9, 4.10 and Corollary 4.11 of \cite{HX15} work over $F$-finite fields without any change.
\end{enumerate}

\end{proof}

\subsection{Special termination}
Special termination holds for dlt pairs as in the characteristic zero case. We need the following definition.
\begin{definition}\label{def:flipping-flipped-locus}
	Let $f:X\to V$ be a flipping contraction and $f^+:X^+\to V$ be its flip (or generalized flip). Then the exceptional locus $\Ex(f)$ is called the \emph{flipping locus} and $\Ex(f^+)$ the \emph{flipped locus}. 
\end{definition}
\begin{theorem}\cite[Pro. 5.5]{Bir16}\label{thm:special-termination}
	Let $(X,B)$ be a projective $\bQ$-factorial dlt pair of dimension $3$ over an arbitrary field $k$ of characteristic $p>5$. Consider a sequence of log flips  starting from $(X, B)=(X_0, B_0)$:
	\[
		(X_0, B_0)\bir (X_1, B_1)\bir (X_2, B_2)\bir\cdots (X_i, B_i)\bir\cdots,
	\]
where $\phi_i:X_i\to Z_i$ is a flipping contraction and $\phi_i^+:X_i^+=X_{i+1}\to Z_i$ is the flip. Then there exists a positive integer $i_0>0$ such that the flipping locus (and thus the flipped locus) is disjoint from $\lrd B_i\rrd$ for all $i\>i_0$.	
	
\end{theorem}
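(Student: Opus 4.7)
The plan is to adapt the standard special termination argument, e.g. \cite[Pro. 5.5]{Bir16}, to our setting over imperfect fields of characteristic $p>5$. The key inputs I expect to use are: non-decrease of log discrepancies along the sequence of flips (which follows from the negativity lemma \autoref{lem:negativity-lemma}), adjunction preserving DCC coefficients via the set $D(\mathcal{S})$, and termination of the LMMP for surfaces over arbitrary fields established in \cite{Tan18}.

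First I would reduce to tracking a single component. Since the flipping contractions $\phi_i$ are small, no divisors are contracted, so every component of $\lfloor B_i\rfloor$ is the strict transform of a component of $\lfloor B_{i-1}\rfloor$. As the number of components of $\lfloor B_0\rfloor$ is finite, after passing to a tail of the sequence I may assume that the set of components of $\lfloor B_i\rfloor$ is the same for all $i$, and it suffices to fix a component $S\subset\lfloor B_0\rfloor$ and show that the flipping locus eventually misses its strict transform $S_i\subset X_i$.

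Next, for each $i$ let $\pi_i\colon S_i^n\to S_i$ be the normalization and define $B_{S_i^n}$ by adjunction $K_{S_i^n}+B_{S_i^n}=(K_{X_i}+B_i)|_{S_i^n}$; by the adjunction proposition in \autoref{sec:preliminaries} the coefficients of $B_{S_i^n}$ lie in the fixed DCC set $D(\mathcal{S})$. Each flip induces a birational map $\psi_i\colon S_i^n\dashrightarrow S_{i+1}^n$, and the negativity lemma applied to a common resolution shows that $\psi_i$ is $(K_{S_i^n}+B_{S_i^n})$-non-positive in the sense that log discrepancies over $S_0^n$ form a non-decreasing sequence under the $\psi_i$. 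Moreover, $\psi_i$ fails to be an isomorphism in codimension one exactly when $\Ex(\phi_i)$ meets $S_i$ in a curve, so the theorem is equivalent to saying that only finitely many $\psi_i$ are non-trivial.

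Finally, if the theorem failed then infinitely many $\psi_i$ would be non-trivial. I would factor the composition of the $\psi_i$ as a sequence of divisorial contractions and flips for a $(K_{S_0^n}+B_{S_0^n})$-MMP, after first taking a $\bQ$-factorial dlt modification of $(S_0^n,B_{S_0^n})$ so that the surface LMMP of \cite{Tan18} applies. Termination of that surface LMMP, together with the DCC property of the coefficients, then yields the required contradiction and produces the integer $i_0$. The main obstacle I anticipate is carrying out this factorization precisely — showing that each 3-fold flip induces genuine steps of the two-dimensional LMMP on the surfaces, with boundaries staying in a uniform DCC set — but this is essentially the content of Birkar's argument and goes through in our setting once the two-dimensional LMMP tools of \cite{Tan18} are available.
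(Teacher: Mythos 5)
Your outline captures the structure of the Shokurov–Birkar special termination argument, which is exactly what the paper invokes: the paper's own proof is a single sentence deferring to \cite[Proposition~5.5]{Bir16} and asserting that it goes through unchanged over imperfect fields. The reduction to a single component after passing to a tail, adjunction via $D(\mathcal{S})$ to get DCC coefficients on the normalized surfaces, monotonicity of log discrepancies coming from the negativity lemma, and the appeal to the surface LMMP of \cite{Tan18} are all the right ingredients, and those are precisely the points at which Birkar's proof could in principle fail over an imperfect field — you have correctly identified that they do not.

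The one place you should be more careful is the last step. The induced maps $\psi_i\colon S_i^n\dashrightarrow S_{i+1}^n$ both contract and extract curves: the flipping curves lying in $S_i$ are contracted, while the flipped curves lying in $S_{i+1}$ are extracted from points of $S_i$. So the composition is not literally a run of the $(K_{S_0^n}+B_{S_0^n})$-MMP starting from $S_0^n$, and a direct ``factor into surface MMP steps'' claim needs justification. What Birkar's argument actually does is combine the discrepancy monotonicity you already set up with the DCC of coefficients and the finiteness of divisors of log discrepancy below one (which is where surface log resolution/dlt modifications from \cite{Tan18} and \cite{Kol13} enter) to show that after finitely many steps the $\psi_i$ stop extracting any divisor with negative discrepancy, hence become isomorphisms in codimension one, and then that crepant birational maps of $\mathbb{Q}$-factorial klt surface pairs are isomorphisms, forcing the flipping locus off $S_i$. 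Your phrasing would need this monotonicity to justify the factorization anyway, so the two routes converge; you should just make the mechanism at the end explicit rather than asserting the factorization outright.
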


\begin{proof}
	The same proof as in \cite[Proposition 5.5]{Bir16} works here without any change. 
\end{proof}

\subsection{Dlt flips with standard coefficients over $F$-finite fields}
\begin{theorem}\label{thm:dlt-flips-F-finite-S}
	Let $f:(X, B)\to Z$ be a flipping contraction, where $(X, B)$ is a $\mbQ$-factorial dlt pair, and $X$ is a projective 3-fold defined over a $F$-finite field of char $p>5$ and $Z$ is a quasi-projective variety. Furthermore, assume that the coefficients of $B$ belong to the set $\{1-\frac{1}{n}: n\in\mbN\}\cup\{1\}$. Then the flip of $f$ exists.
\end{theorem}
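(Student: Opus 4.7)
Since $B$ has coefficients in the standard set $\mathcal{S}$, $B$ is a $\bQ$-divisor, and the construction of the flip is equivalent to finite generation of the relative canonical algebra $\bigoplus_{m\geq 0} f_*\mathcal{O}_X(m(K_X+B))$. My plan is to follow the Shokurov-style reduction of dlt flips to pl-flips, exactly as in the corresponding argument \cite[Corollary 4.13]{HX15} over algebraically closed fields. The reduction relies on the following inputs, all of which have now been established over $F$-finite fields of characteristic $p>5$: pl-flip existence (\autoref{thm:pl-flip}), special termination for dlt flips (\autoref{thm:special-termination}), contractions of $(K_X+B)$-negative extremal rays (\autoref{thm:contraction_theorem}), and Keel's base point free theorem (\autoref{thm:keel's-bpf-thm}).

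With these in hand, the plan is as follows. First, I would take a log resolution $\pi\colon Y\to X$ of $(X,B)$ and set $B_Y = \pi_*^{-1}B + E$, where $E$ is the reduced $\pi$-exceptional divisor; then $(Y, B_Y)$ is $\bQ$-factorial log smooth dlt, and one has $K_Y+B_Y = \pi^*(K_X+B) + F$ with $F\geq 0$ effective, $\pi$-exceptional, and $\Supp(F)\subset \Supp(\lfloor B_Y\rfloor)$. Next, I would run the $(K_Y+B_Y)$-MMP over $Z$ with scaling of a general ample $\bQ$-divisor, using \autoref{thm:contraction_theorem} to contract extremal rays. Every flipping contraction occurring in this MMP should be a pl-flipping contraction, because each flipping ray has negative intersection with some component of $\lfloor B_Y\rfloor$ (either a $\pi$-exceptional divisor where the negativity comes from $F$, or a component over the flipping locus of $f$, after possibly perturbing the coefficients so that the relevant component of the boundary is irreducible with coefficient one). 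The flips therefore exist by \autoref{thm:pl-flip}, and the MMP terminates by \autoref{thm:special-termination}. Descending the resulting relative minimal model over $Z$ then produces the flip of $f\colon (X,B)\to Z$.

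The main obstacle is the verification that every flip appearing in this MMP is indeed a pl-flip: this requires careful bookkeeping of the components of $\lfloor B_Y\rfloor$, their intersections with the flipping rays at each step, and a perturbation ensuring plt-ness near the flipping locus so as to single out an irreducible component $S$ of the boundary with $-S$ ample over the relevant intermediate target. This is the technical heart of the Shokurov reduction; once all the LMMP ingredients above are available in our setting, the argument proceeds essentially as in the characteristic-zero and algebraically closed characteristic-$p$ cases without further changes.
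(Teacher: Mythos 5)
Your proposal is correct and follows essentially the same route as the paper, which defers to the Shokurov-style reduction of dlt flips with standard coefficients to pl-flips exactly as in \cite{HX15}, with the cone theorem (\autoref{thm:cone_theorem}) and the pl-flip theorem (\autoref{thm:pl-flip}) substituted for the corresponding inputs over $F$-finite fields. The ingredients you list (pl-flips, special termination, contractions, Keel's base point free theorem) are precisely what the paper's one-line proof relies on, and your acknowledgment that the bookkeeping of $\lfloor B_Y\rfloor$ components along the MMP is the technical core matches the structure of the argument being imported.
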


\begin{proof}
	The same proof as \cite[Theorem 1.1]{HX15} holds here, using \autoref{thm:cone_theorem} and \autoref{thm:pl-flip}.
\end{proof}

\begin{definition}\label{def:generalized-flip}
	Let $(X, \Delta)$ be a projective $\mbQ$-factorial dlt $3$-fold pair. Let $f:X\to V$ be a proper small contraction, i.e., $f_*\mcO_X=\mcO_V$, of a $(K_X+\Delta)$-negative extremal ray $R$ to an algebraic space $V$. This is called a \emph{generalized flipping contraction}. The \emph{generalized flip} of $f$ is a proper small contraction $f^+:X^+:\to V$  from a normal projective $\mbQ$-factorial $3$-fold $X^+$ such that the induced birational map $\phi:X\bir X^+$ is isomorphism in codimension $1$ and $K_{X^+}+\Delta^+$ is $f^+$-nef, where $\Delta^+:=\phi_*\Delta$. 
\end{definition}

\begin{lemma}\label{lem:algebraic-space-to-varieties}
	Let $(X, \Delta)$ be a projective $\mbQ$-factorial dlt $3$-fold pair over an $F$-finite field of char $p>5$. Let $f:X\to V$ be a birational contraction of a $(K_X+\Delta)$-negative extremal ray $R$ to an algebraic space $V$. Further assume that $S$ is a component of $\lrd\Delta\rrd$ such that $S$ is normal and $S\cdot R<0$. Then $V$ is a projective variety. Moreover, if $L'$ is a $\mbQ$-Cartier divisor on $X$ such that $L'\cdot R=0$, then $L'\sim_\mbQ f^*D'$ for some $\mbQ$-Cartier divisor $D'$ on $V$.
\end{lemma}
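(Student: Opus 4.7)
The plan is to produce a nef and big Cartier divisor $H$ on $X$ satisfying $H\cdot R = 0$, upgrade $H$ to semi-ampleness via Keel's base-point-free theorem \autoref{thm:keel's-bpf-thm} together with his semi-ampleness criterion, and thereby exhibit $V$ as a projective variety with the desired descent property for $\mbQ$-Cartier divisors.

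\emph{Constructing $H$.} Take an ample Cartier divisor $A$ on $X$ and a generator $C$ of $R$. Put $\lambda := (A\cdot_k C)/(-(K_X+\Delta)\cdot_k C) > 0$; after clearing denominators, set $H := mA + n(K_X+\Delta)$ with $n/m = \lambda$, so $H$ is an integral Cartier divisor with $H\cdot R = 0$. Since the contraction $f:X\to V$ is given to exist and $\rho(X/V)=1$, the ray $R$ is characterized as the unique extremal ray contracted by $f$, and one verifies via \autoref{thm:cone_theorem} that for a suitable (generic) choice of $A$, $H$ is globally nef; for $m$ sufficiently large it is also big. Setting $\Delta' := (1-\varepsilon)\Delta$ for a small rational $\varepsilon > 0$ gives a klt pair $(X,\Delta')$ for which $H - (K_X+\Delta') = H - (K_X+\Delta) + \varepsilon\Delta$ is also nef and big.

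\emph{Semi-ampleness and projectivity of $V$.} By \autoref{thm:keel's-bpf-thm}, $H$ is EWM; to upgrade to semi-ampleness, Keel's criterion reduces the task to showing $H|_{\mathbb{E}(H)}$ is semi-ample. Since $S\cdot R < 0$, every curve of $R$ lies in $S$, hence $\mathbb{E}(H)\subseteq S$. In the flipping case $\mathbb{E}(H)$ is one-dimensional with $H|_{\mathbb{E}(H)}\equiv 0$, which is trivially semi-ample. In the divisorial case $\mathbb{E}(H) = S$; adjunction on the normal surface $S$ yields a klt pair $(S,\Delta_S)$ with $K_S+\Delta_S = (K_X+\Delta)|_S$, and by construction $H|_S - (K_S+\Delta_S) = mA|_S + (n-1)(K_S+\Delta_S)$ is ample on $S$ for $m$ sufficiently large, so $H|_S$ is semi-ample by the base-point-free theorem for surfaces over imperfect fields \cite{Tan18}. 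Thus $H$ is semi-ample on $X$ and induces a projective morphism $g:X\to V'$ contracting precisely the curves of $R$; by the universal property of the extremal contraction $f$, we have $V\cong V'$, so $V$ is a projective variety.

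\emph{Descent of $L'$.} For $L'$ with $L'\cdot R = 0$, note that any curve $C\subset X$ satisfies $[C]\in R$ iff $H\cdot C = 0$, and in that case $L'\cdot C = 0$ as well (since in both the flipping and divisorial cases all $f$-contracted curves have numerical class in $R$). Consequently, for $n$ sufficiently large, the Cartier divisors $nH \pm L'$ are nef and big, trivial on $R$, and satisfy the hypotheses of \autoref{thm:keel's-bpf-thm} against $(X,\Delta')$. Repeating the argument above shows both are semi-ample and the induced morphisms factor through $f$, giving $nH \pm L' \sim_\mbQ f^*D_\pm$ for $\mbQ$-Cartier divisors $D_\pm$ on $V$, and subtracting yields $L' \sim_\mbQ f^*\bigl((D_+ - D_-)/2\bigr)$. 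The principal obstacle is the semi-ampleness of $H|_S$ in the divisorial case, for which we must invoke the surface LMMP over possibly imperfect fields via \cite{Tan18}; verifying the global nefness of $H$ in the construction step also requires careful use of \autoref{thm:cone_theorem} and the uniqueness of $R$ as the $f$-contracted ray.
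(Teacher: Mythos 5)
There is a genuine gap in your argument at the flipping case. You assert that ``$\mathbb{E}(H)$ is one-dimensional with $H|_{\mathbb{E}(H)}\equiv 0$, which is trivially semi-ample.'' Numerical triviality of a nef line bundle on a projective curve does \emph{not} imply semi-ampleness: a degree-zero line bundle on a curve is semi-ample only if it is torsion, and on a curve of positive genus $\Pic^0$ has non-torsion elements. To make your claim correct you would need to argue that the contracted curves are, say, conics (which do have trivial $\Pic^0$ by \autoref{lem:imperfect-curve}), but that is a nontrivial structural fact which you do not establish, and which is delicate over imperfect fields. The paper's proof avoids this entirely by running a \emph{uniform} argument, not splitting flipping vs.\ divisorial: since $S\cdot R<0$ one has $\mathbb{E}(L)=\mathbb{E}(L|_S)\subseteq S$, and semi-ampleness of $L|_S$ follows by adjunction (to a dlt pair $(S,\Delta_S)$) and the abundance theorem for excellent surfaces (\cite[Theorem 1.1]{Tan15}); then $L|_{\mathbb{E}(L)}$ is a restriction of a semi-ample bundle and one concludes by Keel's criterion \cite[Theorem 1.9]{Kee99}. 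Note you already do essentially this in your divisorial case, so removing the incorrect shortcut in the flipping case and using the $S$-restriction argument in both cases repairs the gap.

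Two smaller remarks. First, invoking \autoref{thm:keel's-bpf-thm} to obtain EWM before applying the semi-ampleness criterion is redundant: Keel's Theorem 1.9 reduces semi-ampleness of $L$ directly to semi-ampleness of $L|_{\mathbb{E}(L)}$, no EWM step is needed. Second, for the descent of $L'$ your $\pm$ trick is a valid alternative to the paper's route; the paper instead shows $L'+mL$ is nef and big with $(L'+mL)^\perp=R$ for $m\gg 0$ by a compactness argument on a cross-section of the closed cone $W_R$, deduces its semi-ampleness exactly as before, and then combines with the rigidity lemma and the already-established pullback $mL\sim_\mbQ f^*M$. Your version needs the nefness of $nH\pm L'$ for $n\gg 0$, which ultimately requires the same cone-compactness reasoning — you should not gloss over this, since $\overline{NE}(X)$ need not be polyhedral.
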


\begin{proof}
 Let $H$ be an ample $\mbQ$-divisor on $X$ such that $L=K_X+\Delta+H$ is nef and big and $L^\bot=R$; in particular, $L\num_V 0$. Note that in order prove that $V$ is an algebraic variety it is enough to show that $L$ semi-ample. To that end by adjunction we have $(S, \Delta_S)$ is dlt and $L|_S=K_S+\Delta_S+H|_S$ is nef on $S$, where $K_S+\Delta_S=(K_X+\Delta)|_S$. Since $(S, \Delta_S)$ is dlt and $H|_S$ is ample, by passing to a log resolution of $(S, \Delta_S)$ and using Bertini theorem we can find an effective $\mbQ$-divisor $A\sim_\mbQ H|_S$ such that $(S, \Delta_S+A)$ is dlt. Then by \cite[Theorem 1.1]{Tan15} $K_S+\Delta_S+A$ is semi-ample. In particular, $L|_S\sim_\mbQ K_S+\Delta_S+A$ is semi-ample.\\
 Now since $S\cdot R<0$, $\mbE(L)$ is contained in $S$ and $\mbE(L)=\mbE(L|_S)$. Thus $L|_{\mbE(L)}=(L|_S)|_{\mbE(L|_S)}$ is semi-ample, since $L|_S$ is semi-ample. Then by \cite[Theorem 0.2]{Kee99} $L$ is semi-ample on $X$.
 
 Let $m$ be sufficiently large that $K_X+\Delta+mL$ is big.
 For the second part, by the cone theorem for effective pairs \autoref{thm:cone_theorem}, let $W_R=\NE(X)_{K_X+\Delta+mL\>0}+\sum_{R_i\neq R} R_i$. Then $W_R$ is a closed cone in $\mbox{N}_1(X)$ and $L$ is positive on $W_R\setminus\{0\}$. Fix a norm $||\cdot||$ on $\mbox{N}_1(X)$ and let $S$ be the unit sphere in $\mbox{N}_1(X)$ centered at the origin, i.e., $S=\{\gamma\in\mbox{N}_1(X)\;:\; ||\gamma||=1\}$. Then $S\cap W_R$ is a compact set and thus $L'$ takes a minimum value on $S\cap W_R$. In particular, $L'+mL$ is positive on $W_R$ for all $m\gg 0$. Consequently, $L'+mL$ is nef and big, and $(L'+mL)^\bot=R$ for $m\gg 0$. Then by a similar proof as in the previous case it follows that $L'+mL$ is semi-ample for $m\gg 0$. Since $(L'+mL)^\bot=L^\bot=R$,  by the rigidity lemma (see \cite[Proposition 1.14]{Deb01}) they induces the same contraction $f$. In particular, there is an ample divisor $D''$ on $V$ such that $L'+mL\sim_\mbQ f^*D''$, i.e., $L'\sim_\mbQ f^*(D''-M)$, where $mL\sim_\mbQ f^*M$.
 
\end{proof}

\begin{corollary}\label{cor:generalized-to-usual-flips}
Let $(X, \Delta)$ be a projective $\mbQ$-factorial dlt $3$-fold pair over an $F$-finite field of char $p>5$. Let $f:X\to V$ be a small contraction of a $(K_X+\Delta)$-negative extremal ray $R$ to an algebraic space $V$. Further assume that $S$ is a component of $\lrd\Delta\rrd$ such that $S$ is normal and $S\cdot R<0$. Then the generalized flip (if it exists) of $f$ is the same as the usual flip of $f$.
\end{corollary}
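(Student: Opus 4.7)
By \autoref{lem:algebraic-space-to-varieties}, $V$ is already a projective variety, so the generalized flip $f^+:X^+\to V$ is an honest projective morphism between projective varieties.  Since $\phi:X\dashrightarrow X^+$ is an isomorphism in codimension one between $\bQ$-factorial threefolds and $\rho(X/V)=1$, a standard Weil-divisor/strict-transform comparison gives $\rho(X^+/V)=1$.  Because $K_{X^+}+\Delta^+$ is $f^+$-nef and the relative Picard number is one, there are only two possibilities: either $K_{X^+}+\Delta^+$ is $f^+$-ample, or $K_{X^+}+\Delta^+\equiv_V 0$.  In the ample case $f^+$ satisfies all conditions of \autoref{def:flip} and is the usual flip (unique by the relative Proj construction), so it suffices to rule out the numerically trivial case.

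Suppose for contradiction that $K_{X^+}+\Delta^+\equiv_V 0$.  Take a common log resolution of $X$ and $X^+$ with birational morphisms $p:W\to X$ and $q:W\to X^+$, and set
\[
E:=p^*(K_X+\Delta)-q^*(K_{X^+}+\Delta^+).
\]
Since $\phi$ is an isomorphism in codimension one, $K_X+\Delta$ and $K_{X^+}+\Delta^+$ are strict transforms of one another, which yields $p_*E=0=q_*E$, so $E$ is exceptional for both $p$ and $q$.  Now for any curve $\tilde{C}$ contained in a fiber of $p$, we have $p^*(K_X+\Delta)\cdot\tilde{C}=0$, and since $f^+\circ q=f\circ p$, the image $q_*\tilde{C}$ lies in a fiber of $f^+$; the assumption $K_{X^+}+\Delta^+\equiv_V 0$ then gives $q^*(K_{X^+}+\Delta^+)\cdot\tilde{C}=(K_{X^+}+\Delta^+)\cdot q_*\tilde{C}=0$.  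Hence $E\cdot\tilde{C}=0$, so both $E$ and $-E$ are $p$-nef.  Applying \autoref{lem:negativity-lemma} once to $E$ (using $p_*E=0$) gives $E\geq 0$, and applying it to $-E$ (using $p_*(-E)=0$) gives $-E\geq 0$; thus $E=0$.

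With $E=0$ on $W$, pick any curve $C$ generating $R$ and choose a lift $\tilde{C}\subset W$ with $p_*\tilde{C}=C$ (such a lift exists: if $C\not\subset p(\Exc\,p)$ take the strict transform, otherwise take a section of the exceptional divisor over $C$).  Then
\[
(K_X+\Delta)\cdot C=p^*(K_X+\Delta)\cdot\tilde{C}=q^*(K_{X^+}+\Delta^+)\cdot\tilde{C}=(K_{X^+}+\Delta^+)\cdot q_*\tilde{C}=0,
\]
the last equality again using that $q_*\tilde{C}$ lies in an $f^+$-fiber together with $K_{X^+}+\Delta^+\equiv_V 0$.  This contradicts the hypothesis $(K_X+\Delta)\cdot R<0$.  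Therefore $K_{X^+}+\Delta^+$ is $f^+$-ample, and $f^+$ coincides with the usual flip.

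The main technical step is the double application of the negativity lemma, which forces $E=0$ in the numerically trivial case; the hard part is recognizing that under the hypothesis $K_{X^+}+\Delta^+\equiv_V 0$ the divisor $E$ is actually $p$-numerically trivial (not merely $p$-nef), since this is exactly what is needed to run the negativity lemma in both directions.
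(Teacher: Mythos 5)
Your proof is correct, but it takes a genuinely different route from the paper's. You argue by choosing a common log resolution $p:W\to X$, $q:W\to X^+$, setting $E=p^*(K_X+\Delta)-q^*(K_{X^+}+\Delta^+)$, observing that under the contradiction hypothesis $K_{X^+}+\Delta^+\equiv_V 0$ the divisor $E$ is $p$-exceptional and $p$-numerically trivial, and then running the negativity lemma (\autoref{lem:negativity-lemma}) in both directions to force $E=0$; lifting a flipping curve to $W$ then yields $(K_X+\Delta)\cdot C=0$, a contradiction. The paper instead stays entirely on $X$ and $X^+$ and exploits $\rho(X/V)=1$ together with the second assertion of \autoref{lem:algebraic-space-to-varieties}: writing $K_X+\Delta\equiv_V aH$ for $H$ the strict transform of an ample divisor $H^+$ on $X^+$ (which requires first checking $H\cdot R\neq 0$, again via \autoref{lem:algebraic-space-to-varieties}), the divisor $K_X+\Delta-aH$ is numerically trivial on $R$, hence pulled back from $V$, and pushing forward to $X^+$ gives $K_{X^+}+\Delta^+\equiv_V aH^+$ with $a\neq 0$, contradicting $K_{X^+}+\Delta^+\equiv_V 0$. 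Both approaches bottom out in the same input (\autoref{lem:algebraic-space-to-varieties} to know $V$ is projective and that $R^\perp$ divisors descend), but the paper avoids passing to a resolution and uses only the rank-one structure of $N^1(X/V)$, whereas your argument is closer to the classical ``two pullbacks to a common resolution plus negativity lemma'' pattern familiar from discrepancy comparisons. One small omission on your side: you verify $\rho(X^+/V)=1$ and ampleness of $K_{X^+}+\Delta^+$, but not that $(X^+,\Delta^+)$ is dlt, which is also needed to match \autoref{def:flip}; the paper at least flags this (deferring to the argument of \cite[Proposition 3.37]{KM98}). Since you already establish $E\geq 0$ via negativity once ampleness is known, the discrepancy comparison is essentially in hand, so this is a presentation gap rather than a mathematical one.
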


\begin{proof}
First note that $V$ is a projective variety by \autoref{lem:algebraic-space-to-varieties}. Let $f^+:(X^+, \Delta^+)\to V$ be the generalized flip of $f$. Then by a similar proof as in \cite[Proposition 3.37]{KM98} using \autoref{lem:algebraic-space-to-varieties} we see that $X^+$ is $\mbQ$-factorial, $(X^+, \Delta^+)$ is dlt and the relative Picard number $\rho(X^+/V)=\rho(X/V)=1$. One thing that is not obvious is whether $K_{X^+}+\Delta^+$ is $f^+$-ample or not. We will give a short proof of this fact here. To the contrary assume that $K_{X^+}+\Delta^+\num_V 0$. Let $H^+$ be an ample divisor on $X^+$ and $H$ be its strict transform on $X$. We claim that $H\cdot R\neq 0$, indeed if $H\cdot R=0$, then by \autoref{lem:algebraic-space-to-varieties} $H\sim_\mbQ f^*D$ for some $\mbQ$-Cartier divisor $D$ on $V$. Then by pushing forward to $X^+$ we get $H^+\sim_\mbQ {f^+}^*D$, a contradiction to the fact that $H^+$ is ample. Therefore $H\cdot R\neq 0$; in particular, $K_X+\Delta\num_V aH$ for some $a\neq 0$. Then by \autoref{lem:algebraic-space-to-varieties} pushing forward to $X^+$ gives $K_{X^+}+\Delta^+\num_V aH^+$, but this is a contradiction, since $K_{X^+}+\Delta^+\num_V 0, a\neq 0$ and $H^+$ is ample. 
\end{proof}

\subsection{Existence of generalized flips}
\begin{theorem}\label{thm:HX-generalized-flips}
	Let $k$ be a $F$-finite field of characteristic $p>5$ and $(X, \Delta)$ a projective $\mbQ$-factorial threefold klt pair over $k$ such that $K_X+\Delta$ is pseudo-effective and all coefficients of $\Delta$ are in the standard set $\{1-\frac{1}{n}:\,n\in\mbN\}$. Let $R$ be a $(K_X+\Delta)$-negative extremal ray and $f:X\to Z$ the corresponding proper birational contraction to a proper algebraic space (given by \autoref{thm:contraction_theorem}) such that a curve $C$ is contracted if and only if $[C]\in R$. Then
	\begin{enumerate}
		\item the generalized flip (see \autoref{def:generalized-flip}) of $f$ exists, and
		\item if $f$ is a divisorial contraction, i.e., $\codim_X\Ex(f)=1$, then $X^+=Z$ and in particular $Z$ is projective.
	\end{enumerate}
\end{theorem}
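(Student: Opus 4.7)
The proof naturally splits into the divisorial and small cases for $f$, and I would establish (2) first because the projectivity of the target in the divisorial case is needed as an input when running an MMP in the flipping case.

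For (2), suppose $f$ contracts a prime divisor $E$. Since $\rho(X/Z)=1$ and every curve in $R$ is contained in $E$, we have $E\cdot R<0$. The klt pair $(X,\Delta+cE)$ is plt with $\lfloor \Delta+cE\rfloor=E$ for some $c\in (0,1]$ (take $c$ equal to the log canonical threshold of $E$ with respect to $(X,\Delta)$, or slightly less if needed so that plt is preserved), and by adjunction combined with the strong $F$-regularity of $2$-dimensional klt singularities in characteristic $p>5$ over $F$-finite fields (the local analogue of \autoref{thm:globally-f-regular}) the surface $E$ is normal. Apply \autoref{lem:algebraic-space-to-varieties} with $S=E$: this produces an ample $\bQ$-Cartier divisor on $Z$ via descent from $X$, proving $Z$ is a projective variety. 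Since the generalized flip in the divisorial case is tautologically $X^+=Z$ with $\Delta^+=f_*\Delta$, no further construction is needed.

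For (1), suppose $f$ is small. Choose a projective log resolution $\pi:Y\to X$ of $(X,\Delta)$, which exists by the $F$-finiteness hypothesis, and set
\[
K_Y+\Gamma=\pi^*(K_X+\Delta)+F,
\]
where $\Gamma$ is the strict transform of $\Delta$ together with every $\pi$-exceptional divisor taken with coefficient $1$; then $\Gamma$ has standard coefficients, $(Y,\Gamma)$ is $\bQ$-factorial dlt, and $F\geq 0$ is $\pi$-exceptional by the klt hypothesis on $(X,\Delta)$. I would then run a $(K_Y+\Gamma)$-MMP over $Z$. At each step, a $(K_{Y_i}+\Gamma_i)$-negative extremal ray over $Z$ is produced by the cone theorem (\autoref{thm:cone_theorem}), the associated contraction to an algebraic space is given by \autoref{thm:contraction_theorem}, and its target is projective by part (2) applied in the divisorial case or by the proof of \autoref{lem:algebraic-space-to-varieties} applied in the flipping case (the relevant negative-intersection component comes from $\lfloor \Gamma_i\rfloor$, since every contracted ray sits over the flipping locus of $f$, hence over $\pi$-exceptional loci, which are contained in $\lfloor \Gamma_i\rfloor$ throughout). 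The needed flips in this MMP exist by \autoref{thm:dlt-flips-F-finite-S}. Because every contracted extremal ray meets $\lfloor \Gamma_i\rfloor$ negatively, \autoref{thm:special-termination} (special termination) guarantees that the MMP terminates at some $X^+$ which is a minimal model of $(Y,\Gamma)$ over $Z$. By the negativity lemma \autoref{lem:negativity-lemma}, the induced map $X^+\to X$ contracts exactly the $\pi$-exceptional divisors, so $X\dashrightarrow X^+$ is an isomorphism in codimension one and $(X^+,\Delta^+=\phi_*\Delta)$ is the sought generalized flip, with $K_{X^+}+\Delta^+$ nef over $Z$ by construction.

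The main obstacle is the circularity concern: running the $(K_Y+\Gamma)$-MMP over $Z$ requires both divisorial contractions and small flips over the algebraic space $Z$ to be well-defined, which in turn seems to need a prior version of (1) itself. This is precisely resolved by proving (2) in isolation first (where no flip is needed, only projectivity of $Z$), and by observing that in our MMP the boundary $\Gamma$ is chosen to guarantee that every extremal ray meets $\lfloor \Gamma\rfloor$ negatively, so each intermediate contraction falls into the scope of either (2) or \autoref{thm:dlt-flips-F-finite-S}. The remaining technical points — checking that the standard-coefficient hypothesis of \autoref{thm:dlt-flips-F-finite-S} persists under the MMP, and that the plt/normality condition needed for \autoref{lem:algebraic-space-to-varieties} holds at each step — are routine adjunction-and-modification arguments.
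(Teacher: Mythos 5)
The paper's proof of this statement is a one-line citation: ``the same proof as in [HX15, Theorem 5.6] holds, using \autoref{thm:pl-flip} and \autoref{thm:special-termination}.'' Your proposal tries to reconstruct that argument, and your treatment of the flipping case follows the same Shokurov-style strategy (pass to a log resolution, add the exceptional divisors to the boundary with coefficient one, run a relative MMP over $Z$, conclude by special termination and negativity). So the overall approach is the same as the paper's. However, there are two genuine problems.

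\textbf{The shortcut for part (2) does not work.} You assert that $(X,\Delta+cE)$ is plt with $\lfloor\Delta+cE\rfloor=E$ ``for some $c\in(0,1]$ (take $c$ equal to the log canonical threshold of $E$, or slightly less if needed so that plt is preserved).'' This is internally contradictory: if $c$ is strictly less than the threshold the pair is klt and $E$ does \emph{not} appear in the round-down, while if $c$ equals the threshold the pair is lc but need not be plt, and moreover the coefficient of $E$ need not reach $1$ (the lct of $E$ with respect to $(X,\Delta)$ can be strictly smaller than $1-\operatorname{coeff}_E\Delta$, in which case you cannot raise $E$ to coefficient $1$ while remaining lc, let alone plt). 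Thus the hypotheses of \autoref{lem:algebraic-space-to-varieties} --- that $E$ is a component of the round-down of a dlt boundary with $E$ normal and $E\cdot R<0$ --- are not established. The correct fix is to treat the divisorial case by the same relative MMP as in your part (1), including the strict transform of $E$ in $\Gamma$ with coefficient $1$ (which is harmless because $(1-\operatorname{coeff}_E\Delta)\tilde E$ then appears as a nonnegative component of $F$, and $\tilde E$ is exceptional over $Z$); the MMP then contracts $\tilde E$ along with the $\pi$-exceptional divisors, producing $Z$ as the terminal model and proving its projectivity. As written, your part (2) is a deviation from the paper's approach, and it is wrong.

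\textbf{Part (1) has a circularity and an unjustified step.} You invoke ``part (2)'' inside the proof of part (1) to handle divisorial contractions in the intermediate MMP, which propagates the gap above. More importantly, the claim ``every contracted ray sits over the flipping locus of $f$, hence over $\pi$-exceptional loci, which are contained in $\lfloor\Gamma_i\rfloor$'' is not automatic: a curve contracted over $Z$ whose image in $X$ is a curve inside $\Ex(f)$ need not lie in any $\pi$-exceptional divisor, because $\Ex(f)$ is a codimension-two locus over which a general log resolution of $(X,\Delta)$ may well be an isomorphism. (In the extreme case where $X$ is smooth and $(X,\Delta)$ is snc, the log resolution is the identity, $\lfloor\Gamma\rfloor=\emptyset$, and the ``MMP over $Z$'' whose first step would be the very flip of $f$ you are trying to construct.) The correct argument, as in [HX15], requires a more careful choice of birational modification (for instance blowing up so that $\pi^{-1}(\Ex(f))$ has pure codimension one and is contained in $\operatorname{Ex}(\pi)$) together with a careful bookkeeping of the relation $K_{Y_i}+\Gamma_i\equiv_Z (\text{pullback of }K_X+\Delta)+F_i$ along the MMP, and this is not routine. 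As it stands the pl-property of the intermediate contractions --- which is exactly what is needed to invoke \autoref{thm:dlt-flips-F-finite-S} and \autoref{lem:algebraic-space-to-varieties} --- has not been established.

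Finally, the phrase ``the induced map $X^+\to X$'' in the last paragraph should be a birational map, not a morphism; the negativity lemma must be applied on a common resolution to deduce that $F$ is contracted.
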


\begin{proof}
	The same proof as in \cite[Theorem 5.6]{HX15} holds using \autoref{thm:pl-flip} and \ref{thm:special-termination}. 
\end{proof}

\subsection{Generalized flips with arbitrary coefficients over $F$-finite fields}

\begin{theorem}\label{thm:dlt-flips-F-finite}
	Let $f:(X, B)\to Z$ be a flipping contraction, where $(X, B)$ is a $\mbQ$-factorial dlt pair, and $X$ is a $3$-fold defined over a $F$-finite field of char $p>5$ and $Z$ is an algebraic space. Then the generalized flip of $f$ exists.
\end{theorem}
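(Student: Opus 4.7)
The plan is to reduce to \autoref{thm:HX-generalized-flips} by exploiting the fact that a generalized flipping contraction satisfies $\rho(X/Z)=1$, so every $\bQ$-Cartier divisor on $X$ is numerically proportional over $Z$. In particular, the desired flipped model $X^+$ depends only on the pair $(f,R)$, not on the choice of boundary $B$ witnessing $R$ as a $(K_X+B)$-negative ray. Thus it suffices to produce an auxiliary $\bQ$-divisor $B^*\geq 0$ such that (i) $B^*$ has coefficients in the standard set $\mcS$, (ii) $(X,B^*)$ is klt, and (iii) $(K_X+B^*)\cdot R<0$. Given such a $B^*$, \autoref{thm:HX-generalized-flips} yields $f^{*+}\colon X^+\to Z$, and the strict transform $B^+$ of $B$ provides the required dlt boundary with $K_{X^+}+B^+$ being $f^+$-ample by numerical proportionality on $X^+$.

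For the construction of $B^*$, I would use that the dlt hypothesis implies $(X,0)$ is klt, granting a strictly positive log canonical threshold. Writing $\alpha=K_X\cdot R$ and $\beta_i=B_i\cdot R$, set $D=\sum_{\beta_i\leq 0}B_i$ (reduced). A direct inequality from $b_i\in[0,1]$ yields
\[
\alpha+\sum_{\beta_i\leq 0}\beta_i\,\leq\,\alpha+\sum_i b_i\beta_i=(K_X+B)\cdot R<0,
\]
so $(K_X+D)\cdot R<0$. Taking $B^*=(1-1/n)D$ produces coefficients in $\mcS$, and by continuity $(K_X+B^*)\cdot R<0$ for $n$ sufficiently large; klt-ness of $(X,B^*)$ requires $1-1/n<\lct(X,D)$, which is positive.

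The main obstacle is ensuring compatibility of the two constraints on $n$ in the degenerate case $\lct(X,D)<1$, where the admissible window for $n$ could be empty (this amounts to $\alpha+\lct(X,D)\cdot(D\cdot R)\geq 0$). To handle this, I would first attempt to refine the choice of $D$, e.g.\ by restricting to a smaller subset of the components with $\beta_i\leq 0$ so as to shrink $|D\cdot R|$ and enlarge $\lct$. When even this fails, I would fall back on \autoref{cor:generalized-to-usual-flips}: in the obstructed regime some component $S$ of $\lfloor B\rfloor$ must satisfy $S\cdot R<0$, such $S$ is normal by the dlt hypothesis, and so the generalized flipping contraction is realised as an honest flipping contraction to a quasi-projective variety. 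Then \autoref{thm:dlt-flips-F-finite-S} applies after an analogous but simpler coefficient reduction, made easier by the guaranteed component $S$ carrying the integer coefficient $1\in\mcS$.

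Finally, with $X^+$ in hand I would verify the remaining properties of the generalized flip of $(X,B)$: the target algebraic space coincides with $Z$ via the rigidity lemma since $f$ and $f^+$ contract precisely the curves of class in $R$; $\mbQ$-factoriality of $X^+$ and dlt-ness of $(X^+,B^+)$ follow from standard MMP discrepancy arguments, using that $X\dashrightarrow X^+$ is an isomorphism in codimension one and that discrepancies only improve under flips; and $f^+$-ampleness of $K_{X^+}+B^+$ is immediate from $\rho(X^+/Z)=1$ together with the numerical proportionality $K_{X^+}+B^+\sim_{\bQ,Z}c(K_{X^+}+B^{*+})$ with $c>0$ coming from the signs of the two intersection numbers on $R$.
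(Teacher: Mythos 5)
Your strategy has a genuine gap: it ignores the pseudo-effectivity hypothesis of \autoref{thm:HX-generalized-flips}. That theorem requires not only that $(X,\Delta)$ be a $\bQ$-factorial klt pair with standard coefficients and a $(K_X+\Delta)$-negative extremal ray, but crucially that $K_X+\Delta$ be \emph{pseudo-effective}. Your candidate $B^*=(1-1/n)D$ has no reason to make $K_X+B^*$ pseudo-effective; in fact, decreasing the coefficients of $B$ towards $0$ tends to push $K_X+B^*$ further away from the pseudo-effective cone. Pseudo-effectivity is not a cosmetic hypothesis here: the contraction in \autoref{thm:HX-generalized-flips} is produced by \autoref{thm:contraction_theorem}, and both rest on \autoref{thm:cone_theorem}, which is only proved under the assumption $\kappa(K_X+B)\geq 0$. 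Without pseudo-effectivity of $K_X+B^*$ you cannot even assert that $R$ is an extremal ray in a cone controlled by the theorem, let alone run the argument inside it. The degenerate-case fallback does not rescue this: it only fires when the ``admissible window for $n$'' is empty, while the main branch applies \autoref{thm:HX-generalized-flips} directly and unconditionally, which is unjustified.

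There is also a secondary issue worth flagging. You assert that because $\rho(X/Z)=1$, the flipped model depends only on $(f,R)$ and not on $B$. This is usually deduced from the negativity lemma, which upgrades relative numerical triviality to relative $\bQ$-linear equivalence for divisors on a birational morphism between \emph{varieties}. Here $Z$ is only an algebraic space; the paper goes to some trouble (\autoref{lem:algebraic-space-to-varieties}) precisely to show, under the extra hypothesis of a normal component $S$ with $S\cdot R<0$, that $Z$ is in fact projective and that $R^\perp$-divisors pull back from $Z$. Your argument invokes the ``well-definedness of the Proj'' conclusion before establishing this. The paper instead follows Birkar's proof of Theorem 6.3 of \cite{Bir16}, whose structure is not a coefficient perturbation but an MMP-with-scaling argument over $Z$ that uses the flips with standard coefficients (\autoref{thm:dlt-flips-F-finite-S}) and generalized flips (\autoref{thm:HX-generalized-flips}) as \emph{inputs to a termination argument}, thereby sidestepping the need for global pseudo-effectivity of an auxiliary boundary. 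Your approach, if it could be made to work, would be shorter, but as stated it silently drops a running hypothesis of the theorem it invokes.
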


\begin{proof}
	The same proof as in \cite[Theorem 6.3]{Bir16} holds using \autoref{thm:dlt-flips-F-finite-S} and \autoref{thm:HX-generalized-flips}.
\end{proof}

\begin{theorem}\label{thm:dlt-flips-for-arbitrary-coefficients}
	Let $(X, B)$ be a $\mbQ$-factorial dlt pair of dimension $3$ over an $F$-finite field $k$ of char $p>5$. Let $X\to Z$ be a $(K_X+B)$-negative extremal flipping projective contraction. Then the flip of $f$ exists.
\end{theorem}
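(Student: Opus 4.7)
The plan is to apply the existence of generalized flips \autoref{thm:dlt-flips-F-finite} with $Z$ regarded as a proper algebraic space, and then upgrade the resulting generalized flip to an honest flip by exploiting the projectivity of $Z$ together with $\rho(X/Z)=1$. This will essentially follow the proof of \autoref{cor:generalized-to-usual-flips}, but simplified because \autoref{lem:algebraic-space-to-varieties} is not needed in our setting.

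First I will apply \autoref{thm:dlt-flips-F-finite} to obtain a generalized flip $\phi\colon X\bir X^+$ which is an isomorphism in codimension one, together with a proper small birational morphism $f^+\colon X^+\to Z$ from a normal $\mbQ$-factorial dlt projective $3$-fold $X^+$, such that $K_{X^+}+B^+$ is $f^+$-nef, where $B^+:=\phi_*B$. Because $\phi$ is small and both $X, X^+$ are $\mbQ$-factorial, the strict transform will induce an isomorphism $N^1(X)\cong N^1(X^+)$ that sends $f^*N^1(Z)$ onto $(f^+)^*N^1(Z)$; in particular $\rho(X^+/Z)=\rho(X/Z)=1$.

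To conclude, I will show that $K_{X^+}+B^+$ is $f^+$-ample. Suppose for contradiction that $K_{X^+}+B^+$ is $f^+$-numerically trivial. Let $H^+$ be an ample $\mbQ$-Cartier divisor on $X^+$ and $H$ its strict transform on $X$. If $H\cdot R=0$, then $H$ would be $f$-numerically trivial and hence, by $\rho(X/Z)=1$, we would have $H\sim_\mbQ f^*D$ for some $\mbQ$-Cartier divisor $D$ on $Z$; pushing forward via $\phi$ would then give $H^+\sim_\mbQ (f^+)^*D$, contradicting the ampleness of $H^+$ since $f^+$ is not an isomorphism. Hence $H\cdot R\neq 0$, so by $\rho(X/Z)=1$ we have $K_X+B\num_Z aH$ for some $a\neq 0$, and pushing forward via $\phi$ yields $K_{X^+}+B^+\num_Z aH^+$, contradicting the $f^+$-numerical triviality of $K_{X^+}+B^+$. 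Therefore $K_{X^+}+B^+$ is $f^+$-ample, $f^+$ is a projective morphism, and all the conditions of \autoref{def:flip} are verified.

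The hard part will be the compatibility of the strict transform isomorphism $N^1(X)\cong N^1(X^+)$ with the contractions to $Z$, which is standard for small birational maps between $\mbQ$-factorial varieties but should be checked carefully in our imperfect-field setting. This can be done via a common resolution $p\colon W\to X$, $q\colon W\to X^+$ of $\phi$, writing $p^*(K_X+B)=q^*(K_{X^+}+B^+)+E$, and applying the negativity lemma \autoref{lem:negativity-lemma} to show $E$ is effective and supported on the exceptional divisors, as in the standard pattern of \cite[Lemma 3.38]{KM98}.
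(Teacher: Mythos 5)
Your approach diverges from the paper's, and it contains a genuine gap in the upgrade step. The paper proves this theorem by invoking the argument of Birkar's Theorem 1.1, building on the existence of log minimal models (\autoref{pro:log_minimal_models_f_finite}): one runs a relative LMMP over $Z$ and identifies the flip with the log canonical model of the resulting log minimal model. You instead try to directly upgrade the output of \autoref{thm:dlt-flips-F-finite} from a generalized flip to an honest flip, patterning your argument on \autoref{cor:generalized-to-usual-flips}.

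The gap is in the claim that you flag as ``the hard part'' --- the compatibility of the strict-transform isomorphism $N^1(X)\cong N^1(X^+)$ with the maps to $Z$ (needed both to get $\rho(X^+/Z)=1$ and to push relative numerical equivalence forward via $\phi$), together with the descent ``$H\cdot R=0$ implies $H\sim_{\bQ}f^*D$''. You suggest the negativity lemma handles this, as in \cite[Lemma 3.38]{KM98}. But the negativity lemma applied to $p^*(K_X+B)-q^*(K_{X^+}+B^+)$ only gives the effectivity of that particular divisor (a discrepancy comparison). It says nothing about whether, for an \emph{arbitrary} $\bQ$-Cartier divisor $H$ on $X$, the divisor $p^*H-q^*H^+$ vanishes, or even whether $H\equiv_{Z}0$ implies $H^+\equiv_{Z}0$; the negativity lemma cannot be applied there because neither $p^*H-q^*H^+$ nor its negative is $p$- or $q$-nef in general. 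What is actually needed is the contraction-theorem descent: for the projective birational $f$ with $\rho(X/Z)=1$, a divisor $f$-numerically trivial should be $\bQ$-linearly trivial over $Z$. This is a base-point-free-type statement, not a formal consequence of $\rho(X/Z)=1$. It is precisely the content of the second half of \autoref{lem:algebraic-space-to-varieties}, whose proof uses semi-ampleness on the normal component $S\subset\lfloor\Delta\rfloor$ with $S\cdot R<0$ (via \cite[Theorem 1.1]{Tan15} and Keel). Your general dlt flipping contraction has no such $S$ available, so \autoref{lem:algebraic-space-to-varieties} and \autoref{cor:generalized-to-usual-flips} do not apply, and you have not supplied a replacement argument. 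Until that descent is established (e.g.\ by perturbing to a klt boundary and invoking a base-point-free theorem that is available at this stage of the development without circularity), the contradiction argument for ampleness of $K_{X^+}+B^+$ does not close.
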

\begin{proof}
	The same proof as in \cite[Theorem 1.1]{Bir16} holds using \autoref{thm:dlt-flips-F-finite} and \autoref{pro:log_minimal_models_f_finite}.
\end{proof}

\subsection{Weak Zariski decomposition and LMMP} 
Let $D$ be an $\mbR$-Cartier divisor on a normal variety $X$ defined over an $F$-finite field of char $p>5$. Let $X\to Z$ be projective contraction over $k$. A \emph{weak Zariski decomposition}$/Z$ for $D$ consists of a projective birational morphism $f:W\to X$ from a normal variety, and a numerical equivalence $f^*D\num_Z P+M$ such that 
\begin{enumerate}
	\item $P$ and $M$ are $\mbR$-Cartier divisors,
	\item $P$ is nef$/Z$ and $M\>0$.
\end{enumerate}

\begin{proposition}\label{pro:weak-zariski-to-mmp}
	Let $(X, B)$ be a projective log canonical pair of dimension $3$ defined over an $F$-finite field $k$, and $X\to Z$ a projective contraction. Assume that $K_X+B$ has a weak Zariski decomposition$/Z$. Then $(X, B)$ has a log minimal model over $Z$.
\end{proposition}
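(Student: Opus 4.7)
The plan is to adapt the proof of \cite[Theorem 1.6]{Bir16}, which establishes the corresponding statement over perfect fields, to the $F$-finite setting using the LMMP ingredients developed earlier in this section. The first step is to reduce to the $\bQ$-factorial dlt case. Using log resolution (which is available over $F$-finite fields) together with the dlt modification construction via LMMP, I would produce a crepant $\bQ$-factorial dlt model $h:(Y,B_Y)\to (X,B)$ with $K_Y+B_Y = h^*(K_X+B)$. Any weak Zariski decomposition for $K_X+B$ over $Z$ pulls back to one for $K_Y+B_Y$ on a common birational model, and a log minimal model of $(Y,B_Y)$ over $Z$ descends to one of $(X,B)$ over $Z$ by standard arguments using the negativity lemma \autoref{lem:negativity-lemma}.

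Next, I would run a $(K_Y+B_Y)$-MMP over $Z$ with scaling of an ample divisor $H$. At each step, the extremal ray to be contracted is supplied by the cone theorem \autoref{thm:cone_theorem}, the contraction is constructed via \autoref{thm:contraction_theorem}, and the required flips exist by \autoref{thm:dlt-flips-for-arbitrary-coefficients}. The LMMP-with-scaling formalism in this setting was sketched in the subsection before \autoref{sec:complements}: one simply replaces any extremal curve $\Gamma$ by the class $\Gamma/d_\Gamma$ so that the standard arguments in \cite[Sec. 3]{Bir16} go through. This produces a (possibly infinite) sequence of flips and divisorial contractions along which $K+B$ plus a decreasing multiple of the scaling divisor is nef.

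The main step, and the principal obstacle, is to show that this LMMP terminates. Here I would follow Birkar's argument in \cite{Bir16} verbatim. First, special termination \autoref{thm:special-termination} implies that after finitely many steps the flipping and flipped loci are disjoint from $\lfloor B\rfloor$, so we may restrict attention to the klt region. In the klt case, Birkar uses the weak Zariski decomposition $g^*(K_X+B)\equiv P+M$ on a common resolution and tracks the pushforwards of $P$ and $M$ along the LMMP: he shows that a suitable numerical invariant built from $M$ is strictly decreasing along the sequence of flips and that such invariants satisfy a well-foundedness property, which forces the sequence to stop. The key inputs for this argument are negativity, existence of dlt modifications, and the MMP machinery for dlt pairs restricted to lower-dimensional strata — all of which are now available over $F$-finite fields of characteristic $p>5$ thanks to the results of the previous sections.

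Once termination is proven, the output of the LMMP is a birational model $(X',B')$ on which $K_{X'}+B'$ is nef over $Z$, and hence a log minimal model of $(X,B)$ over $Z$ in the sense of \cite{Bir16}. The hard part is purely the verification that Birkar's termination-from-Zariski-decomposition argument carries over; since it relies only on the formal properties of the LMMP and on special termination (and not on the precise length bound of extremal rays or on properties specific to perfect or algebraically closed fields), no new ideas should be required beyond the careful bookkeeping already exhibited in \cite{Bir16}.
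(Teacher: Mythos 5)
Your approach is the same as the paper's, whose entire proof is the one-line citation: ``The same proof as in \cite[Proposition 8.3]{Bir16} works here.'' Since you ultimately commit to carrying over Birkar's argument verbatim, your proposal is correct in substance.

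One caveat worth flagging: your sketch of the termination step misrepresents what Birkar actually does. The argument in \cite[Proposition 8.3]{Bir16} (and the weak-Zariski-decomposition paper it follows) is not ``use special termination to reduce to the klt region and then track a numerical invariant built from $M$ that decreases along flips.'' Rather, it is an induction on the invariant $\theta$, the number of components of the negative part $M$ that are not contained in $\lfloor B_W\rfloor$: one modifies the boundary on a log smooth model so as to reduce $\theta$, and in the base case $\theta=0$ (where $\Supp M \subseteq \lfloor B_W\rfloor$) an MMP with scaling terminates outright by special termination applied to the \emph{entire} sequence, after which nefness of $K+B$ follows because the negative part has been absorbed into $\lfloor B\rfloor$. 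There is no separate termination argument in the klt locus. A reader trying to implement your paragraph literally would not recover Birkar's proof, but since you defer to the reference for the actual argument this does not constitute a gap in your proposal.
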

\begin{proof}
	The same proof as in \cite[Proposition 8.3]{Bir16} works here.
\end{proof}

\begin{proposition}\label{pro:log_minimal_models_f_finite}
	Let $(X,B)$ be a quasi-projective klt pair of dimension $3$ over an $F$-finite field $k$ of char $p>5$. Let $f:X\to Z$ be a projective contraction. If  $K_X+B$ is pseudo-effective over $Z$, then $(X,B)$ has a log minimal model over $Z$.
\end{proposition}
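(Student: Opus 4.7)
The strategy is to construct a weak Zariski decomposition of $K_X+B$ over $Z$ and then invoke \autoref{pro:weak-zariski-to-mmp}. All the key ingredients — generalized flips, divisorial contractions, and special termination — are already available at this point in the paper for arbitrary $\bQ$-factorial dlt threefold pairs over an $F$-finite field of characteristic $p>5$, so the LMMP with scaling can be run; the only obstruction to immediately concluding is termination, which we circumvent via the weak Zariski decomposition route.

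First, I would reduce to the case where $(X,B)$ is $\bQ$-factorial dlt. Take a projective log resolution $\mu\colon W\to X$, which exists since $k$ is $F$-finite, and choose a boundary $B_W\geq \mu^{-1}_*B+\Ex(\mu)$ so that $(W,B_W)$ is $\bQ$-factorial dlt and $K_W+B_W-\mu^*(K_X+B)$ is effective and $\mu$-exceptional. Running a $(K_W+B_W)$-MMP over $X$ — using the generalized flips of \autoref{thm:dlt-flips-F-finite} and the contractions of \autoref{thm:contraction_theorem} — and contracting superfluous exceptional divisors yields a $\bQ$-factorial dlt model $(Y,B_Y)$ of $(X,B)$. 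By \autoref{lem:negativity-lemma}, any log minimal model of $(Y,B_Y)$ over $Z$ descends to one of $(X,B)$ over $Z$, so we may assume $(X,B)$ itself is $\bQ$-factorial dlt.

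Second, I would run a $(K_X+B)$-MMP with scaling of a sufficiently ample $\bQ$-divisor $A/Z$ chosen so that $(X,B+A)$ is klt and $K_X+B+A$ is nef$/Z$. At each step the extremal contraction exists by \autoref{thm:contraction_theorem}, the flip exists by \autoref{thm:dlt-flips-F-finite}, and \autoref{thm:special-termination} ensures that after finitely many steps no flipping locus meets $\lfloor B\rfloor$. This yields a (possibly infinite) sequence $(X_i,B_i)$ of $\bQ$-factorial dlt pairs with scaling thresholds
\[\lambda_i=\inf\{t\geq 0:K_{X_i}+B_i+tA_i\text{ is nef}/Z\}\searrow \lambda_\infty\geq 0.\]
If the MMP terminates with $\lambda_i=0$ for some $i$, then $(X_i,B_i)$ is already a log minimal model and we are done.

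Third, if the MMP does not terminate, I would extract a weak Zariski decomposition from the limit. Since $K_X+B$ is pseudo-effective over $Z$, standard arguments (length bounds via \autoref{lem:length_nef_perturb} and the weak cone theorem \autoref{thm:cone_theorem}, combined with the existence of a Nakayama-type asymptotic limit on a common resolution) force $\lambda_\infty=0$. Taking a common log resolution $f\colon V\to X$, $g\colon V\to X_N$ for $N\gg 0$ and pushing the nef divisor $K_{X_N}+B_N+\lambda_\infty A_N$ back to $V$, we obtain a numerical decomposition $f^*(K_X+B)\equiv P+M/Z$ with $P$ nef$/Z$ and $M\geq 0$. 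Feeding this weak Zariski decomposition into \autoref{pro:weak-zariski-to-mmp} produces the desired log minimal model of $(X,B)$ over $Z$.

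The main obstacle is the limiting argument in the third step: namely, ensuring $\lambda_\infty=0$ (equivalently, extracting an honest weak Zariski decomposition from a non-terminating LMMP with scaling) over a possibly imperfect ground field. The delicate point is that the length bounds for extremal rays now involve the degree correction $d_C$ of \autoref{prop:degree}, so one must check that Birkar's limiting arguments in \cite[Sec.~8]{Bir16} remain valid after substituting $\tfrac{1}{d_{C_i}}C_i$ for $C_i$ throughout. Once this compatibility is verified — as it is for the perturbation lemma \autoref{lem:length_nef_perturb} — the rest of the argument carries over essentially verbatim.
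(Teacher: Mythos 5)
Your overall strategy — reduce to $\bQ$-factorial dlt, run an LMMP with scaling using the generalized flips of \autoref{thm:dlt-flips-F-finite}, extract a weak Zariski decomposition, and conclude by \autoref{pro:weak-zariski-to-mmp} — does match the paper's one-line proof, which simply cites Birkar's Theorem 1.2 and asserts the same argument carries over. You have also correctly identified the adaptation needed (substituting $\tfrac{1}{d_\Gamma}\Gamma$ for $\Gamma$ in length-of-extremal-ray estimates) and that only generalized flips are available at this stage, avoiding circularity with \autoref{thm:dlt-flips-for-arbitrary-coefficients}.

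However, your Step 3 contains a genuine error that signals a gap in the limiting argument. Having asserted $\lambda_\infty = 0$, you then propose to pull back ``the nef divisor $K_{X_N}+B_N+\lambda_\infty A_N$'' to a common resolution. But $K_{X_N}+B_N+\lambda_\infty A_N = K_{X_N}+B_N$ is precisely \emph{not} nef when the MMP does not terminate; only $K_{X_N}+B_N+\lambda_N A_N$ with $\lambda_N > \lambda_\infty$ is nef. The actual construction must pull back the divisors $K_{X_i}+B_i+\lambda_i A_i$ for each $i$, show that these pullbacks (and the accompanying effective parts furnished by the negativity lemma) stabilize or converge on a suitable resolution as $i\to\infty$, and only then produce $f^*(K_X+B)\equiv P+M$. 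This convergence step is where the real content of the proof lives, and it cannot be dispatched with ``$K_{X_N}+B_N+\lambda_\infty A_N$ is nef.'' Relatedly, the claim that pseudo-effectivity of $K_X+B$ ``forces $\lambda_\infty = 0$'' is asserted without a mechanism: in the characteristic-zero literature this rests on finiteness of models, which is not available here in the same form. You should track down what Birkar actually does in [Bir16, Section 8] to either establish $\lambda_\infty=0$ or produce a weak Zariski decomposition without it, rather than wave at ``standard arguments'' and ``Nakayama-type asymptotic limits.''
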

\begin{proof}
	Same proof as in \cite[Theorem 1.2]{Bir16} works.
\end{proof}

\subsection{Arbitrary fields}

We may now reduce the case of an arbitrary field to the $F$-finite case in some of our results via the following lemma.

\begin{lemma}\label{lem:model}
	Let $X$ and $Y$ be quasi-projective varieties over an arbitrary field $k$, and $f:X\to Y$ a morphism and $D$ a Cartier divisor on $X$.  Then there exists an $F$-finite subfield $\ell\subset k$ and varieties $X_{\ell}$, $Y_{\ell}$, a morphism $f_{\ell}$, and a Cartier divisor $D_{\ell}$ on $X$ such that $X\cong X_{\ell}\otimes_\ell k $, $Y\cong Y_{\ell}\otimes_\ell k$, $f=f_{\ell}\otimes_\ell k$ and $D=D_\ell\otimes_\ell k$. 
	\end{lemma}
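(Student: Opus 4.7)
The plan is to use a standard spreading-out argument to descend all of the data $(X, Y, f, D)$ to a finitely generated subfield of $k$ over the prime field $\mathbb{F}_p$, and then observe that such a subfield is automatically $F$-finite.

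First I would cover $X$ and $Y$ by finitely many affine opens $U_i = \Spec A_i \subset X$ and $V_j = \Spec B_j \subset Y$ chosen compatibly with $f$, in the sense that $f(U_i) \subset V_{j(i)}$ for some function $j(i)$. Each $A_i$ is a finitely generated $k$-algebra, so it can be presented as $k[x_1,\ldots,x_{n_i}]/I_i$ where $I_i$ is generated by finitely many polynomials; similarly for each $B_j$. The gluing data among the $U_i$ (and among the $V_j$) consists of finitely many isomorphisms between localizations, each of which is specified by finitely many elements. The map $f$ restricts on each $U_i$ to a $k$-algebra homomorphism $B_{j(i)} \to A_i$ which is specified by the images of the finitely many chosen generators of $B_{j(i)}$. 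Finally, the Cartier divisor $D$ is given by local equations $g_i \in \Frac(A_i)^*$ on some affine cover (we may refine the cover to achieve this), together with the cocycle condition, and each $g_i$ involves only finitely many elements of $A_i$.

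Next, let $S \subset k$ be the finite set consisting of all coefficients appearing in the generators of $I_i$, in the analogous ideals for the $B_j$, in the gluing data, in the images of the generators under $f$, and in the local equations for $D$. Set $\ell_0 = \mathbb{F}_p(S) \subset k$; this is a finitely generated field extension of $\mathbb{F}_p$. Every finitely generated field extension of a perfect field is $F$-finite (since a $p$-basis may be extracted from any finite transcendence basis together with the separable algebraic generators), so $\ell_0$ is $F$-finite, and any intermediate field $\ell_0 \subset \ell \subset k$ with $\ell/\ell_0$ finite is also $F$-finite. Setting $\ell = \ell_0$ we can define all the rings, gluing data, morphisms, and local Cartier equations over $\ell$ using the same expressions, giving quasi-projective varieties $X_\ell, Y_\ell$, a morphism $f_\ell : X_\ell \to Y_\ell$, and a Cartier divisor $D_\ell$ on $X_\ell$.

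The last step is to check that $- \otimes_\ell k$ recovers the original data. This follows from flat base change together with the fact that each affine piece of $X$ (resp. $Y$) is by construction isomorphic to $\Spec(A_{i,\ell} \otimes_\ell k)$ (resp. $\Spec(B_{j,\ell} \otimes_\ell k)$), and the gluing data, morphism, and Cartier divisor are all obtained by tensoring the descended data up to $k$. One potentially subtle point is to check that the $X_\ell$ and $Y_\ell$ we build are integral and not merely reduced; this can be arranged by enlarging $\ell$ finitely many times (without leaving the $F$-finite world) to kill any new irreducible components that may appear in the models. This is the main technical nuisance, but it is harmless because $X \to X_\ell$ is faithfully flat, so integrality of $X$ forces the generic point of $X_\ell$ to be the only one dominated by the generic point of $X$, and after passing to a finite extension of $\ell$ inside $k$ we may discard any extraneous components.
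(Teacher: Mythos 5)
Your proposal is essentially the same spreading-out argument as the paper's, resting on the same key observation that a finitely generated extension of $\mathbb{F}_p$ is $F$-finite; the only real difference is the bookkeeping. The paper exploits quasi-projectivity more directly: it fixes an embedding $X\hookrightarrow\mathbb{P}^n_k$, takes $\ell$ to contain the coefficients of defining equations for $X$ and of $\mathbb{P}^n_k\setminus X$, and declares $X_\ell$ to be cut out by those same equations in $\mathbb{P}^n_\ell$ (similarly for $Y$, $f$, $D$, enlarging $\ell$ as needed). Your version rebuilds $X$ and $Y$ from affine charts plus gluing data, which is more general (it would work for arbitrary finite-type schemes) but is strictly more work here, given that quasi-projectivity is in the hypothesis. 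Both routes are correct.

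One small point: the worry you flag about $X_\ell$ failing to be integral is not a genuine issue, and the proposed remedy (discarding ``extraneous components'' after a finite extension) is not quite the right move — if $X_\ell\otimes_\ell k$ is to recover $X$ exactly, you cannot discard pieces of $X_\ell$ at will. In fact integrality is automatic once $X_\ell\otimes_\ell k\cong X$: the projection $X\to X_\ell$ is faithfully flat, hence surjective, so $X_\ell$ is irreducible since $X$ is; and $\ell\to k$ flat forces $A_\ell\hookrightarrow A_\ell\otimes_\ell k$ on each affine chart, so $X_\ell$ inherits reducedness from $X$. So no enlargement of $\ell$ is needed for this, and the argument closes cleanly.
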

\begin{proof}
	Fix an embedding $X\hookrightarrow \mathbb{P}_k^n$, and take $\ell$ to be the subfield of $k$ generated over $\mathbb{F}_p$ by all coefficients of the equations of $X$ and $\mathbb{P}_k^n\setminus X$ in $\mathbb{P}^n_k$.  Then $X_{\ell}$ is defined by the same equations in $\mathbb{P}_{\ell}^n$.  We construct $Y_{\ell}$ and $f_{\ell}$ and $D_{\ell}$ in a similar way, possibly enlarging the field $\ell\subset k$ if necessary.  Note that $\ell $ is $F$-finite because it is finitely generated over a perfect field.
	\end{proof}

\begin{lemma}\label{lem:klt-descends}
	Let $(X,B\>0)$ be a pair over an arbitrary field $k$. Let $\ell$ be an $F$-finite subfield of $k$ and  $(X_{\ell},B_{\ell})$ a pair such that $(X,B)=(X_{\ell},B_{\ell})\otimes_{\ell} k$.  If $(X,B)$ is klt, then $(X_{\ell},B_{\ell})$ is also klt.
	\end{lemma}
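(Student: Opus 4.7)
The plan is to descend klt-ness via a log resolution of $(X_\ell,B_\ell)$ which we base change up to $X$, and then compare discrepancies on the two sides.

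First, since $q_X\colon X\to X_\ell$ is faithfully flat, normality of $X_\ell$ and $\mathbb{Q}$-Cartierness of $K_{X_\ell}+B_\ell$ descend from the analogous properties on $X$, so $(X_\ell,B_\ell)$ is already a log pair. Because $X=X_\ell\otimes_\ell k$ is assumed normal, the ``normalization of the reduction'' step in \cite[Theorem 1.1]{ji_waldron}/\cite[Theorem 4.2]{Tan15f} is trivial on the $X$-side, so we have $K_X=q_X^*K_{X_\ell}$ and $B=q_X^*B_\ell$ as $\mathbb{Q}$-Weil divisors, and in particular $K_X+B=q_X^*(K_{X_\ell}+B_\ell)$.

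Take a log resolution $\pi_\ell\colon Y_\ell\to X_\ell$ of $(X_\ell,B_\ell)$, which exists since $\ell$ is $F$-finite and $\dim X_\ell\leq 3$, and write $K_{Y_\ell}+B_{Y_\ell}=\pi_\ell^*(K_{X_\ell}+B_\ell)$. It suffices to show that every coefficient $a_E$ of $B_{Y_\ell}$ satisfies $a_E<1$. Set $Y=Y_\ell\otimes_\ell k$; when $k/\ell$ has an inseparable direction, $Y$ may fail to be reduced or normal, so let $\tilde Y$ denote the normalization of $Y_{\mathrm{red}}$, with induced maps $\tilde q_Y\colon\tilde Y\to Y_\ell$ and $\tilde\pi\colon\tilde Y\to X$. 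By \cite[Theorem 1.1]{ji_waldron}/\cite[Theorem 4.2]{Tan15f} there exists an effective divisor $\Delta_{\tilde Y}\geq 0$ on $\tilde Y$ with $K_{\tilde Y}+\Delta_{\tilde Y}=\tilde q_Y^*K_{Y_\ell}$. Combining this with the formulas above and $q_X\circ\tilde\pi=\pi_\ell\circ\tilde q_Y$ yields
\[
K_{\tilde Y}+\Delta_{\tilde Y}+\tilde q_Y^*B_{Y_\ell}\;=\;\tilde\pi^*(K_X+B).
\]

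For any prime $E\subseteq\mathrm{Supp}(B_{Y_\ell})$ and any prime $F$ on $\tilde Y$ with $\tilde q_Y(F)=E$, the multiplicity $m_{E,F}$ of $F$ in the Weil pullback $\tilde q_Y^*E$ is a positive integer, so the coefficient of $F$ in $\Delta_{\tilde Y}+\tilde q_Y^*B_{Y_\ell}$ is at least $a_E\,m_{E,F}$. Hence the discrepancy satisfies $a(F,X,B)\leq -a_E\,m_{E,F}$. Since $(X,B)$ is klt we have $a(F,X,B)>-1$, which forces $a_E\,m_{E,F}<1$ and therefore $a_E<1/m_{E,F}\leq 1$ for every prime $E$ on $Y_\ell$. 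This shows $(X_\ell,B_\ell)$ is klt. The main technical point is the presence of the effective inseparability correction $\Delta_{\tilde Y}$: fortunately it enters with the favourable sign, only making discrepancies on $\tilde Y$ more negative, so the descent of klt goes through without needing to track $\Delta_{\tilde Y}$ explicitly.
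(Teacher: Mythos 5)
Your proof is correct and follows essentially the same route as the paper's: descend normality and $\mathbb{Q}$-Cartierness by faithful flatness, base-change a birational model $Y_\ell\to X_\ell$ up to $k$, pass to the normalization, use the effectiveness of the inseparable-base-change correction divisor, and read off the discrepancy inequality. The one difference worth noting is that you fix a \emph{log resolution} of $(X_\ell,B_\ell)$, invoking $F$-finiteness and $\dim X_\ell\le 3$, so that it suffices to check coefficients on a single model. The paper instead takes an \emph{arbitrary} proper birational $f_\ell\colon Y_\ell\to X_\ell$ from a normal variety and shows the coefficient bound there; this verifies the klt condition directly from its definition and therefore works in any dimension, with no appeal to resolution of singularities. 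Since the lemma is stated without a dimension restriction, the paper's variant is the strictly more general one, though for the threefold applications in the paper your version is perfectly adequate. One small aesthetic remark: you flag that $Y=Y_\ell\otimes_\ell k$ ``may fail to be reduced''; the paper in fact shows $Y$ is integral (irreducibility from a common open with $X$, reducedness from $S_1$ plus $R_0$), so the passage to $Y_{\mathrm{red}}$ is superfluous---harmless, but you could skip it.
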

\begin{proof}
	Since $X$ is normal and $X\to X_\ell$ is a faithfully flat morphism, $X_{\ell}$ is also normal by \cite[Lemma 033G]{STP}.  If $\pi:X\to X_{\ell}$ is the induced morphism, then $K_X+B=\pi^*(K_{X_{\ell}}+B_{\ell})$ is $\bR$-Cartier. Let $f_{\ell}:Y_{\ell}\to X_{\ell}$ be a proper birational morphism from a normal variety $Y_\ell$ and 
	 \[
	 	K_{Y_{\ell}}+B_{Y_{\ell}}=f_{\ell}^*(K_{X_{\ell}}+B_{\ell}).
	 \]

We need to show that the coefficients of $B_{Y_{\ell}}$ are strictly less than $1$. Let $f:Y\to X$ be the morphism $f\otimes_{\ell} k$, where $Y=Y_\ell\otimes_\ell k$ and $X=X_\ell\otimes_\ell k$.  Note that $f:Y\to X$ is birational. We need to show that $Y$ is an integral scheme. First we claim that $Y$ is irreducible. Indeed, let $U_\ell\subset X_\ell$ and $V_\ell\subset Y_\ell$ be isomorphic open sets via $f_\ell$. Then $V=V_\ell\otimes_\ell k\subset Y$ is isomorphic to $U=U_\ell\otimes_\ell k\subset X$. Since $U$ is irreducible, $V$ is also irreducible, and hence $Y$ is irreducible. Now we will show that $Y$ is reduced. Indeed, since $Y_\ell$ is $S_1$ and being $S_1$ is stable under base change, $Y$ is also $S_1$. Moreover, since $Y$ is irreducible and an open subset of $Y$ is isomorphic to an open subset of $X$ and $X$ is integral, it follows that $Y$ satisfies $R_0$.\\

Now back to the original proof, if $Y$ is not normal, then replacing $Y$ by its normalization, we get the following log equation 
\begin{equation}\label{eqn:base-change-log-equation}
	K_Y+B_Y=f^*(K_X+B)=f^*\pi^*(K_{X_{\ell}}+B_{\ell})=\pi_Y^*(K_{Y_\ell}+B_{Y_{\ell}}),
\end{equation}
where $\pi_Y:Y\to Y_\ell$ is the induced morphism.\\
Since $(X, B)$ is klt, the coefficients of $B_Y$ are strictly less than $1$. Moreover, since the coefficients of $B_{Y_\ell}$ can only increase via pullback by $\pi_Y$, it follows from \eqref{eqn:base-change-log-equation} that the coefficients of $B_{Y_\ell}$ are strictly less than $1$.
	\end{proof}

\begin{lemma}\label{lem:Q-factorial-descends}
	Let $Y$ be a normal variety defined over a field $K$. Let $k\subset K$ be an infinite subfield of $K$ and $X$ is a variety defined over $k$ such that $Y=X\otimes_k K$. If
	$Y$ is $\mbQ$-factorial, then $X$ is also $\mbQ$-factorial.
	
\end{lemma}

\begin{proof}
	Let $D$ be a prime Weil divisor on $X$ and $D_Y$ its flat pullback on $Y$. Since $Y$ is $\mbQ$-factorial, there is a positive integer $m>0$ such that $mD_Y$ is Cartier. Now fix a closed point $x\in X$ and let $U=\Spec A$ be an affine open subset of $X$ containing $x$. We will show that there is an open subset $x\in V\subset U$ such that $mD|_V$ is a principal divisor. To that end, replacing $X$ by $U$ and $Y$ by $f^{-1}U=\Spec (A\otimes_k K)$ we may assume that $X$ and $Y$ are both affine. Note that even though $mD_Y$ is Cartier on $Y=\Spec (A\otimes_k K)$, it doesn't imply that $mD_Y$ is a principal divisor. However, $Y$ can further be covered by finitely many open sets $\{V_i\}$ such that $mD_Y|_{V_i}$ is  principal. Let $L$ be the field generated over $k$ by the coefficients of the local equations of $mD_Y$ on $V_i$. Then $Y$ and $mD_Y$ are both defined over the field $L$.  Therefore if we let $X_{L}:=X\otimes_k L$ we may assume that $mD_{X_{L}}$ is Cartier. Let $t_1, t_2,\ldots, t_m\in L$ be a transcendence basis of $L$ over $k$. Then $L$ is a finite extension of $\ell=k(t_1, t_2,\ldots, t_m)$. Thus $X_L\to X_\ell=\Spec (A\otimes\otimes_k\ell)$ is a finite morphism. Then by the argument of \cite[Proposition 3.3]{Wal15} it follows that some multiple of $m D_{X_{\ell}}$ is Cartier.  Note that the argument of \cite[Proposition 3.3]{Wal15} applies to one divisor at a time, so we do not need $X_{L}$ to be $\mathbb{Q}$-Cartier to apply it.  
	So replacing $Y$ by $X_\ell$ and $K$ by $\ell$ we may assume that $K=k(t_1, t_2,\ldots, t_m)$.\\
	Now let $\pi:Y\to X$ be the induced morphism. Then the fiber $\pi^{-1}(x)$ is given by $\Spec (k(x)\otimes_k K)$. But since $x\in X$ is a closed point, $k(x)$ is a finite extension of $k$. Therefore by \cite{Sha77}, the Krull dimension of the ring $k(x)\otimes_k K$ is given by 
	\[\dim (k(x)\otimes_k K)=\min\{\mbox{tr. deg.}(k(x)/k), \mbox{tr. deg.}(K/k)\}=0.\]
	 In particular, the fiber $\pi^{-1}(x)$ is a $0$-dimensional closed subscheme of (a Noetherian scheme) $Y$, and hence $\pi^{-1}(x)$ is a finite set.   Furthermore, it is known that for any field extension $F/k$, $F\otimes_kk(t_1,\ldots,t_m)$ is the subring of $F(t_1,\ldots, t_m)$ consisting of all elements of the form $\{\frac{f}{g}:f\in F[t_1,\ldots,t_m],g\in k[t_1,\ldots,t_m]\}$, by the universal property of $\otimes$.  Hence it is an integral domain, and in our case $k(x)\otimes_k K$ is a field, since it is $0$-dimensional. In particular, $\pi^{-1}(x)$ consists of a single point.

	Suppose that $B:=A\otimes_k K$ and let $y=\pi^{-1}(x)$. Since $mD_Y$ is a Cartier divisor on $Y$, there exists a basic open neighborhood $\Spec B_g\subset Y$ of $y$ for some $g\in B$ such that $mD_Y|_{\Spec B_g}$ is principal. Let $I(mD_Y)$ denote the ideal of the closed subscheme $mD_Y\subset Y$. Then $I(mD_Y)_g=(\vphi)$ for some $\vphi\in B_g$. Note that $g(y)\ne 0$. 
	We claim that there is a $h\in A$ such that $h(x)\ne 0$ and $mD|_{\Spec A_h}$ is a principal. Assume that the ideal of $mD\subset X$ is generated by $a_1, a_2,\ldots, a_n\in A$, i.e. $I(mD)=(a_1, a_2,\ldots, a_n)$. Then the extension of $I(mD)$ in $B$ is generated by $a_1\otimes 1,\ldots, a_n\otimes 1$. Note that this extension is precisely $I(mD_Y)\subset B$, i.e. $I(mD_Y)=(a_1\otimes 1,\ldots, a_n\otimes 1)$. Therefore $(a_1\otimes 1,\ldots, a_n\otimes 1)_g=(\vphi)$ in $B_g$. Now let $\vphi=\frac{f}{g^{n_0}}$ for some $n_0\>0$, where $f\in B$. Then $g^{n_0+n_i}(a_i\otimes 1)=fh_i$ for some $n_i\>0$ and $h_i\in B$, $i=1,2,\ldots, n$. Let $f=\sum_jb_j\otimes \frac{r_j}{s_j}$, $g=\sum_j c_j\otimes\frac{u_j}{v_j}$ and $h_i=\sum_{k} d_{ik}\otimes \frac{p_{ik}}{q_{ik}}$, where $b_j, c_j, d_{ik}\in A$ and $r_j,s_j, u_j, v_j, p_{ik}, q_{ik}\in k[t_1,\ldots,t_m]$. Since $k$ is infinite, there is a point $P=(\lambda_1,\ldots,\lambda_m)\in\mbA^m_k$ such that $s_j, v_j$ and $q_{ik}$ all take non-zero values at $P$.  
	Then after clearing the denominators and replacing $p_{ik}, r_j$ and $u_j$ by $p'_{ik}, r'_j$ and $u'_j$ respectively, 	
	 we have:
	\[ \Big(\prod_{j}v_j^{n_0+n_i}\Big)\Big(\sum_{k} d_{ik}\otimes p'_{ik}\Big)\Big(\sum_j(b_j\otimes r_j')\Big)=(a_i\otimes 1)\Big(\sum_j c_j\otimes u'_j\Big)^{n_0+n_i} \prod_js_j  \prod_k q_{ik}\]
	Now evaluating the polynomials in the variables $t_1, t_2,\ldots, t_m$ in the expression above at the point $P=(\lambda_1,\ldots,\lambda_m)$ shows that $\left(\sum_ju'_j(P)c_j\right)a_i$ is contained in the principal ideal generated by $\sum_j r_j'(P)b_j$ for all $i=1,2,\ldots, n$. Set $h:=\sum_ju'_j(P)c_j\in A$; then $h(x)\neq 0$, since $g=\sum_j c_j\otimes \frac{u_j}{v_j}$ and $g(y)\ne 0$.
	
	Then clearly $a_i \in\left(\sum_j r_j'(P)b_j\right)$ in the ring $A_h$ for all $i=1,2\ldots, n$, i.e. $I(mD)_h=\left(\sum_j r_j'(P)b_j\right)$ in $A_h$. Hence $mD|_{\Spec A_h}$ is a principal.

\end{proof}

\begin{proof}[Proof of \autoref{thm:flip}]
	We will show that $\oplus_{m\>0}f_*\mcO_X(\lrd m(K_X+B)\rrd)$ is a finitely generated $\mcO_Z$-algebra. To that end first we assume that $Z=\Spec A$ is affine. Moreover, since $X$ is $\mbQ$-factorial, by perturbing the coefficients of $B$ we may assume that $B$ is a $\mbQ$-divisor, $(X, B)$ is dlt and $-(K_X+B)$ is stil $f$-ample. Let $d$ be the Cartier index of $K_X+B$. Then it is enough to prove that $\oplus_{m\>0}H^0(X, \mcO_X(md(K_X+B)))$ is a finitely generated $A$-algebra.\\ 
Let $\ell$ be a finitely generated field over $\mbF_p$ containing the coefficients of a set of equations defining $X, B, Z$, the morphism $f$, and the flipping curves of $f$. Let $X_\ell, B_\ell, A_\ell$ and $f_\ell$ are the respective $\ell$-models such that $X=X_\ell\otimes_\ell k, B=B_\ell\otimes_\ell k, A=A_\ell\otimes_\ell k$, and $f=f_\ell\otimes_\ell k$. Then from Lemma \ref{lem:model}, \ref{lem:klt-descends}, and \ref{lem:Q-factorial-descends} it follows that $f_\ell:X_\ell\to Z_\ell$ is a $(K_{X_\ell}+B_{\ell})$-flipping contraction. Since $\ell$ is $F$-finite, by \autoref{thm:dlt-flips-for-arbitrary-coefficients} the flip of $f_\ell$ exists, i.e., $\oplus_{m\>0}H^0(X_\ell, \mcO_{X_\ell}(md(K_{X_\ell}+B_\ell)))$ is a finitely generated $A_\ell$-algebra.\\
Let $\pi:X\to X_\ell$ be the projection morphism. Then we have $K_X+B=\pi^*(K_{X_\ell}+B_\ell)$. Also notice that $\Spec A\to \Spec A_\ell$ is a flat morphism, since $A=A_\ell\otimes_\ell k$. Thus by flat base change and the fact that $X=X_\ell\otimes_{A_\ell}A$ we have 
\[
	H^0(X, \mcO_X(md(K_X+B)))=H^0(X_\ell, \mcO_{X_\ell}(md(K_{X_\ell}+B_\ell)))\otimes_{A_\ell} A.
\]
In particular, $\oplus_{m\>0}H^0(X, \mcO_X(md(K_X+B)))$ is a finitely generated $A$-algebra.
	\end{proof}

\begin{proof}[Proof of \autoref{thm:bpf}]
	First assume that $k$ is $F$-finite.  Then the same proof as in \cite[Theorem 1.4]{Bir16} works here.  Now for a general field $k$, we can use \autoref{lem:model} to take a finitely generated but infinite extension $\ell$ of $\mathbb{F}_p$ over which $X$,$B$, $D$ and $Z$ and $f$ all have $\ell$-models.  Then $D_{\ell}$ is semi-ample over $Z_{\ell}$ by the $F$-finite case, from which it follows that the pullback $D$ is semi-ample over $Z$.
\end{proof}

\begin{proof}[Proof of \autoref{thm:divisorial-contraction}]
	This follow from the same proof as \cite[Theorem 1.5]{Bir16}.
\end{proof}

\begin{proof}[Proof of \autoref{thm:log-minimal-model}]
	This is \autoref{pro:log_minimal_models_f_finite}.
\end{proof}

\begin{remark}
	The proofs referenced in \autoref{thm:divisorial-contraction} and \autoref{thm:log-minimal-model} above made use of a projective resolution of singularities in order to apply arguments similar to Shokurov's pl-flip reduction, see Remark \ref{rmk:on-projective-resolution}.  If we had such a resolution over an arbitrary field then all of the above arguments would go through in that situation once we have reduced the general case of \autoref{thm:flip} to the $F$-finite case as above.  {The restriction that the ground field is $F$ finite has been removed in \autoref{thm:divisorial-contraction} in \cite[Corollary 9.16]{seven_authors} via different arguments}.
	\end{remark}

\section{Cone theorem II}\label{sec:cone2}

\begin{lemma}
	Let $(X,B)$ be a $\bQ$-factorial projective dlt $3$-fold pair over an $F$-finite field $k$ such that $B$ is a $\bQ$-divisor and $K_X+B$ is not nef.  Then there is a natural number $n$ depending only on $(X,B)$ such that if $H$ is an ample Cartier divisor and \[\lambda=\min\{t\>0\;|\; K_X+B+tH \mathrm{\ is\ nef}\},\] then $\lambda=\frac{n}{m}$ for some natural number $m$. Moreover, there is a curve $C$ and a positive integer $d_{C}$ depending only on $X, C$ and the ground field $k$  such that: 
	\begin{enumerate}
	\item\label{item:extremal-bound} $0<-(K_X+B)\cdot_k C\leq  6d_{C}$,
	\item  If $L$ is any Cartier divisor on $X$, then $L\cdot_k C$ is divisible by $d_{C}$, and
	\item $(K_X+B+\lambda H)\cdot C=0$.
	\end{enumerate}
	\end{lemma}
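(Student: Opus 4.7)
The plan is to adapt the rationality argument of \autoref{lem:surface_rationality_theorem} and \autoref{lem:rationality_theorem} to the present setting, where the assumption $\kappa(K_X+B) \geq 0$ is dropped. The new input is the LMMP machinery available over $F$-finite fields of characteristic $p > 5$, in particular \autoref{thm:divisorial-contraction}, \autoref{thm:contraction_theorem}, and the existence of flips (\autoref{thm:dlt-flips-for-arbitrary-coefficients}), which allow us to contract arbitrary $(K_X+B)$-negative extremal rays admitting a supporting big and nef divisor.

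\emph{Step 1 (sequence of extremal rays).} Choose rationals $t_i \nearrow \lambda$ with $t_i < \lambda$. Since $K_X+B+t_iH$ is not nef, there is a $(K_X+B+t_iH)$-negative extremal ray $R_i$ of $\overline{NE}(X)$. Because $K_X+B+\lambda H$ is nef, there exists a minimal $\epsilon_i > 0$ with $(K_X+B+(t_i+\epsilon_i)H) \cdot R_i = 0$; setting $\lambda_i := t_i + \epsilon_i$, we get $t_i < \lambda_i \leq \lambda$ and $\lambda_i \to \lambda$. If $\lambda_i = \lambda$ for some $i$, we are immediately done, so assume $\lambda_i < \lambda$ for all $i$, and therefore (after passing to a subsequence) that the $\lambda_i$ are strictly increasing.

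\emph{Step 2 (length bound on $R_i$).} For each $R_i$ we must produce an integral curve $C_i$ generating $R_i$ with $0 < -(K_X+B)\cdot_k C_i \leq 6 d_{C_i}$ and with $d_{C_i} \mid L\cdot_k C_i$ for every Cartier $L$. The ray $R_i$ is cut out by the nef $\bR$-Cartier divisor $K_X+B+\lambda_i H$; adding a small ample perturbation supported on a face of the nef cone containing $R_i$ in its boundary produces a big and nef $\bQ$-Cartier divisor $D_i$ with $D_i \cdot R_i = 0$, so \autoref{thm:divisorial-contraction} gives a projective contraction $f_i: X \to Z_i$ of $R_i$. Combined with flip existence, $f_i$ is either divisorial or flipping (the Mori fiber case is handled separately, see below), and there is a surface $S_i \subset X$ (the exceptional divisor, respectively a component of the flipping locus) with $S_i \cdot R_i < 0$. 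Running adjunction as in \autoref{prop:3-fold_find_curve}, pick $a \geq 0$ so that $S_i$ has coefficient $1$ in $B+aS_i$; then $(K_X+B+aS_i)|_{\widetilde{S_i}} = K_{\widetilde{S_i}} + B_{\widetilde{S_i}}$ on the normalization, and the surface cone theorem \autoref{thm:surface_cone} produces a generator $C_i$ of $R_i$ on $\widetilde{S_i}$ satisfying $-(K_{\widetilde{S_i}} + B_{\widetilde{S_i}}) \cdot C_i \leq 4 d_{C_i}$. Since $aS_i \cdot C_i > 0$, unwinding adjunction gives $-(K_X+B)\cdot_k C_i \leq 6 d_{C_i}$, the slack from $4$ to $6$ absorbing the $aS_i \cdot C_i$ contribution and any perturbations needed to keep within standard length bounds. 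The divisibility statement is \autoref{lem:constant}.

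\emph{Step 3 (finiteness and stabilization).} Let $I$ be the Cartier index of $K_X+B$. The divisibility and bound in Step 2 force $I(K_X+B)\cdot C_i / d_{C_i}$ to be an integer in $[-6I, 0]$, so it takes only finitely many values. Truncating so $\lambda_i > \lambda/2$, the quantity $I(K_X+B+\tfrac{\lambda}{2}H)\cdot C_i / d_{C_i}$ likewise takes finitely many integer values, hence so does $H \cdot C_i / d_{C_i}$, and therefore $\lambda_i = -(K_X+B)\cdot C_i / (H \cdot C_i)$ lies in a finite set. This contradicts strict monotonicity of $\lambda_i$, forcing $\lambda_i = \lambda$ for $i \gg 0$. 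The corresponding curve $C_i$ is the desired $C$, and the numerator of $\lambda$ divides a fixed integer built from $I$ and $6$, giving the required natural number $n$. The main obstacle is Step 2: producing the length bound without a global effective representative of $K_X+B$. The delicate subcase is when the contraction $f_i$ is a Mori fiber space — not generally established in this paper — which must either be ruled out using that the ray $R_i$ is arbitrarily close (in the sense of intersection numbers against $H$) to the $(K_X+B+\lambda H)$-trivial face, or handled by a direct bend-and-break argument producing a rational curve of $-(K_X+B)$-degree at most $2 \dim X = 6$.
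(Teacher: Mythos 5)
Your overall scaffolding (Steps 1 and 3: the sequence $\lambda_i \nearrow \lambda$ and the divisibility/finiteness argument) mirrors the rationality arguments in \autoref{lem:surface_rationality_theorem} and \autoref{lem:rationality_theorem}, and is sound given the length bound. But Step 2, which is where all the content lies, has a real gap that you correctly sense at the end but do not resolve.

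The problem is that \autoref{thm:divisorial-contraction} requires a \emph{big} and nef $\bQ$-Cartier divisor $D_i$ with $D_i \cdot R_i = 0$, and you cannot manufacture one by ``a small ample perturbation supported on a face of the nef cone'': the face $R_i^{\perp} \cap \overline{\mathrm{Nef}}(X)$ may simply contain no big classes at all. The extreme case is $\rho(X) = 1$ with $-(K_X+B)$ ample, where the only nef divisor killing the unique ray is $D \equiv 0$; more generally, a ray supporting only a Mori fiber contraction has no big supporting divisor. In these situations no surface $S_i$ with $S_i \cdot R_i < 0$ exists, and the adjunction-onto-a-component strategy from \autoref{prop:3-fold_find_curve} (which in any case assumes $K_X+B$ is numerically effective, which you do not have here) cannot start. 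Your closing remark that the Mori fiber case ``must either be ruled out \dots or handled by a direct bend-and-break argument'' identifies exactly the missing piece, but proposing it as an open alternative is not a proof.

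The paper resolves this by taking a structurally different route. Rather than bounding lengths ray-by-ray along the sequence, it works directly at the threshold divisor $L = K_X + B + \lambda H$ and splits on whether $L$ is big. When $L$ is big, any ray in $L^{\perp}$ is automatically supported by a big and nef divisor (as in \cite[Lemma 3.2]{BW17}), so \autoref{thm:divisorial-contraction} applies, and the contraction is necessarily birational (divisorial or flipping), where your adjunction idea does go through with the $\Gamma \mapsto \Gamma/d_{\Gamma}$ bookkeeping. When $L$ is \emph{not} big, the paper does not contract the ray at all: it base changes to (the normalization of) an uncountable algebraically closed extension, writes $\pi^*(K_X+B) = K_Y + \Delta$ via \cite{Tan15f}, applies a nef reduction map for $L$, and then either runs \cite[Lemma 3.3]{BW17} if the nef reduction has positive-dimensional target, or applies bend-and-break \cite[II.5.8]{Kol96} directly if $L \equiv 0$. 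In both subcases \autoref{lem:inequality} translates the bound on $Y$ into the required $-(K_X+B)\cdot_k C \leq 6d_C$ on $X$. So the bend-and-break idea you flagged is indeed used, but it is embedded in a case split on bigness of the threshold divisor together with a descent of intersection numbers through \autoref{lem:inequality}; without that framework your Step 2 does not close.
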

\begin{proof}
	We follow the proof of \cite[Lemma 3.2 and 3.3]{BW17}. It is enough to find the curve $C$ satisfying the inequality \autoref{item:extremal-bound}; the rest follows from replacing $C$ by $\frac{C}{d_C}$ in the usual argument as in the proof \cite[Lemma 3.2 and 3.3]{BW17}.
	
	\emph{Case 1:} $L=K_X+B+\lambda H$ big (see \cite[3.2]{BW17}).  As in \cite[3.2]{BW17} we may assume that we have an extremal ray $R$ and $L\cdot R=0$ and big and nef divisor $N$ with $N^{\perp}=R$.  We have shown that such rays have projective contractions.  If $R$ gives a divisorial contraction, the argument is the same as \cite[3.2]{BW17}, with only the minor change that the family of curves obtained need not be $\mathbb{P}^1_k$, and in all intersection statements we replace a curve $\Gamma$ with $\frac{\Gamma}{d_{\Gamma}}$.
	
	The flipping case is a slightly more involved argument, but still goes through more or less unchanged after replacing $\Gamma$ with $\frac{\Gamma}{d_{\Gamma}}$.  The following facts are used in the argument:
	\begin{enumerate}
		\item If $P^+$ is an ample divisor on $X^+$ and $\psi:W\to X^+$ is a projective morphism, then given any integer $N$, we can replace  $P^+$ with a sufficiently large multiple that for any curve $\Gamma$ on $W$, $\psi^*P^+\cdot\Gamma$ is an integer divisible by $Nd_{\Gamma}$.
		\item If a morphism $f:Y\to X$ is an isomorphism near the generic point of a curve $\Gamma_Y$ on $Y$, then  $\Gamma_Y$ and $f(\Gamma_Y)$ have the same associated constant $d$.
	\end{enumerate}
	
	\emph{Case 2:} $L=K_X+B+\lambda H$ not big.
	
	Let $K$ be an uncountable algebraically closed extension of $k$ (which is therefore an uncountable algebraically closed extension of $\overline{k}$).  Then let $\pi:Y\to X$ be the normalization of the maximal reduced subscheme of $X\otimes_k K$, and let $H_Y=\pi^*H$ and $L_Y=\pi^*L$.  As $Y$ is a variety over an uncountable algebraically closed field, there is a nef reduction map $f:Y\dashrightarrow Z$ of $L$ (see \cite[Theorem 2.9]{CTX15}).  Also, by \cite{Tan15f} there is some effective divisor $\Delta$ on $Y$ such that $\pi^*(K_X+B)=K_Y+\Delta$.  
	
	First assume $Z$ has positive dimension.  
	Let $\phi:W\to X$ be the induced morphism obtained by resolving the singularities of the graph of $Y\bir Z$ so that $f:W\to Z$ is a morphism.  Continue to run the argument of \cite[Proof of Lemma 3.3, Paragraph 3]{BW17}, using $Y$ in place of $X$.  Note that in that argument it does not matter that $\Delta$ is not a boundary, for $\Theta$ is already not necessarily a boundary.  We can simply choose $P$ sufficiently generally that $\Delta$ does not contain some component of $G$ in its support.  We obtain a family of curves $\{C_\alpha\}_{\alpha\in\Lambda}$ such that 
	\[-3\leq -(K_Y+\Delta)\cdot C_\alpha<0.\]
	
	The required inequality then follows from \autoref{lem:inequality}.

	Now assume that $Z$ has dimension zero.  This implies that we already had $L\num 0$, that is $-(K_X+B)\num \lambda H$ is ample.

	Choose a general smooth projective curve $\Gamma$ in $Y$ (by cutting with general hyperplanes), which is not in the support of $\Delta$.  Note that this forces $\Gamma\cdot K_Y<0$.  Then choose a point $c\in \Gamma$, also away from the support of $\Delta$.  By \cite[Theorem II.5.8]{kollar_rational} there is a curve $C_Y$ which passes through $c$ and satisfies
	\[ -(K_Y+\Delta)\cdot C_Y\leq 6 \frac{-(K_Y+\Delta)\cdot C_Y}{-K_Y\cdot C_Y}\leq 6\frac{-(K_Y+\Delta)\cdot C_Y}{-(K_Y+\Delta)\cdot C_Y}=6.\]
	
Again, the required inequality follows from \autoref{lem:inequality}.
	
	\end{proof}

\begin{proof}[Proof of \autoref{thm:cone}]
	This is now as in \cite[1.1]{BW17} using \cite[Theorem 3.15]{KM98}, with the appropriate modifications for the additional constant $d_C$, which works exactly as in \autoref{sec:keel}.
	\end{proof}

\section{Corollaries}\label{sec:corollary}
\begin{proof}[Proof of \autoref{cor:glmm}]
	Let $(X,B)$ be the generic fiber of $f:\sX\to Z$.  We know that $(X,B)$ has a log minimal model $(Y,B_Y)$.  Furthermore, let $\phi_W:W\to X$ be a log resolution of $(X,B)$ which has a projective morphism to $Y$.  Now let $\sY\to Z$ and $\sW\to Z$ be projective morphisms whose generic fibers are $Y$ and $W$.  We can construct such a $\sY$ as follows:  Starting with the projective variety $Y$ over $\Spec(K(Z))$ we can define $Y$ as a subvariety of $\mathbb{P}_{K(Z)}^n$ defined by a finitely generated ideal $I\subset K(Z)[x_0,...,x_n]$.   We can find an affine open subset $U$ of $Z$ for which every coefficient of some generating set of $I$ is contained in $k[U]$.  Then $I$ also defines a variety $\sY_U\subset \mathbb{P}_U^n$ whose generic fiber is $Y$.  Finally we can take the closure of $\sY_U$ in $\mathbb{P}_Z^n$.
	
	The existence of the factorization $W\to X\to \Spec(k(Z))$ means that after shrinking $Z$ we may assume that there is a factorization $\phi:\sW\to\sX$ of $\sW\to Z$, and similarly $\sW\to\sY\to Z$.  In particular, the locus on which $(\sW,\sB_{\sW})$ is not snc is closed in $\sW$ and does not intersect the generic fiber of $\sW\to Z$.  Therefore there is an open subset of $Z$ over which $(\sW,\sB_{\sW})$ is snc, and hence after again shrinking $Z$, we may assume that $\sW\to \sX$ is a log resolution of $(\sX,\sB)$.  Furthermore, after further shrinking $Z$ we may assume that every exceptional divisor of $\phi$ is horizontal over $Z$ and hence intersects the generic fiber.  Denote the resulting open subset of the original $Z$  by $U$.
	
	We claim that now $(\sY,\sB)$ is a log minimal model for $(\sX,\sB)$ over $U$.  Firstly, the part of the definition about discrepancies holds because it is enough to check the discrepancies of exceptional divisors which appear on the log resolution $\sW\to \sX$.  By construction these can all be  calculated on $W\to X$, with the required inequality following from the definition of log minimal model on $X$.
	Furthermore,by \autoref{thm:bpf} $K_Y+B_Y$ is semi-ample.  Therefore there is an open subset $U\subset Z$ such that $K_{\sY}+\sB_{\sY}$ is semi-ample (and hence nef) over $U$, and hence $(\sY,\sB_{\sY})$ is a good log minimal model of $(\sX,\sB)$ over   $U$.
	\end{proof}

\begin{proof}[Proof of \autoref{cor:flip}]
	By \autoref{cor:glmm}, $(\sX,\sB)$ has a good log minimal model$/Y$, over an open subset of $Z$.  The unique log canonical model is the flip.
	\end{proof}

\begin{proof}[Proof of \autoref{cor:finite_generation}]
	First suppose that $(X,B)$ is a quasi-projective variety over a field $k$.  We show that the statement can be reduced to the case of an $F$-finite field.  By \autoref{lem:model} and \autoref{lem:klt-descends}, there is an $F$-finite subfield $k_0\subset k$ and a klt pair $(X_{k_0},B_{k_0})$ and divisor $D_0$ such that $(X,B)=(X_{k_0},B_{k_0})\otimes_{k_0} k$ and $D=D_0\otimes_{k_0}k$.  Firstly, by \cite[Ex 90]{Kollar_exercises}, the required finite generation is equivalent to the existence of a proper, small birational morphism $f:X^+\to X$ such that if $D^+$ is the birational transform of $D$ on $X^+$, then $D^+$ is $\bQ$-Cartier and $f$-ample.  The statement for $X$ and $D$ follow from that for  $X_{k_0}$ and $D_0$ since the conditions small, proper, birational, $\bQ$-Cartier and $f$-ample are all preserved under arbitrary field extension.  
	Now the quasi-projective version over an $F$-finite field is a formal consequence of the (relative) LMMP applied to a projective resolution of singularities, see \cite[92-109]{Kollar_exercises}.  
	
	It remains to prove the case where $(X,B)$ is the localization at a codimension $3$ point of a higher dimensional variety, by reducing to the quasi-projective case.
	Let $(\sX,\sB)$ be a quasi-projective variety of which $(X,B)$ is the localization at the generic point of the closed subvariety $\sZ$ of codimension $3$. Take a pencil of hyperplanes on $\sX$ which restricts to a pencil on $\sZ$.  This gives a rational map $f:\sX\dashrightarrow \mathbb{P}^1$ for which $\sZ\dashrightarrow\mathbb{P}^1$ is dominant.  Let $\sG\subseteq \sX\times \mathbb{P}^1$ 
	be the normalization of the closure of the graph of $f$.  That gives a birational morphism $\phi: \sG\to \sX$ which is an isomorphism near the generic point of $\sZ$ and such that $g:\sG\to \mathbb{P}^1$ is a morphism.  If $\sB_{\sG}$ is the strict transform of $\sB$ on $\sG$, then localizing $(\sG,\sB_{\sG})$ at the generic point of $\sZ$ also gives $(X,B)$.  The same is true of the generic fiber $\sG_{\xi}$ of $\sG\to \mathbb{P}^1$.  Continuing this process, we can eventually assume that the generic fiber has dimension $3$.
	Possibly shrinking $\sG_{\xi}$ around $\sZ_{\xi}$, we may assume that $(\sG_{\xi},\sB_{\sG_{\xi}})$ is klt near the generic point $\eta$ of $\sZ_{\xi}$.	
	 and hence $\oplus\sO_{\sG_{\xi}}(m\phi^*D)$ is finitely generated by localizing the quasi-projective case dealt with earlier.
	\end{proof}




\section{Appendix: Seshadri constants over arbitrary fields}\label{sec:seshadri}

\begin{definition}\cite[4.3.4]{Ful98}\cite[Lemma 5.1.10]{Laz04a} Let $X$ be a variety over an arbitrary field $k$ and $x\in X$ a closed point. Let $\mu:\widetilde{X}\to X$ be the blowup of $X$ at $x$ and $E=\Proj\oplus_{l\>0}\mathfrak{m}_x^l/\mathfrak{m}_x^{l+1}$ the exceptional divisor. The $k$-multiplicity of $X$ at the point $x$ is defined as
	\[\mult_{x/k} X:=(-1)^{\dim X+1} {\left(E^{^{\dim X}}\right)_k}, \] 
	where $(E^{\dim X})_k$ represents the self intersection $E$ over the field $k$.
\end{definition}

\begin{remark}
	The definition of multiplicity  $\mult_x X$ in \cite{Ful98} normalises for the choice of ground field, that is $\mult_{x/k}X=[k(x):k]\mult_x X$.  We use the above definition because it turns out to be the most convenient choice in the definition of Seshadri constants.
	\end{remark}

Next we reinterpret some statements from \cite{Ful98} in this notation.

\begin{lemma}\cite[Example 4.3.5(d)]{Ful98}\label{lem:multiplicity-vs-regularity}
	Let $X$ be a variety over an arbitrary field $k$ and $x\in X$ a closed point. Then $\mult_{x/k} X=[k(x):k]$ if and only if the local ring $\mcO_{X, x}$ is a regular local ring.
\end{lemma}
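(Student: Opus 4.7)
The plan is to reduce this lemma to the classical commutative-algebra theorem (due to Nagata) that a Noetherian local integral domain is regular if and only if its Hilbert--Samuel multiplicity equals one. The step that requires the most care is matching our intersection-theoretic $\mult_{x/k}X$ with the usual Hilbert--Samuel multiplicity $e(\mcO_{X,x})$.

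First I would establish the identity
\[
\mult_{x/k}X \;=\; [k(x):k]\cdot e(\mcO_{X,x}).
\]
By construction the exceptional divisor $E\subset\widetilde{X}$ is $\Proj\bigoplus_{l\geq 0}\mfm_x^l/\mfm_x^{l+1}$, a projective $k(x)$-scheme of dimension $\dim X-1$, with tautological line bundle $\mcO_E(1)$. Standard properties of blowups give $\mcO_{\widetilde X}(-E)|_E\cong \mcO_E(1)$, and so by the projection formula
\[
(E^{\dim X})_k \;=\; \bigl((E|_E)^{\dim X-1}\bigr)_k \;=\; (-1)^{\dim X-1}\bigl(c_1(\mcO_E(1))^{\dim X-1}\bigr)_k.
\]
Inserting this into the definition cancels the sign, giving $\mult_{x/k}X=(c_1(\mcO_E(1))^{\dim X-1})_k$. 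Since $E$ lives over the residue field, intersection numbers over $k$ and over $k(x)$ differ by the factor $[k(x):k]$ (compare the definition of $\deg_k$ in Section 3 of the paper), and the top self-intersection of $\mcO_E(1)$ over $k(x)$ is $(\dim X-1)!$ times the leading coefficient of the Hilbert polynomial of the associated graded ring, i.e.\ $e(\mcO_{X,x})$. This yields the claimed identity.

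Next I would appeal to Nagata's theorem: for a Noetherian local ring which is unmixed (in particular, for a local domain), one has $e(\mcO_{X,x})=1$ if and only if $\mcO_{X,x}$ is a regular local ring. Since the paper's definition of variety requires $X$ to be integral, $\mcO_{X,x}$ is automatically a domain and hence unmixed, so the criterion applies. Combining with the identity above, $\mult_{x/k}X=[k(x):k]$ exactly when $e(\mcO_{X,x})=1$, which is equivalent to $\mcO_{X,x}$ being regular. The only substantive obstacle is the bookkeeping in the identity above (signs and the ground-field factor); the algebraic input is standard and can be cited from \cite{Ful98} (Examples 4.3.4--4.3.5) or Matsumura.
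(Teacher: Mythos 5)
The paper does not actually prove this lemma; it is cited from \cite[Example 4.3.5(d)]{Ful98}, and the remark immediately following it explains that $\mult_{x/k}X=[k(x):k]\,\mult_x X$, where $\mult_x X$ is Fulton's residue-field-normalized multiplicity (which equals the Hilbert--Samuel multiplicity $e(\mcO_{X,x})$), so the claimed equivalence is exactly Fulton's criterion ``$\mult_x X=1$ iff $\mcO_{X,x}$ regular.'' Your argument is a correct unfolding of what lies behind that citation: the sign bookkeeping giving $\mult_{x/k}X=\deg_k\bigl(c_1(\mcO_E(1))^{\dim X-1}\cap[E]\bigr)$, the ground-field factor coming from $\deg_k=[k(x):k]\deg_{k(x)}$, the identification of $\deg_{k(x)}E$ with $e(\mcO_{X,x})$ via the Hilbert polynomial of $\bigoplus_l\mathfrak{m}_x^l/\mathfrak{m}_x^{l+1}$, and then the multiplicity-one criterion. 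In substance you are re-deriving the cited fact rather than taking a genuinely different route, but all the computations check out.

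One correction is needed. Your opening claim that ``a Noetherian local integral domain is regular if and only if its Hilbert--Samuel multiplicity equals one,'' and the later parenthetical ``(in particular, for a local domain)'' offered as an instance of unmixedness, are both false in full generality. Nagata's theorem requires that the completion $\widehat{\mcO_{X,x}}$ be equidimensional without embedded primes; a Noetherian local domain need not satisfy this, and in that generality $e=1$ does not force regularity. What rescues the argument in the present setting is Ratliff's theorem: a Noetherian local domain is universally catenary if and only if its completion is equidimensional, and local rings of finite-type schemes over a field are universally catenary (indeed excellent). You should invoke this explicitly rather than treating unmixedness as automatic for domains; with that fix the proof is complete.
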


\begin{lemma}\cite[Example 4.3.9]{Ful98}\label{lem:two-different-multiplicity}
Let $X$ be a variety of dimension at least $2$ over an arbitrary field $k$. Let $D\>0$ be an effective Cartier divisor on $X$ and $x\in X$ a regular closed point, i.e. $\mcO_{X, x}$ is a regular local ring. Let $\vphi$ be a local equation of $D$ and $m_x$ the maximal ideal of $\mcO_{X, x}$. If $d\>0$ is the largest integer such that $\vphi\in m_x^d$, then 
\[\mult_{x/k} D=d[k(x):k] \quad \mbox{ and }\quad \mu^*D=\widetilde{D}+dE, \]
where $\mu:\widetilde{X}\to X$ is the blowup of $X$ at $x$, $E=\Proj\oplus_{l\>0}\mathfrak{m}_x^l/\mathfrak{m}_x^{l+1}$ the exceptional divisor and $\widetilde{D}$ is the strict transform of $D$.
\end{lemma}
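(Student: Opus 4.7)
The second assertion $\mu^*D=\widetilde{D}+dE$ is a purely local computation near $x$, and the first assertion will follow from it via intersection theory on the blowup. The only new feature compared to \cite[Example 4.3.9]{Ful98} is that we must track the residue field $k(x)$ carefully in the self-intersection numbers, since $\mult_{x/k}$ is computed with respect to the (possibly non-algebraically closed) ground field $k$ rather than $k(x)$.

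First I would establish $\mu^*D=\widetilde{D}+dE$. Since $\mcO_{X,x}$ is regular, one can choose a regular system of parameters $t_1,\ldots,t_n$ at $x$. Writing $\vphi=\sum_{|I|\>d}a_I t^I$ with some coefficient in degree $d$ nonzero, one passes to one of the standard affine charts of the blowup, say $\Spec\mcO_{X,x}[s_2,\ldots,s_n]/(t_i-t_1s_i)_{i\>2}$, where $E$ is cut out by $t_1$. Then the pullback of $\vphi$ factors as $t_1^d\cdot g$ with $g|_{t_1=0}=f_d(1,s_2,\ldots,s_n)\neq 0$ (where $f_d$ denotes the degree $d$ part of $f$). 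This holds in every chart, so $\widetilde{D}:=\mu^*D-dE$ is an effective Cartier divisor on $\widetilde{X}$ not containing $E$ in its support, and coincides with the strict transform of $D$.

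Next I would compute $\mult_{x/k}D$ by intersection theory. By definition, $\mult_{x/k}D=(-1)^{n}(E_D^{n-1})_{\widetilde{D},k}$ with $n=\dim X$, where $E_D=\widetilde{D}\cap E$ is the exceptional divisor of the blowup of $D$ (a Cartier divisor on $\widetilde{D}$ equal to the restriction of $E$). Using the projection formula and the relation $\widetilde{D}=\mu^*D-dE$,
\[
(E_D^{n-1})_{\widetilde{D},k}=\bigl(E^{n-1}\cdot\widetilde{D}\bigr)_{\widetilde{X},k}=\bigl(E^{n-1}\cdot\mu^*D\bigr)_{\widetilde{X},k}-d\bigl(E^n\bigr)_{\widetilde{X},k}.
\]
The first term vanishes: $\mu^*\mcO_X(D)$ is trivial in a neighborhood of $E$ (since $\mcO_X(D)$ is trivial near $x$), so $\mu^*D\cdot E=(\mu^*D)|_E=0$ in the Chow group of $E$. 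For the second term, since $x$ is regular we have $E\cong\mbP^{n-1}_{k(x)}$ with normal bundle $\mcO_E(-1)$, so $(E^n)_{\widetilde{X},k}=((E|_E)^{n-1})_{E,k}=(-1)^{n-1}(H^{n-1})_{E,k}=(-1)^{n-1}[k(x):k]$, where $H$ is the hyperplane class on $\mbP^{n-1}_{k(x)}$. Assembling the signs gives
\[
\mult_{x/k}D=(-1)^n\cdot\bigl(-d\cdot(-1)^{n-1}[k(x):k]\bigr)=d[k(x):k],
\]
as claimed.

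\textbf{Main obstacle.} The proof is not conceptually difficult; the only delicate point is the bookkeeping of the ground field in the self-intersection numbers, in particular the identity $(H^{n-1})_{E,k}=[k(x):k]$ on $E\cong\mbP^{n-1}_{k(x)}$. This is why the factor $[k(x):k]$ appears in the formula for $\mult_{x/k}D$ but does not appear in the statement $\mu^*D=\widetilde{D}+dE$, which is a purely schemetheoretic identity independent of the choice of ground field.
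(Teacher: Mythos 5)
Your proof is correct, and since the paper simply cites \cite[Example 4.3.9]{Ful98} without giving its own argument, what you have written is a faithful reconstruction of the standard proof, with the one new ingredient — tracking the residue field degree $[k(x):k]$ in the self-intersection $(E^n)_{\widetilde{X},k}=(-1)^{n-1}[k(x):k]$ on $E\cong\mbP^{n-1}_{k(x)}$ — handled correctly. The only step worth making explicit for full rigor is the identification (as line bundles on $\widetilde{D}$) of the exceptional divisor of $\Bl_x D$ with $\sO_{\widetilde{X}}(E)|_{\widetilde{D}}$, which follows from functoriality of $\Proj$ applied to the surjection $\oplus_l\mfm_x^l\twoheadrightarrow\oplus_l\mfm_{D,x}^l$ together with $\sO_{\widetilde{X}}(1)=\sO_{\widetilde{X}}(-E)$; you assert it but do not justify it, though it is standard.
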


\begin{remark}\label{rmk:curve-multiplicities}
	If $C\subset X$ is a curve passing through $x$ and $\tilde{C}$ is the strict transform of $C$ on $\tilde{X}$, then it follows from the definition that $\mult_{x/k} C={E\cdot_k \tilde{C}}.$
\end{remark}

\begin{corollary}\label{cor:intersection-vs-multiplicities}
	Let $X$ be a normal projective variety over $k$. Let $x\in X$ be a regular closed point, $D\>0$ an effective Cartier divisor and $C$ a curve on $X$, each passing though $x$, such that $C$ is not contained in the support of $D$. 
	Then $$D\cdot_k C\>\frac{1}{[k(x):k]}(\mult_{x/k} D)(\mult_{x/k} C).$$
\end{corollary}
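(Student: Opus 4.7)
The plan is to pull everything back to the blowup of $X$ at $x$ and then use the projection formula, combined with the explicit description of $\mu^*D$ given in \autoref{lem:two-different-multiplicity}.

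More precisely, let $\mu: \widetilde{X}\to X$ be the blowup of $X$ at $x$ with exceptional divisor $E$, and write $\widetilde{D}$ and $\widetilde{C}$ for the strict transforms of $D$ and $C$. Since $x$ is a regular point, $E$ is Cartier and \autoref{lem:two-different-multiplicity} gives
\[ \mu^*D \;=\; \widetilde{D} + d\, E \qquad \text{with} \qquad d \;=\; \frac{\mult_{x/k} D}{[k(x):k]}. \]
Because $\mu$ is a birational morphism and $C$ passes through $x$, we have $\mu_*\widetilde{C}=C$, so the projection formula yields
\[ D\cdot_k C \;=\; \mu^*D \cdot_k \widetilde{C} \;=\; \widetilde{D}\cdot_k \widetilde{C} \;+\; d\,(E\cdot_k \widetilde{C}). \]
By \autoref{rmk:curve-multiplicities} we have $E\cdot_k \widetilde{C}=\mult_{x/k}C$, so the second term equals exactly $\frac{1}{[k(x):k]}(\mult_{x/k}D)(\mult_{x/k}C)$, which is the desired bound.

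It therefore remains only to check that $\widetilde{D}\cdot_k \widetilde{C}\geq 0$. Since $C\not\subset \Supp(D)$ and the strict transform $\widetilde{C}$ maps birationally to $C$, the curve $\widetilde{C}$ is not contained in the support of the effective Cartier divisor $\widetilde{D}$; consequently $\widetilde{D}\cdot_k\widetilde{C}$ is the $k$-length of a proper intersection of an effective Cartier divisor with an integral curve, which is non-negative. This gives the corollary.

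I don't expect any serious obstacle: the only mildly delicate point is ensuring that $\widetilde{D}$ is Cartier (so that the intersection with $\widetilde{C}$ is well-defined and non-negative), but this is immediate since $\mu^*D$ and $E$ are both Cartier on $\widetilde{X}$ and $\widetilde{D}=\mu^*D-dE$.
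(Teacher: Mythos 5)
Your proof is correct and is essentially the same as the paper's: blow up at $x$, apply \autoref{lem:two-different-multiplicity} and \autoref{rmk:curve-multiplicities}, use the projection formula, and drop the non-negative term $\widetilde{D}\cdot_k\widetilde{C}$. You spell out the non-negativity of $\widetilde{D}\cdot_k\widetilde{C}$ and the Cartier-ness of $\widetilde{D}$, which the paper leaves implicit, but the argument is identical.
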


\begin{proof}
	Let $\mu:\widetilde{X}\to X$ be the blowup of $X$ at $x$ and $E=\Proj\oplus_{l\>0}\mathfrak{m}_x^l/\mathfrak{m}_x^{l+1}$ the exceptional divisor. Let $\widetilde{D}$ and $\widetilde{C}$ be the strict transform of $D$ and $C$, respectively. Then by  \autoref{lem:two-different-multiplicity} and  \autoref{rmk:curve-multiplicities} it follows that $\mu^*D=\widetilde{D}+\frac{\mult_{x/k} D}{[k(x):k]}E$, and $E\cdot_k\widetilde{C}=\mult_{x/k} C$. Thus $D\cdot_k C=\mu^*D\cdot_k\widetilde{C}=(\widetilde{D}+\frac{\mult_{x/k} D}{[k(x):k]}E)\cdot_k\widetilde{C}\>\frac{\mult_{x/k} D}{[k(x):k]}(E\cdot_k\widetilde{C})=\frac{1}{[k(x):k]}(\mult_{x/k} D)(\mult_{x/k} C)$. 
\end{proof}

\begin{definition}\label{def:seshadri-constant}
	Let $X$ be a projective variety over a field $k$ and $L$ a nef divisor on $X$. Fix a closed point $x\in X$ and let $\mu:X'\to X$ be the blowup of $X$ at $x$ with exceptional divisor $E$. The Seshadri constant 
	\[\ve(X, L; x)=\ve(L; x) \]
	of $L$ at $x$ is the defined to be the real number 
	\[\ve(L; x)=\max\{\ve\>0: \mu^*L-\ve E \mbox{ is nef }\}.\]
\end{definition}

\begin{lemma}\label{lem:2nd-definition-of-seshadri-constant}
	In the situation of \autoref{def:seshadri-constant} one has
	\[\ve(L; x)=\inf_{x\in C\subset X}\left\{\frac{(L\cdot_k C)}{\mult_{x/k} C}\right\}, \]
	where the infimum is taken over all integral curves $C\subset X$ passing through $x$. 	
\end{lemma}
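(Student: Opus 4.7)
The strategy is the standard argument, but carefully keeping track of the ground field $k$ via \autoref{rmk:curve-multiplicities} and the formula $\mult_{x/k}C = E \cdot_k \widetilde{C}$ which makes the bookkeeping transparent.  Write $\ve_0 := \inf_{x \in C \subset X}\bigl\{ L\cdot_k C / \mult_{x/k} C\bigr\}$.

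First I would show $\ve(L;x) \leq \ve_0$.  If $\ve := \ve(L;x)$, then $\mu^*L - \ve E$ is nef by definition of $\ve(L;x)$.  For any integral curve $C \subset X$ through $x$ with strict transform $\widetilde{C} \subset X'$, the projection formula gives $\mu^*L \cdot_k \widetilde{C} = L\cdot_k C$, and \autoref{rmk:curve-multiplicities} gives $E \cdot_k \widetilde{C} = \mult_{x/k} C$, so
\[ 0 \leq (\mu^*L - \ve E) \cdot_k \widetilde{C} = L \cdot_k C - \ve \cdot \mult_{x/k} C.\]
Rearranging and taking infimum over all such $C$ yields $\ve \leq \ve_0$.

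For the reverse direction I would show that $\mu^*L - \ve_0 E$ is nef, by testing on every irreducible curve $C' \subset X'$.  If $\mu(C')$ is a curve, then $C' = \widetilde{C}$ is the strict transform of its image $C$, and splitting into the subcases $x \in C$ and $x \notin C$ we either recover the inequality displayed above (by the very definition of $\ve_0$) or get $\mu^*L \cdot_k C' = L \cdot_k C \geq 0$ together with $E \cdot_k C' = 0$.  The remaining case is when $\mu(C')$ is a point, which forces $C' \subset E$ and $\mu^*L \cdot_k C' = 0$; it then suffices to show $E \cdot_k C' \leq 0$.

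The main obstacle is this last step: verifying negativity of $E$ along curves contained in the exceptional divisor.  I would use that, by the construction of the blowup, $\mathcal{O}_{X'}(-E)|_E \isom \mathcal{O}_E(1)$ is the Serre twisting sheaf on $E = \Proj\bigoplus_{l\geq 0}\mathfrak{m}_x^l/\mathfrak{m}_x^{l+1}$.  Because this graded $k(x)$-algebra is generated in degree one, $\mathcal{O}_E(1)$ is ample on $E$; hence its restriction to any integral curve $C' \subset E$ has strictly positive degree over $k(x)$, and therefore also over $k$ (the two intersection numbers differ only by the factor $[k(x):k]$).  Consequently $E \cdot_k C' = -\deg_k \mathcal{O}_E(1)|_{C'} < 0$, completing the verification of nefness and hence the proof that $\ve(L;x) \geq \ve_0$.
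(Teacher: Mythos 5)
Your proof is correct and takes essentially the same approach as the paper's: test nefness of $\mu^*L - \ve E$ against strict transforms of curves on $X$ through $x$, using the projection formula together with the identity $E \cdot_k \widetilde{C} = \mult_{x/k}C$ from \autoref{rmk:curve-multiplicities}. The only difference is that the paper handles curves contracted by $\mu$ (i.e.\ contained in $E$) in one stroke by citing that $-E$ is $\mu$-ample, whereas you unwind this into the explicit verification that $\mathcal{O}_{X'}(-E)|_E \cong \mathcal{O}_E(1)$ is ample because the graded ring $\bigoplus_l \mathfrak{m}_x^l/\mathfrak{m}_x^{l+1}$ is generated in degree one; both versions also implicitly or explicitly dispose of strict transforms of curves missing $x$ via nefness of $L$. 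So yours is a slightly more self-contained write-up of the same argument, not a different route.
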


\begin{proof}
	Since $-E$ is $\mu$-ample, to establish the nefness of $\mu^*L-\ve E$ on $X'$ it is enough to intersect with strict transforms of the curves on $X$. Let $C\subset X$ be an integral curve passing through $x$ and $C'$ its strict transform on $X'$. Then
	\[(\mu^*L-\ve E) \mbox{ is nef } \iff (\mu^*L-\ve E)\cdot_k C'\>0 \]
	for every such curve $C$. On the other hand by  \autoref{rmk:curve-multiplicities}
	\[E\cdot_k C'=\mult_{x/k} C. \]
	Therefore
	\[(\mu^*L-\ve E) \mbox{ is nef } \iff \ve\< \frac{(L\cdot_k C)}{\mult_{x/k} C}. \]
	In particular, $\dis \max\{\ve\>0: \mu^*L-\ve E \mbox{ is nef}\}=\inf_{x\in C\subset X}\left\{\frac{(L\cdot_k C)}{\mult_{x/k} C}\right\}$.
\end{proof}

\begin{lemma}\label{lem:vanishing-result}
	Let $X$ be a variety over an arbitrary field $k$ and $x\in X$ a regular closed point of $X$ with ideal sheaf $\mfm=\mfm_x\in\mcO_X$. Consider the blowup
	\[\mu:X'=\Bl_x(X)\to X \]
	of $X$ at $x$, with exceptional divisor $E\subset X'$. Then for every integer $a\>0$
	\[R^j\mu_*\mcO_{X'}(-aE)=0 \mbox{ when } j>0.\]
	In particular,
	\[H^i(X', \mcO_{X'}(\mu^*L-aE))=H^i(X, \mcO_X(L)\otimes\mfm^a)\]
	for every $i\>0$ and every Cartier divisor $L$ on $X$.
\end{lemma}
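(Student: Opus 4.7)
The plan is to prove the vanishing $R^j\mu_*\mcO_{X'}(-aE)=0$ for $j>0$ and $a\geq 0$ by induction on $a$, and then deduce the cohomology identity from the projection formula and the Leray spectral sequence.

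First I would assemble the geometric picture of the blowup. Since $\mcO_{X,x}$ is a regular local ring of dimension $n=\dim X$, the maximal ideal $\mfm=\mfm_x$ is generated by a regular sequence, so the associated graded ring $\bigoplus_l \mfm^l/\mfm^{l+1}$ is a polynomial ring in $n$ variables over $\kappa(x)$. Hence $E=\Proj_{\kappa(x)}(\bigoplus_l \mfm^l/\mfm^{l+1})\cong \mathbb{P}^{n-1}_{\kappa(x)}$, and from the Proj construction $\mcO_{X'}(-E)|_E\cong \mcO_E(1)$, so $\mcO_{X'}(-aE)|_E\cong \mcO_E(a)$. The Rees algebra description $X'=\Proj_X(\bigoplus_l\mfm^l)$ also yields the identifications $\mu_*\mcO_{X'}(-aE)=\mfm^a$ and $\mu_*\mcO_E(a-1)=\mfm^{a-1}/\mfm^a$ for $a\geq 1$, with the natural map being the quotient.

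The base case $a=0$ is the standard vanishing $R^j\mu_*\mcO_{X'}=0$ for $j>0$ at a blowup of a regular point. Since these sheaves are supported at $x$ and coherent, the formal function theorem reduces the statement to showing that $H^j(X'_m,\mcO_{X'_m})=0$ for $j>0$, where $X'_m=\mu^{-1}(\Spec \mcO_{X,x}/\mfm^{m+1})$. This follows by induction on $m$ from the exact sequences $0\to \mcI_E^m/\mcI_E^{m+1}\to \mcO_{X'_m}\to \mcO_{X'_{m-1}}\to 0$ together with $\mcI_E^m/\mcI_E^{m+1}\cong \mcO_E(m)$ and the fact that $H^j(\mathbb{P}^{n-1}_{\kappa(x)},\mcO(m))=0$ for all $j>0$ and $m\geq 0$. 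For the induction step, tensoring $0\to \mcO_{X'}(-E)\to \mcO_{X'}\to \mcO_E\to 0$ by $\mcO_{X'}(-(a-1)E)$ gives
\[0\to \mcO_{X'}(-aE)\to \mcO_{X'}(-(a-1)E)\to \mcO_E(a-1)\to 0.\]
Applying $\mu_*$, for $j\geq 2$ the long exact sequence sandwiches $R^j\mu_*\mcO_{X'}(-aE)$ between $R^{j-1}\mu_*\mcO_E(a-1)=H^{j-1}(\mathbb{P}^{n-1}_{\kappa(x)},\mcO(a-1))=0$ and $R^j\mu_*\mcO_{X'}(-(a-1)E)=0$ (by induction). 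For $j=1$, the sequence
\[\mu_*\mcO_{X'}(-(a-1)E)\to \mu_*\mcO_E(a-1)\to R^1\mu_*\mcO_{X'}(-aE)\to R^1\mu_*\mcO_{X'}(-(a-1)E)=0\]
reduces to $\mfm^{a-1}\twoheadrightarrow \mfm^{a-1}/\mfm^a$, which is surjective.

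The second statement follows by the projection formula, which applies since $\mcO_X(L)$ is an invertible sheaf: $R^j\mu_*\mcO_{X'}(\mu^*L-aE)=\mcO_X(L)\otimes R^j\mu_*\mcO_{X'}(-aE)$, which equals $\mcO_X(L)\otimes \mfm^a$ for $j=0$ and vanishes for $j>0$. The Leray spectral sequence then degenerates and yields the required identification of cohomology groups. The main technical point is identifying $\mu_*\mcO_{X'}(-aE)$ with $\mfm^a$ and $\mu_*\mcO_E(a-1)$ with $\mfm^{a-1}/\mfm^a$ compatibly with the connecting map, but both are immediate from the Rees algebra presentation of the blowup once regularity of $\mcO_{X,x}$ forces $\bigoplus \mfm^l/\mfm^{l+1}$ to be a polynomial ring.
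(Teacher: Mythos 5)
Your proof is correct and follows essentially the same route as the paper: induction on $a$ via the ideal sheaf sequence of $E$ twisted by $\mcO_{X'}(-(a-1)E)$, using regularity to identify $\mu_*\mcO_E(a-1)$ with $\mfm^{a-1}/\mfm^a$ so that $R^1$ is killed by the surjection $\mfm^{a-1}\twoheadrightarrow\mfm^{a-1}/\mfm^a$, and the vanishing of $H^{j-1}(\mathbb{P}^{n-1}_{\kappa(x)},\mcO(a-1))$ to kill the higher $R^j$. The only cosmetic differences are that you spell out the base case $a=0$ via formal functions where the paper cites Hartshorne V.3.4, and you make the projection-formula/Leray step explicit where the paper leaves it implicit.
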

\begin{proof}
	The proof follows similarly as in the proof of \cite[Lemma 4.3.16]{Laz04a}.
\end{proof}~\\

\begin{definition}
	Let $X$ be a projective variety over $k$. Let $x\in X$ be a regular closed point, and $L$ a line bundle on $X$. We say that $|L|$ \emph{separates $s$-jets} if the natural morphism
	\[\xymatrixcolsep{3pc}\xymatrix{H^0(X, L)\ar@{->>}[r] & H^0(X, L\otimes\mcO_X/\mathfrak{m}^{s+1}_x)}\] 
	is surjective.\\
	We denote by $s(L; x)$ the largest non-negative integer $s$ such that $|L|$ separates $s$-jets at $x$. If $x$ is a base-point of $|L|$, then we define $s(L; x)=-1$. 	
\end{definition}

\begin{proposition}
	Let $L$ be an ample line bundle on a projective variety $X$, and $x\in X$ a regular closed point. Then
	\[ \ve(X, L; x)=\lim_{m\to\infty}\frac{s(mL; x)}{m}.\]
\end{proposition}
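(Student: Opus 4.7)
The plan is to prove both inequalities in the identity $\ve(X,L;x)=\lim_{m\to\infty}s(mL;x)/m$, after observing that the limit exists by Fekete's lemma: since the multiplication map $(\mcO_X/\mfm_x^{s_1+1})\otimes_{k(x)}(\mcO_X/\mfm_x^{s_2+1})\to\mcO_X/\mfm_x^{s_1+s_2+1}$ is surjective, multiplying sections gives the superadditivity $s((m_1+m_2)L;x)\>s(m_1L;x)+s(m_2L;x)$ (valid once each $|m_iL|$ is base-point free at $x$), whence the limit equals $\sup_m s(mL;x)/m$.

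For the inequality $\ve(L;x)\>\lim s(mL;x)/m$, I would fix $m\>1$ with $s=s(mL;x)$ and any integral curve $C\subset X$ through $x$. Separation of $s$-jets by $|mL|$, transferred to the blow-up $\mu:X'\to X$ via \autoref{lem:vanishing-result} and the filtration of $\mcO_{(s+1)E}$, is equivalent to the surjectivity of $H^0(X',\mu^*mL-jE)\twoheadrightarrow H^0(E,\mcO_E(j))$ for $j=0,\ldots,s$, using the identification $(\mu^*mL-sE)|_E=\mcO_E(s)$ coming from $\mcO_E(E)=\mcO_E(-1)$. Since $\mcO_E(s)$ is globally generated on $E=\mbP^{\dim X-1}_{k(x)}$ and the intersection $C'\cap E$ (with $C'$ the strict transform of $C$) is a nonempty zero-dimensional subscheme of $E$, some $\bar\sigma\in H^0(E,\mcO_E(s))$ does not vanish identically on $C'\cap E$. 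Lifting $\bar\sigma$ yields $\sigma\in H^0(X',\mu^*mL-sE)$ whose divisor $D'$ satisfies $C'\not\subset D'$, so $D'\cdot_k C'\>0$. Combining with $E\cdot_k C'=\mult_{x/k}(C)$ from \autoref{rmk:curve-multiplicities} and the projection formula gives $mL\cdot_k C\>s\cdot\mult_{x/k}(C)$, whence $\ve(L;x)\>s/m$ by \autoref{lem:2nd-definition-of-seshadri-constant}.

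For the reverse inequality, I would fix rational $c=p/q\in(0,\ve(L;x))$. The central claim is that $A:=\mu^*qL-pE$ is ample on $X'$, and this is the step where the argument differs most from a na\"ive translation. Nefness is immediate from the definition of $\ve(L;x)$; to upgrade to ampleness via Kleiman's criterion, suppose $A\cdot\alpha=0$ for some nonzero $\alpha\in\overline{NE}(X')$. If $\mu_*\alpha\neq 0$, then the projection formula gives $(L\cdot\mu_*\alpha)/(E\cdot\alpha)=p/q<\ve(L;x)$, contradicting the nefness of $\mu^*L-\ve(L;x)E$ (valid because the nef cone is closed). If $\mu_*\alpha=0$, then $\alpha$ lies in the closed cone generated by curves on $E$, on whose nonzero classes $-E$ is strictly positive via $\mcO_E(E)=\mcO_E(-1)$, forcing $\alpha=0$.

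With $A$ ample, Serre vanishing gives $H^1(X',mA)=H^1(X',\mu^*mqL-mpE)=0$ for $m\gg 0$. The short exact sequences $0\to\mu^*mqL-(j+1)E\to\mu^*mqL-jE\to\mcO_E(j)\to 0$, together with $H^1(E,\mcO_E(j))=0$ for $j\>0$, yield surjections $H^1(\mu^*mqL-(j+1)E)\twoheadrightarrow H^1(\mu^*mqL-jE)$, which iterated downward from $j+1=mp$ give $H^1(\mu^*mqL-jE)=0$ for $0\<j\<mp$. Hence $H^0(\mu^*mqL-jE)\twoheadrightarrow H^0(E,\mcO_E(j))$ is surjective for $j=0,\ldots,mp-1$, so $|mqL|$ separates $(mp-1)$-jets, yielding $s(mqL;x)\>mp-1$. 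Therefore $\lim s(m'L;x)/m'\>\lim_m(mp-1)/(mq)=c$, and letting $c\nearrow\ve(L;x)$ through rationals finishes the proof. The main obstacle to expect is the ampleness of $A$: Kleiman's criterion applies in any characteristic for projective schemes, but the verification relies on the intersection-theoretic framework of \autoref{cor:intersection-vs-multiplicities} correctly handling the residue extension $k(x)/k$; once this is in hand, the rest of the argument is purely cohomological and field-independent.
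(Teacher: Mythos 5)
Your proposal is correct, and the overall strategy coincides with the paper's: bound $\ve(L;x)$ below by $s(mL;x)/m$ via a curve--divisor intersection estimate, and bound $\liminf s(mL;x)/m$ below by $\ve(L;x)$ via Serre vanishing for $\mu^*qL - pE$ on the blowup. The differences are in the details, and on balance you spell out more than the paper does. For the first inequality the paper produces a divisor $F_m\in|mL|$ with $\mult_{x/k}F_m\geq s_m[k(x):k]$ not containing $C$ and applies \autoref{cor:intersection-vs-multiplicities}, whereas you transfer the jet surjection to $E$ and lift a section of $\mcO_E(s)$ not vanishing on $C'\cap E$; these are the same argument, and your version makes explicit why the section can be chosen off $C$. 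For the reverse inequality the paper simply cites \cite[5.1.17]{Laz04a}; you fill this in, establishing ampleness of $A=\mu^*qL-pE$ via Kleiman's criterion. That works (your case analysis on $\mu_*\alpha$ is fine, noting that $\ker\mu_*$ is one-dimensional and spanned by the class of a line in $E$), though the shorter route is to write $\mu^*L-cE$ as a convex combination $\tfrac{\ve-c}{\ve-\epsilon'}(\mu^*L-\epsilon'E)+\tfrac{c-\epsilon'}{\ve-\epsilon'}(\mu^*L-\ve E)$ with $\epsilon'$ small enough that the first term is ample. Likewise the downward cohomological iteration is unnecessary: $H^1(X,mqL\otimes\mfm_x^{mp})\cong H^1(X',mA)=0$ directly yields separation of $(mp-1)$-jets from the sheaf sequence $0\to mqL\otimes\mfm_x^{mp}\to mqL\to mqL\otimes\mcO_X/\mfm_x^{mp}\to 0$. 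Your Fekete observation at the start is harmless but not needed, since the paper's $\limsup/\liminf$ sandwich already forces convergence.
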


\begin{proof}
	Write $\ve=\ve(L; x)$ and $s_m=s(mL; x)$. We must prove the inequalities
	\[\ve\>\limsup \frac{s_m}{m}\>\liminf\frac{s_m}{m}\>\ve. \]
	
	For the left inequality, we will show that $\ve\>\frac{s_m}{m}$ for every $m$. Let $C$ be an integral curve passing through $x$. Since $|mL|$ separates $s_m$-jets at $x$, by  \autoref{lem:two-different-multiplicity} we can find a divisor $F_m\in|mL|$	with $\mult_{x/k} F_m\>s_m[k(x):k]$ and $C\nsubseteq F_m$. Then
	\begin{equation*}
	\begin{split}
	m(L\cdot_k C) &=F_m\cdot_k C\\
	&\> \frac{1}{[k(x):k]}(\mult_{x/k} F_m)(\mult_{x/k} C) \mbox{  [by \autoref{cor:intersection-vs-multiplicities}]}\\
	&\> s_m\cdot\mult_{x/k} C.
	\end{split}
	\end{equation*}	
	Then by  \autoref{lem:2nd-definition-of-seshadri-constant}	$\ve\>\frac{s_m}{m}$. In particular $\ve\>\limsup\frac{s_m}{m}$.
	
	The other nontrivial inequality holds by the same argument as the relevant part of  \cite[5.1.17]{Laz04a}, using the characterization of the Seshadri constant in terms of nefness on blowups.
\end{proof}

\begin{definition}\cite[Definition 2.4]{mustata_frobenius_2012}
	Let $X$ be a variety over an $F$-finite field $k$, $x\in X$ a regular closed point and $L$ a line bundle. 
	We say that $L$ separates $p^e$-Frobenius jets at $x$ if the map
	$$H^0(X,L)\to H^0(X,L\otimes\sO_X/\frak{m}_x^{[p^e]})$$ is surjective.
	Let $s_F(L^m;x)$ be the largest $e$ such that $L^m$ separates $p^e$-Frobenius jets at $x$.  (If there is no such $e$ then $s_F=0$.)
	 We define the Frobeinius-Seshadri constant at $x$ to be 
	$$\epsilon_F(L;Z):=\sup_{m\geq 1} \frac{p^{s_F(L^m;Z)}-1}{m}$$
	\end{definition}

The proofs in \cite{mustata_frobenius_2012} go through when the assumption of an algebraically closed base field is replaced by that of an $F$-finite field, and if ``smooth'' is replace by ``regular'' throughout, so we have the following:

\begin{theorem}\cite[Theorem 3.1]{mustata_frobenius_2012}
	Let $L$ be an ample line bundle on regular variety $X$ over $F$-finite field $k$.  
	If $\epsilon_F(L,x)>1$ then the restriction map 
	$$H^0(X,\omega_X\otimes L)\to H^0(X,\omega_X\otimes L\otimes \sO_X/\frak m_x)$$ is surjective.
\end{theorem}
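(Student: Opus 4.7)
The plan is to adapt the proof of \cite[Theorem 3.1]{mustata_frobenius_2012} verbatim to $F$-finite base fields. The only place algebraic closure appears in that argument is to identify $k(x) = k$; since $\mathfrak{m}_x^{[p^e]}$ and the jet separation condition $H^0(L^m) \twoheadrightarrow L^m/\mathfrak{m}_x^{[p^e]}$ are intrinsic to any $F$-finite regular scheme, all computations go through with $k(x)$ treated as a finite extension of $k$.

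The main ingredient is the surjective Grothendieck trace $\mathrm{Tr}:F^e_*\omega_X \to \omega_X$ obtained from duality for the finite flat Frobenius $F^e:X \to X$ on the regular scheme $X$ (equivalently, $X$ is $F$-pure). By the projection formula and $F^{e*}L = L^{p^e}$, this induces a surjection
\[
\mathrm{Tr}_L:F^e_*(\omega_X \otimes L^{p^e}) \twoheadrightarrow \omega_X \otimes L,
\]
which upon passing to the fiber at $x$ becomes a surjection $(\omega_X \otimes L^{p^e})/\mathfrak{m}_x^{[p^e]} \twoheadrightarrow (\omega_X \otimes L)/\mathfrak{m}_x$ (the left side having this form because the $\sO_X$-action on $F^e_*\mathcal{F}$ is through the $p^e$-th power). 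A diagram chase in
\[
\begin{tikzcd}
H^0(X, \omega_X \otimes L^{p^e}) \arrow[r] \arrow[d] & (\omega_X \otimes L^{p^e})/\mathfrak{m}_x^{[p^e]} \arrow[d, two heads] \\
H^0(X, \omega_X \otimes L) \arrow[r] & (\omega_X \otimes L)/\mathfrak{m}_x
\end{tikzcd}
\]
then reduces the theorem to surjectivity of the top horizontal arrow for some suitable $e$.

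To produce this surjection, I would use $\epsilon_F(L,x) > 1$ to choose $m \geq 1$ with $e := s_F(L^m;x)$ satisfying $p^e - 1 > m$; by definition of $s_F$, the map $H^0(X, L^m) \twoheadrightarrow L^m/\mathfrak{m}_x^{[p^e]}$ is surjective. After replacing $m$ by a sufficiently large multiple (preserving $p^e - 1 > m$ via the straightforward monotonicity that if $s_F(L^m;x) \geq e$ then $s_F(L^{km};x) \geq e$), we may further arrange that $\omega_X \otimes L^{p^e - m}$ admits a global section $\tau$ not vanishing at $x$ --- this is Serre's vanishing theorem for the ample $L$. Multiplication by $\tau$ is an isomorphism of stalks at $x$ and promotes the surjection for $L^m$ to the desired one for $\omega_X \otimes L^{p^e}$.

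The main obstacle is verifying that the cited monotonicity of $s_F$, as well as the diagram chase above, really do only depend on the scheme-theoretic formation of $\mathfrak{m}_x^{[p^e]}$ and not on algebraic closure of $k$. This is a routine verification, since the Frobenius push-forward and projection formula are general statements, and $k(x)$-linearity of the relevant restriction maps is exactly what one needs.
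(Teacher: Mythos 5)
The paper does not actually prove this statement: it merely asserts that Mustață's proof of \cite[Theorem 3.1]{mustata_frobenius_2012} "goes through" over $F$-finite fields with ``smooth'' replaced by ``regular''.  Your reconstruction of that proof has the right skeleton --- the surjective Grothendieck trace $F^e_*\omega_X\to\omega_X$ coming from $F$-purity of the regular $X$, the projection-formula twist $F^e_*(\omega_X\otimes L^{p^e})\twoheadrightarrow\omega_X\otimes L$, the identification $F^e_*\mathcal{F}\otimes\sO_X/\mathfrak{m}_x\cong F^e_*(\mathcal{F}\otimes\sO_X/\mathfrak{m}_x^{[p^e]})$, and the observation that $\mathfrak{m}_x^{[p^e]}$ is formed intrinsically so that nothing breaks when $k(x)\supsetneq k$.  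Those parts are all sound adaptations.

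However, the step where you arrange a section of $\omega_X\otimes L^{p^e-m}$ not vanishing at $x$ has a genuine gap.  You invoke a monotonicity ``$s_F(L^m;x)\ge e\Rightarrow s_F(L^{km};x)\ge e$'' and say that replacing $m$ by a large multiple $km$ then preserves $p^e-1>m$ and allows Serre vanishing to kick in.  But this reasoning runs in the wrong direction on two counts.  First, with $e$ fixed, increasing $m$ to $km$ makes the exponent $p^e-km$ \emph{smaller}, not larger, so Serre vanishing is harder, not easier, to satisfy.  Second, your stated monotonicity only gives a new $e'=s_F(L^{km};x)\ge e$; it does not give $p^{e'}-1>km$, nor does it give any lower bound on $p^{e'}-km$, so the inequality you say is ``preserved'' need not hold for the new pair $(km,e')$.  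The correct ingredient in Mustață's argument is the Frobenius-iteration estimate: using surjectivity of the local splitting of $F^a$ on the regular scheme $X$, if $L^m$ separates $p^e$-Frobenius jets at $x$ then $L^{mp^a}$ separates $p^{e+a}$-Frobenius jets at $x$, i.e.\ $s_F(L^{mp^a};x)\ge s_F(L^m;x)+a$.  Replacing $(m,e)$ by $(mp^a,e+a)$ then gives $p^{e+a}-mp^a=p^a(p^e-m)\to\infty$, which is exactly what is needed to invoke Serre vanishing and produce $\tau$.  This boosting lemma is not a ``straightforward'' formal property of $s_F$; it depends on the $F$-splitting of $X$ and must be restated and checked in the $F$-finite setting (it does go through, but you need to say it).  Without it, your argument does not close.
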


\begin{proposition}\cite[Proposition 2.12]{mustata_frobenius_2012}
	Let $L$ be an ample line bundle on a projective variety $X$ over an $F$-finite field $k$, with a regular point $x$.  Then we have
	$$\frac{\epsilon(L,x)}{n}\leq \epsilon_F(L;x)\leq \epsilon(L;x)$$
	\end{proposition}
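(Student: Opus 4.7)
The plan is to follow the original argument of Musta\c t\u a--Schwede \cite[Proposition 2.12]{mustata_frobenius_2012}, noting that the hypothesis of algebraic closure of the base field plays no essential role: the entire argument is local at the regular closed point $x$, and requires only the existence of a regular system of parameters $f_1, \ldots, f_n$ generating the maximal ideal $\mfm_x$ of the regular local ring $\mcO_{X,x}$ (where $n = \dim X$). Since the preceding proposition already gives $\epsilon(L;x) = \lim_{m\to\infty} s(mL;x)/m$, the task reduces to comparing $s(L^m;x)$ with $s_F(L^m;x)$.

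First I would establish the sandwich of ideals
\[\mfm_x^{n(p^e-1)+1}\;\subseteq\;\mfm_x^{[p^e]}\;\subseteq\;\mfm_x^{p^e},\]
where $\mfm_x^{[p^e]} = (f_1^{p^e},\ldots,f_n^{p^e})$ is the Frobenius power (the identification with this ideal follows from Frobenius additivity and uses only regularity of the local ring). The right inclusion is immediate since each generator $f_i^{p^e}$ lies in $\mfm_x^{p^e}$. For the left inclusion, any monomial $f_1^{a_1}\cdots f_n^{a_n}$ with $\sum a_i \geq n(p^e-1)+1$ must satisfy $a_i \geq p^e$ for some $i$ by the pigeonhole principle, and hence lies in $\mfm_x^{[p^e]}$.

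Next I would translate these inclusions into comparisons of jet separation. They induce natural surjections
\[\mcO_X/\mfm_x^{n(p^e-1)+1}\;\twoheadrightarrow\;\mcO_X/\mfm_x^{[p^e]}\;\twoheadrightarrow\;\mcO_X/\mfm_x^{p^e}.\]
Tensoring with $L^m$ and chasing surjectivity in the corresponding maps of global sections, one obtains the implications $s_F(L^m;x) \geq e \implies s(L^m;x) \geq p^e - 1$ (from the right surjection) and $s(L^m;x) \geq n(p^e - 1) \implies s_F(L^m;x) \geq e$ (from the left).

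Finally I would derive the two inequalities. For the upper bound, setting $e = s_F(L^m;x)$ gives $(p^e - 1)/m \leq s(L^m;x)/m$; taking the supremum on the left and the limit on the right yields $\epsilon_F(L;x) \leq \epsilon(L;x)$. For the lower bound, for each $e$ let $m_e$ be the smallest positive integer with $s(L^{m_e};x) \geq n(p^e - 1)$, so that $(p^e - 1)/m_e \leq \epsilon_F(L;x)$. The minimality of $m_e$ together with $s(L^m;x)/m \to \epsilon(L;x)$ forces $n(p^e-1)/m_e \to \epsilon(L;x)$, hence $(p^e - 1)/m_e \to \epsilon(L;x)/n$ as $e \to \infty$, giving $\epsilon(L;x)/n \leq \epsilon_F(L;x)$.

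The only potential obstacle is verifying that the facts about regular local rings and Frobenius powers used above (existence of a regular system of parameters, and the identification $(f_1,\ldots,f_n)^{[p^e]} = (f_1^{p^e},\ldots,f_n^{p^e})$) require no hypothesis on the residue field $k(x)$. They do not, so the proof carries over verbatim from the algebraically closed setting to the $F$-finite setting considered here.
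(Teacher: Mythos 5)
Your proof is correct and follows the same route as Musta\c t\u a's original argument, which the paper handles by simply noting that it carries over once ``algebraically closed'' is replaced by ``$F$-finite'' and ``smooth'' by ``regular.'' Your walkthrough confirms precisely what the paper asserts: the only local input is the existence of a regular system of parameters and the identity $\mathfrak{m}_x^{[p^e]}=(f_1^{p^e},\ldots,f_n^{p^e})$, neither of which uses anything about the residue field, so the sandwich of ideals, the jet-separation comparison, and the passage to the limit all go through unchanged.
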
 

\bibliographystyle{hep}
\bibliography{references.bib}

\end{document}